\title[Potentially crystalline deformation rings and Serre weight conjectures]{Potentially crystalline deformation rings and Serre weight conjectures: \\ \textit{Shapes and Shadows}}
\author{Daniel Le}
\address{Department of Mathematics,
University of Toronto,
40 St. George Street,
Toronto, ON M5S 2E4, Canada}
\email{le@math.toronto.edu}
\author{Bao V. Le Hung}
\address{School of Mathematics,
Institute for Advanced Studies, 
1 Einstein Drive, 
Princeton NJ 08540, USA}
\email{lhvietbao@googlemail.com}
\author{Brandon Levin}
\address{Department of Mathematics,
University of Arizona, 
617 N. Santa Rita Avenue,
P.O. Box 210089,
Tucson, Arizona 85721, USA}
\email{bwlevin@math.arizona.edu}
\author{Stefano Morra}
\address{Institut Montpelli\'erain A. Grothendieck,
Universit\'e de Montpellier,
Cc 051, Place E. Bataillon,
34095 Montpellier Cedex, France}
\email{stefano.morra@umontpellier.fr}
\subjclass[2010]{11F33, 11F80.}
\begin{document}

\begin{abstract}  We prove the weight part of Serre's conjecture in generic situations for forms of $U(3)$ which are compact at infinity and split  at places dividing $p$ as conjectured by \cite{herzig-duke}.  We also prove automorphy lifting theorems in dimension three.   The key input is an explicit description of tamely potentially crystalline deformation rings with Hodge-Tate weights $(2,1,0)$ for $K/\Qp$ unramified combined with patching techniques.  Our results show that the (geometric) Breuil-M\'ezard conjectures hold for these deformation rings. 
\end{abstract}

\maketitle

\tableofcontents

\section{Introduction}

Let $p$ be a prime. Serre's modularity conjecture (\cite{serre-inv}) predicts that any continuous, irreducible, odd Galois representation $\overline{r}:G_{\mathbb{Q}}\defeq \Gal(\overline{\mathbb{Q}}/\Q)\rightarrow \GL_2(\overline{\F}_p)$ is modular.
In other words, there exists a cuspidal modular form $f=\sum_{n>0}a_nq^n$, which is an eigenvector for the Hecke operators, such that $\mathrm{tr}(\overline{r}\left(\mathrm{Frob}_{\ell}\right))\equiv a_{\ell}\,\mod{p}$ for all primes $\ell\neq p$ not dividing the level of $f$.  In \cite{serre-duke}, Serre made his conjecture more precise, by specifying the \emph{minimal weight} (for prime to $p$ level) of such a modular form. More precisely, if $\overline{r}$ is modular, then the set of weights in which a modular form $f$  associated to $\overline{r}$ occurs is determined in an explicit way from the local datum $\rhobar\defeq \rbar\vert_{\Gal(\overline{\Q}_p/\Q_p)}$. 
In generalizations of Serre's conjecture beyond $\GL_2(\Q)$, there is no obvious notion of minimality, and it is more natural to attach to $\rhobar$ a set of irreducible mod $p$ representations, or \emph{Serre weights}, of the rational points of a general linear group over a finite field.
This is referred to as the weight part of Serre's conjecture or more succinctly, the \emph{Serre weight conjectures}. 

There has been considerable progress on generalizations of Serre's weight conjecture in dimension two (the Buzzard-Diamond-Jarvis conjecture) (\cite{BDJ}, \cite{schein-crelle}, \cite{gee-kisin}, \cite{GLS15}, \cite{newton13},...), leading to complete results for 2-dimensional Galois representations. A key insight in \cite{gee-kisin} connects the weight part of Serre's conjecture to the Breuil-M\'ezard conjecture (\cite{BM}, \cite{BM11}), and its geometrization (\cite{EG}, \cite{EGstacks}), which predicts the multiplicities of the special fibers of deformation spaces (or, more generally, moduli stacks) of local Galois representations when $\ell = p$ in terms of Serre weights of general linear groups. In particular, a good understanding of the geometry of local Galois deformation spaces leads naturally to modularity lifting results, Breuil-M\'ezard, and the weight part of Serre's conjecture, via the patching techniques of Kisin-Taylor-Wiles. 

In dimension two,  potentially Barsotti-Tate (BT) deformation rings were studied via moduli of finite flat group schemes (\cite{kisin-annals}, \cite{breuil-annals}) leading to explicit presentations when $K/\Qp$ is unramified (\cite{BM11, EGS}). The geometry of these (potentially) BT-deformation rings is a key input into the proof of the weight part of Serre's conjecture in \cite{gee-kisin} and provides evidence for mod $p$ local Langlands. However, a satisfactory understanding of the $n$-dimensional analogue, potentially crystalline deformation rings with Hodge-Tate weights $(n-1, n-2, \ldots, 0)$, seemed out of reach, due to the difficulty of understanding the monodromy operator in the theory of Breuil-Kisin modules.

In this paper, we overcome this difficulty in dimension 3 to give a description of the local deformation rings $R^{(2,1,0), \tau}_{\rhobar}$ for $K/\Qp$ unramified and $\tau$ a generic tame inertial type. We thereby obtain the first examples in dimension greater than 2 of Galois deformation rings which are neither ordinary nor Fontaine-Laffaille.  Our results are consistent with the Breuil-M\'ezard conjecture and lead to improvements in modularity lifting. 

\paragraph{\emph{\textbf{Results on local deformation spaces.}}}

Let  $K/\Qp$ be a finite unramified extension. We fix a sufficiently large finite extension $E/\Qp$, $\cO$ its ring of integers and $\F$ its residue field (the \emph{rings of coefficients} for our representations). 

Let $\tau: I_{\Qp}\ra \GL_3(\cO)$ be a tame inertial type  and $\rhobar: G_K\ra\GL_3(\F)$ be a continuous Galois representation. In Definitions \ref{gencond} and \ref{defn n-gen}, we introduce a mild condition on the inertial weights of $\tau$ and $\rhobar$, which we call \emph{genericity}.
Our main local results  (cf. Corollary  \ref{dring}, Theorem  \ref{thm:dringBC}  in the main body of the paper) are a detailed description of the framed potentially crystalline deformation ring $R^{(2,1,0), \tau}_{\rhobar}$ (if it is nonzero) in terms of the notion of \emph{shape} attached to the pair $(\rhobar,\tau)$ (cf. Definition \ref{rhobarshape}).  The \emph{shape} is an element of length $\leq 4$ in the Iwahori-Weyl group of $\GL_3$ and arises from the study of moduli of Kisin modules with descent datum in \S \ref{KM with dd} (inspired by work of \cite{breuil-buzzati, BM11, CDM1, EGS} and further pursued in \cite{CL}); it generalizes the notion of \emph{genre} which is crucial in \cite{breuil-buzzati} in describing tamely Barsotti-Tate deformation rings for $\GL_2$.

\begin{thm} 
\label{thm1Intro}
Let $\rhobar:G_{\Qp} \ra \GL_3(\F)$ be a continuous Galois representation.  Let $\tau$ be a strongly generic tame inertial type. 
Then the framed potentially crystalline deformation ring $R^{(2,1,0), \tau}_{\rhobar}$ with Hodge-Tate weights $(2,1,0)$ has connected generic fiber and its special fiber is as predicted by the geometric Breuil-M\'ezard conjecture.
 
If $R^{(2,1,0), \tau}_{\rhobar}$ is nonzero and the shape of $(\rhobar,\tau)$ has length at least 2, then $R^{(2,1,0), \tau}_{\rhobar}$ has an explicit presentation given in Table $\ref{table:withmon}$. If the shape of $(\rhobar,\tau)$ has length $\leq1$, then the special fiber of $R^{(2,1,0), \tau}_{\rhobar}$ is described in Section $\ref{sec:badcases}$.
\end{thm}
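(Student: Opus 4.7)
The plan is to reduce the description of $R^{(2,1,0),\tau}_{\rhobar}$ to an explicit calculation on a moduli space of Kisin modules with tame descent data, and then to impose the monodromy condition that cuts out the potentially crystalline locus inside the potentially semistable one. Since the Hodge--Tate weights $(2,1,0)$ lie outside both the Fontaine--Laffaille range and the potentially Barsotti--Tate regime, the monodromy condition is nontrivial and carries essentially all the subtle information that distinguishes this setting from previously understood cases.

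First I would set up the moduli of Breuil--Kisin modules $\fM$ of $E(u)$-height $\leq 2$ over the base $\fS_L \defeq W(k')[[v]]$ attached to the tame extension $L/\Qp$ trivializing $\tau$, equipped with a descent datum for $\Gal(L/\Qp)$, where $k'$ is the residue field of $L$. The descent datum decomposes $\fM$ into eigencomponents indexed by characters of inertia, and after untwisting, the Frobenius is encoded by a matrix $C \in \Mat_3(\fS_L)$ whose columns have prescribed $v$-valuations dictated by the inertial weights of $\tau$. By Kisin's theorem, a point of this moduli corresponds to a lattice in a potentially semistable representation, and the potentially crystalline locus sits inside as a closed subscheme determined by a further condition on the associated $(\varphi,\hat G)$-module.

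Next, to each pair $(\rhobar,\tau)$ one attaches its shape $\tilde w$, an element of the extended affine Weyl group of $\GL_3$; the bound $\ell(\tilde w) \leq 4$ is dictated by the length of $t_{(2,1,0)}$ in the admissible set for the cocharacter $(2,1,0)$. For each shape I would write down the universal Frobenius matrix $C_{\tilde w}$ on an open affine chart of the local model around $\tilde w$: its entries are polynomials in $v$ whose coefficients are the formal parameters of the moduli problem. The genericity hypothesis on $\tau$ guarantees that this chart is clean and contains the point of interest; identifying the complete local ring of the moduli at this point with an explicit power series quotient is then a Schubert-type computation inside the affine Grassmannian, independent of any subtle $p$-adic Hodge theory.

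The main obstacle is the final step: translating the crystalline condition into explicit polynomial relations on the entries of $C_{\tilde w}$. I would pass to the associated Breuil module $M$, express $\Fil^2 M$ in terms of $C_{\tilde w}$, and analyze the monodromy operator $N$ coming from the $\hat G$-structure on the Kisin module; requiring that $N$ vanish to the appropriate precision yields nontrivial polynomial relations that, combined with the Schubert-type relations of the previous step, cut out $R^{(2,1,0),\tau}_{\rhobar}$. Carrying this out uniformly across all shapes of length $\geq 2$ produces Table \ref{table:withmon}. Once the presentation is in hand, connectedness of the generic fiber follows by inspection of the tables, namely irreducibility of the defining ideals after inverting $p$, and the Breuil--M\'ezard assertion is verified by matching the components of $\Spec R^{(2,1,0),\tau}_{\rhobar}/p$ with the Serre weights in $\JH(\overline{\sigma}(\tau))$ compatible with $\rhobar$. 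The low-length cases ($\ell(\tilde w) \leq 1$), treated in Section \ref{sec:badcases}, must be handled separately because the monodromy condition degenerates there and the special fiber must be described by a more direct analysis.
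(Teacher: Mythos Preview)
Your broad architecture matches the paper's: pass to Kisin modules with tame descent data, stratify by shapes in the extended affine Weyl group, compute universal coordinates on each Schubert cell, then impose a monodromy condition and read off the presentation. Two of the key technical steps, however, are misidentified.

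First, the monodromy step is not implemented via Breuil modules or a $(\varphi,\hat G)$-structure, and the crystalline condition is not that ``$N$ vanish to the appropriate precision''. Following Kisin, the paper works with the rigid-analytic extension $\fM^{\rig}$, which carries a canonical derivation $N_{\fM^{\rig}}$ over $N_\nabla=-u\lambda\,d/du$ satisfying $N\phi=E(u)\phi N$ and $N_{\fM^{\rig}}\bmod u=0$; crystallinity is the statement that $\fM^{\rig}$ (a priori only $\fM^{\rig}[1/\lambda]$) is stable under it, equivalently $\lambda N^{(j)}_\infty|_{u=\pi}=0$. This is a transcendental condition, and the essential new ingredient (Theorem~\ref{thm:monodr}) is an approximation: the monodromy condition equals an explicit polynomial leading term $P_N(A^{(j-1)})|_{v=-p}$ plus an error divisible by $p^{n-1}$, where $n$ is the genericity of $\tau$. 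It is precisely this estimate that converts the condition into manageable equations on the entries of the universal Frobenius; without it there is no way to extract the relations that produce Table~\ref{table:withmon}. Your sketch does not isolate this point.

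Second, the shapes of length $\leq 1$ are \emph{not} handled by a ``more direct analysis''. In those cases the monodromy equations in characteristic $0$ are too complicated to $p$-saturate by hand; the paper only produces a candidate for the special fiber (Propositions~\ref{id explicit}, \ref{expl alpha2}) and then appeals to a \emph{global} argument---globalizing $\rhobar$ to an automorphic $\rbar$, using a weak minimal patching functor, and invoking the Serre weight results of \S\ref{patching} and Proposition~\ref{Prop-non-ss}---to certify that the candidate is not too large (Proposition~\ref{prop:explicit2}). Connectedness of the generic fiber in those cases is a corollary of this, so your claim that ``connectedness follows by inspection of the tables'' only covers the length $\geq 2$ shapes.
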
 

The first step towards Theorem \ref{thm1Intro} is a detailed study of the moduli space of Kisin modules with descent datum.  The shapes of Kisin modules which arise from reductions of potentially crystalline representations with Hodge-Tate weights $(2,1,0)$ are indexed by  $(2,1,0)$-admissible elements $(\mathrm{Adm}(2,1,0))$ in the Iwahori-Weyl group of $\GL_3$ as defined by Kottwitz and Rapoport (cf. \cite[(9.17)]{PZ}).  For generic $\tau$, the Kisin variety is trivial, and so we can associate a shape to a pair $(\rhobar,\tau)$.

There are 25 elements in $\mathrm{Adm}(2,1,0)$ (cf. Table \ref{Table admissible elements}).  Due to an additional symmetry, we are able to reduce our analysis to nine cases.  The shorter the length of the shape the more complicated the deformation ring is. In seven cases (length $\geq 2$), the deformation ring admits a simple description (see Table \ref{table:withmon}).  The remaining two cases require separate analysis undertaken in \S \ref{sec:badcases}.  Our strategy is as follows:

\begin{enumerate} 
\item Classify all Kisin modules of shape $\widetilde{w} \in \mathrm{Adm}(2,1,0)$ over $\overline{\mathbb{F}}_p$ (Theorem \ref{thm:classification});
\item For $\overline{\fM}$ of shape $\widetilde{w}$, construct the universal deformation space with height conditions (Theorem \ref{univfh}); 
\item Impose monodromy condition on the universal family (\S 5).  
\end{enumerate}
 
Steps (1) and (2) generalize techniques of \cite{breuil-buzzati, CDM1, EGS} used to compute tamely Barsotti-Tate deformation rings for $\GL_2$. Step (2) amounts to constructing local coordinates for the Pappas-Zhu local model for $(\GL_3, \mu = (2,1,0)$, Iwahori level) (cf. \cite{CL}) and requires a more systematic approach to the $p$-adic convergence algorithm employed by \cite{breuil-buzzati, CDM1, LM}. 
 
Step (3) requires a genuinely new method not present in the tamely Barsotti-Tate case where the link between moduli of finite flat groups schemes and Galois representations is stronger.  Kisin \cite{KisinFcrys} characterized when a torsion-free Kisin module $\fM$ over $\Zp$ comes from a crystalline representation in terms of the poles of a monodromy operator $N_{\fM^{\rig}}$ which is naturally defined on the extension $\fM^{\rig}$ of  $\fM$ to the rigid analytic unit ball.  This condition on the poles of the monodromy operator is a subtle analogue of Griffiths transversality in $p$-adic Hodge theory. 
While one cannot compute $N_{\fM^{\rig}}$ completely, it is possible to give an explicit approximation using the genericity condition on $\tau$. The \emph{error term} turns out to be good enough to understand the geometry of the deformation rings. 

\vspace{2mm}

\paragraph{\emph{\textbf{Global applications.}}}

Using Kisin-Taylor-Wiles patching methods, the local information on the Galois deformation spaces leads to new modularity results and the Serre weight conjectures.  To state these results, we fix a global setup (cf. \S \ref{subsec:glsetup}) and remark that the weight part of Serre's conjecture is expected to be independent of the global setup.  Our proofs only use the existence of \emph{patching functors} in the sense of \cite{EGS,GHS} verifying certain axioms  (Definition \ref{minimalpatching}) and so our results should hold in other situations as well.   

Let $F/\Q$ be a CM field with totally real subfield $F^+$.  Assume that $p$ splits completely in $F$.  Let $\rbar:\Gal(\overline{F}/F) \ra \GL_3(\overline{\F})$ be a continuous irreducible representation.   Let $G$ be a unitary group over $F^+$ which is isomorphic to $U(3)$ at each infinite place and split above $p$. Attached to this data, there is a well known notion of modularity for $\rbar$ (cf. Definition \ref{definition modularity}). Roughly speaking, we can find a prime-to-$p$ level $U^p$ in the finite ad\`elic points $G(\A_F^{\infty, p})$ of $G$ and a maximal ideal $\mathfrak{m}_{\rbar}$ in the Hecke algebra $\T$ acting on the space of mod $p$ algebraic automorphic forms $S(U^p,\F)$ with infinite level at $p$, such that $S(U^p,\F)_{\mathfrak{m}_{\rbar}}\neq 0$. 

Rather than specify a minimal weight for which $\rbar$ is modular, it is natural to consider local systems attached to 
irreducible mod $p$-representations of $G(\cO_{F^+_p})$ (also called \emph{Serre weights}) on the locally symmetric spaces of $G$. We let $W(\rbar)$ be the set of modular weights (i.e., the set of Serre weights $V$ for which $\Hom_{G(\cO_{F^+_p})}(V,S(U^p,\F)_{\mathfrak{m}_{\rbar}})\neq 0$). We define what it means for Serre weights to be \emph{reachable} (cf. Definition \ref{definition genericity}, which is an explicit condition on the highest weight) and write $W_{elim}(\rbar)$ for the set of reachable modular weights. (The notion of reachable weight is due to the current weight elimination result when $\rbar$ is irreducible at a place above $p$, cf. \cite{EGH}, Theorem 5.2.5.)

If $\rbar$ is semisimple at each place above $p$, then there is a set of conjectural weights $W^{?}(\rbar)$ defined in \cite{herzig-duke, GHS} which only depends on the restriction of $\rbar$ to the inertia subgroups at the primes above $p$. 

\begin{thm}
\label{thm-intro2}
Let $\rbar:G_F\rightarrow \GL_3(\F)$ be a continuous Galois representation, verifying the Taylor-Wiles conditions $($cf. Definition $\ref{TWconditions})$. Assume that $\rbar|_{G_{F_{v}}}$ is semisimple and $8$-generic for all $v \mid p$ $($cf. Definition $\ref{defn n-gen})$, that $\rbar$ is automorphic of some reachable Serre weight, and that $\rbar$ has split ramification outside $p$. Then 
$$
W_{\mathrm{elim}}(\rbar) = W^{?}(\rbar).
$$
\end{thm}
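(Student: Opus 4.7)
The proof plan proceeds via the Kisin-Taylor-Wiles patching formalism, reducing the global statement to a local question controlled by the explicit deformation ring description of Theorem \ref{thm1Intro}. First, using the global setup of \S\ref{subsec:glsetup} (which I am assuming produces a patching functor in the sense of Definition \ref{minimalpatching}), one attaches to $\rbar$ a nonzero $M_\infty$ on the category of $\cO$-lattices in locally algebraic types of $\prod_{v \mid p} \GL_3(\cO_{F^+_v}) = \prod_{v \mid p} \GL_3(\Zp)$. The Taylor--Wiles hypothesis, together with the hypothesis that $\rbar$ is automorphic of some reachable Serre weight, ensures that $M_\infty$ is nonzero and that its action factors through the global patched deformation ring $R_\infty$, whose local factors at places above $p$ are (up to formally smooth extensions) the rings $R^{(2,1,0),\tau}_{\rhobar_v}$.

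The statement $W_{\mathrm{elim}}(\rbar) = W^?(\rbar)$ then splits into two inclusions. For weight elimination, $W_{\mathrm{elim}}(\rbar) \subseteq W^?(\rbar)$: given a reachable modular weight $V = \bigotimes_{v \mid p} V_v$, existence of automorphic lifts guarantees, for each $v \mid p$ and each tame type $\tau_v$ with $V_v \in \JH(\overline{\sigma(\tau_v)})$, a potentially crystalline lift of $\rhobar_v$ of type $\tau_v$ and Hodge--Tate weights $(2,1,0)$, i.e.\ a closed point of $R^{(2,1,0),\tau_v}_{\rhobar_v}$. Theorem \ref{thm1Intro} controls exactly which pairs $(\rhobar_v, \tau_v)$ yield nonzero rings (via the shape combinatorics and the admissibility criterion), and comparing the resulting combinatorial list against the explicit definition of $W^?(\rhobar_v)$ from \cite{herzig-duke, GHS} forces $V_v \in W^?(\rhobar_v)$ at each place. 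The $8$-genericity hypothesis is precisely what guarantees that the shape analysis applies and that the elimination criteria sweep out all weights outside $W^?$.

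For the reverse inclusion $W^?(\rbar) \subseteq W_{\mathrm{elim}}(\rbar)$, I would use the geometric Breuil--M\'ezard statement in Theorem \ref{thm1Intro} combined with a weight-cycling argument. Fix $V \in W^?(\rbar)$ and choose, at each $v \mid p$, a tame type $\tau_v$ such that $V_v$ occurs in $\overline{\sigma(\tau_v)}$ with $R^{(2,1,0),\tau_v}_{\rhobar_v} \neq 0$; set $\sigma(\tau) = \bigotimes_v \sigma(\tau_v)$. The patching axioms imply that $M_\infty(\sigma(\tau))$ is a maximal Cohen--Macaulay module over $R_\infty(\tau)$, which by Theorem \ref{thm1Intro} has connected generic fiber; hence $M_\infty(\sigma(\tau))$ is supported on all of $\Spec R_\infty(\tau)$ once it is nonzero, and nonzeroness follows from the known modularity in some reachable weight via a path through compatible types. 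Passing to the special fiber via the Breuil--M\'ezard multiplicities furnished by Theorem \ref{thm1Intro}, the multiplicity of $V$ in the patched module is forced to be positive, yielding $V \in W(\rbar)$ and hence $V \in W_{\mathrm{elim}}(\rbar)$.

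The main obstacle is matching the explicit geometric Breuil--M\'ezard multiplicities coming from the shape/shadow combinatorics of Table \ref{table:withmon} and \S\ref{sec:badcases} with the set-theoretic description of $W^?(\rhobar_v)$. In practice this requires a careful bookkeeping of Jordan--H\"older constituents of $\overline{\sigma(\tau)}$ across all generic tame types $\tau$, coupled with the recognition that the shape of $(\rhobar_v, \tau_v)$ computes exactly which constituents survive in the special fiber of the deformation ring. The two low-length shapes are the delicate cases, since there the deformation ring is no longer a complete intersection with a uniform presentation, and the verification of the Breuil--M\'ezard equality must be done by the ad hoc geometric analysis of \S\ref{sec:badcases}; the weight cycling argument must also be executed so as to avoid any gap where only one of the two directions of the Breuil--M\'ezard inequality is known.
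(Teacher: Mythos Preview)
Your overall framework (patching functor plus local deformation ring input) is correct, but there are two substantive departures from the paper's actual argument, one of which is a genuine gap.

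\textbf{Weight elimination is not proved here.} The inclusion $W_{\mathrm{elim}}(\rbar)\subset W^?(\rbar)$ is not deduced from the deformation ring computations of this paper; it is quoted as known from \cite{EGH, HLM, MP} (see the paragraph after the statement of the theorem in the introduction, and Remark~\ref{rem:WE}). Your proposed mechanism---``Theorem~\ref{thm1Intro} controls exactly which pairs $(\rhobar_v,\tau_v)$ yield nonzero rings''---is not what the theorem asserts: Theorem~\ref{thm1Intro} describes the ring \emph{given} that it is nonzero, but does not supply a criterion for nonvanishing. The elimination results in the cited references use direct integral $p$-adic Hodge theory constraints on $\rhobar|_{I_{\Qp}}$, not the shape/monodromy analysis developed here.

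\textbf{The bad shapes are not used, and the inductive scheme is different.} You write that the length $\leq 1$ shapes are ``the delicate cases'' requiring the ad hoc analysis of \S\ref{sec:badcases}. In fact the paper is explicit (end of \S\ref{sbsec:ModLift}) that the proof of Theorem~\ref{thm:serreweightconj} uses only types for which $\mathbf{w}(\rhobar,\tau)$ has length $\geq 2$, and makes no use of \S\ref{sec:badcases}; rather, \S\ref{sec:badcases} uses Theorem~\ref{thm:serreweightconj} as input. The existence direction is not ``weight cycling'' (that is the technique of \cite{EGH} for irreducible $\rhobar$). Instead, the paper proves the combinatorial Proposition~\ref{prop:intersec}, which for each weight in $W^?(\rhobar_v)$ manufactures a generic tame type $\tau_v$ with prescribed shape (of length $4$, $3$, or $2$) and with $\JH(\overline{\sigma}(\tau_v))\cap W^?(\rhobar_v)$ of controlled size ($1$, $2$, or $4$). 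One then reads off $e(\overline{R}_\infty(\tau))$ from Table~\ref{table:withmon}, uses connectedness of the generic fiber and Proposition~\ref{upperbound} to get $e(M_\infty(\overline{\sigma}(\tau)))=e(\overline{R}_\infty(\tau))$, and runs an induction on the number of places where $V_v$ is a shadow weight to force $e(M_\infty(\otimes_v V_v))=1$. A separate reduction (again via Proposition~\ref{prop:intersec}(3),(4)) shows that modularity of any reachable weight implies modularity of a lower-alcove obvious weight, which seeds the induction. Your sketch omits this combinatorial backbone and the multiplicity bookkeeping that replaces cycling.
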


When $\rbar$ is irreducible at each prime above $p$, this is proven in \cite{EGH} using the technique of weight cycling and without any Taylor-Wiles conditions. The inclusion $W_{elim}(\rbar) \subset W^{?}(\rbar)$ (weight elimination) is proven in \cite{EGH, HLM, MP}.  Recent improvements in weight elimination results show that $W_{elim}(\rbar)$ can be replaced by $W(\rbar)$ and `automorphic of some reachable Serre weight' with just `automorphic', see Remark \ref{rem:WE} for a precise discussion.

If $[F^+:\Q] =d$, there are $9^d$ conjectural weights appearing in $W^{?}(\rbar)$, $6^d$ of which are called \emph{obvious} weights since they are directly related to the Hodge-Tate weights of ``obvious'' crystalline lifts of $(\rbar|_{G_{F_v}})_{v \mid p}$.  The precise relation between Serre weights of $\rbar$ and Hodge-Tate weights of crystalline lifts of $\rhobar$ was first made precise in \cite{gee-annalen} and the obvious weights are shown to be modular in \cite{BLGG} using global methods (namely, modularity lifting techniques) under the assumption that $\rbar$ is modular of a lower alcove weight.   

The remaining weights in $W^{?}(\rbar)$ are more mysterious and are referred to as \emph{shadow weights}. The modularity of the shadow weights lies deeper than that of the obvious weights, in part, because modularity of a shadow weight cannot be detected by modularity lifting alone but requires characteristic $p$ information.  It is at this point that the computation of the monodromy operator appears to play a critical role.  The proof of Theorem \ref{thm-intro2} builds on the Breuil-M\'ezard philosophy introduced in \cite{gee-kisin}. The patching techniques of Gee-Kisin \cite{gee-kisin} and Emerton-Gee \cite{EG} connect the geometry of the local deformation rings to modularity questions. We use geometric information about the local deformation rings, especially the geometry of their special fibers, to prove the modularity of the shadow weights. 

Theorem \ref{thm-intro2} is stated only for $\rbar$ which are semisimple above $p$ because those are the only representations for which there is an explicit conjecture.  Our computations, together with work of \cite{HLM, MP}, suggest a set $W^{?}(\rhobar)$ for non-semisimple $\rhobar$ for which the Theorem should hold. We give one example in Proposition \ref{Prop-non-ss} and will return to this question in future work.  We also give counterexamples in Proposition \ref{geecounter} to Conjecture 4.3.2 (for some non-semisimple $\rhobar$) of \cite{gee-annalen}, which predicts Serre weights in terms of the existence of crystalline lifts.  

The information on the deformation rings (Theorem \ref{thm1Intro}), namely the connectedness of their generic fiber,  lets us deduce new modularity lifting theorems.
\begin{thm}
\label{thm-intro3}
Let $r: G_F\rightarrow \GL_3(\cO)$ be a Galois representation and write $\rbar: G_F\rightarrow \GL_3(\F)$ for its associated residual representation.

Assume that:
\begin{enumerate}
	\item $p$ splits completely in $F^+$;
	\item $r$ is unramified almost everywhere and satisfies $r^c\cong r^{\vee} \epsilon^{-2}$ $($where $c$ denotes the complex conjugation on $F/F^+$$)$;
	\item for all places $w\in \Sigma_p$, the representation $r|_{G_{F_w}}$ is potentially crystalline, with parallel Hodge type $(2,1,0)$ and with strongly generic tame inertial type $\tau_{\Sigma_p^+}=\otimes_{v\in \Sigma_p^+}\tau_v$ $($cf. Definition $\ref{gencond})$;
	\item $\rbar$ verifies the Taylor-Wiles conditions $($cf. Definition $\ref{TWconditions}$, in particular $\rbar$ is absolutely irreducible$)$ and $\rbar$ has split ramification;
	\item $\rbar\cong \rbar_{\imath}(\pi)$ for a RACSDC representation $\pi$ of $\GL_3(\bA_{F})$ with trivial infinitesimal character and such that $\otimes_{v\in \Sigma_p^+}\sigma(\tau_v)$ is a $K$-type for $\otimes_{v\in \Sigma_p^+}\pi_v$.
\end{enumerate}
Then $r$ is automorphic.
\end{thm}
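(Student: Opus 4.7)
The plan is to deduce Theorem \ref{thm-intro3} from the local result Theorem \ref{thm1Intro} via the Kisin--Taylor--Wiles patching machinery in the form developed in \cite{gee-kisin,EGS}, and abstracted to a patching functor formalism in \cite{EGS,GHS}. First I would build a patching functor $M_\infty$ attached to the automorphic forms on the definite unitary group $G$ introduced in \S\ref{subsec:glsetup}: applied to the tame type $\sigma(\tau) \defeq \otimes_{v \in \Sigma_p^+} \sigma(\tau_v)$, this yields a finite module $M_\infty(\sigma(\tau))$ over the (completed tensor product of) framed potentially crystalline deformation rings $R^{(2,1,0), \tau}_\infty \defeq \widehat{\bigotimes}_{v \in \Sigma_p^+} R^{(2,1,0), \tau_v}_{\rbar|_{G_{F_v}}}\,\widehat{\otimes}\,\cO[[x_1,\dots,x_s]]$ (with formally smooth auxiliary variables recording the Taylor--Wiles framing data). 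Conditions (2), (4) ensure the existence of the Taylor--Wiles primes needed to carry out patching and produce $M_\infty(\sigma(\tau))$ as a maximal Cohen--Macaulay $R^{(2,1,0), \tau}_\infty$-module.

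Next, hypothesis (3) lets us evaluate at the single type $\sigma(\tau)$: the Galois representation $r$ determines a closed point $x_r \in \Spec R^{(2,1,0), \tau}_\infty[1/p]$, and it suffices to show $x_r$ lies in the support of $M_\infty(\sigma(\tau))$. That this support is \emph{nonempty} is exactly the content of hypothesis (5): the RACSDC representation $\pi$, with the prescribed $K$-type at places above $p$, contributes a nonzero vector to the relevant space of algebraic automorphic forms, and hence $M_\infty(\sigma(\tau)) \neq 0$.

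The crucial local input is Theorem \ref{thm1Intro}: for each $v \in \Sigma_p^+$, strong genericity of $\tau_v$ forces the generic fiber of $R^{(2,1,0), \tau_v}_{\rbar|_{G_{F_v}}}$ to be connected. Taking completed tensor products and adjoining formally smooth variables preserves this property, so $\Spec R^{(2,1,0), \tau}_\infty[1/p]$ is connected (and equidimensional, as each factor is). Combining the three facts now is standard: a maximal Cohen--Macaulay module on an equidimensional Noetherian scheme has pure-dimensional support equal to a union of irreducible components, and on a \emph{connected} equidimensional regular-in-codimension-zero base, a nonzero such module must be supported on every component. Therefore $M_\infty(\sigma(\tau))$ has full support on $\Spec R^{(2,1,0), \tau}_\infty[1/p]$, in particular $x_r$ belongs to its support, and the standard argument of \cite{gee-kisin} translates this into the automorphy of $r$.

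The main obstacle for this deduction is \emph{not} the patching argument itself, which follows the now-standard template once $M_\infty$ is constructed; rather, the entire difficulty has been packaged into the connectedness of the generic fiber of $R^{(2,1,0), \tau}_{\rbar|_{G_{F_v}}}$. That is precisely the nontrivial local statement of Theorem \ref{thm1Intro}, whose proof requires the explicit presentations of Table \ref{table:withmon} (and the separate analysis of short-length shapes in \S\ref{sec:badcases}) produced from the monodromy computation on Kisin modules. A minor technical point to verify is the compatibility of strong genericity with the Taylor--Wiles level raising/lowering so that the patching functor axioms of \cite{EGS,GHS} (Definition \ref{minimalpatching}) are preserved; this is handled by choosing auxiliary primes away from the generic locus.
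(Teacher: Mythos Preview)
Your proposal is correct and follows essentially the same approach as the paper: the paper reduces Theorem~\ref{thm-intro3} (restated as Theorem~\ref{thm:modularity}) to Theorem~\ref{thm:conngenfib} (connectedness of the generic fiber of $R^{(2,1,0),\tau}_{\rhobar}$) via ``standard Kisin--Taylor--Wiles patching methods,'' and you have spelled out exactly that deduction through the patching functor $M_\infty$, the maximal Cohen--Macaulay property, and the regularity of $R_\infty(\tau)[1/p]$. One small inaccuracy: your final paragraph's concern about ``compatibility of strong genericity with Taylor--Wiles level raising/lowering'' is misplaced, since the Taylor--Wiles auxiliary primes are chosen away from $p$ and do not interact with the types $\tau_v$ at places $v\mid p$ at all.
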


In Theorem \ref{thm-intro3}, we do not assume that $\rhobar$ is semisimple nor do we make any potential diagonalizability assumptions.  We also allow any tame type not just principal series types. Assumption (1) can be relaxed to the condition that $p$ is unramified in $F^+$.  This requires new representation theoretic techniques which will be discussed in a companion paper (\cite{LLLM2}).  We also believe that the genericity assumptions on the type $\tau$ can be weakened with more work.


Our results shed light on other questions in mod $p$ and $p$-adic Langlands as well.  
For example, Breuil \cite{breuil-buzzati} formulates a conjecture, based on calculations of tamely Barsotti-Tate deformation rings for $\GL_2$, on integral lattices in tame types cut out by completed cohomology. When a tame principal series representation $\pi$ of $G(F^+_p)$ appears in the $\mathfrak{m}_{\rbar}$-part (for a globalization of $\rhobar$) of the completed cohomology of an appropriate Shimura curve, the natural integral structure on the cohomology induces an integral structure on the type associated to $\pi$.  Breuil conjectures that this lattice only depends on the local $p$-adic Galois representation attached to $\pi$ by the hypothetical $p$-adic local Langlands correspondence.  A related conjecture of Demb\'el\'e in \cite{breuil-buzzati}
is a ``multiplicity one'' statement for cohomology at Iwahori level.
Both conjectures for $K/\Qp$ unramified were proven by Emerton-Gee-Savitt \cite{EGS} using the Taylor-Wiles method and geometric Breuil-M\'ezard realized by explicit presentations of tamely Barsotti-Tate deformation rings.  
 
The first author proved a generalization of Breuil's conjecture to $\GL_3$ in the setting of \cite{EGH}.  In a companion work (\cite{LLLM2}), we address Breuil's conjecture for $\GL_3$ and $K/\Qp$ unramified using the methods developed in this paper.  We will also extend the global results of this paper to the case of $K/\Qp$ unramified. For ease of exposition, we restrict ourselves to the case of $\GL_3$ throughout this paper, although \S 2-4, some of \S 5, and \S 6 could be extended to $\GL_n$ without serious difficulty.   

\paragraph{\emph{\textbf{Overview of the paper.}}}
We start in Section \S \ref{KM with dd} with the basic formalism of Kisin modules with tame descent data (notion of eigenbasis, genericity and the basic formulas of semilinear algebra). This is further pursued in \S \ref{subsec:classif}, where we obtain a complete classification of Kisin modules with generic descent data in terms of shapes (cf. Definition \ref{defn:shape} and Theorem \ref{thm:classification}).
Section \ref{Kisintangent} compares the moduli of Kisin modules with generic tame descent data and Galois deformation spaces. The genericity assumption guarantees the triviality of the Kisin variety (Theorem \ref{Kisinvariety}) and injectivity on tangent spaces. We conclude the section with the notion of shape and genericity for a mod $p$ Galois representation (cf. Definitions  \ref{rhobarshape}, \ref{defn n-gen}) together with a Galois cohomology argument which shows that, under a mild assumption on $\rhobar$, restriction to Galois deformations of some deeply ramified extension is fully faithful on tangent spaces.

Section \ref{sec:algorithm} and \ref{sec:Monodromy and PCDR} are the technical heart of the paper. 
In \S \ref{sec:algorithm}, we develop an algorithm to construct a ``universal family'' of Kisin modules of finite height, lifting a residual Kisin module of a given shape (Theorem \ref{thm:classification1}). The strategy is a wide generalization of the methods already appearing in \cite{breuil-buzzati, CDM, LM}.
An algorithm is described to construct a \emph{gauge basis} on the universal family in \S \ref{subsec:algorithm} on which we then impose the $p$-adic Hodge type conditions (cf. Table \ref{table:lifts} and Theorem \ref{univfh})

In \S \ref{sec:Monodromy and PCDR}, we endow the rigid analytification of the universal family of Kisin modules with a canonical monodromy operator, which we determine up to an error term which is divisible by a power of $p$ (Theorem \ref{thm:monodr}); by imposing the monodromy to have logarithmic poles (Proposition \ref{Mcond}), we finally obtain explicit equations for the moduli of Kisin modules with monodromy (Table \ref{table3}), and hence for the Galois deformation ring (\S \ref{subsection:PCDR}, Corollary \ref{dring}, and Table \ref{table:withmon}).

Section \ref{sec:BC} extends the results of \S \ref{sec:Monodromy and PCDR} to  other tame types (cf. Theorem \ref{thm:dringBC}). We generalize the formalism of ``base change'' for deformation rings as developed in \cite{EGS} in dimension 2.

The main global applications are discussed in \S \ref{sec:appl}. Via Kisin-Taylor-Wiles patching and the formalism of patched functors (as introduced in \cite{EGS}), we prove the main theorems discussed above. Section \ref{sec:badcases} is devoted to the analysis of the monodromy condition when the shape has length $\leq1$ where the computations become more involved. In the Appendix, we collect tables summarizing our results.

\emph{Acknowledgments:} We would like to thank Matthew Emerton, Toby Gee, and Florian Herzig for many helpful conversations and for comments on an earlier draft of the paper.   We thank the referee for the very careful reading and many suggestions on an earlier version which improved the exposition and clarity of the article.

\subsection{Notation} \label{subsection:Notation}


If $F$ is any field, we write $G_F\defeq \Gal(\overline{F}/F)$ for the absolute Galois group, where $\overline{F}$ is a separable closure of $F$. If $F$ is moreover a $p$-adic field, we write $I_F$ to denote the inertia subgroup of $G_F$.

We fix once and for all an algebraic closure $\overline{\Q}$ of $\Q$. All number fields are considered as subfields of our fixed $\overline{\Q}$. Similarly, if $\ell\in \Q$ is a prime, we fix algebraic closures $\overline{\Q}_\ell$ as well as embeddings $\overline{\Q}\iarrow\overline{\Q}_{\ell}$. All finite extensions of $\Q_{\ell}$ are considered as subfields of $\overline{\Q}_{\ell}$. Moreover, the residue field of $\overline{\Q}_{\ell}$ is denoted by $\overline{\F}_\ell$.

Let $p>3$ be a prime. 
For $f>0$, we let $K$ be the unramified extension of $\Qp$ of degree $f$. We write $k$ for its residue field and let $W = W(k)$. We set $e\defeq p^f-1$ and consider the Eisenstein polynomials $E(u)\defeq u^e+p\in K[u]$ and $P(v)\defeq v+p\in K[v]$ where $v= u^e$. We fix a root $\pi \defeq (-p)^{\frac{1}{e}}\in \overline{K}$, define the extension $L = K(\pi)$ and set $\Delta\defeq \Gal(L/K)$. 
The choice of the root $\pi$ lets us define a character
\begin{align*}
\teich{\omega}_{\pi}:\Delta&\rightarrow W\s\\
g&\mapsto \frac{g(\pi)}{\pi}
\end{align*}
whose associated residual character is denoted by $\omega_{\pi}$. In particular, for $f=1$, $\omega_{\pi}$ is the mod $p$ cyclotomic character, which will be simply denoted by $\omega$. The $p$-adic cyclotomic character will be denoted by $\varepsilon: G_{\Qp}\ra\Zp\s$. 
If $F_w/\Qp$ is a finite extension and $W_{F_w}\leq G_{F_w}$ denotes the Weil group we normalize Artin's reciprocity map $\mathrm{Art}_{F_w}: F_w\s\ra W_{F_w}^{\mathrm{ab}}$ in such a way that uniformizers are sent to geometric Frobenius elements.

Let $E$ be a finite extension of $\Qp$. We write $\cO$ for its ring of integers, fix an uniformizer $\varpi\in \cO$ and let $\mathfrak{m}_E=(\varpi)$. We write $\F\defeq \cO/\mathfrak{m}_E$ for its residue field. We will always assume that $E$ is sufficiently large, i.e. that any embedding $\sigma: K\iarrow \overline{\Q}_p$ factors through $E\subset \overline{\Q}_p$. In particular, any embedding $\sigma: k\iarrow \overline{\F}_p$ factors through $\F$. 

Let $\rho: G_K\rightarrow \GL_n(E)$ be a $p$-adic, de Rham Galois representation. For $\sigma: K\iarrow \overline{\Q}_p$, we define $\mathrm{HT}_\sigma(\rho)$ to be the multiset of $\sigma$-labeled Hodge-Tate weights of $\rho$, i.e. the set of integers $-i$ such that $\dim_E\big(\rho\otimes_{\sigma,K}\C_p(i)\big)^{G_K}\neq 0$ (with the usual notation for Tate twists).  
In particular, we have $\mathrm{HT}_\sigma(\varepsilon)=\{1\}$ for any $\sigma$. 
We define the \emph{Hodge type} of $\rho$ to be the multiset 
$\big(\mathrm{HT}_{\sigma}(\rho)\big)_{\sigma\in S_K}\in \big(\Z^n\big)^{S_K}$ where $S_K\defeq \big\{\sigma\mid\ \sigma: K\iarrow \Qpbar\big\}$ and the \emph{inertial type} of $\rho$ as the isomorphism class of $\mathrm{WD}(\rho)|_{I_K}$, where $\mathrm{WD}(\rho)$ is the Weil-Deligne representation attached to $\rho$ as in \cite{CDT}, Appendix B.1 (in particular, $\rho\mapsto\mathrm{WD}(\rho)$ is \emph{covariant}).
Recall that an inertial type is a morphism $\tau: I_K\ra \GL_n(\cO)$ with open kernel and which extends to the Weil group $W_K$ of $G_K$.

We fix an embedding $\sigma_0:K \subset E$, which induces maps $W \iarrow \cO$ and $k \iarrow \F$; by an abuse of notation, we denote all of these by $\sigma_0$. We let $\phz$ denote the $p$-th power Frobenius on $k$ and set $\sigma_j \defeq \sigma_0 \circ \phz^{-j}$. The choice of the embedding $\sigma_0$ gives a fundamental character $\omega_f := \sigma_0 \circ \widetilde{\omega}_{\pi}:I_K \ra \cO^{\times}$ of niveau $f$.   


Let $S_3$ denote the symmetric group on $\{1,2,3\}$. We fix an injection $S_3\hookrightarrow \GL_3(\Z)$ sending $s$ to the permutation matrix whose $(k, m)$-entry is $\delta_{k,s(m)}$ and $\delta_{k,s(m)}\in\{0,1\}$ is the Kronecker $\delta$ specialized at $\{k,s(m)\}$. We will abuse notation and simply use $s$ to denote the corresponding permutation matrix. 
Finally for $m\geq 0$ and a collection $(B_j)_{j=0,\dots,m}$ of square matrices of the same size, we write $\prod_{j=0}^m B_j=B_0\cdot B_1\dots B_{m}$.

\section{Kisin modules modulo $p$}

\subsection{Kisin modules with descent datum}
\label{KM with dd}

Let $\tau = \eta_1 \oplus \eta_2 \oplus \eta_3$ be an $\cO\s$-valued inertial type consisting of pairwise distinct characters. 

Let $\mathbf{a}_1 = (a_{1,j})_{j }$, $\mathbf{a}_2 = (a_{2, j})_{j }$, and $\mathbf{a}_3 = (a_{3, j})_{j }$ where $0\leq j\leq f-1$ and $0 \leq a_{k, j} \leq p-1$.    For any $0\leq j\leq f-1$, define
$$
\mathbf{a}^{(j)}_k = \sum_{i =0}^{f-1} a_{k, -j + i} p^i.
$$
For $1 \leq k \leq 3$, we can write 
$$
\eta_k = (\omega_f)^{-\mathbf{a}_k^{(0)}}
$$
for a unique choice of $\mathbf{a}_k$. We say that $(\mathbf{a}_1,\mathbf{a}_2,\mathbf{a}_3)$ is \emph{associated to $\tau$}.
We will need the following genericity assumption throughout the paper. 

\begin{defn} \label{gencond}  
Let $n\in \N$. We say that the triple $(\mathbf{a}_1,\mathbf{a}_2,\mathbf{a}_3)$ is \emph{$n$-generic} if 
\begin{eqnarray}
\label{labelgencond}
n \leq |a_{1, j} - a_{2, j}|,\,|a_{2, j} - a_{3, j}|,\,|a_{1, j} - a_{3, j}| \leq p-1-n
\end{eqnarray}
for all $j$. We say that an inertial type $\tau=\eta_1 \oplus \eta_2 \oplus \eta_3$ is \emph{$n$-generic} if the associated triple $(\mathbf{a}_1,\mathbf{a}_2,\mathbf{a}_3)$ is generic. 
We say that $\tau$ is \emph{generic} (resp. \emph{weakly generic}, resp. \emph{strongly generic}) if $\tau$ is $5$-generic (resp. $3$-generic, resp. $10$-generic).
\end{defn}
 
Let $R$ be an $\cO$-algebra. Any $W \otimes_{\Zp} R$-module $M$ decomposes as direct sum of $R$-modules $M = \bigoplus_{j=0}^{f-1} M^{(j)}$ where $M^{(j)}$ is the submodule such that $(x \otimes 1) m = (1 \otimes \sigma_j(x)) m$ for all $m \in M^{(j)}$ and $x \in W$. 

For any $g \in \Delta$ and any $\cO$-algebra $R$, we let $\widehat{g}$ be the $W \otimes_{\Zp} R$-linear automorphism of $(W \otimes_{\Zp} R)[\![u]\!]$ given by $u \mapsto  (\omega_\pi(g) \otimes 1)u$. 

\begin{defn} Let $\fM_R$ be an $(W \otimes_{\Zp} R)[\![u]\!]$-module. A \emph{semilinear action} of $\Delta$ on $\fM_R$ is collection of $\widehat{g}$-semilinear additive bijections $\widehat{g}:\fM_R \ra \fM_R$  for each $g \in \Delta$ such that
$$
\widehat{g} \circ \widehat{h} = \widehat{gh}
$$
for all $g, h \in \Delta$. 
\end{defn}    

Recall that for any $\cO$-algebra $R$, we define the Frobenius $\phz: (W \otimes_{\Zp} R)[\![u]\!] \ra (W \otimes_{\Zp} R)[\![u]\!]$ to be trivial on $R$, the Frobenius on $W$, and with $\phz(u) = u^p$.  Note that $\widehat{g}\circ \phz=\phz\circ \widehat{g}$.

\begin{defn} Let $R$ be any $\cO$-algebra.  A \emph{Kisin module} with height in $[0, h]$ over $R$ is a finitely generated projective $(W \otimes R)[\![u]\!]$-module $\fM_R$ together with Frobenius $\phi_{\fM_R}:\phz^*(\fM_R) \ra \fM_R$ such that the cokernel is killed by $E(u)^h$. 
Here and throughout the paper the notation $\phz^*(\fM_R)$ stands for the base change of $\fM_R$ along  $\phz: (W \otimes_{\Zp} R)[\![u]\!] \ra (W \otimes_{\Zp} R)[\![u]\!]$.
\end{defn} 

\begin{defn}  A \emph{Kisin module with descent datum} over $R$ is a Kisin module $(\fM_R, \phi_{\fM_R})$ together with a semilinear action of $\Delta$ given by $\{ \widehat{g} \}_{g \in \Delta}$ which commutes with $\phi_{\fM_R}$, i.e., for all $g \in \Delta$,
$$
\widehat{g} \circ \phi_{\fM_R} = \phi_{\fM_R} \circ \phz^*(\widehat{g}).
$$
Let $\fM_R \cong \bigoplus_{j = 0}^{f-1} \fM^{(j)}_R$.  We say that the descent datum is of \emph{type} $\tau$ if the linear representation of $\Delta$ on the $R$-module satisfies $\fM_R^{(j)}/ u \fM_R^{(j)} \cong \tau \otimes_{\cO} R$ for each $0\leq j\leq f-1$.  
\end{defn}

For any $\cO$-algebra $R$, let $Y^{[0,h], \tau} (R)$ be the category of Kisin modules over $R$ with height in $[0,h]$, rank 3, and descent datum of type $\tau$. For a given $N\in\N$, it is shown in \cite{CL} that $Y^{[0,h], \tau}  \mod (\mathfrak{m}_E)^N$ is represented by an Artin stack of finite type over $\cO/(\mathfrak{m}_E)^N$. The aim of this section is to classify the $\overline{\F}$-points of  $Y^{[0,2], \tau}$ which are reductions of Kisin modules with ``Hodge-Tate'' weights $(2,1,0)$.  

\begin{defn} Let $v \defeq u^e$ and $\fM_R \in Y^{[0,2], \tau} (R)$. 
For $k\in\{1,2,3\}$, define $\fM_{R, k}$ to be the $(W \otimes R)[\![v]\!]$-submodule of $\fM_R$ on which $\Delta$ acts by $\eta_k$, i.e., $\fM_{R, k}\defeq \big(\fM_R\big)^{\Delta=\eta_k}$.   
Similarly, we  define $^{\phz} \fM_{R, k}$ to be the $(W \otimes R)[\![v]\!]$-submodule of $\phz^*(\fM_R)$ on which $\Delta$ acts by $\eta_k$, i.e., $^{\phz} \fM_{R, k}\defeq \big(\phz^*(\fM_R)\big)^{\Delta=\eta_k}$.

By considering the decomposition $\fM_R\cong \bigoplus_{j=0}^{f-1}\fM_R^{(j)}$, we write
$\fM^{(j+1)}_{R, k}$ for the $R[\![v]\!]$-submodules of $\fM^{(j+1)}_R$ on which $\Delta$ acts by $\eta_k$ and
we write $^{\phz} \fM^{(j)}_{R, k}$ for the $R[\![v]\!]$-submodules of $(\phz^*(\fM_R))^{(j+1)} = \phz^*(\fM_R^{(j)})$ on which $\Delta$ acts by $\eta_k$ (with the usual convention that $j+1\defeq 0$ if $j=f-1$).
\end{defn}

While we have made a choice of global ordering $\eta_1, \eta_2, \eta_3$, it will be important for uniform statements to order things (possibly) differently at each embedding $\sigma_j:K \ra E$.  We introduce this local ordering now. 

\begin{defn} \label{orientation} 
Let $(\mathbf{a}_1,\mathbf{a}_2,\mathbf{a}_3)$ be a triple as in Definition \ref{gencond}.
An \emph{orientation} of $(\mathbf{a}_1,\mathbf{a}_2,\mathbf{a}_3)$ is an $f$-tuple $(s_{j})_j \in S_3^f$ such that 
$$
\bf{a}_{s_{j}(1)}^{(j)} \geq  \bf{a}_{s_{j}(2)}^{(j)} \geq  \bf{a}_{s_{j}(3)}^{(j)}.
$$
If $\tau$ is an inertial type as above, an \emph{orientation of $\tau$} is defined to be an orientation of the triple $(\mathbf{a}_1,\mathbf{a}_2,\mathbf{a}_3)$ associated to it. In this case, we say that $s_j$ is an \emph{orientation at $j$} of $\tau$.
\end{defn}

Under the weak genericity condition (\ref{labelgencond}), there exists a unique orientation on $\tau$ and the orientation at $j$  is determined by the values of
$a_{1, f-j -1}$, $a_{2, f-j -1}$, $a_{3, f-j -1}$ which are the dominant terms of $\bf{a}_1^{(j)}$, $\bf{a}_2^{(j)}$, $\bf{a}_3^{(j)}$ respectively.  In particular, we have 
\begin{equation} \label{orientlead}
a_{s_j(1), f - j -1} > a_{s_j(2), f - j -1} > a_{s_j(3), f - j -1}.   
\end{equation}

For any $\fM \in Y^{[0,h], \tau}(R)$, we have the following commutative diagram relating the different isotypic components:
\begin{equation} \label{bigdiagram}
\xymatrix@=3pc{
^{\phz} \fM^{(j-1)}_{s_{j}(3)} \ar[rr]^{u^{ e - (\mathbf{a}_{s_{j}(1)}^{(j)} - \mathbf{a}_{s_{j}(3)}^{(j)})}} \ar[d]^{\phi^{(j-1)}_{\fM, s_{j}(3)}} & & ^{\phz} \fM^{(j-1)}_{s_{j}(1)} \ar[rr]^{u^{\mathbf{a}_{s_{j}(1)}^{(j)} - \mathbf{a}_{s_{j}(2)}^{(j)}} } \ar[d]^{\phi^{(j-1)}_{\fM, s_{j}(1)}} && ^{\phz} \fM^{(j-1)}_{s_{j}(2)} \ar[d]^{\phi^{(j-1)}_{\fM, s_{j}(2)}} \ar[rr]^{u^{\mathbf{a}_{s_{j}(2)}^{(j)} - \mathbf{a}_{s_{j}(3)}^{(j)}}} & & ^{\phz} \fM^{(j-1)}_{s_{j}(3)}  \ar[d]^{\phi^{(j-1)}_{\fM, s_{j}(3)}} \\
\fM^{(j)}_{s_{j}(3)} \ar[rr]^{u^{ e - (\mathbf{a}_{s_{j}(1)}^{(j)} - \mathbf{a}_{s_{j}(3)}^{(j)})}} & & \fM^{(j)}_{s_{j}(1)} \ar[rr]^{u^{\mathbf{a}_{s_{j}(1)}^{(j)} - \mathbf{a}_{s_{j}(2)}^{(j)}}} & & \fM^{(j)}_{s_{j}(2)} \ar[rr]^{u^{\mathbf{a}_{s_{j}(2)}^{(j)} - \mathbf{a}_{s_{j}(3)}^{(j)}}} & & \fM^{(j)}_{s_{j}(3)} \\
}
\end{equation}
where the composition along each row is multiplication by $u^{e}$ and the vertical arrows are induced by $\phi_{\fM}$. All the maps in the diagram are injective (again, with the standard convention that $j-1\defeq f-1$ if $j=0$).  In particular, any one of the three maps $\phi^{(j-1)}_{\fM, 1}, \phi^{(j-1)}_{\fM, 2}, \phi^{(j-1)}_{\fM, 3}$ determines the other two.   We choose to focus on $\phi^{(j - 1)}_{\fM, s_{j}(3)}$.   We discuss in more detail at the end of the section how the Frobenii $\phi^{(j - 1)}_{\fM, s_{j}(k)}$, for $1\leq k\leq 3$, are related (Proposition \ref{cyclicsymmetry}).

\begin{rmk} The submodule $^{\phz} \fM_{k}$ of $\phz^*(\fM)$ is NOT the same as the Frobenius pullback of $\fM_{k}$.  In particular, $\phi_{\fM, k}$ does not define a semilinear endomorphism of $\fM_{k}$.   It is merely a linear map from $^{\phz} \fM_{k} \ra \fM_{k}$.   This fact is reflected again in the change of basis formula (Proposition \ref{changeofbasis}). 
\end{rmk} 

We want to consider $\big(\phi^{(j)}_{\fM, s_{j+1}(3)}\big)_j$ as a collection of matrices with respect to a choice of bases. We will refine the basis further in the next section.   

\begin{defn} Let $\fM \in Y^{[0,2], \tau} (R)$.  An \emph{eigenbasis} $\beta :=\big\{\beta^{(j)}\big\}_j$ for $\fM$ is a collection of bases $\beta^{(j)}= \Big( f_{1}^{(j)}, f_{2}^{(j)}, f_{3}^{(j)} \Big)$ of each $\fM^{(j)}$ 
such that $f_{k}^{(j)} \in \fM_{k}^{(j)}$ for each $k\in\{1,2,3\}$.
\end{defn}


\begin{lemma} \label{eigenbases} If $\beta = \Big\{ \Big(f_1^{(j)}, f_{2}^{(j)}, f_{3}^{(j)}\Big) \Big \}_{j}$ is an eigenbasis for $\fM$, then for any $0\leq j\leq f-1$,
$$
\beta^{(j)}_{s_{j}(3)} := \Big( u^{\mathbf{a}_{s_{j}(1)}^{(j)} - \mathbf{a}_{s_{j}(3)}^{(j)}  } f^{(j)}_{s_{j}(1)}, u^{\mathbf{a}_{s_{j}(2)}^{(j)} - \mathbf{a}_{s_{j}(3)}^{(j)}} f_{s_{j}(2)}^{(j )},  f_{s_{j}(3)}^{(j)} \Big),
$$
is a basis for  $\fM_{s_{j}(3)}^{(j)}$.  Similarly, 
$$
^{\phz} \beta^{(j-1)}_{s_{j}(3)} := \Big(u^{\mathbf{a}_{s_{j}(1)}^{(j)} - \mathbf{a}_{s_{j}(3)}^{(j)}  } \otimes f^{(j-1)}_{s_{j}(1)}, u^{\mathbf{a}_{s_{j}(2)}^{(j)} - \mathbf{a}_{s_{j}(3)}^{(j)}} \otimes f_{s_{j}(2)}^{(j-1)}, 1 \otimes f_{s_{j}(3)}^{(j-1)} \Big)
$$
is a basis for $^{\phz} \fM_{s_{j}(3)}^{(j-1)}$.  
\end{lemma}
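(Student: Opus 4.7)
The plan is to handle both parts of the lemma in parallel: verify that each listed element has $\Delta$-weight $\eta_{s_j(3)}$, then show that the three elements $R[\![v]\!]$-span the entire eigenspace $\fM^{(j)}_{s_j(3)}$ (resp.\ $^{\phz}\fM^{(j-1)}_{s_j(3)}$). For the first step I will compute the $\Delta$-action directly: on the $\sigma_j$-component, the element $u$ carries a specific $\Delta$-character equal to a power of the niveau-$f$ fundamental character $\omega_f$, and the definition of $\mathbf{a}_k^{(j)}$ as a cyclic shift of the base-$p$ digits of $\mathbf{a}_k^{(0)}$ yields the identity $\mathbf{a}_k^{(j)} \equiv p^j \mathbf{a}_k^{(0)} \pmod{e}$; a short character-arithmetic check then gives that $u^{\mathbf{a}_{s_j(i)}^{(j)} - \mathbf{a}_{s_j(3)}^{(j)}} f_{s_j(i)}^{(j)}$ lies in $\fM^{(j)}_{s_j(3)}$. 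The same verification handles the Frobenius pullback statement, once one notes that $\widehat{g}(1\otimes m) = 1\otimes \widehat{g}(m)$, so $1\otimes f_k^{(j-1)}$ is still an $\eta_k$-eigenvector; the $\Delta$-weight of $u$ inside $\phz^*(\fM^{(j-1)}) = (\phz^*\fM)^{(j)}$ is now governed by the target embedding $\sigma_j$, which is precisely why the formulas involve $\mathbf{a}_k^{(j)}$ and not $\mathbf{a}_k^{(j-1)}$.

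For the basis property I will use that $|\Delta| = e = p^f - 1$ is invertible in $\cO$ (hence in $R$) and that $v = u^e$ is $\Delta$-fixed, so the $\Delta$-isotypic decomposition of $\fM^{(j)}$ is a decomposition of $R[\![v]\!]$-modules. Combining the $R[\![u]\!]$-basis $(f_k^{(j)})_k$ of $\fM^{(j)}$ coming from the eigenbasis with the splitting $R[\![u]\!] = \bigoplus_{i=0}^{e-1} R[\![v]\!]u^i$ breaks each summand $R[\![u]\!] f_k^{(j)}$ into $e$ rank-one $R[\![v]\!]$-eigenlines, one for each character of $\Delta$. Intersecting with the $\eta_{s_j(3)}$-isotypic component then extracts exactly one line from each of the three summands, namely $R[\![v]\!] \cdot u^{m_\ell} f_{s_j(\ell)}^{(j)}$ with $m_\ell \in \{0,\dots,e-1\}$ the unique residue of $\mathbf{a}_{s_j(\ell)}^{(j)} - \mathbf{a}_{s_j(3)}^{(j)}$ modulo $e$; the orientation $\mathbf{a}_{s_j(1)}^{(j)} \geq \mathbf{a}_{s_j(2)}^{(j)} \geq \mathbf{a}_{s_j(3)}^{(j)}$ ensures these differences are already in $[0,e-1]$, so no reduction is needed and one recovers precisely the three elements in the statement. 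The Frobenius pullback case is identical, applied to the $R[\![u]\!]$-basis $(1\otimes f_k^{(j-1)})_k$ of $\phz^*(\fM^{(j-1)})$.

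The main difficulty is really just bookkeeping: tracking at once the embedding index $j$, the cyclically shifted exponents $\mathbf{a}_k^{(j)}$, the shift $j\leftrightarrow j-1$ induced by Frobenius pullback, and the orientation permutation $s_j$. Once this indexing is kept straight, the lemma reduces to standard semisimple representation theory for the cyclic group $\Delta$ of order prime to $p$, combined with the elementary decomposition $R[\![u]\!] = \bigoplus_{i=0}^{e-1} R[\![v]\!]u^i$.
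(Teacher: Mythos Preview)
Your proof is correct and complete; the paper itself states this lemma without proof, treating it as a routine verification. The approach you outline---computing the $\Delta$-weight of $u$ on $\fM^{(j)}$ via $\sigma_j$, using the congruence $\mathbf{a}_k^{(j)}\equiv p^j\mathbf{a}_k^{(0)}\pmod{e}$, and then extracting the $\eta_{s_j(3)}$-isotypic line from each summand of $\fM^{(j)}=\bigoplus_k R[\![u]\!]f_k^{(j)}$ via $R[\![u]\!]=\bigoplus_{i=0}^{e-1}R[\![v]\!]u^i$---is exactly the natural argument and is implicit in the paper's use of the diagram~(\ref{bigdiagram}).
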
   

\begin{rmk} We always order an eigenbasis $\beta^{(j)}$ with respect to the ordering on the characters $\eta_1, \eta_2, \eta_3$.  On the other hand, when we work with the isotypic pieces $\fM_{s_{j}(3)}^{(j)}$ we order our bases using the orientation $(s_j)$ of $\tau$.  It will be important to keep track of this difference.     
\end{rmk} 

\begin{defn} \label{0matrix} Given an eigenbasis $\beta$ for $\fM$, the \emph{matrix $C^{(j)}$ of $\phi_{\fM}^{(j)}$ with respect to $\beta^{(j)}$} is defined to be the matrix $C^{(j)}$ such that
\[
\phi_{\fM}^{(j)}\left(\phz^*\big(\beta^{(j)}\big)\right)  =\beta^{(j+1)}\, C^{(j)}.
\]
The \emph{matrix $A^{(j)}$ of $\phi_{\fM, s_{j+1}(3)}^{(j)}$ with respect to $\beta^{(j)}$} is defined to be the matrix $A^{(j)}$ such that  
\[
\phi_{\fM, s_{j+1}(3)}^{(j)}\left({}^{\phz} \beta^{(j)}_{s_{j+1}(3)}\right)=\beta^{(j+1)}_{s_{j+1}(3)}\, A^{(j)}.
\]
\end{defn} 

It is customary to write $C^{(j)}=\Mat_{\beta}\big(\phi_{\fM}^{(j)}\big)$ and $A^{(j)}=\Mat_{\beta}\big(\phi_{\fM, s_{j+1}(3)}^{(j)}\big)$ for short.  To understand how $C^{(j)}$ and $A^{(j)}$ relate to each other we define the following conjugation action by diagonal matrices:
\begin{defn} \label{adconj}  For any $b_1, b_2, b_3 \in \Z$ and for any $M \in \Mat_3(R(\!(u)\!))$,  we define
$$
\Ad(u^{b_1},u^{b_2},u^{b_3})(M) := \begin{pmatrix} u^{b_1} & 0 & 0 \\ 0 &  u^{b_2} & 0 \\ 0 & 0 &  u^{b_3} \end{pmatrix} M  
\begin{pmatrix} u^{-b_1} & 0 & 0 \\ 0 &  u^{-b_2} & 0 \\ 
0 & 0 &  u^{-b_3} \end{pmatrix}.
$$
For any $0\leq j\leq f-1$ and any $M \in \Mat_3(R(\!(u)\!))$, we define conjugation with orientation by 
$$
\Ad_{s_{j}}(u^{\bf{a}_1},u^{\bf{a}_2},u^{\bf{a}_3})(M) := s_j \Big(\Ad \Big(u^{\bf{a}^{(j )}_{s_{j}(1)}}, u^{\bf{a}^{(j)}_{s_{j}(2)}}, u^{\bf{a}^{(j )}_{s_{j}(3)}} \Big)(M)\Big) s_j^{-1}
$$
and  
$$
\Ad^{-1}_{s_{j}}(u^{\bf{a}_1},u^{\bf{a}_2},u^{\bf{a}_3})(M) := \Ad \Big(u^{-\bf{a}^{(j )}_{s_{j}(1)}}, u^{-\bf{a}^{(j)}_{s_{j}(2)}}, u^{-\bf{a}^{(j )}_{s_{j}(3)}} \Big)( s_{j}^{-1} M s_j).
$$
\end{defn}

In particular, if $M$ has  the form $C^{(j)}\in \GL_3(R(\!(u)\!))$ as in Definition \ref{0matrix} then $\Ad^{-1}_{s_{j+1}}(u^{\bf{a}_1},u^{\bf{a}_2},u^{\bf{a}_3})(M)$ is a matrix in $\GL_3(R(\!(v)\!))$ (and the above conjugation can be thought as ``removing the descent datum'').
For $\fM \in Y^{[0,h], \tau}(R)$, the following Proposition relates the matrix for $\phi_{\fM}^{(j)}$ to the matrix for $\phi^{(j)}_{\fM, s_{j+1}(3)}$.   
 \begin{prop} \label{addingdd}  Let $\beta$ be an eigenbasis for $\fM$ and $(s_j)$ be the orientation of $\tau$.
Let $A^{(j)}= \Mat_{\beta}\big(\phi_{\fM, s_{j+1}(3)}^{(j)}\big)$ be as in Definition $\ref{0matrix}$.  Then 
\begin{eqnarray}
\label{change isotypical}
C^{(j)}=\Mat_\beta\big(\phi_{\fM}^{(j)}\big)= \Ad_{s_{j+1}}\big(u^{\bf{a}_1}, u^{\bf{a}_2}, u^{\bf{a}_3}\big) \big(A^{(j)}\big). 
\end{eqnarray}
 \end{prop}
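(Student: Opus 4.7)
The plan is to directly compare matrix entries on both sides of \eqref{change isotypical}. Set $s := s_{j+1}$ and $b_k := \bf{a}^{(j+1)}_{s(k)} - \bf{a}^{(j+1)}_{s(3)}$ for $k \in \{1,2,3\}$, so $b_3 = 0$.

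The first step is to unpack the definition of $A^{(j)}$. By Lemma~\ref{eigenbases} together with the $(W \otimes R)[\![u]\!]$-linearity of $\phi_\fM^{(j)}$, the identity $\phi_{\fM, s(3)}^{(j)}({}^\phz\beta^{(j)}_{s(3)}) = \beta^{(j+1)}_{s(3)} A^{(j)}$ translates, for each $k$, into
$$
\phi_\fM^{(j)}\bigl(u^{b_k} \otimes f^{(j)}_{s(k)}\bigr) = \sum_{m=1}^{3} A^{(j)}_{m,k} \, u^{b_m} f^{(j+1)}_{s(m)}.
$$
On the other hand, by the definition of $C^{(j)}$ and linearity, the same element equals $u^{b_k} \sum_{r=1}^{3} C^{(j)}_{r, s(k)} f^{(j+1)}_r$. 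Equating the two expressions and reading off the coefficient of $f^{(j+1)}_{s(m)}$ yields
$$
C^{(j)}_{s(m), s(k)} = A^{(j)}_{m,k} \, u^{\, \bf{a}^{(j+1)}_{s(m)} - \bf{a}^{(j+1)}_{s(k)}}
$$
for all $m, k \in \{1,2,3\}$.

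To conclude, I would unwind Definition~\ref{adconj} and check that this is exactly the $(s(m), s(k))$-entry of $\Ad_s(u^{\bf{a}_1}, u^{\bf{a}_2}, u^{\bf{a}_3})(A^{(j)})$: the diagonal conjugation multiplies the $(m,k)$-entry of $A^{(j)}$ by $u^{\bf{a}^{(j+1)}_{s(m)} - \bf{a}^{(j+1)}_{s(k)}}$, and the subsequent conjugation by the permutation matrix $s$ sends entry $(m,k)$ to position $(s(m), s(k))$. The argument is pure bookkeeping and carries no real obstacle; the only point requiring care is to distinguish the global ordering of the eigenbasis by $(\eta_1,\eta_2,\eta_3)$ from the local ordering at $\sigma_{j+1}$ induced by the orientation $s_{j+1}$, which is exactly what the conjugation by $s_{j+1}$ in $\Ad_{s_{j+1}}$ accounts for.
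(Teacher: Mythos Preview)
Your proof is correct and follows essentially the same approach as the paper: both arguments unwind Definition~\ref{0matrix} and Lemma~\ref{eigenbases} to compare $C^{(j)}$ and $A^{(j)}$ via the change of basis between $\beta^{(j+1)}$ and $\beta^{(j+1)}_{s_{j+1}(3)}$. The paper records this change of basis as the single matrix identity $\beta^{(j+1)}_{s_{j+1}(3)} = \beta^{(j+1)} s_{j+1}\,\mathrm{diag}(u^{\bf{a}^{(j+1)}_{s_{j+1}(k)}})\,u^{-\bf{a}^{(j+1)}_{s_{j+1}(3)}}$ and leaves the rest implicit, whereas you carry out the same comparison entry by entry; the content is identical.
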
 
 \begin{proof} This is straightforward from Definition \ref{0matrix} and Lemma \ref{eigenbases} noting that:
\[
\beta^{(j+1)}_{s_{j+1}(3)}= \beta^{(j+1)}{s_{j+1}}\begin{pmatrix} u^{a^{(j+1)}_{s_{j+1}(1)}} & 0 & 0 \\ 0 & u^{a^{(j+1)}_{s_{j+1}(2)}} & 0 \\ 0 & 0 &  u^{a^{(j+1)}_{s_{j+1}(3)}} \end{pmatrix}u^{-a^{(j+1)}_{s_{j+1}(3)}}.
\]
 \end{proof}

For any $\cO$-algebra $R$,  define
\begin{itemize} 
\item $L \Gl_3(R) := \Gl_3(R(\!(v)\!))$
\item $L^+ \Gl_3(R) := \Gl_3(R[\![v]\!])$
\item $\Iw(R) := \{ M \in L^+ \Gl_3(R) \mid M \mod v \text{ is upper triangular } \}$
\item $\Iw_1(R) := \{ M \in L^+ \Gl_3(R) \mid M \mod v \text{ is upper triangular unipotent } \}$ 
\item $\cD_3(R) : = \{ M \in L^+ \Gl_3(R) \mid M \mod v^3 \text{ is diagonal } \}$.
\end{itemize}

 \begin{lemma} \label{integrality} Let $I \in  \Mat_3(R(\!(v)\!))$.  For any integers $b_1, b_2, b_3$ with $e > b_1 - b_3 > b_2 - b_3 > 0$, consider 
 $$
 D = \Ad\big(u^{b_1}, u^{b_2}, u^{b_3}\big)(I).
 $$     
Then $D \in \Mat_3(R[\![u]\!])$ if and only if $I$ is in $\Mat_3(R[\![v]\!])$ and is upper triangular modulo $v$.  
%
\end{lemma}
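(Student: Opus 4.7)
The plan is to proceed by a direct entry-by-entry analysis of the conjugation, exploiting the tight bounds on the exponents $b_i - b_j$.

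First I would compute the $(i,j)$-entry of $D$ explicitly: from the definition of $\Ad$ we get $D_{ij} = u^{b_i - b_j} I_{ij}$. Writing $I_{ij} = \sum_{k \in \Z} c_{ij,k}\, v^k = \sum_{k} c_{ij,k}\, u^{ek}$ (a priori a Laurent series in $v$), this expands to
\[
D_{ij} = \sum_{k} c_{ij,k}\, u^{ek + b_i - b_j}.
\]
So the condition $D \in \Mat_3(R[\![u]\!])$ is equivalent to requiring $c_{ij,k} = 0$ whenever $ek + b_i - b_j < 0$, for every pair $(i,j)$. Note also that $R(\!(v)\!) \cap R[\![u]\!] = R[\![v]\!]$, since negative $v$-powers would produce negative $u$-powers.

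Next I would split into three cases based on the sign of $b_i - b_j$. The hypotheses $e > b_1 - b_3 > b_2 - b_3 > 0$ force $b_1 > b_2 > b_3$ and, more importantly, $0 < b_i - b_j < e$ whenever $i < j$. Thus:
\begin{itemize}
\item For $i = j$: the constraint $ek \ge 0$ is just $k \ge 0$, so $I_{ii} \in R[\![v]\!]$.
\item For $i < j$ (strictly upper triangular): since $0 < b_i - b_j < e$ is replaced by $0 < b_j - b_i < e$ (oops, here $b_i - b_j > 0$), the bound $ek + (b_i - b_j) \ge 0$ with $b_i - b_j \in (0,e)$ forces $k \ge 0$, so again $I_{ij} \in R[\![v]\!]$ and no further restriction is needed.
\item For $i > j$ (strictly lower triangular): here $b_i - b_j \in (-e,0)$, and $ek + (b_i - b_j) \ge 0$ forces $k \ge 1$, i.e., $I_{ij} \in v\,R[\![v]\!]$.
\end{itemize}

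Combining the three cases, the requirement is exactly that $I \in \Mat_3(R[\![v]\!])$ and that the strictly lower triangular entries of $I$ vanish modulo $v$, which is the statement that $I \bmod v$ is upper triangular. The converse is immediate: under these conditions every exponent $ek + b_i - b_j$ appearing in $D_{ij}$ is $\ge 0$, so $D \in \Mat_3(R[\![u]\!])$. The only subtle point is making sure the strict inequalities $0 < b_i - b_j < e$ really are strict in both directions, but that is exactly the hypothesis, so no further argument is needed.
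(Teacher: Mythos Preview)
Your proof is correct and is precisely the straightforward entry-by-entry computation the paper alludes to (the paper's own proof consists of the single sentence ``The proof is a straightforward computation''). The parenthetical ``(oops, here $b_i - b_j > 0$)'' is a bit untidy but the surrounding argument is right; you might clean that up before submission.
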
 
\begin{proof}  The proof is a straightforward computation.  
\end{proof}

We will now describe the effect of change of basis for the eigenbasis coordinates.  Recall that $(s_j) \in S_3^f$ is the orientation of $\tau$ (Definition \ref{orientation}), and we associate to $s_j$ the corresponding permutation matrix in $\GL_3(\cO)$ as described in  \S \ref{subsection:Notation}.  

\begin{prop} \label{changeofbasis}
Let $R$ be an $\cO$-algebra.  Let $\fM \in Y^{[0,2], \tau}(R)$ together with two eigenbases $\beta_1^{(j)}:=  \Big( f^{(j)}_{1},  f^{(j)}_{2}, f^{(j)}_{3} \Big) $ and $\beta_2^{(j)}:= \Big ( f'^{(j)}_{1}, f'^{(j)}_{2}, f'^{(j)}_{ 3} \Big)$ related by
$$
\Big (f'^{(j)}_{1}, f'^{(j)}_{2}, f'^{(j)}_{3} \Big) D^{(j)} = \Big (f^{(j)}_{1}, f^{(j)}_{2}, f^{(j)}_{3} \Big)
$$
with $D^{(j)} \in \GL_3(R[\![u]\!])$. 
Let us write $A^{(j)}_1 \defeq \Mat_{\beta_1}\big(\phi^{(j)}_{\fM, s_{j+1}(3)}\big)$ and  $A^{(j)}_2 \defeq \Mat_{\beta_2}\big(\phi^{(j)}_{\fM, s_{j+1}(3)}\big)$ as in Definition \ref{0matrix}.
Then 
\begin{equation}
\label{cob1} 
A^{(j)}_2= I^{(j+1)}A^{(j)}_1 \big(s_{j+1}^{-1} s_j\big(I^{(j), \phz}\big) s_j^{-1} s_{j+1}\big) 
\end{equation}
where, for all $0\leq j\leq f-1$, we have $I^{(j)} \defeq \Ad^{-1}_{s_{j}}(u^{\mathbf{a}_1}, u^{\mathbf{a}_2}, u^{\mathbf{a}_3})(D^{(j)}) \in \Iw(R)$ and $$I^{(j), \phz}\defeq  \Ad(v^{a_{s_{j}(1), f-j - 1}}, v^{a_{s_{j}(2), f-j - 1}}, v^{a_{s_{j}(3), f-j -1}}) (\phz(I^{(j)})^{-1}).$$ 
\end{prop}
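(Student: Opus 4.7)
The plan is to reduce the statement to two inputs: the standard change of basis formula for the ``full'' Frobenius matrices $C_i^{(j)} := \Mat_{\beta_i}(\phi_{\fM}^{(j)})$, and then the translation back to $A_i^{(j)}$ via Proposition \ref{addingdd}.

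For the first input, applying $\phz^*$ to $\beta_1 = \beta_2 \cdot D$ and computing $\phi_{\fM}^{(j)}(\phz^*\beta_1^{(j)})$ in two ways yields the classical identity
\[
C_2^{(j)} \;=\; D^{(j+1)} \, C_1^{(j)} \, \phz(D^{(j)})^{-1}.
\]
Since $\Ad^{-1}_{s_{j+1}}(u^{\bf{a}_1}, u^{\bf{a}_2}, u^{\bf{a}_3})$ is multiplicative and inverse to $\Ad_{s_{j+1}}(u^{\bf{a}_1}, u^{\bf{a}_2}, u^{\bf{a}_3})$, applying it to the above and invoking Proposition \ref{addingdd} converts each $C_i^{(j)}$ to $A_i^{(j)}$ and the first factor to $I^{(j+1)}$, giving
\[
A_2^{(j)} \;=\; I^{(j+1)} \cdot A_1^{(j)} \cdot \Ad^{-1}_{s_{j+1}}(u^{\bf{a}_1}, u^{\bf{a}_2}, u^{\bf{a}_3})\bigl(\phz(D^{(j)})^{-1}\bigr).
\]
That $I^{(j+1)} \in \Iw(R)$ is a separate claim handled as a lemma: the $\Delta$-equivariance of $D^{(j)}$ (forced since $\beta_1^{(j)}$ and $\beta_2^{(j)}$ are both eigenbases) implies each entry $D^{(j)}_{ik}$ lies in $u^{[\bf{a}^{(j)}_i - \bf{a}^{(j)}_k]_e}\,R[\![v]\!]$; together with the orientation condition, Lemma \ref{integrality} supplies the Iwahori property with upper-triangular reduction mod $v$.

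The remaining and key step is to identify the third factor with $s_{j+1}^{-1} s_j(I^{(j),\phz}) s_j^{-1} s_{j+1}$. This rests on the \emph{Frobenius shift identity}
\[
u^{p\,\bf{a}^{(j)}_k} \;=\; v^{a_{k, f-j-1}} \cdot u^{\bf{a}^{(j+1)}_k},
\]
an immediate consequence of the arithmetic equality $p\,\bf{a}^{(j)}_k = \bf{a}^{(j+1)}_k + a_{k, f-j-1}\cdot e$, itself a direct manipulation using $p^f = e+1$ and the cyclic indexing of the $a_{k,\cdot}$. Substituting this into $\phz(I^{(j)}) = \Ad(u^{-p\,\bf{a}^{(j)}_{s_j(\cdot)}})(s_j^{-1}\phz(D^{(j)})s_j)$ splits off a purely-$v$ diagonal factor $\Ad(v^{-a_{s_j(\cdot), f-j-1}})$, which is precisely cancelled in forming $I^{(j),\phz}$; the remainder $\Ad(u^{-\bf{a}^{(j+1)}_{s_j(\cdot)}})(s_j^{-1}\phz(D^{(j)})^{-1}s_j)$, after conjugation by $s_{j+1}^{-1} s_j$, reshuffles the diagonal indices from the $s_j$-ordering to the $s_{j+1}$-ordering (using $s \diag(x_{s(k)})_k\, s^{-1} = \diag(x_k)_k$), yielding exactly $\Ad^{-1}_{s_{j+1}}(u^{\bf{a}_1}, u^{\bf{a}_2}, u^{\bf{a}_3})(\phz(D^{(j)})^{-1})$.

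The main obstacle is notational rather than conceptual: the two distinct permutations $s_j$ and $s_{j+1}$ coming from consecutive embeddings interact with the Frobenius shift and with the two Ad-conjugation conventions of Definition \ref{adconj} in a way that demands care. Once the shift identity is in place, the rest is bookkeeping.
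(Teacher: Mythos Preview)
Your proof is correct and follows essentially the same route as the paper's: both start from the standard $\sigma$-conjugation formula $C_2^{(j)} = D^{(j+1)} C_1^{(j)} \phz(D^{(j)})^{-1}$, translate to the $A_i^{(j)}$ via Proposition~\ref{addingdd}, invoke Lemma~\ref{integrality} for the Iwahori claim, and then identify the rightmost factor using the arithmetic identity $p\,\bf{a}^{(j)}_k - \bf{a}^{(j+1)}_k = a_{k,f-j-1}\,e$ together with the permutation reshuffle from $s_j$ to $s_{j+1}$. Your multiplicative ``Frobenius shift identity'' is exactly the paper's additive identity, and your handling of the $s_{j+1}^{-1}s_j$ conjugation matches the paper's computation in~(\ref{xx2}).
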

\begin{proof}
The proof is a direct computation using Proposition \ref{addingdd}. More precisely, let us write $C_i^{(j)}\defeq \Mat_{\beta_i}(\phi^{(j)}_{\fM})$ for $i\in \{1,2\}$ as in Definition \ref{0matrix}.
We have
\begin{equation}
\label{sigma-conjugation}
C_2^{(j)}=D^{(j+1)}C_1^{(j)}\phz\big(D^{(j)}\big)^{-1}.
\end{equation}

Since $D^{(j)}$ respects the descent datum, $I^{(j)} := \Ad^{-1}_{s_j}(u^{\mathbf{a}_1}, u^{\mathbf{a}_2}, u^{\mathbf{a}_3})(D^{(j)})$ is in $\GL_3(R(\!(v)\!))$, hence in $\Iw(R)$ by Lemma \ref{integrality}.  Using (\ref{sigma-conjugation}) and Proposition \ref{addingdd}, one  obtains:
\begin{align*}
 \Ad_{s_{j+1}}(u^{\mathbf{a}_1}, u^{\mathbf{a}_2}, u^{\mathbf{a}_3})\big(A_2^{(j)}\big) &=  D^{(j+1)} C^{(j)}_1 \phz\big(D^{(j)}\big)^{-1}  \\
 &=  D^{(j+1)}  \Big(\Ad_{s_{j+1}}(u^{\mathbf{a}_1}, u^{\mathbf{a}_2}, u^{\mathbf{a}_3})\big(A^{(j)}_1\big)\Big)  \phz\big(D^{(j)}\big)^{-1}.
\end{align*} 
Conjugating on both sides, we further deduce that
\begin{eqnarray} \label{xx1}
A_2^{(j)} = I^{(j+1)} A^{(j)}_1  \Ad^{-1}_{s_{j+1}}(u^{\mathbf{a}_1}, u^{\mathbf{a}_2}, u^{\mathbf{a}_3})\Big(\phz \big( D^{(j)} \big)^{-1}\Big).
\end{eqnarray} 
We now study the final term of the right-hand side of (\ref{xx1}).  Let $s_{j+1, j} := s_{j+1}^{-1} s_j$. We have 
\begin{align}
 \label{xx2}
&\Ad^{-1}_{s_{j+1}}(u^{\mathbf{a}_1}, u^{\mathbf{a}_2}, u^{\mathbf{a}_3}) \left(\phz \left( D^{(j)} \right)^{-1}\right) \\
=&\Ad \Big(u^{-\mathbf{a}^{(j+1)}_{s_{j+1}(1)}}, u^{-\mathbf{a}^{(j+1)}_{s_{j+1}(2)}}, u^{-\mathbf{a}^{(j+1)}_{s_{j+1}(3)}} \Big)  \Big( s_{j+1, j} \Ad\Big(u^{p \mathbf{a}^{(j)}_{s_{j}(1)}}, u^{p\mathbf{a}^{(j)}_{s_{j}(2)}}, u^{p\mathbf{a}^{(j)}_{s_{j}(3)}}\Big)\big(\phz(I^{(j)})^{-1}\big) s_{j+1, j}^{-1}  \Big) \nonumber \\
=&s_{j+1, j} \Ad\big(v^{a_{s_{j}(1), f-j - 1}}, v^{a_{s_{j}(2), f-j - 1}}, v^{a_{s_{j}(3), f-j -1}}\big) (\phz(I^{(j)})^{-1}) s_{j+1, j}^{-1}. \nonumber
\end{align}
where the last step follows from $p \bf{a}^{(j)}_k - \bf{a}^{(j+1)}_k = a_{k, f- j-1} e$ and 
$$
s_{j+1, j}^{-1} \begin{pmatrix} u^{-\bf{a}^{(j+1 )}_{s_{j+1}(1)}} & 0 & 0 \\ 0 &  u^{-\bf{a}^{(j+1)}_{s_{j+1}(2)}} & 0 \\ 0 & 0 &  u^{-\bf{a}^{(j+1 )}_{s_{j+1}(3)}} \end{pmatrix} s_{j+1, j} = \begin{pmatrix} u^{-\bf{a}^{(j+1 )}_{s_{j}(1)}} & 0 & 0 \\ 0 &  u^{-\bf{a}^{(j+1)}_{s_{j}(2)}} & 0 \\ 0 & 0 &  u^{ -\bf{a}^{(j +1 )}_{s_{j}(3)}} \end{pmatrix}.
$$
The conclusion follows by combining (\ref{xx1}) and (\ref{xx2}).
\end{proof}

\begin{prop} \label{divisibility} Assume that $\tau$ is weakly generic $($Definition \ref{gencond}$)$. Let $I^{(j)} \in \Iw(R)$ be as in Proposition $\ref{changeofbasis}.$ Then
$$
I^{(j), \phz} =  \Ad\big(v^{a_{s_{j}(1), f-j - 1}}, v^{a_{s_{j}(2), f-j - 1}}, v^{a_{s_{j}(3), f-j -1}}\big) (\phz(I^{(j)})^{-1}) \in \cD_3(R).
$$ 
\end{prop}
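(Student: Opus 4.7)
The plan is a direct entry-by-entry computation of $I^{(j),\phz}$, tracking the order of vanishing in $v$ of each matrix coefficient. First I would unpack $I^{(j)}\in\Iw(R)$: its entries lie in $R[\![v]\!]$, its diagonal entries are units, and its strictly lower-triangular entries are divisible by $v$. Since the Iwahori subgroup is stable under inversion, $(I^{(j)})^{-1}$ satisfies the same conditions. Applying $\phz$, which acts on $R[\![v]\!]$ by $v\mapsto v^p$, the strictly lower-triangular entries of $\phz((I^{(j)})^{-1})$ become divisible by $v^p$, the diagonal entries lie in $R[\![v^p]\!]^\times$, and the strictly upper-triangular entries lie in $R[\![v^p]\!]$.

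Writing $a'_k := a_{s_j(k),\,f-j-1}$, the conjugation $\Ad(v^{a'_1},v^{a'_2},v^{a'_3})$ multiplies the $(k,l)$-entry by $v^{a'_k-a'_l}$. By the orientation identity \eqref{orientlead}, $a'_1>a'_2>a'_3$, so entries strictly above the diagonal gain positive powers of $v$, while entries strictly below lose powers of $v$. The diagonal entries are unchanged and remain units in $R[\![v]\!]$.

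It then remains to check that every off-diagonal entry of $I^{(j),\phz}$ lies in $v^3 R[\![v]\!]$, which will force $I^{(j),\phz}\in\cD_3(R)$. Here the weak genericity hypothesis $n=3$ in \eqref{labelgencond} enters decisively: for $k<l$ one has $a'_k - a'_l \geq 3$, so the corresponding upper-triangular entry lies in $v^3 R[\![v]\!]$; for $k>l$ one has $a'_l - a'_k \leq p-4$, and combining this with the extra factor of $v^p$ coming from $\phz$ on the lower-triangular piece produces an entry in $v^{p+a'_k-a'_l}R[\![v^p]\!]\subseteq v^3 R[\![v]\!]$.

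The only place requiring genuine care, and the likely source of trouble if the genericity bound were weakened, is the below-diagonal estimate: the conjugation there introduces a \emph{negative} power of $v$, and one must confirm that the $v^p$ produced by $\phz$ on Iwahori-lower entries suffices to absorb this negative exponent and still leave enough margin for the required $v^3$. The weakly generic bound $|a'_k-a'_l|\leq p-1-3$ is tailored exactly to deliver that margin, while simultaneously (through the lower bound $\geq 3$) handling the upper-triangular entries.
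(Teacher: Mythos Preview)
Your proof is correct and follows essentially the same approach as the paper: both arguments use that $(I^{(j)})^{-1}\in\Iw(R)$ so that $\phz(I^{(j)})^{-1}$ has strictly lower-triangular entries divisible by $v^p$, and then invoke the weak genericity bounds $3\leq a'_k-a'_l\leq p-4$ (for $k<l$) to control the conjugation entry by entry. Your write-up is in fact slightly more explicit than the paper's, which compresses the case analysis into the phrase ``a direct computation then shows.''
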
 
\begin{proof} By the weak genericity assumptions and choice of orientation, $a_{s_{j}(1), f-j - 1} - a_{s_{j}(2), f-j - 1} \geq 3$ and 
$$
p-4 \geq a_{s_{j}(1), f-j - 1} - a_{s_{j}(3), f-j - 1} > a_{s_{j}(2), f-j - 1} - a_{s_{j}(3), f-j - 1} \geq 3.
$$
Since $\left(I^{(j)}\right)^{-1} \in \cI(R)$, the entries of $\phz \left(I^{(j)} \right)^{-1}$ below the diagonal are divisible by $v^p$.  A direct computation then shows that 
$$
\Ad \big(v^{a_{s_{j}(1), f-j - 1}}, v^{a_{s_{j}(2), f-j - 1}}, v^{a_{s_{j}(3), f-j -1}} \big) \big(\phz (I^{(j)})^{-1} \big) \in \cD_3(R).
$$
\end{proof}

\subsection{Classification over $\F$}
\label{subsec:classif}

We keep the notations of the previous section except we now work over $\F$ as opposed to $\cO$.  In this subsection, we assume that $\tau$ is \emph{weakly generic}. We are now ready to define the \emph{shape} (or \emph{genre} in French) of a Kisin module.   Let $T$ be the diagonal torus of $\GL_3$ and let $N_{\GL_3}(T)$ denote the normalizer of $T$.   The (extended) affine Weyl group of $\GL_3$ is given by 
$$
\widetilde{W} := N_{\GL_3}(T)(\F(\!(v)\!))/T(\F[\![v]\!]).   
$$
Recall that $\widetilde{W}$ sits in an exact sequence
$$
0 \ra X_*(T) \ra \widetilde{W} \ra S_3 \ra 0
$$
where $S_3$ is the ordinary Weyl group of $\GL_3$ and $X_*(T) \cong \Z^3$ are the cocharacters of $T$. 
If $\lambda\in X_*(T)$ is a cocharacter, it is customary to write $t_\lambda$ to denote the corresponding element (a \emph{translation}) in $\widetilde{W}$.

For any finite extension $\F'$ of $\F$, Bruhat-Tits theory gives the following double coset decomposition
\begin{equation} \label{BTdecomp}
L \GL_3 (\F') = \bigcup_{\widetilde{w} \in \widetilde{W}} \Iw(\F') \widetilde{w} \Iw(\F').
\end{equation}

\begin{defn}
\label{defn:shape}  Let $\bf{w} = (\widetilde{w}_0, \widetilde{w}_1, \ldots, \widetilde{w}_{f-1}) \in \widetilde{W}^f$.   A Kisin module $\fM \in Y^{[0,h], \tau}(\F')$ has \emph{shape} $\bf{w}$ if for any eigenbasis $\beta$, the matrices $ \big(A^{(j)} \big)_j = \big(\Mat_{\beta}\big(\phi_{\fM, s_{j+1}(3)}^{(j)} \big) \big)_j$ have the property that $A^{(j)} \in \Iw(\F') \widetilde{w}_j \Iw(\F')$. 
\end{defn} 

It follows immediately from Propositions \ref{changeofbasis} and \ref{divisibility} that the shape of a Kisin module is well-defined, i.e., it does not depend on the choice of the eigenbasis.  The motivation for this definition comes from the theory of local models and a corresponding stratification there.  We now give an overview of this connection which is described in detail in the joint work \cite{CL} of the third author.   We turn our attention to the study of Kisin modules with ``parallel'' weight $(2,1,0)$.  Precisely, let $\mu = (\mu_j)$ with $\mu_j = (2,1,0)$ for all $j$ considered as a geometric cocharacter of $\Res_{K/\Qp} \Gl_3$. Then, \cite{CL} constructs a closed substack $Y^{\mu, \tau} \subset Y^{[0,2], \tau}$ together with a ``local model diagram''  
$$
\xymatrix{
& \widetilde{Y}^{\mu, \tau} \ar[dl]_{\pi} \ar[dr]^{\Psi} & \\
Y^{\mu, \tau} & & M(\mu), \\
}
$$
where $M(\mu)$ is the Pappas-Zhu local model for $\Res_{K/\Qp} \GL_3$ with Iwahori level structure and cocharacter $\mu$ (cf. \cite[\S5]{CL} in particular Proposition 5.2 and Theorem 5.3).  Both $\pi$ and $\Psi$ are smooth maps.  For any $\bf{w} \in \widetilde{W}^f$,  define $\overline{Y}^{\mu, \tau}_{\bf{w}}(\overline{\F}) \subset \overline{Y}^{\mu, \tau}(\overline{\F})$ to be the set of points with shape $\bf{w}$. 

\begin{prop} Let $\overline{M}(\mu)$ and $\overline{Y}^{\mu, \tau}$ denote the special fibers of $M(\mu)$ and $Y^{\mu, \tau}$.  Then $\overline{M}(\mu)$ has a stratification by locally closed affine Schubert varieties $S_{\bf{w}}^0$ indexed by elements $\bf{w} \in \widetilde{W}^f$.  Furthermore, the set of points $\overline{Y}^{\mu, \tau}_{\bf{w}}(\overline{\F}) \subset \overline{Y}^{\mu, \tau}(\overline{\F})$ of shape $\bf{w}$ is given by $\pi \left(\Psi^{-1} \left(S_{\bf{w}}^0 \right) \right)$.
\end{prop}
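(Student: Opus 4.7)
The plan is to deduce the proposition from two ingredients: the standard affine Schubert stratification of the affine flag variety for $\mathrm{Res}_{K/\Qp}\GL_3$, and the compatibility of this stratification with the local model diagram of \cite{CL} via the change-of-basis formulas already established in this subsection.

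First, I would recall from Pappas--Zhu theory that $\overline{M}(\mu)$ embeds as a closed subscheme of the special fiber of the affine flag variety $\Fl$ for $\mathrm{Res}_{K/\Qp}\GL_3$. As an $\overline{\F}$-scheme, $\Fl$ decomposes as a product of $f$ copies of the affine flag variety for $\GL_3$, and the Bruhat--Tits decomposition \eqref{BTdecomp} on each factor gives a stratification of $\Fl(\overline{\F})$ by Iwahori orbits indexed componentwise by $\widetilde{W}^f$. Intersecting with $\overline{M}(\mu)$ produces the locally closed affine Schubert strata $S_{\bf{w}}^0$, which are nonempty exactly when $\bf{w}$ is $\mu$-admissible in the sense of Kottwitz--Rapoport (this being the defining property of the Pappas--Zhu local model with cocharacter $\mu$ and Iwahori level).

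Next, I would unwind the local model diagram of \cite{CL}. The stack $\widetilde{Y}^{\mu,\tau}$ parametrizes Kisin modules $\fM \in Y^{\mu,\tau}$ together with a trivialization by an eigenbasis $\beta$ in the sense of \S\ref{KM with dd}; the map $\pi$ forgets $\beta$ and is a torsor under a product of $f$ copies of the Iwahori group scheme, while $\Psi$ sends $(\fM,\beta)$ to the matrix tuple $(A^{(j)})_j = (\Mat_\beta(\phi_{\fM,s_{j+1}(3)}^{(j)}))_j$ viewed as a point of $\overline{M}(\mu) \subset \Fl$. The change-of-basis Proposition \ref{changeofbasis}, combined with the diagonality of $I^{(j),\phz}$ guaranteed by the genericity Proposition \ref{divisibility}, shows that a change of eigenbasis translates into left-right multiplication of each $A^{(j)}$ by elements of $\Iw$: the left factor is $I^{(j+1)} \in \Iw$, and the right factor $s_{j+1}^{-1}s_j I^{(j),\phz} s_j^{-1}s_{j+1}$ is a permutation conjugate of an element of $\cD_3 \subset \Iw$. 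This is exactly the Iwahori action on $\Fl$ whose orbits cut out the strata $S_{\bf{w}}^0$.

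Combining these two observations, the Iwahori double coset class of $(A^{(j)})_j$ is a well-defined invariant of $\fM$ alone which, by Definition \ref{defn:shape}, is precisely the shape $\bf{w}$. Hence $\overline{Y}^{\mu,\tau}_{\bf{w}}(\overline{\F}) = \pi(\Psi^{-1}(S_{\bf{w}}^0))$. The main technical point to pin down is identifying the precise form of $\Psi$ in \cite{CL} and verifying that the Iwahori group scheme intervening in the local model $M(\mu)$ matches the one appearing in the change of basis formula (including the bookkeeping involving the permutations $s_j$ that enter via $\Ad_{s_{j+1}}$); once this compatibility of Iwahori actions is fixed, the rest of the proof is formal.
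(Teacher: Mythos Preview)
The paper does not supply its own proof of this proposition: it is stated without argument, as part of an ``overview of this connection which is described in detail in the joint work \cite{CL}.'' The local model diagram, the maps $\pi$ and $\Psi$, and the stratification of $\overline{M}(\mu)$ are all imported from that reference, and the proposition records the compatibility between the Schubert stratification of $M(\mu)$ and the shape invariant of Definition~\ref{defn:shape}.

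Your sketch is a reasonable reconstruction of why this compatibility holds, and you correctly identify the ingredients: the Iwahori stratification of the affine flag variety restricted to $\overline{M}(\mu)$, the interpretation of $\widetilde{Y}^{\mu,\tau}$ as Kisin modules with eigenbasis, and the change-of-basis formula (Propositions~\ref{changeofbasis} and~\ref{divisibility}) showing that the double coset of $A^{(j)}$ is an invariant of $\fM$ alone. Your final caveat is the right one: the substantive content lies in checking that your description of $\widetilde{Y}^{\mu,\tau}$, $\pi$, and $\Psi$ matches the actual constructions in \cite{CL}, and that the group acting on the $\pi$-torsor is identified with the Iwahori group acting on $M(\mu)$ via $\Psi$. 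One small point to tighten: the Schubert cells $S_{\bf{w}}^0$ are orbits for the \emph{left} Iwahori action on $\Fl = L\GL_3/\Iw$, so in your formula $A^{(j)}_2 = I^{(j+1)} A^{(j)}_1 \cdot (\text{right factor})$ the right factor already vanishes in $\Fl$ and only the left factor $I^{(j+1)}$ witnesses the Iwahori-equivariance of $\Psi$; your phrasing in terms of ``left-right multiplication'' slightly obscures this. Since there is no proof in the paper to compare against, the honest summary is that your proposal is a correct outline of the argument behind the citation, with the expected dependence on \cite{CL} for the precise definitions.
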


The set of $\bf{w} \in \widetilde{W}^f$ such that $S_{\bf{w}}^0$ is a nonempty subscheme of $\overline{M}(\mu)$ is given by the $\mu$-admissible set $\mathrm{Adm}(\mu) = \prod_j \mathrm{Adm}(2,1,0)$ (see \cite[\S 5.2]{CL} for details).  

\begin{cor} The set $\overline{Y}^{\mu, \tau}_{\bf{w}}(\overline{\F})$ is nonempty if and only if $\bf{w} = (\widetilde{w}_0, \widetilde{w}_1, \ldots, \widetilde{w}_{f-1})$ where $\widetilde{w}_j$ is a $(2,1,0)$-admissible element of $\widetilde{W}$.  
\end{cor}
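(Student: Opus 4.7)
The strategy is to reduce the nonemptyness question for $\overline{Y}^{\mu, \tau}_{\bf{w}}(\overline{\F})$ to the analogous question for the Schubert stratum $S_{\bf{w}}^0 \subset \overline{M}(\mu)$ via the local model diagram, and then invoke the description of the admissible set recalled immediately before the corollary.

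First, by the preceding Proposition, the $\overline{\F}$-points of shape $\bf{w}$ are exactly $\pi\bigl(\Psi^{-1}(S_{\bf{w}}^0)\bigr)$. The maps $\pi$ and $\Psi$ in the local model diagram are both smooth by \cite[Thm.~5.3]{CL}; since the question is about nonemptyness on geometric points and smooth morphisms of finite type are open (and in the setup of \cite{CL} they are surjective onto the relevant stacks), we get that $\overline{Y}^{\mu,\tau}_{\bf w}(\overline{\F})$ is nonempty if and only if the preimage $\Psi^{-1}(S_{\bf w}^0)$ is nonempty, if and only if $S_{\bf w}^0$ itself is nonempty as a subscheme of $\overline{M}(\mu)$.

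Second, I appeal to the characterization of the $\mu$-admissible set for $(\Res_{K/\Qp}\GL_3,\mu,\text{Iwahori})$ recalled just above the corollary: $S_{\bf w}^0$ is a nonempty Schubert cell of $\overline{M}(\mu)$ precisely when $\bf{w} \in \mathrm{Adm}(\mu)$, and the factorization $\mathrm{Adm}(\mu)=\prod_j \mathrm{Adm}(2,1,0)$ (a consequence of the product structure of the local model for a Weil restriction at Iwahori level, cf.\ \cite[\S 5.2]{CL}) translates this into the componentwise condition that each $\widetilde{w}_j$ lies in $\mathrm{Adm}(2,1,0)$.

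Combining these two steps yields the corollary. The only substantive input is the identification of the image of the shape stratification on $\overline{Y}^{\mu,\tau}$ with the Schubert stratification of $\overline{M}(\mu)$; this is already supplied by the Proposition and the smoothness/surjectivity of the local model diagram, so no further computation is needed. The main nontrivial ingredient, which is used as a black box from \cite{CL}, is precisely the factorization $\mathrm{Adm}(\mu)=\prod_j \mathrm{Adm}(2,1,0)$; this is the only place where the specific shape of the cocharacter $\mu=(2,1,0)$ at each embedding enters.
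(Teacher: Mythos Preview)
Your proposal is correct and is exactly the argument the paper has in mind. In fact the paper does not write a proof for this Corollary at all: it is recorded as an immediate consequence of the preceding Proposition (giving $\overline{Y}^{\mu,\tau}_{\bf w}(\overline{\F})=\pi(\Psi^{-1}(S^0_{\bf w}))$) together with the sentence just before the Corollary identifying the nonempty Schubert cells of $\overline{M}(\mu)$ with $\mathrm{Adm}(\mu)=\prod_j\mathrm{Adm}(2,1,0)$, and you have simply made the implicit reduction via the smoothness and surjectivity of $\pi$ and $\Psi$ explicit.
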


One can describe $\mathrm{Adm}(2,1,0)$ quite concretely.   
Let $\widetilde{W}^0$ be the affine Weyl group of $\SL_3$.  It is a Coxeter group generated by three reflections $\alpha, \beta$ and $\gamma$. We represent  the elements $\alpha$ and $\beta$ which generate the finite Weyl group by
$$
\alpha = \begin{pmatrix} 0 & 1 & 0 \\ 1 &0 & 0 \\ 0 & 0 & 1 \end{pmatrix}, \quad
\beta = \begin{pmatrix} 1 & 0 & 0 \\ 0 & 0 & 1 \\ 0 & 1 & 0 \end{pmatrix}. 
$$    
The element $\gamma$ is given by 
$$
\gamma = \begin{pmatrix} 0 & 0 & v^{-1} \\ 0 & 1 & 0 \\ v & 0 & 0 \end{pmatrix}.
$$ 
The set $\mathrm{Adm}(2,1,0)$ is a subset of $v \widetilde{W}^0 \subset \widetilde{W}$ (i.e., those matrices with determinant $v^3$(unit)). 
In particular, the element $t_{(2,1,0)} \in \widetilde{W}$ corresponding to the $\GL_3$-cocharacter $(2,1,0)$ is in $v \widetilde{W}^0$.  Recall that $\widetilde{W}^0$ is endowed with a Bruhat ordering, and for $\tld{w}_1$, $\tld{w}_2\in  \widetilde{W}^0$, we say $v\tld{w}_1\leq v\tld{w}_2$ if and only if $\tld{w}_1\leq \tld{w}_2$.   $\mathrm{Adm}(2,1,0)$ is then defined to be the subset of elements $\tld{w}\in v\widetilde{W}^0$ such that $\tld{w}\leq t_{s(2,1,0)}$ for some permutation $s\in S_3$. 

There are six extremal elements in $\mathrm{Adm}(2,1,0)$ of length 4, corresponding to the six permutations of $(2,1,0)$. The length three shapes are divided into two different sets which reflect different behavior on the Galois side. The set $\mathrm{Adm}(2,1,0)$ is given in Table \ref{Table admissible elements}.


The following is the key result for classifying Kisin modules with $\F'$-coefficients, where $\F'$ is a finite extension of $\F$.

\begin{lemma} \label{Frowoper}
Let $\fM,\,\,\fM'\in {Y}^{\mu, \tau}(\F')$ and let $\beta$, $\beta'$ be eigenbases of $\fM$, $\fM'$ respectively. We define $A^{(j)}\defeq \Mat_{\beta}\big(\phi^{(j)}_{R,s_{j+1}(3)}\big)$ $($resp. $A'^{(j)}\defeq \Mat_{\beta'}\big(\phi^{(j)}_{R,s_{j+1}(3)}\big))$ as in Definition \ref{0matrix}.
Assume further that there exists $ J^{(j+1)} \in \cI_1(\F')$ such that $A'^{(j)}=J^{(j+1)}A^{(j)}$ for all $0\leq j\leq f-1$. Then there is an isomorphism $\fM\stackrel{\sim}{\lra}\fM'$ in ${Y}^{\mu, \tau}(\F')$.
\end{lemma}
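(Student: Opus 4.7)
\emph{Proof plan.} The strategy is to construct the isomorphism explicitly as a change of basis. Given $D^{(j)} \in \GL_3(\F'[\![u]\!])$ commuting with the $\Delta$-action, the assignment $\iota(\beta^{(j)}) := \beta'^{(j)} D^{(j)}$ extends to an $(W\otimes\F')[\![u]\!]$-linear, $\Delta$-equivariant bijection $\iota : \fM \to \fM'$, and one just needs to arrange Frobenius-equivariance. Letting $I^{(j)} := \Ad^{-1}_{s_{j}}(u^{\mathbf{a}_1}, u^{\mathbf{a}_2}, u^{\mathbf{a}_3})(D^{(j)}) \in \cI(\F')$ as in Proposition \ref{changeofbasis} and combining with the hypothesis $A'^{(j)} = J^{(j+1)} A^{(j)}$, the condition that $\iota$ commute with Frobenius becomes the cyclic system
\begin{equation*}
I^{(j+1)} \;=\; J^{(j+1)}\, A^{(j)} \, \bigl( s_{j+1}^{-1} s_j(I^{(j),\phz}) s_j^{-1} s_{j+1} \bigr)^{-1}\, (A^{(j)})^{-1}, \qquad 0 \leq j \leq f-1,
\end{equation*}
where $I^{(j),\phz}$ is defined as in Proposition \ref{divisibility}.

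I propose to solve this system by successive approximation in the $v$-adic topology on $\cI_1(\F')$. Starting from $I^{(j)}_0 := \mathrm{Id}$ (note $I^{(j)}_1 = J^{(j)} \in \cI_1(\F')$), define $I^{(j)}_{n+1}$ inductively from $I^{(j)}_n$ by the displayed relation. The contraction rests on three facts: first, because we work over the field $\F'$, Frobenius acts on $\F'[\![v]\!]$ by $v \mapsto v^p$, so if $I^{(j)}_n$ and $I^{(j)}_\infty$ agree modulo $v^N$, then $\phz(I^{(j)}_n)$ and $\phz(I^{(j)}_\infty)$ agree modulo $v^{pN}$; second, the weak genericity bound $|a_{s_{j}(i), f - j - 1} - a_{s_{j}(k), f - j - 1}| \leq p - 4$ limits the valuation loss from the $\Ad(v^{a_i})$-conjugation defining $I^{(j),\phz}$ (refining Proposition \ref{divisibility}); third, the admissibility $\tld{w}_j \in v\widetilde{W}^0$ forces $\det A^{(j)}$ to have $v$-valuation exactly $3$, bounding the poles of the entries of $(A^{(j)})^{-1}$ by $v^{-3}$. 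Balancing these shows that the map $I_n \mapsto I_{n+1}$ is strictly $v$-adically contractive (possibly after the first one or two iterations to enter the contractive regime), and the Cauchy sequence converges in the complete space $\cI_1(\F')$ to a limit $I^{(j)}_\infty$ solving the system.

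Translating back via the descent-datum conjugation gives $D^{(j)} \in \GL_3(\F'[\![u]\!])$ which automatically respects the $\Delta$-action since $I^{(j)}_\infty \in \cI(\F')$, and the prescription $\iota(\beta^{(j)}) := \beta'^{(j)} D^{(j)}$ defines an isomorphism of Kisin modules with descent datum in $Y^{\mu, \tau}(\F')$.

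The main obstacle is establishing the contraction bound precisely. One must track how a perturbation of $I^{(j)}_n$ at $v$-valuation $N$ propagates through $\phz$, then through the adjustment by $\Ad(v^{a_i})$, and finally through the conjugation by $A^{(j)}$ and $(A^{(j)})^{-1}$; the weak genericity inequality is exactly what makes the net balance $pN - (p-4) - 3 \geq N + 1$ strict for $N$ sufficiently large. The explicit control over the shape of $A^{(j)}$ (an element of $\Iw \widetilde{w}_j \Iw$ with $\widetilde{w}_j$ $(2,1,0)$-admissible) is what ultimately forces the $v$-adic valuation count to go through.
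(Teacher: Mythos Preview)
Your proposal is correct and follows essentially the same approach as the paper: both construct the change-of-basis matrix by iterating the fixed-point equation derived from Proposition~\ref{changeofbasis}, starting from $I^{(j)}_0=\mathrm{Id}$ (so that $I^{(j)}_1=J^{(j)}$), and prove $v$-adic convergence by balancing the gain from $\phz$ against the losses from the $\Ad(v^{a_i})$-conjugation (bounded via weak genericity) and from $(A^{(j)})^{-1}$. The only minor difference is that you bound the pole order of $(A^{(j)})^{-1}$ by $3$ using the determinant, whereas the paper uses the sharper bound $v^2(A^{(j)})^{-1}\in\Mat_3(\F'[\![v]\!])$ coming directly from the height $\leq 2$ condition; your weaker bound still yields a contraction (for $N\geq 2$), just with slightly worse constants in the initial steps, and the paper likewise handles the first few iterations separately before entering the contractive regime.
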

\begin{proof}
We define, by induction, the following sequence $\big(J^{(j)}_n\big)_{n\in\N}$ of elements in $\cI_1(\F')$.  
For all $j \in \Z/f \Z$, set $J_0^{(j)}\defeq \mathrm{Id}_3$.  For $n\geq1$, set
\begin{equation*}
J_{n+1}^{(j+1)}=J^{(j+1)}A^{(j)} \left(A^{(j)} s_{j+1, j} J^{(j), \phz}_n s_{j+1, j}^{-1} \right)^{-1}
\end{equation*}
where  $J^{(j), \phz}_n$ is constructed from $J^{(j)}_n \in \Iw(\F')$ as in Proposition \ref{changeofbasis}.  Note that this defines a sequence in the pro-$v$ Iwahori $\cI_1(\F')$. 

From the definition of $J_n^{(j)}$ and the hypothesis $A'^{(j)}=J^{(j+1)}A^{(j)}$, we obtain
\begin{equation}
A'^{(j)}=J_{n+1}^{(j+1)}A^{(j)}  \big(s_{j+1, j} J^{(j), \phz}_n s_{j+1, j}^{-1}\big).
\end{equation}
Provided that the sequence $\big(J_n^{(j)}\big)_n$ converges,  we deduce the desired isomorphism 
$\fM \stackrel{\sim}{\lra}\fM'$ via Proposition \ref{changeofbasis}. 
 
We now prove the convergence of the sequence $\big(J_n^{(j)}\big)_{n}$. By the definition of the $v$-adic topology on $\cI(\F')$, it is enough to prove that
\begin{equation}
\label{key convergence}
v^{p(n-2)}\big\vert \big(J^{(j+1)}_{n+1}-J_{n}^{(j+1)}\big)
\end{equation}
for all $n\geq 3$. We induct on $n$. 

It follows directly from the definitions that for all $n\geq 1$ the element $J^{(j+1)}_{n+1}-J^{(j+1)}_n$ equals
\begin{equation}
\label{eq:iter:referee}
J^{(j+1)}A^{(j)} s_{j+1, j} \Big(\Ad\big(v^{a_{s_j(1),f-1-j}},v^{a_{s_j(2),f-1-j}},v^{a_{s_j(3),f-1-j}}\big) \phz\big(J^{(j)}_n-J^{(j)}_{n-1}\big)\Big) s_{j+1, j}^{-1} \big(A^{(j)}\big)^{-1}.
\end{equation}

First, let $n=1$. Then $J_1^{(j)} = J^{(j)}\in\cI_1(\F')$ and hence:
$$
\phz\Big(J_1^{(j)}-\mathrm{Id}_3\Big)\in \begin{pmatrix}(v^p)&\F'[\![v]\!]&\F'[\![v]\!]\\
(v^p)&(v^p)&\F'[\![v]\!]\\
(v^p)&(v^p)&(v^p)\end{pmatrix}.
$$
By the weak genericity condition (\ref{labelgencond}) and the height condition, we deduce that 
$$
v^3 \mid \Ad\big(v^{a_{s_j(1),f-1-j}},v^{a_{s_j(2),f-1-j}},v^{a_{s_j(3),f-1-j}}\big)\cdot \phz\big(J^{(j)}-\mathrm{Id}_3\big)
$$ 
and $v^2 \left(A^{(j)}\right)^{-1} \in \Mat_3(\F'[\![v]\!])$, respectively. We conclude from (\ref{eq:iter:referee}) that $v\big\vert \big(J^{(j+1)}_2-J^{(j+1)}_1\big)$. 
 
For $n = 2$, we have $v^p \big\vert\phz\big(J_2^{(j)}- J_1^{(j)} \big)$ by the previous step, and hence $v^2 \big\vert \big(J_3^{(j)} - J_2^{(j)}\big)$ by the weak genericity condition  (\ref{labelgencond}). Finally, we conclude that $v^p \big\vert \big(J_4^{(j)} - J_3^{(j)}\big)$ since $v^{2p} \big\vert  \phz\big(J_3^{(j)}- J_2^{(j)} \big)$.

For  $n \geq 3$, we see by induction, the weak genericity condition, and the height condition on $A^{(j)}$, that
\begin{equation}
\label{key divisibility}
v^{p^2(n-2) - (p-4) - 2} \big\vert \Big(\Ad\big(v^{a_{s_j(1),f-1-j}},v^{a_{s_j(2),f-1-j}},v^{a_{s_j(3),f-1-j}}\big)\Big( \phz\big(J_n^{(j)}-J_{n-1}^{(j)}\big)\Big)\Big)\big(A^{(j)}\big)^{-1}. 
\end{equation}
Hence
$$
p^2(n-2) - p + 2 \geq p(p(n-2) -1) \geq p(n-1)
$$
since $p \geq 3$.  
\end{proof}

With more careful analysis, Lemma \ref{Frowoper} probably holds with even weaker genericity conditions.  However, we do not attempt such an analysis here.

\begin{thm} 
\label{thm:classification} 
Let $\F'$ be a finite extension of $\F$. Let $\fM \in \overline{Y}^{\mu, \tau}_{\bf{w}}(\F')$ with $\bf{w} = (\widetilde{w}_0, \widetilde{w}_1, \ldots, \widetilde{w}_{f-1})$.  
Fix a choice of representatives for $\widetilde{w}_j\cdot \left(P_{\widetilde{w}_j}\backslash \Iw(\F')\right)$, where
\[
P_{\widetilde{w}_j}\defeq \big(\widetilde{w}_j^{-1}\Iw_1(\F')\widetilde{w}_j\big)\cap\Iw(\F').
\]

Then there exists an eigenbasis $\beta$ of $\fM$ such that for each $j\in \Z/f\Z$ the matrix $A^{(j)} = \Mat_{\beta} \big(\phi^{(j)}_{\fM, s_{j+1}(3)} \big)$ lies in our fixed choice of representatives.

\end{thm}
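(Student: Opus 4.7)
The plan is to reduce the statement to Lemma \ref{Frowoper} by constructing a second Kisin module whose Frobenius matrices with respect to some eigenbasis are exactly the fixed representatives, and then transporting that eigenbasis back through an isomorphism.

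First, I would verify that $\widetilde{w}_j \cdot (P_{\widetilde{w}_j} \backslash \Iw(\F'))$ is a complete set of representatives for the left cosets $\Iw_1(\F') \backslash (\Iw(\F') \widetilde{w}_j \Iw(\F'))$. Concretely, the map $\Iw(\F') \to \Iw_1(\F') \backslash (\Iw(\F') \widetilde{w}_j \Iw(\F'))$ sending $b \mapsto [\widetilde{w}_j b]$ has fiber $P_{\widetilde{w}_j}\cdot b$ over $[\widetilde{w}_j b]$, by a direct computation from the definition $P_{\widetilde{w}_j} = (\widetilde{w}_j^{-1} \Iw_1(\F') \widetilde{w}_j) \cap \Iw(\F')$; and it is surjective because $\Iw(\F') = \Iw_1(\F') \cdot T(\F')$ while the extended affine Weyl element $\widetilde{w}_j$ normalizes $T$, so any class $[b_1 \widetilde{w}_j b_2]$ can be rewritten as $[\widetilde{w}_j b]$ for some $b \in \Iw(\F')$.

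Now start with an arbitrary eigenbasis $\beta$ of $\fM$ and set $A^{(j)} \defeq \Mat_\beta(\phi^{(j)}_{\fM, s_{j+1}(3)})$, which lies in $\Iw(\F') \widetilde{w}_j \Iw(\F')$ by the shape hypothesis. By the previous paragraph, for each $j$ there exist unique $J^{(j+1)} \in \Iw_1(\F')$ and $r_j$ in the fixed set of representatives of $P_{\widetilde{w}_j} \backslash \Iw(\F')$ with $J^{(j+1)} A^{(j)} = \widetilde{w}_j r_j$. I would then construct a target object $\fM' \in \overline{Y}^{\mu,\tau}_{\bf{w}}(\F')$ together with a distinguished eigenbasis $\beta'$ whose partial Frobenius matrices equal $\widetilde{w}_j r_j$: take $\fM'$ to be the free $(W \otimes_{\Zp} \F')[\![u]\!]$-module on generators $f_k'^{(j)}$ on which $\Delta$ acts through $\eta_k$, with Frobenius defined, in the basis obtained from these generators, by the matrices $C'^{(j)} \defeq \Ad_{s_{j+1}}(u^{\bf{a}_1}, u^{\bf{a}_2}, u^{\bf{a}_3})(\widetilde{w}_j r_j)$ as in Proposition \ref{addingdd}; these lie in $\Mat_3(\F'[\![u]\!])$ by Lemma \ref{integrality}, and their cokernels are killed by $E(u)^2$ thanks to $(2,1,0)$-admissibility of $\widetilde{w}_j$, yielding a valid Kisin module of shape $\bf{w}$.

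Finally, I would apply Lemma \ref{Frowoper} with this choice of $\{J^{(j+1)}\}$ to produce an isomorphism $\iota : \fM \stackrel{\sim}{\to} \fM'$ in $\overline{Y}^{\mu,\tau}(\F')$, and pull back $\beta'$ along $\iota$ to obtain the required eigenbasis of $\fM$. The genuine substance of the argument is already absorbed into Lemma \ref{Frowoper}, whose iterative construction of the gluing isomorphism converges precisely because of the weak genericity of $\tau$; the main obstacle in the present proof is thus essentially resolved by that lemma, and what remains is the coset bookkeeping of the preceding paragraphs together with the explicit construction of the model Kisin module $\fM'$.
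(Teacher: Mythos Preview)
Your proposal is correct and follows essentially the same approach as the paper: both arguments reduce to Lemma~\ref{Frowoper} together with the coset bijection $\widetilde{w}_j\cdot(P_{\widetilde{w}_j}\backslash\Iw(\F'))\cong \Iw_1(\F')\backslash\Iw(\F')\widetilde{w}_j\Iw(\F')$. The paper's proof is terser---it simply asserts that Lemma~\ref{Frowoper} produces the desired eigenbasis once the target matrices are in the same $\Iw_1$-coset---while you spell out the implicit construction of the auxiliary module $\fM'$ and the pullback of its eigenbasis along the resulting isomorphism; but the substance is identical.
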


\begin{proof}
Let $A^{(j)}_1:=\Mat_{\beta_1^{(j)}}\big(\phi_{\fM,s_{j+1}(3)}^{(j)}\big)$  for some eigenbasis $\beta_1^{(j)}$ of $\fM^{(j)}$. 
If $A^{(j)}\in \Iw(\F')\widetilde{w}_j\Iw(\F')$ is such that $A^{(j)}$ and $A^{(j)}_1$ lie in the same left coset $\Iw_1(\F')\backslash\Iw(\F')\widetilde{w}_j\Iw(\F')$, then by Lemma \ref{Frowoper} there exists an eigenbasis of $\beta^{(j)}$ of $\fM^{(j)}$ such that 
$A^{(j)}=\Mat_{\beta^{(j)}}\big(\phi_{\fM',s_{j+1}(3)}^{(j)}\big)$.

By considering the obvious isomorphism
$$
 \widetilde{w}_j\cdot \left(P_{\widetilde{w}_j}\backslash \Iw(\F')\right) \stackrel{\sim}{\longrightarrow} \Iw_1(\F')\backslash \Iw_1(\F')\widetilde{w}_j\Iw(\F')=\Iw_1(\F')\backslash \Iw(\F')\widetilde{w}_j\Iw(\F'),
$$
we conclude that if $\fM^{(j)}$ has shape $\widetilde{w}_j$, then there exists an eigenbasis $\beta$ of $\fM$ such that $\Mat_{\beta}\big(\phi_{\fM,s_{j+1}(3)}^{(j)}\big)=A^{(j)}$ where $A^{(j)}$ lies in our choice of representatives in $\widetilde{w}_j\cdot \left(P_{\widetilde{w}_j}\backslash \Iw(\F')\right)$.
\end{proof}
In Table $\ref{table shapes mod p}$, we have listed a choice of representatives for $\widetilde{w}_j\cdot \left(P_{\widetilde{w}_j}\backslash \Iw(\F')\right)$ for $9$ out of the $25$ elements in $\mathrm{Adm}(2,1,0)$. As we will see below, a choice of representatives for the remaining $16$ can be easily obtained from these $9$ elements by cyclic symmetry.

We now introduce the notion of a \emph{gauge basis} of a mod $p$ Kisin module: 
\begin{defn}
\label{definition gauge basis mod p}
Let $\overline{\fM} \in \overline{Y}^{\mu, \tau}_{\bf{w}}(\F')$. A \emph{gauge basis} $\beta=(\beta^{(j)})_j$ of $\fM$ is an eigenbasis such that for each $0\leq j\leq f-1$, the matrix $\Mat_{\beta}\big(\phi^{(j)}_{\fM, s_{j+1}(3)}\big)$ is in the form given by the  $\widetilde{w}_j$-entry in Table \ref{table shapes mod p}.
\end{defn}
\noindent Note that a gauge basis always exists by Theorem \ref{thm:classification}.

In the above discussion,  one could just as well have chosen to use $\phi^{(j)}_{\fM, s_{j+1}(2)}$ or $\phi^{(j)}_{\fM, s_{j+1}(1) }$ instead of $\phi^{(j)}_{\fM, s_{j+1}(3)}$.   There is a simple way of determining the matrices for $\phi^{(j)}_{\fM, s_{j+1}(2)}$ or $\phi^{(j)}_{\fM, s_{j+1}(1) }$ in terms of $\phi^{(j)}_{\fM, s_{j+1}(3)}$.  Furthermore, while the \emph{shape} of the Kisin module depends on the choice of the isotypic piece, there is a simple recipe which relates them.  As a consequence we can restrict ourselves to the study of the shapes listed in Table \ref{table shapes mod p}, and the results for any other shape can be easily deduced by cyclic symmetry via Corollary \ref{symmetry} below.

Let 
$$
 \delta = \begin{pmatrix} 0 & 0 & v^{-1} \\ 1 &0 & 0 \\ 0 & 1 & 0 \end{pmatrix} \in \widetilde{W}.
$$
Conjugation by $\delta$ induces an outer automorphism of $\widetilde{W}^0$ of order 3 satisfying 
$$
\delta \alpha \delta^{-1} = \beta, \quad \delta \beta \delta^{-1} = \gamma, \quad \delta \gamma \delta^{-1} = \alpha.
$$
It is furthermore easy to check that 
$$
\delta \Iw(R) \delta^{-1} = \Iw(R)
$$
for any $\cO$-algebra $R$. 

\begin{prop} \label{cyclicsymmetry} Let  $\fM \in Y^{[0,h], \tau}(R)$.  Let $\beta^{(j)} =\left (f_{1}^{(j)}, f_{2}^{(j)}, f_{3}^{(j)} \right)$ be an eigenbasis for $\fM$ and let $A^{(j)}_{\bf{3}}\defeq \Mat_{\beta}\big(\phi^{(j)}_{\fM, s_{j+1}(3)}\big)$ be as in Definition \ref{0matrix}.
Then 
\begin{align*}
\text{\tiny$\Big \{u^{e - \bf{a}^{(j)}_{s_{j}(2)} + \bf{a}^{(j)}_{s_j(3)}}\otimes f_{s_j(3)}^{(j-1)}, 
u^{\bf{a}^{(j)}_{s_j(1)} - \bf{a}^{(j)}_{s_j(2)}}\otimes f_{s_j(1)}^{(j-1)},  1\otimes f_{s_j(2)}^{(j-1)} \Big \}$},\  
&
\text{\tiny$\Big\{u^{e - \bf{a}^{(j)}_{s_{j}(2)} + \bf{a}^{(j)}_{s_j(3)}} f_{s_j(3)}^{(j)}, 
u^{\bf{a}^{(j)}_{s_j(1)} - \bf{a}^{(j)}_{s_j(2)}} f_{s_j(1)}^{(j)},  f_{s_j(2)}^{(j)} \Big\}$}
\\
\left(\text{\tiny$
\Big\{u^{e - \bf{a}^{(j)}_{s_{j}(1)} + \bf{a}^{(j)}_{s_j(2)}}\otimes f_{s_j(2)}^{(j-1)}, 
u^{e -\bf{a}^{(j)}_{s_j(1)} + \bf{a}^{(j)}_{s_j(3)}}\otimes f_{s_j(3)}^{(j-1)},  1\otimes f_{s_j(1)}^{(j-1)} \Big\}$},\right.&
\left.\text{\tiny$\Big \{u^{e - \bf{a}^{(j)}_{s_{j}(1)} + \bf{a}^{(j)}_{s_j(2)}}  f_{s_j(2)}^{(j)}, 
u^{e -\bf{a}^{(j)}_{s_j(1)} + \bf{a}^{(j)}_{s_j(3)}} f_{s_j(3)}^{(j)},  f_{s_j(1)}^{(j)}\Big\}$}
\right )
\end{align*}
are bases for ${}^\phz\fM_{s_j(2)}^{(j -1)}$ and $\fM_{s_j(2)}^{(j)}$ respectively $($resp. for  ${}^\phz\fM_{s_j(1)}^{(j -1)}$ and $\fM_{s_j(1)}^{(j)}$ respectively$)$.  
If $A^{(j)}_{\bf{2}}\defeq \Mat_{\beta}\big(\phi_{\fM,\mathbf{a}_{s_{j+1}(2)}}^{(j)}\big)$ and $A^{(j)}_{\bf{1}}\defeq \Mat_{\beta}\big(\phi_{\fM,\mathbf{a}_{s_{j+1}(1)}}^{(j)}\big)$ are defined in the evident way following Definition \ref{0matrix}, 
then 
$$
A^{(j)}_{\bf{2}} = \delta A^{(j)}_{\bf{3}} \delta^{-1}, \quad A^{(j)}_{\bf{1}} = \delta^2 A^{(j)}_{\bf{3}} \delta^{-2}.
$$
\end{prop}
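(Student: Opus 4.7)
The proof is a direct bookkeeping computation in the same spirit as Proposition \ref{addingdd}.

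\textbf{Step 1 (the bases).} The verification that the listed triples form bases of the respective isotypic pieces proceeds exactly as in Lemma \ref{eigenbases}: using $\widehat{g}(u) = \omega_\pi(g)\,u$ together with $\omega_\pi^e = 1$, each listed element is directly checked to transform under $\Delta$ by the stated character $\eta_{s_j(2)}$ (resp.\ $\eta_{s_j(1)}$), and a rank count over $R[\![v]\!]$ identifies it as a basis of the corresponding isotypic submodule.

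\textbf{Step 2 (transition matrices).} Abbreviate $a_k = \bf{a}^{(j+1)}_{s_{j+1}(k)}$. The key computation is that the change-of-basis matrix from $\beta^{(j+1)}_{s_{j+1}(3)}$ to $\beta^{(j+1)}_{s_{j+1}(2)}$, viewed inside $R(\!(u)\!)^3$, equals $u^{a_3 - a_2}\,\delta^{-1}$ (where we use $u^e = v$ to identify $\delta^{-1}$ with its explicit matrix having $1$'s on the superdiagonal and a single $v$ in the bottom-left corner); and \emph{the same} matrix relates ${}^{\phz}\beta^{(j)}_{s_{j+1}(3)}$ to ${}^{\phz}\beta^{(j)}_{s_{j+1}(2)}$. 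The coincidence of these two transitions is the essential point: on both sides, passing from the $s_{j+1}(3)$-focus to the $s_{j+1}(2)$-focus amounts to a cyclic rotation of the basis together with identical compensating $u$-powers needed to stay in the correct $\Delta$-isotypic component. An entirely parallel computation produces the common transition $u^{a_3 - a_1}\,\delta^{-2}$ for the $s_{j+1}(1)$-case.

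\textbf{Step 3 (change of basis).} Since $\phi^{(j)}_{\fM}$ is $(W\otimes R)[\![u]\!]$-linear, the scalar $u^{a_3-a_2}$ commutes with it and with all the matrices involved, so the standard change-of-basis formula yields
\[
A^{(j)}_{\mathbf{2}} \;=\; u^{-(a_3-a_2)}\,\delta\, A^{(j)}_{\mathbf{3}}\, u^{a_3-a_2}\, \delta^{-1} \;=\; \delta\, A^{(j)}_{\mathbf{3}}\, \delta^{-1};
\]
the identical computation with $\delta^{-2}$ in place of $\delta^{-1}$ delivers $A^{(j)}_{\mathbf{1}} = \delta^2 A^{(j)}_{\mathbf{3}} \delta^{-2}$. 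The only place that demands real attention is matching the $v^{\pm 1}$ in the corner of $\delta^{\pm 1}$ to the $u^e$-factor that emerges when the orientation wraps around from position $3$ back to position $1$; this is a bookkeeping verification rather than a conceptual obstacle.
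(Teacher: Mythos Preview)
Your proof is correct and morally the same as the paper's, but you take a slightly more direct route. The paper passes through the intermediate matrix $C^{(j)}=\Mat_\beta(\phi_{\fM}^{(j)})$: it invokes Proposition \ref{addingdd} once to express $A^{(j-1)}_{\mathbf{3}}$ in terms of $C^{(j-1)}$, then an analogue of Proposition \ref{addingdd} (with the cyclically reordered basis, introducing the permutation $c_1=\beta\alpha$) to express $A^{(j-1)}_{\mathbf{2}}$ in terms of $C^{(j-1)}$, combines the two to get $A^{(j)}_{\mathbf{2}}=\Ad(\delta_{\mathbf{2}})(A^{(j)}_{\mathbf{3}})$ for an explicit monomial matrix $\delta_{\mathbf{2}}$, and finally checks $\delta_{\mathbf{2}}=\delta$ by multiplying out the diagonals. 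You instead compare the two isotypic bases $\beta^{(j+1)}_{s_{j+1}(3)}$ and $\beta^{(j+1)}_{s_{j+1}(2)}$ directly and observe that the transition matrix is $u^{a_3-a_2}\delta^{-1}$ on both source and target, so that the scalar cancels in the conjugation. This is a cleaner shortcut that avoids the detour through $C^{(j)}$; the paper's route has the minor virtue of reusing Proposition \ref{addingdd} verbatim rather than redoing a version of it. One small point of care in your Step~3: the scalar $u^{a_3-a_2}$ itself has negative exponent, so the honest justification for commuting the transition matrix through $\phi^{(j)}_{\fM}$ is that the \emph{full} matrix $M=u^{a_3-a_2}\delta^{-1}$ has all entries in $R[\![u]\!]$ (as you implicitly note in Step~2), not that the scalar alone does.
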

\begin{proof} 
We give the proof for $\fM_{s_j(2)}^{(j -1)}$ as the other proof is similar. Let $C^{(j)}\defeq \Mat_{\beta}\big(\phi_{\fM}^{(j)}\big)$ be as in Definition \ref{0matrix}
so that by Proposition \ref{addingdd} we have 
\begin{equation} \label{z1}
C^{(j-1)} = \Ad_{s_{j}}(u^{\bf{a}_1}, u^{\bf{a}_2}, u^{\bf{a}_3}) \big(A^{(j-1)}\big).
\end{equation}
Let $c_1 = \beta \alpha$ be the permutation matrix corresponding to the cycle $(132)$. It is clear that the elements listed in the statement form a basis for ${}^\phz\fM_{s_j(2)}^{(j-1)}$ and $\fM_{s_j(2)}^{(j)}$ respectively and hence the same argument as in Proposition \ref{addingdd} shows that 
\begin{eqnarray} 
\label{z2}
&&A^{(j-1)}_{\bf{2}} = \Ad\big(u^{- \bf{a}^{(j)}_{s_j(3)} -e}, u^{- \bf{a}^{(j)}_{s_j(1)} }, u^{-\bf{a}^{(j)}_{s_{j}(2)}}\big) \big(c_1^{-1} s_{j}^{-1} C^{(j-1)} s_{j} c_1\big), 
\end{eqnarray}
Combining (\ref{z1}) and (\ref{z2}),  we see that 
\begin{eqnarray*}
A^{(j)}_{\bf{2}} = \Ad(\delta_{\bf{2}})\big(A^{(j)}_{\bf{3}}\big),
\end{eqnarray*}
where 
\begin{eqnarray*}
&&\delta_{\bf{2}} = \Diag\big(u^{- \bf{a}^{(j)}_{s_j(3)} -e}, u^{- \bf{a}^{(j)}_{s_j(1)} }, u^{-\bf{a}^{(j)}_{s_{j}(2)}}\big)  c_1^{-1} \Diag\big(u^{\bf{a}^{(j)}_{s_j(1)}}, u^{\bf{a}^{(j)}_{s_j(2)}}, u^{\bf{a}^{(j)}_{s_j(3)}}\big).
\end{eqnarray*}
A direct computation shows that $\delta_{\bf{2}} = \delta$. 
\end{proof} 

\begin{cor} \label{symmetry} Let  $\fM \in Y^{[0,h], \tau}(\F')$. If $\fM$ has shape $\bf{w} = (\widetilde{w}_0, \ldots, \widetilde{w}_{f-1})$ then for any eigenbasis $\beta = (\beta^{(j)})$, 
$$
\Mat_{\beta}(\phi_{\fM,\mathbf{a}_{s_{j+1}(2)}}^{(j)}) \in \Iw(\F') (\delta \widetilde{w}_j \delta^{-1}) \Iw(\F'), \quad \Mat_{\beta}(\phi_{\fM,\mathbf{a}_{s_{j+1}(1)}}^{(j)}) \in \Iw(\F') (\delta^2 \widetilde{w}_j \delta^{-2}) \Iw(\F')
$$
for all $0\leq j\leq f-1$.  
\end{cor}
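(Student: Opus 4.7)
The plan is to deduce this corollary directly from Proposition \ref{cyclicsymmetry} together with the normalization property $\delta \Iw(R) \delta^{-1} = \Iw(R)$ already recorded in the excerpt. Since shape is defined (Definition \ref{defn:shape}) via the isotypic component indexed by $s_{j+1}(3)$, the hypothesis that $\fM$ has shape $\bf{w}$ means precisely that $A^{(j)}_{\bf{3}} := \Mat_{\beta}(\phi^{(j)}_{\fM, s_{j+1}(3)}) \in \Iw(\F')\widetilde{w}_j \Iw(\F')$ for every $j$.

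First, I would invoke Proposition \ref{cyclicsymmetry}, which gives the matrix identities $A^{(j)}_{\bf{2}} = \delta A^{(j)}_{\bf{3}} \delta^{-1}$ and $A^{(j)}_{\bf{1}} = \delta^2 A^{(j)}_{\bf{3}} \delta^{-2}$ in $\GL_3(\F'(\!(v)\!))$. Next, I would apply $\Ad(\delta)$ to the Bruhat cell containing $A^{(j)}_{\bf{3}}$: writing $A^{(j)}_{\bf{3}} = I_1 \widetilde{w}_j I_2$ with $I_1, I_2 \in \Iw(\F')$, we get
\[
A^{(j)}_{\bf{2}} = (\delta I_1 \delta^{-1})(\delta \widetilde{w}_j \delta^{-1})(\delta I_2 \delta^{-1}).
\]
Because $\delta \Iw(\F')\delta^{-1} = \Iw(\F')$, the outer factors $\delta I_1 \delta^{-1}$ and $\delta I_2 \delta^{-1}$ lie in $\Iw(\F')$, so $A^{(j)}_{\bf{2}} \in \Iw(\F')(\delta \widetilde{w}_j \delta^{-1})\Iw(\F')$. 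The same argument with $\delta^2$ in place of $\delta$ treats the $s_{j+1}(1)$ case.

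There is essentially no obstacle here: all the substance is in Proposition \ref{cyclicsymmetry} (the explicit computation of the conjugating element $\delta_{\mathbf{2}}$) and in the conjugation-invariance of the Iwahori, both of which are already established in the excerpt. The corollary is a clean corollary of applying $\mathrm{Ad}(\delta^k)$ to the double coset decomposition (\ref{BTdecomp}).
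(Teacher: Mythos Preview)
Your proposal is correct and matches the paper's approach exactly: the paper states the corollary without proof, leaving it as an immediate consequence of Proposition \ref{cyclicsymmetry} together with the fact that $\delta$ normalizes $\Iw(\F')$. Your write-up makes this explicit and there is nothing to add.
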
 

\begin{rmk} \label{deltaorbits} As a consequence of Corollary \ref{symmetry}, there is symmetry among the 25 shapes of $\mathrm{Adm}(2,1,0)$.  It is easy to see that $\delta \mathrm{Adm}(2,1,0) \delta^{-1} = \mathrm{Adm}(2,1,0)$ and that there are 9 orbits under conjugation by $\delta$.  In Table \ref{table shapes mod p}, we choose representatives for these 9 orbits and restrict our attention to those 9 shapes.   One can deduce all our results for the remaining 18 shapes simply by conjugating by $\delta$ or $\delta^2$. 
\end{rmk}

\subsection{\'Etale $\phz$-modules}
\label{subsection:etale phi modules}

We recall briefly some properties of \'etale $\phz$-modules which are well-known. We refer to \cite[\S 2.1]{CDM} and \cite[\S 5.3]{CL}  for proofs.

Let $\cO_{\cE, K}$ denote the $p$-adic completion of $\fS[\frac 1v]$, where $\fS\defeq W[\![v]\!]$, endowed with the unique continuous Frobenius morphism such that the natural inclusion $\fS[ \frac 1v]\ia \cO_{\cE, K}$ is Frobenius-equivariant. Let $R$ be a local, complete Noetherian $\cO$-algebra. By base change, the ring $\cO_{\cE, K}\widehat{\otimes}_{\Zp}R$ is naturally endowed with a Frobenius endomorphism $\phz$ and we write $\Phi\text{-}\Mod^{\text{\'et}}(R)$ for the category of \'etale $(\phz,\cO_{\cE, K}\widehat{\otimes}_{\Zp}R)$-modules. We fix once and for all a sequence $\underline{p}\defeq (p_n)_{n\in\N}$ where $p_n\in \Qpbar$ verify $p_{n+1}^{p}= p_n$ and $p_0=-p$. We let $K_{\infty}\defeq\underset{n\in\N}{\bigcup}K(p_n)$ and $G_{K_{\infty}}\defeq\Gal(\Qpbar/K_{\infty})$. 

By classical work of Fontaine (\cite{fontaine-fest}) we have an exact anti-equivalence of $\otimes$-categories:
\begin{eqnarray*}
\Phi\text{-}\Mod^{\text{\'et}}(R)&\stackrel{\sim}{\longrightarrow}& \Rep_{G_{K_{\infty}}}(R)\\
\cM&\longmapsto& \bV^*(\cM)\defeq
\Hom_{\Phi\text{-Mod}}\big(\cM,\cO_{\cE^{un}, K}\big)
\end{eqnarray*}
where $\cO_{\cE^{un}, K}$ is the \'etale extension of $\cO_{\cE,K}$ corresponding to a separable closure of $k(\!(v)\!)$.

The above construction can also be carried out with descent datum. More precisely, choose $(\pi_n)_{n\in\N}$ to be the sequence satisfying $\pi_n^e=p_n$ and $\pi_{n+1}^p= \pi_n$ with $\pi_0=\pi$. Then $L_{\infty}\defeq \underset{n\in\N}{\bigcup}L(\pi_n)$ and $G_{L_{\infty}}\defeq \Gal(\Qpbar/L_{\infty})$. We have $\Gal(L_{\infty}/K_{\infty}) \cong \Gal(L/K) = \Delta$.  Let $\cO_{\cE, L}$ denote the $p$-adic completion of $(W[\![u]\!])[1/u]$ equipped with an action of $\Gal(L_{\infty}/K_{\infty})\cong \Delta$ characterized by $\widehat{g}(u)\defeq \omega_{\pi}(g) u$.
We define, in the evident way, the category $\Phi\text{-}\Mod^{\text{\'et}}_{dd}(R)$ of \'etale $\phz$-module over $\cO_{\cE,L}\widehat{\otimes}_{\Zp}R$ with descent data and note that $\cO_{\cE,L}$ is an $\cO_{\cE,K}$-algebra (by ``ramifying the variable'' $u=v^{1/e}$).


We have an exact anti-equivalence of categories:
\begin{eqnarray*}
\Phi\text{-}\Mod^{\text{\'et}}_{dd}(R)&\stackrel{\sim}{\longrightarrow}& \Rep_{G_{K_{\infty}}}(R)\\
\cM&\longmapsto& \bV_{dd}^*(\cM)\defeq
\Hom_{\phi, \cO_{\cE, L}} \big(\cM,\cO_{\cE^{un},K}\big)
\end{eqnarray*}
where we make $G_{K_{\infty}}$ act on $\Hom_{\phi, \cO_{\cE, L}} \big(\cM,\cO_{\cE^{un},K}\big)$ via $g\cdot f\defeq g\circ f\circ \widehat{\overline{g}}^{-1} $ (here we write $\widehat{\overline{g}}$ to denote the automorphism of $\cM$ associated to $\overline{g}\in\Gal(L_\infty/K_\infty)\cong \Delta$ via the descent data).

Define $T_{dd}^*$ to be the composition $Y^{[0,h], \tau}(R) \ra \Phi\text{-}\Mod^{\text{\'et}}_{dd}(R) \ra \Rep_{G_{K_{\infty}}}(R)$, where the first map is given by tensoring with $\cO_{\cE, L}$ (over $(W[\![u]\!])$).

In order to compute $T_{dd}^*$, it is convenient to both remove the descent datum and pass to a single Frobenius.  This is carried out in \cite[\S 2.1.3]{CDM}. We briefly recall the construction.  

For any $\cO$-algebra $R$, we consider $R$ as a $W$-algebra via $\sigma_0$. We endow the ring $\cO_{\cE, K}\widehat{\otimes}_{W, \sigma_0}R$ with a Frobenius $\phz^f$ (by base change). We can now define in the evident fashion the category $\Phi^f\text{-}\Mod^{\text{\'et}}_{W, \sigma_0}(R)$ of \'etale $(\phz^{f},\cO_{\cE, K}\widehat{\otimes}_{W, \sigma_0}R)$-modules and we have an exact equivalence of categories:
\begin{eqnarray*}
\Phi^f\text{-}\Mod^{\text{\'et}}_{W, \sigma_0}(R)&\stackrel{\sim}{\longrightarrow}& \Rep_{G_{K_{\infty}}}(R)\\
\cM&\longmapsto& \bV^*_{W}(\cM)\defeq
\Hom_{\Phi^f\text{-Mod}}\big(\cM,\cO_{\cE^{un}, K}\big).
\end{eqnarray*}
 
In particular, if $(\cM, \phi_{\cM}) \in \Phi\text{-}\Mod^{\text{\'et}}(R)$, then $(\cM^{(0)}, \phi_{\cM}^f) \in \Phi^f\text{-}\Mod^{\text{\'et}}_{W, \sigma_0}(R)$. This defines a functor $\varepsilon_0:\Phi\text{-}\Mod^{\text{\'et}}(R)\rightarrow \Phi^f\text{-}\Mod^{\text{\'et}}_{W, \sigma_0}(R)$.

We have the following compatibility between the above constructions (cf. Theorem 2.1.6 and equation (12) in \cite{CDM}): 
\begin{equation*}
\hspace{-1in}
\xymatrix@=6pc{
Y^{[0,2],\tau}(R) \ar[r] \ar@/^2pc/^{T_{dd}^*}[rr]
&\Phi\text{-}\Mod^{\text{\'et}}_{dd}(R)\ar^{\bV^*_{dd}}[r]\ar@/^2pc/^{(\bullet)^{\Delta = 1}}[d]&\Rep_{G_{K_{\infty}}}(R)\\
&\Phi\text{-}\Mod^{\text{\'et}}(R)\ar^{\varepsilon_0\cdot(\bullet)}[r]\ar^{\bV^*}[ur]\ar@/^2pc/[u]^{-\otimes_{\cO_{\cE, K}}\cO_{\cE, L}}&\Phi^f\text{-}\Mod^{\text{\'et}}_{W}(R)\ar_{\bV^*_{W}}[u].
}
\end{equation*}





If $R$ is a $\F$-algebra and $\fM\in Y^{[0,2],\tau}(R)$, it will be useful to describe the \'etale $(\phz^f,R(\!(v)\!))$-module $\varepsilon_0\big((\fM \otimes_{\F[\![u]\!]} \F(\!(u)\!))^{\Delta = 1}\big)$ explicitly in terms of the $A^{(j)}$.

\begin{prop}
\label{prop:phif}
Let $\fM\in Y^{[0,2], \tau}(R)$ and $\beta$ be an eigenbasis of $\fM$. Write $(s_j)$ for an orientation of $\tau$, $\big(A^{(j)}\big)=\Mat_{\beta}\big(\phi^{(j)}_{\fM, s_{j+1}(3)}\big)$
and consider $\cM = \fM[1/u] \in \Phi\text{-}\Mod^{\text{\'et}}_{dd}(R)$. Then the \'etale $(\phz^f, R(\!(v)\!))$-module $\varepsilon_0\big(\cM^{\Delta = 1}\big)$ is described with respect to the basis 
$\mathfrak{f}=(u^{\mathbf{a}_{1}^{(0)}}f^{(0)}_{1}, u^{\mathbf{a}_{2}^{(0)}}f^{(0)}_{2},  
u^{\mathbf{a}_{3}^{(0)}}f^{(0)}_{3})$  by
\begin{eqnarray*}
\Mat_{\mathfrak{f}}(\phi_{\cM^{(0)}}^f)=\prod_{j=0}^{f-1}s_{f-j}\cdot\phz^{j}\left(A^{(f-1-j)}\text{\tiny$
\begin{pmatrix}v^{a_{s_{f-j}(1),j}}&0&0\\0&v^{a_{s_{f-j}(2),j}}&0\\
0&0&v^{a_{s_{f-j}(3),j}}\end{pmatrix}
$}\right)\cdot s_{f-j}^{-1}.
\end{eqnarray*}

\end{prop}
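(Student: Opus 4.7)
The plan is to proceed in three steps: (1) verify that $\mathfrak{f}$ is an $R(\!(v)\!)$-basis of $\varepsilon_0(\cM^{\Delta=1})$; (2) compute $\Mat_{\mathfrak{f}}(\phz_{\cM^{(0)}}^f)$ in terms of the matrices $C^{(j)}\defeq \Mat_\beta(\phi_\fM^{(j)})$; and (3) substitute via Proposition \ref{addingdd} and telescope.

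For step (1), the action of $\widehat{g}$ on the $(0)$-component of $(W \otimes R)[\![u]\!]$ sends $u$ to $\sigma_0(\widetilde{\omega}_\pi(g))\,u$, so a direct computation gives
\[
\widehat{g}\bigl(u^{\mathbf{a}_k^{(0)}} f_k^{(0)}\bigr) = \sigma_0(\widetilde{\omega}_\pi(g))^{\mathbf{a}_k^{(0)}}\eta_k(g)\cdot u^{\mathbf{a}_k^{(0)}} f_k^{(0)} = u^{\mathbf{a}_k^{(0)}} f_k^{(0)},
\]
the second equality using $\eta_k = \omega_f^{-\mathbf{a}_k^{(0)}}$ with $\omega_f = \sigma_0 \circ \widetilde{\omega}_\pi$. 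Conversely, decomposing $\cM^{(0)} = \bigoplus_k R(\!(u)\!)\, f_k^{(0)}$ and matching $\Delta$-characters monomial by monomial in $u$ shows that the invariants in each summand are precisely $R(\!(v)\!)\cdot u^{\mathbf{a}_k^{(0)}} f_k^{(0)}$, whence $\mathfrak{f}$ is the claimed basis.

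For step (2), iterating $\varphi_\fM(\beta^{(j)}) = \beta^{(j+1)} C^{(j)}$ semilinearly through $f$ steps and using the cyclic identification $\beta^{(f)} = \beta^{(0)}$ yields
\[
\varphi_\fM^f(\beta^{(0)}) = \beta^{(0)}\prod_{j=0}^{f-1} \phz^j\bigl(C^{(f-1-j)}\bigr).
\]
Writing $\mathfrak{f} = \beta^{(0)} U^{(0)}$ with $U^{(0)} \defeq \Diag(u^{\mathbf{a}_1^{(0)}}, u^{\mathbf{a}_2^{(0)}}, u^{\mathbf{a}_3^{(0)}})$ and applying $\phz^f$-semilinearity, the matrix in question equals $(U^{(0)})^{-1}\bigl(\prod_{j=0}^{f-1} \phz^j(C^{(f-1-j)})\bigr)\phz^f(U^{(0)})$.

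For step (3), Proposition \ref{addingdd} together with Definition \ref{adconj} (unpacked) gives $C^{(j)} = U^{(j+1)} s_{j+1} A^{(j)} s_{j+1}^{-1} (U^{(j+1)})^{-1}$. Since integer permutation matrices are $\phz$-fixed, each factor splits as $\phz^j(U^{(f-j)})\, Y_j\, \phz^j((U^{(f-j)})^{-1})$ with $Y_j \defeq s_{f-j} \phz^j(A^{(f-1-j)}) s_{f-j}^{-1}$. The consecutive boundary terms telescope via the arithmetic identity $p\,\mathbf{a}_k^{(r)} - \mathbf{a}_k^{(r+1)} = a_{k,\,f-1-r}\cdot e$ (immediate from the definition of $\mathbf{a}_k^{(j)}$ together with $p^f = e+1$), producing $\phz^j((U^{(f-j)})^{-1})\phz^{j+1}(U^{(f-j-1)}) = \Diag(v^{p^j a_{k,j}})_k$ between adjacent $Y_j,Y_{j+1}$. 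The left boundary $(U^{(0)})^{-1}\phz^0(U^{(f)}) = I$ cancels, while the right boundary $\phz^{f-1}((U^{(1)})^{-1})\phz^f(U^{(0)}) = \Diag(v^{p^{f-1} a_{k,f-1}})_k$ fits the same pattern at $j=f-1$. Finally, absorbing each of these diagonal factors into the inner conjugation via $s^{-1}\Diag(x_k)_k\, s = \Diag(x_{s(k)})_k$ and pulling out $\phz^j$ rewrites $Y_j\Diag(v^{p^j a_{k,j}})_k$ as $s_{f-j}\phz^j(A^{(f-1-j)}\Diag(v^{a_{s_{f-j}(k),j}})_k) s_{f-j}^{-1}$, giving exactly the formula in the statement. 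The main obstacle is the careful cyclic bookkeeping of embeddings and iterated Frobenius twists, but once the telescoping identity is in place, the computation is purely mechanical.
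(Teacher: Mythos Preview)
Your proof is correct and follows exactly the approach the paper indicates: the paper's own proof reads in its entirety ``This is a direct computation (cf.\ \cite[(24)]{CDM}),'' and your three steps carry out precisely that computation, using Proposition~\ref{addingdd} to express $C^{(j)}$ in terms of $A^{(j)}$ and then telescoping via the identity $p\mathbf{a}_k^{(r)}-\mathbf{a}_k^{(r+1)}=a_{k,f-1-r}\,e$. In other words, you have supplied the details the paper omits.
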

\begin{proof}
This is a direct computation (cf. \cite[(24)]{CDM}). 
\end{proof}

\section{Kisin varieties and tangent spaces}
\label{Kisintangent}

In this section, we show that under the weak genericity assumption (Definition \ref{gencond}) the Kisin variety is trivial; in particular, if $\rhobar:G_{K} \ra \GL_3(\F')$ comes from a Kisin module $\overline{\fM} \in Y^{\mu, \tau}(\F')$, then $\overline{\fM}$ is unique. As a consequence, one can attach a shape $\bf{w}(\rhobar, \tau)$ to $\rhobar$.  Later, in Proposition \ref{prop:intersec} (also Table \ref{table:SW}), we show that the shape $\bf{w}(\rhobar, \tau)$ is closely related to the Serre weights of $\rhobar$ which appear in the principal series type $\overline{\sigma}(\tau)$ (as was the case for $\GL_2$ \cite{breuil-buzzati}).  In this section, we also show that the map on tangent spaces from deformations of Kisin modules to deformations of \'etale $\phi$-modules is injective and that under mild assumptions the same is true for the restriction on Galois deformations from $G_K$ to $G_{K_{\infty}}$.   

\subsection{Kisin varieties}
In what follows, let $\F'$ denote a finite extension of $\F$.
If $\cM \in \Phi\text{-}\Mod^{\text{\'et}}_{dd}(\F)$ is an \'etale 
$\cO_{\cE, L} \otimes_{\Zp}\F$-module with descent data (cf. \S \ref{subsection:etale phi modules}) and $R$ is any $\F'$-algebra, we set $\cM_R\defeq \cM \otimes_{\F'(\!(u)\!)}R(\!(u)\!) \in  \Phi\text{-}\Mod^{\text{\'et}}_{dd}(R)$.

\begin{defn} \label{defn:Kisinvariety} The \emph{Kisin variety} $Y_{\cM}^{[0,2], \tau}(R)$  of $\cM$ is a projective scheme over $\Spec(\F)$ which represents the functor: 
\begin{equation*}
Y_{\cM}^{[0,2], \tau}(R) \defeq
\left \{ 
\begin{aligned}
\fM_R \subset \cM_R \mid\, \fM_R[1/u] = \cM_R,\ \phi_{\cM_R}(\fM_R) \subset \fM_R,\ \fM_R \in Y^{[0,2], \tau}(R)  
\end{aligned}
\right \}
\end{equation*}
In other words, $Y_{\cM}^{[0,2], \tau}(R)$ is the set of $(k\otimes_{\Fp}R)[\![u]\!]$-lattices in $\cM_R$ which have type $\tau$ and height $\leq$ 2.  We can also consider 
\begin{equation*}
Y_{\cM}^{\mu, \tau} =\left \{ 
\begin{aligned}\fM_R \subset \cM_R \mid \fM_R[1/u] = \cM_R,\ \phi_{\cM_R}(\fM_R) \subset \fM_R,\ \fM_R \in Y^{\mu, \tau}(R)\end{aligned}  \right \}.
\end{equation*}
\end{defn}

There is an obvious inclusion $Y_{\cM}^{\mu, \tau} \subset Y_{\cM}^{[0,2], \tau}$.  These are projective schemes because they are closed subschemes of finite type in the affine Grassmannian for the group $\Res_{k/\Fp} \GL_3$. (cf. \cite[Proposition 2.1.7]{kisin-annals}: the proof there is in the height 1 case, but works for all heights.) The result for $Y_{\cM}^{\mu, \tau}$ follows from the fact that $Y^{\mu, \tau}$ is a closed substack of $Y^{[0,2], \tau}$ (cf. \cite[Proposition 5.2]{CL}).

 \begin{thm} \label{Kisinvariety} If $\tau$ is weakly generic $($Definition $\ref{labelgencond})$, then $Y_{\cM}^{\mu, \tau}(\F')$ is either empty or a single point. 
 \end{thm}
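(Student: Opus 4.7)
Suppose $\fM_1, \fM_2 \in Y^{\mu,\tau}_\cM(\F')$; I aim to show $\fM_1 = \fM_2$ as submodules of $\cM$. By Theorem~\ref{thm:classification}, pick gauge bases $\beta_i$ of each $\fM_i$, so that the matrices $A_i^{(j)} := \Mat_{\beta_i}(\phi^{(j)}_{\fM_i, s_{j+1}(3)})$ have the explicit forms listed in Table~\ref{table shapes mod p} and lie in $\Iw(\F')$. Both bases become bases of $\cM$ after inverting $u$, related by a matrix $D^{(j)} \in \GL_3(\F'(\!(u)\!))$; compatibility with descent data gives $I^{(j)} := \Ad_{s_j}^{-1}(u^{\bf{a}_1}, u^{\bf{a}_2}, u^{\bf{a}_3})(D^{(j)}) \in \GL_3(\F'(\!(v)\!))$, a priori with $v$-poles.

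Writing $s_{j+1,j} := s_{j+1}^{-1} s_j$, the proof of Proposition~\ref{changeofbasis} carries over verbatim to give
\[
A_2^{(j)} = I^{(j+1)} A_1^{(j)}\, s_{j+1,j}\, \Ad\bigl(v^{a_{s_j(1),f-j-1}}, v^{a_{s_j(2),f-j-1}}, v^{a_{s_j(3),f-j-1}}\bigr)\bigl(\phz(I^{(j)})^{-1}\bigr) s_{j+1,j}^{-1}.
\]
The central step is to combine this with the explicit forms from Table~\ref{table shapes mod p} and the weak-genericity bounds to conclude $I^{(j)} \in \Iw(\F')$. Any $v$-pole of $I^{(j)}$ is magnified by a factor of $p$ in $\phz(I^{(j)})^{-1}$, while the twist $\Ad(v^\bullet)$ reduces poles by at most $a_{s_j(1),f-j-1} - a_{s_j(3),f-j-1} \leq p-4$ (weak genericity). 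This comparison prevents $A_1^{(j)}$, whose entries have $v$-valuation bounded by $3$, from absorbing such a pole, producing a pole in $A_2^{(j)}$ that contradicts $A_2^{(j)} \in \Iw(\F')$. A parallel analysis of leading terms modulo $v$ (again via Table~\ref{table shapes mod p} and genericity) rules out nonzero strictly lower-triangular entries of $I^{(j)} \bmod v$, so $I^{(j)} \in \Iw(\F')$.

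Once $I^{(j)} \in \Iw(\F')$, Lemma~\ref{integrality} (applied with $b_i := \bf{a}^{(j)}_{s_j(i)}$; its inequality hypotheses follow from \eqref{orientlead} and weak genericity) implies that $D^{(j)}$ has entries in $\F'[\![u]\!]$. Running the same argument with $\fM_1$ and $\fM_2$ swapped shows $(D^{(j)})^{-1}$ is also integral, hence $D^{(j)} \in \GL_3(\F'[\![u]\!])$; the equality $\beta_1 = \beta_2 D^{(j)}$ then forces $\fM_1 = \fM_2$. The main obstacle is the integrality step $I^{(j)} \in \Iw(\F')$: it requires careful case analysis over the nine $\delta$-orbits of $\mathrm{Adm}(2,1,0)$ represented in Table~\ref{table shapes mod p}, and fails in general without the weak genericity assumption on $\tau$.
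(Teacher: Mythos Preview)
Your overall strategy matches the paper's: invoke the change-of-basis formula, pit the $p$-amplification of poles under $\phz$ against the bounded corrections coming from weak genericity and the height condition, deduce $I^{(j)} \in \Iw(\F')$, and then appeal to Lemma~\ref{integrality}. Two points deserve correction.

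First, the one-step contradiction you sketch (``pole in $I^{(j)} \Rightarrow$ pole in $A_2^{(j)}$'') has a gap: the right-hand side of the change-of-basis formula also contains $I^{(j+1)}$, which may itself carry poles that compensate. The paper closes this by letting $k_j$ be the pole order of $I^{(j)}$ (so $v^{k_j}I^{(j)} =: I^{(j),+}$ is integral and nonzero mod $v$), rearranging to obtain the recursive inequality
\[
k_{j+1} \;\geq\; p\,k_j \;-\; \bigl(a_{s_j(1),f-j-1}-a_{s_j(3),f-j-1}\bigr) \;-\; 2 \;\geq\; p\,k_j - (p-2),
\]
and then iterating around $j \in \Z/f\Z$: if any $k_j \geq 1$ the sequence is forced to grow without bound, a contradiction. (The constant $2$ here comes from the height condition $v^2(A_2^{(j)})^{-1} \in \Mat_3(\F'[\![v]\!])$, not from a ``bound by $3$'' on the entries of $A_1^{(j)}$.) A second rearrangement, again using only the height bound and weak genericity, then forces $I^{(j)}$ to be upper-triangular mod $v$.

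Second---and this is the main correction---no case analysis over shapes is needed, and neither are gauge bases or Theorem~\ref{thm:classification}. The paper works with arbitrary eigenbases and uses only two shape-independent facts about the $A_i^{(j)}$: integrality, and the height bound $v^2(A_i^{(j)})^{-1} \in \Mat_3(\F'[\![v]\!])$. The argument is therefore completely uniform; your concluding remark that the integrality step ``requires careful case analysis over the nine $\delta$-orbits'' is mistaken, and the appeal to Table~\ref{table shapes mod p} is unnecessary.
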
 
 \begin{proof}   
We show that $Y_{\cM}^{[0,2], \tau}(\F')$ is either empty or a single point.  Assume we have two 
$(k\otimes_{\Fp}\F')[\![u]\!]$-lattices $\fM_1$ and $\fM_2$ in $\cM_{\F'}$. 
For $i\in\{1,2\}$ choose an eigenbasis $\beta_i$ for $\fM_i$, such that 
$\Mat_{\beta_i}\big(\phi^{(j)}_{\fM_i,s_{j+1}(3)}\big)=A_i^{(j)}$, where $(s_j)$ denotes the orientation on $\tau$.
Let $\big(D^{(j)}\big) \in \big(\GL_3 (\F'(\!(u)\!))\big)^f$ be the $f$-tuple of matrices which gives the basis for $\fM_2$ in terms of $\fM_1$ as in  Proposition \ref{changeofbasis}. Note that, \emph{a priori}, the matrices $D^{(j)}$ have denominators in $u$.

We want to show that $D^{(j)} \in \GL_3 (\F'[\![u]\!])$ for all $j$.  For all $j \in \Z/f\Z$, let us define
$I^{(j)} = \Ad^{-1}_{s_j}(u^{\bf{a}_1}, u^{\bf{a}_2}, u^{\bf{a}_3})\big(D^{(j)}\big) \in \GL_3(\F'(\!(v)\!))$  and recall the change of basis formula (\ref{cob1}) which remains valid for $\cM$:
\begin{equation} \label{a1}
A^{(j)}_2= I^{(j+1)}A^{(j)}_1 s_{j+1,j}\Big(\Ad\big(v^{a_{s_{j(1),f - j - 1}}},v^{a_{s_{j(2),f - j - 1}}},v^{a_{s_{j(3),f - j - 1}}}\big)\big(\phz\big(I^{(j)}\big)^{-1}\big)\Big)s_{j+1,j}^{-1}.
\end{equation}
For each $j \in \Z/f\Z$, define $k_j \in \Z$ such that  $v^{k_{j}} I^{(j)} =: I^{(j), +} \in \Mat_3(\F'[\![v]\!])$ and such that $I^{(j), +} \not\equiv 0 \mod v$.
Rearranging (\ref{a1}), we get
\begin{equation} \label{a2} 
\text{\small{$v^{-p k_j} \cdot s_{j+1,j}\Big(\Ad\big(v^{a_{s_{j}(1),f - j - 1}},v^{a_{s_{j}(2),f - j - 1}},v^{a_{s_{j}(3),f - j - 1}}\big) 
\cdot \phz\big(I^{(j), +}\big)\Big)s_{j+1,j}^{-1}= v^{-k_{j+1}} \big(A^{(j)}_2 \big)^{-1} I^{(j+1), +}A^{(j)}_1.$}}
\end{equation}
Multiplying through by $v^{2 + k_{j+1}}$ the right side of (\ref{a2}) becomes integral.   Since $\big(I^{(j), +}\big)_{i,k}\in {(\F')}\s+v\F'[\![v]\!]$ for some $1\leq i,k\leq 3$, $\Ad\big(v^{a_{s_{j}(1),f - j - 1}},v^{a_{s_{j}(2),f - j - 1}},v^{a_{s_{j}(3),f - j - 1}}\big) \big( \phz\big(I^{(j), +}\big) \big)$ is at most divisible by $v^{a_{s_j(1),f-1-j}-a_{s_j(3),f-1-j}}$.   
We conclude that
\begin{equation} \label{a3}
k_{j+1} \geq p k_j - (a_{s_j(1),f-1-j}-a_{s_j(3),f-1-j}) - 2.
\end{equation}

Since $\tau$ is weakly generic, $ \max_j\{a_{s_j(1),f-1-j}-a_{s_j(3),f-1-j}\}< p-3$.  Thus, if $k_j \geq 1$ for any $j$, then by iterating (\ref{a3}) we deduce that all $k_j$ become arbitrary large.
Thus, $k_j  \leq 0$ for all $j$ and $I^{(j)} \in \Mat_3(\F'[\![v]\!])$.   
Interchanging the roles of $\fM_1$ and $\fM_2$, we conclude that $I^{(j)} \in \GL_3(\F'[\![v]\!])$.

It remains to show that $I^{(j)} \in \Iw(\F')$: by Lemma \ref{integrality}, this is equivalent to 
$D^{(j)} \in\GL_3(\F'[\![u]\!])$.
Rearranging again the change of basis formula (\ref{cob1}), we have
\begin{equation}
\label{a4}
\big(A^{(j)}_2 \big)^{-1} I^{(j+1)} A^{(j)}_1=s_{j+1,j}
\Big(\Ad\big(v^{a_{s_{j(1),f-j-1}}},v^{a_{s_{j(2),f-j-1}}},v^{a_{s_{j(3),f-j-1}}}\big) \big(\phz\big(I^{(j)}\big)\Big)s_{j+1,j}^{-1}.
\end{equation}
For $1\leq k, h\leq 3$, we have
\begin{eqnarray*}
\Big(\Ad\big(v^{a_{s_j(1),f-j-1}},v^{a_{s_j(2),f-j-1}},v^{a_{s_j(3),f-j-1}}\big) \big(\phz\big(I^{(j)}\big)\big)\Big)_{hk}&\in& 
\big(v^{p\alpha_{hk}-(a_{s_{j}(k),f-1-j}-a_{s_{j}(h),f-1-j})}\big)
\end{eqnarray*}
where the integers $\alpha_{hk}\in\N$ are defined by $\left(I^{(j)}\right)_{h,k}\in \left(v^{\alpha_{h,k}}\right)$.
On the other hand, the height condition on $A_2^{(j)}$ forces the LHS in (\ref{a4}) to be an element in $\frac{1}{v^2}\Mat_3(\F'[\![v]\!])$. 
In particular, we have $p\alpha_{hk}-(a_{s_{j}(k),f-1-j}-a_{s_{j}(h),f-1-j})\geq -2$ for all  $1\leq k, h\leq 3$ and this implies $\alpha_{h,k}\geq 1$ when $h>k$ since $\tau$ is weakly generic. Therefore, $I^{(j)}\in \cI(\F')$, as required.


\end{proof}

Theorem \ref{Kisinvariety} allows us to attach a shape $\mathbf{w}(\rhobar,\tau)$ to $\rhobar: G_K\ra \GL_3(\F')$ when the type $\tau$ is weakly generic: 

\begin{defn} \label{rhobarshape} Let $\rhobar:G_K \ra \GL_3(\F)$ and $\tau$ be as in Theorem \ref{Kisinvariety}. Assume there exists $\overline{\fM}_{\rhobar} \in Y^{\mu, \tau}(\F)$ such that $T_{dd}^*(\overline{\fM}_{\rhobar}) \cong \rhobar|_{G_{K_{\infty}}}$.  We define $\bf{w}(\rhobar, \tau) \in \mathrm{Adm}(2,1,0)^f$ to be the shape of $\overline{\fM}_{\rhobar}$. Whenever we invoke the shape $\bf{w}(\rhobar, \tau)$ implicit in that is the assertion that there exists a Kisin module $\overline{\fM}_{\rhobar}$ as above.
\end{defn}

Next, we study the tangent space at a closed point of $\overline{\fM} \in Y^{\mu, \tau}(\F')$.  Since $\overline{\fM}$ often has automorphisms, we work at the categorical level.  Define 
$$
\mathfrak{t}_{\overline{\fM}} = \Big\{ (\fM, \delta_0) \mid \fM \in Y^{\mu, \tau}(\F'[\eps]/\eps^2),\ \delta_0:\fM/\eps\fM \stackrel{\sim}{\longrightarrow}\overline{\fM} \Big\}
$$   
which we consider as a category where morphisms are maps in $Y^{\mu, \tau}(\F'[\eps]/\eps^2)$ commuting with trivializations. 
 
\begin{prop} \label{tsff} Assume that $\tau$ is weakly generic. The functor $T_{dd}^*$ induces a fully faithful functor
$$
T_{\mathrm{tan}}^*: \mathfrak{t}_{\overline{\fM}} \ra \Rep_{\F'[\eps]/\eps^2}(G_{K_{\infty}}).
$$   
\end{prop}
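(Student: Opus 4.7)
The plan is to factor the functor $T^*_{dd}$ as $\fM \mapsto \cM := \fM[1/u]$ followed by the anti-equivalence $\bV^*_{dd}$ from \'etale $\phz$-modules with descent data to $G_{K_{\infty}}$-representations. Since $\bV^*_{dd}$ is an anti-equivalence, it suffices to show that $\fM \mapsto \cM$ is fully faithful as a functor from $\mathfrak{t}_{\overline{\fM}}$ into deformations of $\overline{\cM} := \overline{\fM}[1/u]$. Faithfulness is immediate: each $\fM \in \mathfrak{t}_{\overline{\fM}}$ is $u$-torsion-free, so a morphism is determined by its extension after inverting $u$.

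For fullness, I start from $(\fM_1, \delta_1), (\fM_2, \delta_2) \in \mathfrak{t}_{\overline{\fM}}$ together with a morphism $h: \cM_1 \to \cM_2$ of \'etale $\phz$-modules with descent data which reduces modulo $\eps$ to the identity via the $\delta_i$, and I would reduce the claim $h(\fM_1) \subseteq \fM_2$ to a matrix computation. Fix an eigenbasis $\overline{\beta}$ of $\overline{\fM}$ and lift it via $\delta_i$ to eigenbases $\beta_i$ of $\fM_i$; such lifts exist because $\fM_i$ is free over $(W \otimes R)[\![u]\!]$, with $R := \F'[\eps]/\eps^2$. In these bases $h$ is represented by matrices $D^{(j)} \in \GL_3(R(\!(u)\!))$ with $D^{(j)} \equiv \mathrm{id} \pmod{\eps}$, and the containment $h(\fM_1) \subseteq \fM_2$ amounts to $D^{(j)} \in \GL_3(R[\![u]\!])$ for every $j$. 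Following Proposition \ref{changeofbasis}, set $I^{(j)} := \Ad^{-1}_{s_j}(u^{\mathbf{a}_1}, u^{\mathbf{a}_2}, u^{\mathbf{a}_3})(D^{(j)})$ and $J^{(j)} := (I^{(j)} - \mathrm{id})/\eps \in \Mat_3(\F'(\!(v)\!))$. By Lemma \ref{integrality}, integrality of $D^{(j)}$ is then equivalent to $J^{(j)} \in \Mat_3(\F'[\![v]\!])$ together with upper triangularity modulo $v$.

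Writing $A_i^{(j)} := \Mat_{\beta_i}(\phi^{(j)}_{\fM_i, s_{j+1}(3)})$ and expanding the change-of-basis identity of Proposition \ref{changeofbasis} modulo $\eps^2$ gives
\[
B^{(j)} \;=\; J^{(j+1)} A_1^{(j)} \;-\; A_1^{(j)}\, s_{j+1,j}\, \Ad\!\left(v^{a_{s_j(1),f-j-1}}, v^{a_{s_j(2),f-j-1}}, v^{a_{s_j(3),f-j-1}}\right)\!\left(\phz(J^{(j)})\right) s_{j+1,j}^{-1},
\]
where $B^{(j)} := (A_2^{(j)} - A_1^{(j)})/\eps \in \Mat_3(\F'[\![v]\!])$ and $s_{j+1,j} := s_{j+1}^{-1} s_j$. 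Up to the integral correction $B^{(j)}$, this is precisely the identity exploited in the proof of Theorem \ref{Kisinvariety}.

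The same $v$-adic propagation-of-poles argument will then conclude the proof. Assuming for contradiction that some $J^{(j)}$ has positive maximal pole order $k_j$, I would write $J^{(j)} = v^{-k_j} J^{(j),+}$ with $J^{(j),+}$ integral and nonzero modulo $v$, multiply the identity above by $v^{2 + k_{j+1}}$, and use the height bound on $A_1^{(j)}$ together with the weak genericity bound $a_{s_j(1),f-j-1} - a_{s_j(3),f-j-1} \leq p - 4$ to conclude $k_{j+1} \geq p k_j - (p-4) - 2$. Since $f$ is fixed and the indices are cyclic, iterating this estimate forces the $k_j$ to blow up, contradicting $J^{(j)} \in \Mat_3(\F'(\!(v)\!))$, and hence all $k_j \leq 0$. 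The upper triangularity modulo $v$ follows from inspecting the $v^0$-term of the same identity, exactly as in the final paragraph of the proof of Theorem \ref{Kisinvariety}. The only technical subtlety is checking that the inhomogeneous term $B^{(j)}$ does not disturb the bookkeeping: this is clear because $B^{(j)}$ is integral in $v$, so after multiplication by $v^{2+k_{j+1}}$ with $k_{j+1} \geq 0$ it contributes no negative $v$-order terms, and it plays no role in the estimate for the minimal $v$-order of the $\Ad$-transformed expression on the other side.
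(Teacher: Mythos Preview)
Your proof is correct and follows essentially the same approach as the paper. You factor through $\bV^*_{dd}$, reduce fullness to a matrix computation by lifting an eigenbasis, linearize the change-of-basis formula of Proposition~\ref{changeofbasis} at first order in $\eps$ (your $J^{(j)}$ is the paper's $Y^{(j)}$, and your use of $A_1^{(j)}$ versus the paper's $A^{(j)}$ is immaterial since all terms involving $J^{(j)}$ already carry a factor of $\eps$), and then invoke the pole-propagation argument of Theorem~\ref{Kisinvariety} verbatim, correctly noting that the integral inhomogeneous term $B^{(j)}$ does not interfere with the estimate.
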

\begin{proof} 
The functor $\bV_{dd}^*:\Phi\text{-}\Mod^{\text{\'et}}_{dd}(\F'[\eps]/\eps^2) \ra \Rep_{\F'[\eps]/\eps^2}(G_{K_{\infty}})$ is an anti-equivalence of categories so we are reduced to showing that
$$
\fM \mapsto \fM[1/u]
$$
is fully faithful on $\mathfrak{t}_{\overline{\fM}}$.    Let $\fM_1, \fM_2 \in \mathfrak{t}_{\overline{\fM}}$.  Choose an eigenbasis $\overline{\beta}$ of $\overline{\fM}$ and let $A^{(j)}=\Mat_{\overline{\beta}}\big(\phi^{(j)}_{\overline{\fM},s_{j+1(3)}}\big)$. 
For $i\in\{1,2\}$, we fix eigenbases $\beta_i$ of $\fM_i$ lifting $\overline{\beta}$ and write $A^{(j)} + \eps B_i^{(j)}=\Mat_{\beta_i}\big(\phi^{(j)}_{\fM_i,s_{j+1(3)}}\big)$ for some $B_i^{(j)}\in\Mat_3(\F'[\![v]\!])$.
An isomorphism $\iota:\fM_1[1/u] \stackrel{\sim}{\longrightarrow} \fM_2[1/u]$ which is trivial modulo $(\eps)$ satisfies $\Mat_{\beta_1,\beta_2}(\iota)=\Id_3 + \eps D^{(j)}$ for some $D^{(j)} \in \Mat_3(\F'(\!(u)\!))$.  

Define $Y^{(j)} \defeq \Ad^{-1}_{s_j}(u^{\bf{a}_1}, u^{\bf{a}_2}, u^{\bf{a}_3})\big(D^{(j)}\big) \in \Mat_3(\F'(\!(v)\!))$.   A direct computation using  Proposition \ref{changeofbasis} gives
$$
B_2^{(j)} = B_1^{(j)} + Y^{(j+1)} A^{(j)} - A^{(j)} s_{j+1, j} \Big(\Ad\big(v^{a_{s_{j}(1),f-j-1}}, v^{a_{s_{j}(2),f-j-1}}, v^{a_{s_{j}(3),f-j-1}}\big)\big(\phz\big(Y^{(j)}\big)\big)\Big) s_{j+1, j}^{-1}. 
$$
Arguing as in Theorem \ref{Kisinvariety}, we deduce that $Y^{(j)} \in \Mat_3(\F'[\![v]\!])$.   
More precisely, for each $i \in \Z/f\Z$ such that $Y^{(i)}\neq 0$, define $k_i \in \Z$ by $v^{k_i}Y^{(i)}=Y^{(i),+}\in \Mat(\F'[\![v]\!])$, where $Y^{(i),+}\not\equiv 0$ modulo $v$.  We define $k_i\defeq 0$ if $Y^{(i)}= 0$. We deduce as in Theorem \ref{Kisinvariety} that
\begin{eqnarray}
\label{tangent a2}
&&v^{-pk_j}s_{j+1, j} \Big(\Ad\big(v^{a_{s_{j}(1),f-j-1}}, v^{a_{s_{j}(2),f-j-1}}, v^{a_{s_{j}(3),f-j-1}}\big) \big(\phz\big(Y^{(j),+}\big)\big)\Big) s_{j+1, j}^{-1}=\\
&&\qquad\qquad=\big(A^{(j)}\big)^{-1} \Big(-B_2^{(j)}+v^{-k_{j+1}}Y^{(j+1),+}A^{(j)}+B_1^{(j)}\Big)\nonumber
\end{eqnarray}
and therefore, by the height condition on $A^{(j)}$: 
$$
v^{2-pk_j+k_{j+1}}\Big(\Ad\big(v^{a_{s_{j}(1),f-j-1}}, v^{a_{s_{j}(2),f-j-1}}, v^{a_{s_{j}(3),f-j-1}}\big)\big(\phz\big(Y^{(j),+}\big)\big)\Big)\in \Mat_3(\F'[\![v]\!]).
$$
If $Y^{(j)}\neq 0$, we obtain the key inequality (\ref{a3}). The same iterative argument as above then shows that $k_j\leq 0$.

It remains to show that $D^{(j)}\in \Mat_3(\F'[\![u]\!])$, which is equivalent to proving that $Y^{(j)}$ is upper triangular modulo $v$.  This is immediate if $k_j < 0$; otherwise, specializing (\ref{tangent a2}) at $k_j=0$ and $Y^{(j), +}=Y^{(j)}$, we see that $v^2\Ad\big(v^{a_{s_{j}(1),f-j-1}}, v^{a_{s_{j}(2),f-j-1}}, v^{a_{s_{j}(3),f-j-1}}\big)\big( \phz\big(Y^{(j)}\big)\big)$ is integral. By the weak genericity assumption, the same argument in the proof of Theorem \ref{Kisinvariety} shows that $Y^{(j)}$ is upper triangular mod $v$.
\end{proof}  

\begin{cor}
\label{cor:kis:var}
Assume that $\tau$ is weakly generic.  
If $Y^{\mu,\tau}_{\cM}(\F)\neq\emptyset$ then $Y^{\mu,\tau}_{\cM}=\Spec(\F)$.
\end{cor}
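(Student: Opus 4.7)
The plan is to upgrade Theorem \ref{Kisinvariety} from the pointwise statement to the scheme-theoretic identity $Y^{\mu,\tau}_{\cM} = \Spec(\F)$. Two ingredients suffice: (a) the underlying topological space of $Y^{\mu,\tau}_{\cM}$ is a single $\F$-rational point $\overline{\fM}$, and (b) the Zariski tangent space at $\overline{\fM}$ vanishes. Granting both, the local ring of the projective scheme $Y^{\mu,\tau}_{\cM}$ at $\overline{\fM}$ is a finite-type local $\F$-algebra with residue field $\F$ and zero cotangent space, so by Nakayama it equals $\F$; since $\overline{\fM}$ is the unique point, this forces $Y^{\mu,\tau}_{\cM} = \Spec(\F)$.

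Part (a) is immediate from Theorem \ref{Kisinvariety} applied with $\F' = \overline{\F}$: the geometric fiber is either empty or a single point, and the hypothesis $Y^{\mu,\tau}_\cM(\F) \neq \emptyset$ forces the unique geometric point to descend to an $\F$-rational $\overline{\fM}$.

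For part (b), the idea is to translate a tangent vector into an object of $\mathfrak{t}_{\overline{\fM}}$ and invoke Proposition \ref{tsff}. Concretely, a tangent vector is a sublattice $\fM \subset \cM_{\F[\eps]/\eps^2}$ with $\fM/\eps\fM = \overline{\fM}$ (inside $\cM$), stable under Frobenius and descent data, and of height type $(2,1,0)$; together with the tautological trivialization $\delta_0 = \mathrm{id}$ it defines an object of $\mathfrak{t}_{\overline{\fM}}$. Because $\fM[1/u] = \cM_{\F[\eps]/\eps^2}$ canonically, the associated $G_{K_\infty}$-representation $T^*_{\mathrm{tan}}(\fM)$ is naturally identified with the trivial deformation $\bV^*_{dd}(\cM)\otimes_\F \F[\eps]/\eps^2$. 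The same is true of the constant lift $\overline{\fM}\otimes_\F \F[\eps]/\eps^2$ embedded in $\cM_{\F[\eps]/\eps^2}$ via $\overline{\fM}\subset\cM$. By the full faithfulness in Proposition \ref{tsff}, there is a unique isomorphism $\iota:\overline{\fM}\otimes_\F \F[\eps]/\eps^2 \xrightarrow{\sim} \fM$ in $\mathfrak{t}_{\overline{\fM}}$ whose image under $T^*_{\mathrm{tan}}$ is the identity. Inverting $u$, $\iota$ becomes the identity endomorphism of $\cM_{\F[\eps]/\eps^2}$, so $\iota$ is itself the identity inclusion, giving $\fM = \overline{\fM}\otimes_\F \F[\eps]/\eps^2$ as sublattices of $\cM_{\F[\eps]/\eps^2}$.

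The substantive work is already done in Theorem \ref{Kisinvariety} and Proposition \ref{tsff}; the only subtlety here is the bookkeeping in part (b), namely ensuring that the trivialization $\delta_0$ of a Kisin variety tangent vector coincides with the one built into the containment $\fM \subset \cM_{\F[\eps]/\eps^2}$, so that the abstract isomorphism supplied by full faithfulness really identifies $\fM$ with the constant lift as a sublattice rather than merely abstractly.
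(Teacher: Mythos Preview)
Your proof is correct and follows exactly the route the paper intends: the corollary is stated without proof immediately after Theorem~\ref{Kisinvariety} and Proposition~\ref{tsff}, and your argument unpacks precisely how those two results combine---the former reduces the underlying space to a single $\F$-rational point, and the latter (via the factorization $\fM \mapsto \fM[1/u] \mapsto \bV^*_{dd}$ used in its proof) kills the tangent space. The bookkeeping you flag about identifying the abstract isomorphism with the literal inclusion inside $\cM_{\F[\eps]/\eps^2}$ is handled correctly.
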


\subsection{Kisin resolution}

We now apply the computations from the previous section to obtain preliminary results in our study of potentially crystalline deformation rings.  Fix a representation $\rhobar:G_{K} \ra \GL_3(\F)$.  

Let $R^{\mu, \tau}_{\rhobar}$ be the framed potentially crystalline deformation ring with parallel Hodge-Tate weights $(2,1,0)$ and inertial type $\tau$ as in \cite{KisinPSS}. Consider the projective morphism 
$$
\Theta:Y^{\mu, \tau}_{\rhobar} \ra \Spf R^{\mu, \tau}_{\rhobar}
$$
as constructed in \cite{CL} (see Theorem 5.19 and discussion before).  This is a version with descent datum of the partial resolution introduced in \cite[\S\ (1.4)]{KisinPSS}.  Note that 
\begin{equation}
\label{eq:kis:fibre}
Y^{\mu, \tau}_{\rhobar}\times_{\Spf R^{\mu, \tau}_{\rhobar}}\Spf (R^{\mu, \tau}_{\rhobar}/\fm)=Y^{\mu, \tau}_{\cM}
\end{equation}
with $Y^{\mu, \tau}_{\cM}$ as defined in the previous section. Here $\cM$ is the \'etale $\phz$-module of $\rhobar$.

Since we always work in parallel weight $(2,1,0)$, we  drop $\mu$ from the notation.  Set $D_{\rhobar}^{\square, \tau} \defeq  \Spf R^{\mu, \tau}_{\rhobar}$. For $\tau$ weakly generic, by (\ref{eq:kis:fibre}) and Corollary \ref{cor:kis:var}, $\Theta$ is quasi-finite and hence finite since it is proper. Thus $Y^{\mu, \tau}_{\rhobar} = \Spf R_{\overline{\fM}, \rhobar}^{\tau,\Box}$ with $\Spf R_{\overline{\fM}, \rhobar}^{\tau,\Box}$~local and finite over $\Spf R^{\mu, \tau}_{\rhobar}$.


\begin{cor} \label{RKisin}  Let $\rhobar:G_{K} \ra \GL_3(\F)$.  If $\tau$ is weakly generic, then 
$$
\Theta:Y^{\mu, \tau}_{\rhobar} \ra \Spf R^{\mu, \tau}_{\rhobar}
$$
is an isomorphism.
\end{cor}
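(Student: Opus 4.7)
The plan is to upgrade the finiteness of $\Theta$ already established in the paragraph preceding the statement to an honest isomorphism. Writing $\varphi\colon R^{\mu,\tau}_{\rhobar}\to R_{\overline{\fM},\rhobar}^{\tau,\Box}$ for the ring map induced by $\Theta$, it will suffice to check that $\varphi$ is both surjective and injective as a map of complete Noetherian local $\cO$-algebras.

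Surjectivity reduces by Nakayama's lemma to the injectivity of the induced map on mod-$\varpi$ tangent spaces. Unwinding the moduli interpretation, a tangent vector at the closed point of $Y^{\mu,\tau}_{\rhobar}$ is a pair $(\rho_\eps,\fM_\eps)$, where $\rho_\eps$ is a framed potentially crystalline lift of $\rhobar$ over $\F[\eps]/\eps^2$ of Hodge type $(2,1,0)$ and inertial type $\tau$, and $\fM_\eps\subset\cM_\eps\defeq T_{dd}^*(\rho_\eps|_{G_{K_\infty}})$ is an object of $Y^{\mu,\tau}(\F[\eps])$ lifting $\overline{\fM}$; the comparison map sends $(\rho_\eps,\fM_\eps)$ to $\rho_\eps$. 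To see injectivity, suppose $\fM_\eps^1,\fM_\eps^2$ are two such lattices producing the same $\rho_\eps$. Both define objects of the category $\mathfrak{t}_{\overline{\fM}}$ of Proposition \ref{tsff} whose $T_{\mathrm{tan}}^*$-images are canonically identified, each being $\rho_\eps|_{G_{K_\infty}}$ with its tautological framing. By the full faithfulness in Proposition \ref{tsff}, the identity on the Galois side is promoted to an isomorphism $\fM_\eps^1\stackrel{\sim}{\longrightarrow}\fM_\eps^2$ compatible with the inclusions into $\cM_\eps$, and consequently $\fM_\eps^1=\fM_\eps^2$ as sublattices of $\cM_\eps$; this is the $\F[\eps]$-analogue of Theorem \ref{Kisinvariety}.

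To promote the resulting surjection to an isomorphism, I would use that $\Theta$ induces an isomorphism on rigid analytic generic fibers. This is essentially Kisin's classification of lattices in potentially crystalline representations, extended to the tame descent setting as recalled in \cite{CL} and \cite{KisinPSS}: every potentially crystalline $G_K$-representation over a finite extension of $E$ with Hodge type $(2,1,0)$ and tame inertial type $\tau$ admits a unique Kisin lattice of height $\leq 2$ with descent data of type $\tau$. Hence $\varphi[1/p]$ is an isomorphism, so $\ker(\varphi)$ is $p$-power torsion; since $R^{\mu,\tau}_{\rhobar}$ is $\cO$-flat by Kisin's theorem from \cite{KisinPSS}, $\ker(\varphi)=0$, and therefore $\varphi$ is an isomorphism.

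The delicate point I anticipate is the precise identification of the tangent space of $Y^{\mu,\tau}_{\rhobar}$ with pairs $(\rho_\eps,\fM_\eps)$ equipped with framings matching the data recorded by $\mathfrak{t}_{\overline{\fM}}$: one must check that the framing on $\cM_\eps$ coming from the universal $\rho_\eps$ and the trivialization $\delta_0$ of $\fM_\eps$ modulo $\eps$ fit together so that Proposition \ref{tsff} applies directly to force the two lattices to coincide. Once this bookkeeping is verified, the rest of the argument is formal.
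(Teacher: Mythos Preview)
Your proof is correct and follows essentially the same strategy as the paper: use Proposition~\ref{tsff} to obtain injectivity on tangent spaces (hence surjectivity of $\varphi$, i.e.\ $\Theta$ is a closed immersion), then combine the isomorphism $\Theta[1/p]$ with the $\cO$-flatness of $R^{\mu,\tau}_{\rhobar}$ to conclude. The paper is terser---it cites \cite[Theorem~5.19]{CL} directly for the generic fiber statement and does not unpack how Proposition~\ref{tsff} yields tangent-space injectivity---but the structure is identical. One notational slip: your $\cM_\eps$ should be the \'etale $\phi$-module attached to $\rho_\eps|_{G_{K_\infty}}$, not $T_{dd}^*(\rho_\eps|_{G_{K_\infty}})$, since $T_{dd}^*$ goes the other way; this does not affect the argument.
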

\begin{proof} 
We saw above that $\Theta$ is a finite morphism.  By \cite[Theorem 5.19]{CL}, $\Theta[1/p]$ is an isomorphism.  Proposition \ref{tsff} implies that the induced map
$$
\Spf R_{\overline{\fM}, \rhobar}^{\tau,\Box} \ra \Spf R^{\mu, \tau}_{\rhobar}
$$
is an injective on tangent spaces.  Hence $\Theta$ is a closed immersion.  Since $R^{\mu, \tau}_{\rhobar}$ is $\cO$-flat, we conclude that $\Theta$ is an isomorphism.
\end{proof}
Thus $R_{\overline{\fM}, \rhobar}^{\tau,\Box}$ is the complete local $\cO$-algebra representing the deformation problem
\begin{equation} \label{defn:Kisres}
D_{\overline{\fM}, \rhobar}^{\tau,\Box}(A) \defeq \big\{ (\fM_A, \rho_A, \delta_A) \mid \fM_A \in  Y^{\mu, \tau}(A), \ \rho_A \in D_{\rhobar}^{\square, \tau}(A),\  \delta_A:T_{dd}^*(\fM_A) \cong (\rho_A)|_{G_{K_{\infty}}}\big\}.
\end{equation}

\subsection{Galois cohomology}

In this section, we work with Galois cohomology to prove that -under mild hypotheses-  the natural restriction map from $G_{K}$-deformations to $G_{K_{\infty}}$-deformations is a closed immersion. 
This will be important in \S \ref{subsection:PCDR}.  

\begin{defn}
\label{defn n-gen}
Let $\rhobar: G_{K}\rightarrow \GL_3(\F)$ be a continuous semisimple Galois representation and let $m\in\N$ be an integer. Let $\mathbf{a}_k = (a_{k,j})_{j}$ with $0 \leq a_{k, j} \leq p-1$ and $j\in \Z/f\Z$ be $f$-tuples such that $\rhobar|_{I_K} \cong \omega_{f}^{\mathbf{a}_1^{(0)}}\oplus\omega_{f}^{\mathbf{a}_2^{(0)}}\oplus\omega_{f}^{\mathbf{a}_3^{(0)}}$. 
We say that $\rhobar$ is \emph{$m$-generic} if
\begin{eqnarray*}
\label{labelgencond1}
m \leq |a_{1, j} - a_{2, j}|,\,|a_{2, j} - a_{3, j}|,\,|a_{1, j} - a_{3, j}| \leq p-1-m
\end{eqnarray*}
for all $j$. 
We say that a continuous Galois representation $\rhobar: G_{K}\rightarrow \GL_3(\F)$ is $m$-generic if there exists a finite unramified extension $K'/K$ such that $\rhobar^{ss}|_{I_{K'}}$ is the direct sum of characters and is $m$-generic in the previous sense. This does not depend on the extension $K'/K$.
\end{defn}

There is a weaker genericity condition which suffices for our Galois cohomology arguments:
\begin{defn} \label{defn:cyclofree} Let $\rhobar: G_{K}\rightarrow \GL_n(\F)$ be a continuous Galois representation. We say $\rhobar$ is \emph{cyclotomic free} if $\rhobar$ becomes upper triangular over an unramified extension $K'/K$ of degree prime to $p$ such that
$$
H^0\big(G_{K'},\big(\rhobar|_{G_{K'}}^{ss}\big)\otimes\omega^{-1}\big)=0.
$$ 
\end{defn}

\begin{prop} \label{2impliescyc} If $p > 3$ and $\rhobar: G_{K}\rightarrow \GL_3(\F)$ is $2$-generic, then $\ad(\rhobar)$ is cyclotomic free. 
\end{prop}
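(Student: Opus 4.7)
I shall verify both conditions of Definition \ref{defn:cyclofree} for $\ad(\rhobar)$. Replacing $K$ by a suitable unramified extension (of degree at most $3$, hence prime to $p$) supplied by $2$-genericity, I may assume $\rhobar^{ss}|_{I_K} = \chi_1 \oplus \chi_2 \oplus \chi_3$ with distinct characters $\chi_i = \omega_f^{\mathbf{a}_i^{(0)}}$ satisfying $|a_{i,j} - a_{k,j}| \in [2, p-3]$ for $i \neq k$; this does not affect the notion of cyclotomic free. The conjugation action of $G_K$ on $\{\chi_1, \chi_2, \chi_3\}$ factors through $G_K/I_K$ (characters into an abelian group are invariant under inner $I_K$-conjugation) and gives a permutation of order $n \mid 6$. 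Letting $K'/K$ be the unramified extension of degree $n$, $[K':K]$ is coprime to $p$ (as $p > 3$) and each $\chi_i$ is $\Frob_{K'}$-invariant.

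Next, I construct a $G_{K'}$-stable flag on the underlying space $V = \F^3$ of $\rhobar$. For each $i$, the $\chi_i$-isotypic subspace $V^{I_{K'}, \chi_i}$ is $G_{K'}$-stable (by normality of $I_{K'}$ in $G_{K'}$ and $\Frob_{K'}$-invariance of $\chi_i$) and has dimension at most one (since $\chi_i$ appears with multiplicity one in $\rhobar^{ss}|_{I_K}$). The $I_{K'}$-socle of $V$ is nonzero, so some $V^{I_{K'}, \chi_i}$ is a $G_{K'}$-stable line $V_1$; iterating on $V/V_1$ produces a full flag $0 \subset V_1 \subset V_2 \subset V$ of $G_{K'}$-subrepresentations. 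Hence $\rhobar|_{G_{K'}}$, and a fortiori $\ad(\rhobar)|_{G_{K'}}$, is upper triangular.

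The vanishing $H^0(G_{K'}, \ad(\rhobar)^{ss}|_{G_{K'}} \otimes \omega^{-1}) = 0$ now reduces to showing that none of the nine characters $\psi_{ik}$ extending $\chi_i \chi_k^{-1}$ to $G_{K'}$ agrees with $\omega$. This can be tested on inertia, where $\psi_{ik}|_{I_K} = \omega_f^{\mathbf{a}_i^{(0)} - \mathbf{a}_k^{(0)}}$ and $\omega|_{I_K} = \omega_f^{(p^f - 1)/(p - 1)}$. The diagonal case $i = k$ requires only $(p-1) \nmid 1$, which holds for $p > 2$. The off-diagonal case reduces to an elementary digit estimate: setting $c_j := (a_{i,j} - a_{k,j}) - 1$, the $2$-genericity bounds force $c_j \in [-(p-2), -3] \cup [1, p-4]$, and the question becomes whether $\sum_j c_j p^j \equiv 0 \pmod{p^f - 1}$. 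Two features of this range cooperate: the absolute bound $|c_j| \leq p - 2$ gives $|\sum_j c_j p^j| < p^f - 1$, forcing any such congruence to be an actual equality $\sum_j c_j p^j = 0$, while $c_j \not\equiv 0 \pmod p$ yields $\sum_j c_j p^j \equiv c_0 \not\equiv 0 \pmod p$, contradicting the equality. The main obstacle is this tightly balanced digit argument: the $2$-generic bounds are precisely tuned to guarantee both the overflow bound and the mod-$p$ nonvanishing simultaneously.
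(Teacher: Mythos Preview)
Your proof is correct and follows essentially the same approach as the paper's: pass to an unramified extension of degree prime to $p$ over which $\rhobar$ (hence $\ad(\rhobar)$) becomes upper triangular, then verify via a digit argument that none of the inertial characters $\chi_i\chi_k^{-1}$ equals $\omega$. The paper simply takes $K'$ to be the degree-$6$ unramified extension and asserts upper-triangularity without further comment, whereas you choose a more minimal extension (degree $\leq 3$, then a further extension that in fact turns out to be trivial) and spell out both the flag construction and the digit estimate; these are expository rather than mathematical differences.
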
 
\begin{proof}
Let $K'$ denote the unramified extension of $K$ of degree 6. Then $\rhobar|_{G_{K'}}$ is upper triangular and we can write $\left(\rhobar|_{G_{K'}}\right)^{\mathrm{ss}}\vert_{I_{K'}}=\oplus_{i=1}^3\omega_{6f}^{\bf{a}^{(0)}_i}$ where the $6f$-tuple $\bf{a}_i\in \{0,1, \ldots, p-1 \}^{6f}$ is  $2$-generic. 
In particular, it follows that $\ad(\rhobar)|_{G_{K'}}=\ad(\rhobar|_{G_{K'}})$ is upper triangular with diagonal characters of the form $\omega_{6f}^{\bf{a}^{(0)}_i - \bf{a}^{(0)}_{i'}}$, where $i,\ i'\in\{1,2,3\}$. Its semisimplification does not have cyclotomic constituents as long as $\bf{a}^{(0)}_i - \bf{a}^{(0)}_{i'} \not\equiv 1 + p + \cdots + p^{6f-1} \mod p^{6f} - 1$ which follows easily from the $2$-genericity assumption. 
\end{proof}

\begin{lemma}
\label{lemma H1}
Let $\rhobar: G_{K}\rightarrow \GL_n(\F)$ be cyclotomic free.  Then the restriction map $H^1(K,\rhobar)\rightarrow H^1(K_{\infty},\rhobar)$ is injective.
\end{lemma}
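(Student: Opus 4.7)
The plan is to reduce to the case of a character via dévissage, then compute the character case using inflation-restriction for the Galois closure of $K_\infty$ over $K$. First I would pass to an unramified extension $K'/K$ of degree prime to $p$ such that $\rhobar|_{G_{K'}}$ becomes upper triangular, which exists by the cyclotomic-free hypothesis. Since $[K':K]$ is a unit in $\F$, the restriction $H^1(K,\rhobar)\to H^1(K',\rhobar|_{G_{K'}})$ is injective (its composite with corestriction is multiplication by $[K':K]$), so it suffices to prove the same statement with $K, K_\infty$ replaced by $K', K'_\infty \defeq K'K_\infty$. I would then d\'evisse along the upper-triangular filtration with graded pieces $\chi_1,\ldots,\chi_n$ by comparing the long exact sequences for $G_{K'}$ and $G_{K'_\infty}$ attached to $0\to V_{i-1}\to V_i\to \chi_i\to 0$ and invoking the five-lemma. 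This reduces the claim to the case $\rhobar=\chi$ is a character with $\chi\neq \omega|_{G_{K'}}$. The surjectivity hypothesis $H^0(K',\chi_i)\twoheadrightarrow H^0(K'_\infty,\chi_i)$ needed by the five-lemma holds because any character of $G_{K'}$ trivial on $G_{K'_\infty}$ already factors through $\Gal(L'/K')$ and must be a power of $\omega$, and on the section $\Gal(L'/K'_\infty)\cong \Zp^\times$ it restricts back to itself, forcing it to have been trivial to begin with.

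For the character case, let $L'\defeq K'(\mu_{p^\infty})\cdot K'_\infty$, the Galois closure of $K'_\infty$ over $K'$. A standard Galois-theoretic calculation gives $K'_\infty\cap K'(\mu_{p^\infty})=K'$, exhibiting $\Gal(L'/K')$ as a semidirect product $\Zp(1)\rtimes \Zp^\times$ (Kummer times cyclotomic, with the cyclotomic action) and a natural section $\Zp^\times\cong \Gal(L'/K'_\infty)$. Inflation-restriction for $K'\subset L'$ then identifies the kernel of $H^1(K',\chi)\to H^1(K'_\infty,\chi)$ with the kernel of restriction $H^1(\Gal(L'/K'),\chi^{G_{L'}})\to H^1(\Gal(L'/K'_\infty),\chi^{G_{L'}})$. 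If $\chi$ is nontrivial on $G_{L'}$ then $\chi^{G_{L'}}=0$ and the kernel vanishes; otherwise $\chi=\omega^i$ for some $i$, and the Hochschild-Serre spectral sequence for the semidirect product splits via the section to give
\[
H^1(\Gal(L'/K'),\chi)\;\cong\; H^1(\Zp^\times,\chi)\,\oplus\, H^1(\Zp(1),\chi)^{\Zp^\times}
\]
with restriction to the section killing the second summand. I would conclude by computing $H^1(\Zp(1),\chi)^{\Zp^\times}\cong \F(\omega^{i-1})^{\Zp^\times}$, which equals $\F$ precisely when $\chi=\omega$ and is zero otherwise, so the hypothesis $\chi\neq \omega$ forces this residual kernel to vanish.

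The main technical subtlety will be carefully tracking the section-induced splitting of the Hochschild-Serre spectral sequence in order to correctly isolate the $H^1(\Zp(1),\chi)^{\Zp^\times}$ summand as the entire obstruction to injectivity; once this is done, the identification of $\chi=\omega$ as the unique problematic character reduces to a short direct computation of $\Hom_{\mathrm{cts}}(\Zp(1),\F)$ as a $\Zp^\times$-module. All other steps (the prime-to-$p$ reduction, the d\'evissage, and verification of the five-lemma hypothesis) are formal.
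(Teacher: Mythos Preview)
Your argument is correct and follows the same overall architecture as the paper: reduce to an unramified prime-to-$p$ extension $K'/K$, then d\'evisse along the upper-triangular filtration using the four/five lemma (with the same surjectivity-on-$H^0$ input) to reduce to the case of a single character $\chi\neq\omega$. The only substantive difference is in the character case: the paper simply cites \cite[Lemma 5.4.2]{GLS15}, whereas you supply a direct proof via inflation--restriction for the Galois closure $L'=K'(\mu_{p^\infty})K'_\infty$ and the explicit semidirect product structure $\Gal(L'/K')\cong\Zp(1)\rtimes\Zp^\times$. Your splitting $H^1(\Gal(L'/K'),\chi)\cong H^1(\Zp^\times,\chi)\oplus H^1(\Zp(1),\chi)^{\Zp^\times}$ is valid here because $\Zp(1)$ acts trivially on $\chi$, and the identification $H^1(\Zp(1),\chi)^{\Zp^\times}\cong\F(\omega^{i-1})^{\Zp^\times}$ correctly isolates $\chi=\omega$ as the unique obstruction. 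This buys you a self-contained treatment at the cost of a little extra work; the paper's citation is shorter but opaque.
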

\begin{proof}
We first assume that $\rhobar$ is upper triangular. In this case the proof is a standard d\'evissage. More precisely, we have an exact sequence $0\ra \rhobar_1\ra\rhobar\ra\overline{\chi}\ra0$ where $\overline{\chi}:G_{K}\ra \F\s$ is not the cyclotomic character and $\rhobar_1: G_{K}\rightarrow \GL_{n-1}(\F)$ is upper triangular (and $\rhobar_1|_{G_{K'}}^{ss}$ does not contain the cyclotomic character).

Group cohomology provides us with the following commutative diagram, with exact rows:
\begin{equation*}
\xymatrix{
H^0(K,\overline{\chi})\ar^{\delta}[r]\ar^{f_0}[d]&
H^1(K,\rhobar_1)\ar[r]\ar^{f_1}[d]&
H^1(K,\rhobar)\ar[r]\ar^{f_2}[d]&
H^1(K,\overline{\chi})\ar^{f_3}[d]\\
H^0(K_{\infty},\overline{\chi})\ar^{\delta}[r]&
H^1(K_{\infty},\rhobar_1)\ar[r]&
H^1(K_{\infty},\rhobar)\ar[r]&
H^1(K_{\infty},\overline{\chi})
}
\end{equation*}
the vertical maps being induced by restriction to $G_{K_{\infty}}$. 
By \cite[Lemma 5.4.2]{GLS15}, the morphism $f_3$ is injective. By d\'evissage, we can assume that $f_1$ is injective. Finally, as $\overline{\chi}$ is a character and as all characters are tame, $f_0$ is surjective. Hence $f_2$ is injective by the ``four lemma.''

As for the general case, letting $K'_{\infty}=K_{\infty}\cdot K'$ we have an exact sequence of groups
\begin{equation*}
\xymatrix{
1\ar[r]&
G_{K'}\ar^{\vartriangleleft}[r]&
G_{K}\ar[r]&
\Gal(K'/K)\ar[r]&1\\
1\ar[r]&
G_{K'_{\infty}}\ar^{\vartriangleleft}[r]\ar@{^{(}->}[u]&
G_{K_{\infty}}\ar[r]\ar@{^{(}->}[u]&
\Gal(K'_{\infty}/K_{\infty})\ar[r]\ar^{\wr}[u]&1
}
\end{equation*}
and hence restriction to $G_{K_{\infty}}$ induces a morphism between the Hochschild-Serre spectral sequences
\begin{equation}
\label{Hoch-Serre}
\xymatrix{
H^{r}\big(\Gal(K'/K), H^s(K',\rhobar)\big)\ar@{=>}[r]\ar[d]&H^{r+s}(K,\rhobar)\ar[d]\\
H^{r}\big(\Gal(K'_{\infty}/K_{\infty}), H^s(K'_{\infty},\rhobar)\big)\ar@{=>}[r]&H^{r+s}(K_{\infty},\rhobar).
}
\end{equation}
As $p\nmid [K':K]$, the category of $\Gal(K'/K)$-representations over $\F$ is semisimple and the above spectral sequence becomes simply
\begin{equation}
\label{Hoch-Serre ss}
\xymatrix{
H^{0}\big(\Gal(K'/K), H^s(K',\rhobar)\big)\ar^-{\sim}[r]\ar[d]&H^{s}(K,\rhobar)\ar[d]\\
H^{0}\big(\Gal(K'_{\infty}/K_{\infty}), H^s(K'_{\infty},\rhobar)\big)\ar^-{\sim}[r]&
H^{s}(K_{\infty},\rhobar).
}
\end{equation}
The conclusion follows from the result in the upper triangular case.
\end{proof}
A statement similar to Lemma \ref{lemma H1} (via a slightly different argument) has been obtained in \cite[Proposition 6.1]{Gao}.

A similar argument yields the following:
\begin{lemma}
\label{lemma B1}
Let $\rhobar: G_{K}\rightarrow \GL_n(\F)$ be cyclotomic free. Then the natural restriction map $B^1(K,\rhobar)\rightarrow B^1(K_{\infty},\rhobar)$ on Galois cohomology boundaries is an isomorphism.
\end{lemma}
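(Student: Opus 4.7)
The plan is to reinterpret the statement in terms of $H^0$ and then mimic the devissage from Lemma \ref{lemma H1}. Via the standard identification $B^1(G, M) \stackrel{\sim}{\longrightarrow} M/M^G$ sending the coboundary $g \mapsto (g-1)m$ to the class of $m$, the restriction map $B^1(K, \rhobar) \to B^1(K_\infty, \rhobar)$ is identified with the quotient map $\rhobar/\rhobar^{G_K} \twoheadrightarrow \rhobar/\rhobar^{G_{K_\infty}}$. This map is always surjective, and it is injective (hence an isomorphism) precisely when $\rhobar^{G_K}=\rhobar^{G_{K_\infty}}$, i.e.\ when the natural restriction $H^0(K, \rhobar) \to H^0(K_\infty, \rhobar)$ is bijective. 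It therefore suffices to establish this $H^0$ statement.

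To do this I would run the same two-step devissage as in Lemma \ref{lemma H1}. The Hochschild--Serre diagram (\ref{Hoch-Serre ss}) reduces matters to the case where $\rhobar|_{G_{K'}}$ is upper triangular, and then one inducts on the length of a $G_{K'}$-stable composition series $0\to \rhobar_1\to \rhobar\to \overline{\chi}\to 0$ with $\overline{\chi}$ a non-cyclotomic character. Applying the four-lemma to the resulting commutative diagram of long exact sequences
$$
\xymatrix@C=1.1pc{
H^0(K', \rhobar_1) \ar[r] \ar^{g_1}[d] & H^0(K', \rhobar) \ar[r] \ar^{g_2}[d] & H^0(K', \overline{\chi}) \ar[r] \ar^{g_3}[d] & H^1(K', \rhobar_1) \ar^{g_4}[d] \\
H^0(K'_\infty, \rhobar_1) \ar[r] & H^0(K'_\infty, \rhobar) \ar[r] & H^0(K'_\infty, \overline{\chi}) \ar[r] & H^1(K'_\infty, \rhobar_1),
}
$$
the surjectivity of $g_2$ follows from the surjectivity of $g_1$ and $g_3$ together with the injectivity of $g_4$; the map $g_2$ is automatically injective because $\rhobar^{G_{K'}}\subseteq \rhobar^{G_{K'_\infty}}$. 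By induction $g_1$ is bijective, and by Lemma \ref{lemma H1} $g_4$ is injective.

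The one genuinely new input is bijectivity of $g_3$, which amounts to showing that any continuous character $\overline{\chi}: G_{K'} \to \F^\times$ trivial on $G_{K'_\infty}$ is already trivial on $G_{K'}$. Since $\F^\times$ has order prime to $p$, any such $\overline{\chi}$ factors through the maximal pro-prime-to-$p$ quotient of $G_{K'}$. On the other hand, the tower $K'_\infty/K'$ is obtained by successively extracting $p$-th roots of uniformizers, so it is totally wildly ramified and in particular linearly disjoint from every finite prime-to-$p$ extension of $K'$; hence $G_{K'_\infty}$ surjects onto the maximal pro-prime-to-$p$ quotient of $G_{K'}$, and the claim follows. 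I do not anticipate any substantive obstacle, as the argument past the devissage is elementary; notably, this last step does not require the cyclotomic-free hypothesis, which still enters indirectly via the appeal to Lemma \ref{lemma H1} for $g_4$.
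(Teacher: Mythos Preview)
Your proposal is correct and follows essentially the same approach as the paper: reduce the $B^1$ statement to bijectivity of $H^0(K,\rhobar)\to H^0(K_\infty,\rhobar)$, then combine the Hochschild--Serre reduction (\ref{Hoch-Serre ss}) with a d\'evissage and the four lemma, using Lemma~\ref{lemma H1} for injectivity on $H^1$. Your justification of the character step (via the pro-prime-to-$p$ quotient and wild ramification of $K'_\infty/K'$) is a slightly expanded version of the paper's one-line ``all characters are tame''; otherwise the arguments coincide.
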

\begin{proof}
The argument follows closely the proof of Lemma \ref{lemma H1} above. Let $V_{\rhobar}$ be the $\F$-linear space underlying $\rhobar$. Then one has
$$
\frac{V_{\rhobar}}{(V_{\rhobar})^{G_{K}}}\stackrel{\sim}{\rightarrow}B^1(K,\rhobar),\qquad
\frac{V_{\rhobar}}{(V_{\rhobar})^{G_{K_{\infty}}}}\stackrel{\sim}{\rightarrow}B^1(K_{\infty},\rhobar);
$$
therefore it is enough to prove that the (obviously injective) restriction map $H^0(K,\rhobar)\rightarrow H^0(K_{\infty},\rhobar)$ is surjective.

We assume first that $\rhobar$ is upper triangular. Let us fix an extension $0\ra \rhobar_1\ra\rhobar\ra\overline{\chi}\ra0$,
where $(\rhobar_1)^{\mathrm{ss}}$ and the character $\overline{\chi}: G_K\ra \F\s$ do not have cyclotomic constituents. The restriction functor to $G_{K_{\infty}}$ and classical group cohomology give us the following commutative diagram, with exact lines: 
\begin{equation*}
\xymatrix{
H^0(K,\rhobar_1)\ar[r]\ar^{f_0}[d]&
H^0(K,\rhobar)\ar[r]\ar^{f_1}[d]&
H^0(K,\overline{\chi})\ar^{f_2}[d]\ar^{\delta}[r]&
H^1(K,\rhobar_1)\ar^{f_3}[d]
\\
H^0(K_{\infty},\rhobar_1)\ar[r]&
H^0(K_{\infty},\rhobar)\ar[r]&
H^0(K_{\infty},\overline{\chi})\ar^{\delta}[r]&
H^1(K_{\infty},\rhobar_1).
}
\end{equation*}
By Lemma \ref{lemma H1}, the morphism $f_3$ is injective; for $i\in\{0,1,2\}$ the morphisms $f_i$ are obviously injective. If $f_0$, $f_2$ are both surjective, the ``four lemma'' again shows that $f_1$ is surjective as well.  Therefore, by d\'evissage, it is enough to show that $H^0(K_{\infty},\overline{\chi}) \neq 0$ if and only if $\overline{\chi}$ is the trivial character of $G_{K}$. This is immediate as all characters are tame.

The deduction for the general case is formal: as in the proof of Lemma \ref{lemma H1}, the hypotheses on $[K':K]$ give us the commutative diagram (\ref{Hoch-Serre ss}). Again, as the natural map $G_{K_{\infty}}/G_{K'_{\infty}}\rightarrow G_{K}/G_{K'}$ is an isomorphism, the isomorphism 
$H^0(K',\rhobar)\stackrel{\sim}{\rightarrow}H^0(K'_{\infty},\rhobar)$ obtained in the upper triangular case respects the residual Galois action on both sides. Therefore 
$H^0\big((\Gal(K'/K),H^0(K',\rhobar)\big)\ia H^0\big((\Gal(K_{\infty}'/K_{\infty}),H^0(K'_{\infty},\rhobar)\big)$ is an isomorphism.
\end{proof}

\begin{prop}
\label{prop:Z1}
Let $\rhobar: G_{K}\rightarrow \GL_n(\F)$ be cyclotomic free.
Then the map on Galois cohomology cycles
$$
Z^1(K,\rhobar)\rightarrow Z^1(K_{\infty},\rhobar)
$$
is injective.
\end{prop}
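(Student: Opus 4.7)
The plan is to deduce the statement directly from Lemmas \ref{lemma H1} and \ref{lemma B1} via a short diagram chase. Recall that for any topological $G$-module $V$, the coboundaries, cocycles, and cohomology fit into the short exact sequence
\[
0 \lra B^1(G, V) \lra Z^1(G, V) \lra H^1(G, V) \lra 0.
\]
Applying this with $G = G_K$ and with $G = G_{K_\infty}$ (both acting on $\rhobar$), the restriction maps induce a commutative diagram with exact rows
\[
\xymatrix{
0 \ar[r] & B^1(K, \rhobar) \ar[r] \ar[d]^{f_B} & Z^1(K, \rhobar) \ar[r] \ar[d]^{f_Z} & H^1(K, \rhobar) \ar[r] \ar[d]^{f_H} & 0 \\
0 \ar[r] & B^1(K_\infty, \rhobar) \ar[r] & Z^1(K_\infty, \rhobar) \ar[r] & H^1(K_\infty, \rhobar) \ar[r] & 0.
}
\]

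By Lemma \ref{lemma B1}, the vertical map $f_B$ is an isomorphism (in particular injective), and by Lemma \ref{lemma H1}, the vertical map $f_H$ is injective. A standard application of the four lemma (or a direct diagram chase: if $z \in Z^1(K, \rhobar)$ maps to $0$ in $Z^1(K_\infty, \rhobar)$, then its image in $H^1(K, \rhobar)$ is killed by $f_H$, hence vanishes, so $z$ comes from $B^1(K, \rhobar)$, and then injectivity of $f_B$ forces $z = 0$) shows that $f_Z$ is injective.

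I do not anticipate any obstacle here: once Lemmas \ref{lemma H1} and \ref{lemma B1} are in hand, the proposition is a formal consequence.
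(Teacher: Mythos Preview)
Your proof is correct and follows essentially the same approach as the paper: both set up the commutative diagram of short exact sequences $0 \to B^1 \to Z^1 \to H^1 \to 0$ for $G_K$ and $G_{K_\infty}$, invoke Lemmas~\ref{lemma H1} and~\ref{lemma B1} for the outer vertical maps, and deduce injectivity on $Z^1$ by a diagram chase (the paper phrases it as the snake lemma, you as the four lemma).
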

\begin{proof}
If $\rhobar$ is cyclotomic free, we can apply Lemmas \ref{lemma H1} and \ref{lemma B1} to $\rhobar$ and the claim follows from the snake lemma applied to the defining sequences of $Z^1$:
\begin{equation*}
\xymatrix{
0\ar[r]&\ar[r] B^1(K, \rhobar)\ar[r]\ar[d]&Z^1(K, \rhobar)\ar[r]\ar[d]&H^1(K, \rhobar)\ar[r]\ar[d]&0\\
0\ar[r]&\ar[r] B^1(K_{\infty}, \rhobar)\ar[r]&Z^1(K_{\infty}, \rhobar)\ar[r]&H^1(K_{\infty}, \rhobar)\ar[r]&0.
}
\end{equation*}
\end{proof}

\section{Finite height $K_{\infty}$-deformations}
\label{sec:algorithm}

In this section, $\tau$ will denote a weakly generic tame principal series type (Definition \ref{gencond}).

Let $\overline{\fM} \in Y^{\mu, \tau} (\F)$ with a basis $\overline{\beta}$ as in Theorem \ref{thm:classification}.  We will now compute the deformations of $(\overline{\fM}, \overline{\beta})$ according to the shape of $\overline{\fM}$.  Roughly, we are giving local coordinates for $Y^{\mu, \tau}$ at $\overline{\fM}$. This amounts to giving coordinates for the Pappas-Zhu local model $M(\mu)$ discussed after Definition \ref{defn:shape}, though this won't be used.  The strategy will be to start with an arbitrary lift of $\overline{\fM}$ to a local Noetherian $\cO$-algebra $R$ with finite residue field and then by a convergence process put the Frobenius into a special form where entries are polynomials with coefficients in $R$ with controlled degree. In this special form, it is straightforward to impose the height $[0,2]$ condition as well as a determinant condition. The algorithm  combines the $u$-adic and max adic topologies. For $\GL_2$, a similar strategy was introduced in setting of Breuil modules in \cite{breuil-buzzati} and was implemented for Kisin modules for non-generic types in \cite{CDM}.  
 
\begin{thm} \label{thm:classification1} Let $R$ be a complete local Noetherian $\cO$-algebra with finite residue field $\F$ and let $\tau$ be a weakly generic tame type as in Definition \ref{gencond}. Write $(s_j)_j\in S_3^f$ for the orientation of $\tau$.   Let $\fM \in Y^{\mu, \tau}(R)$ with $\overline{\fM} := \fM\otimes_R\F$ of shape $\bf{w} = (\widetilde{w}_0, \widetilde{w}_1, \ldots, \widetilde{w}_{f-1})$.  Then there exists an eigenbasis $\beta$ for $\fM$ such that for each $0\leq j\leq f-1$ the matrix $\tld{A}^{(j)} =\Mat_{\beta}(\phi_{\fM,s_{j+1}(3)}^{(j)})$  has the form given in row $\widetilde{w}_j$ in Table \ref{table:lifts}.
\end{thm}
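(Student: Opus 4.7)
The strategy is to produce the desired eigenbasis $\beta$ by starting with an arbitrary lift of a gauge basis of $\overline{\fM}$ and then successively adjusting it via elements of $\Iw(R)$ to move the Frobenius matrix into the normal form of Table \ref{table:lifts}. By Theorem \ref{thm:classification}, there is a gauge basis $\overline{\beta}$ of $\overline{\fM}$ such that each $\Mat_{\overline{\beta}}(\phi^{(j)}_{\overline{\fM},s_{j+1}(3)})$ is an explicit coset representative in $\widetilde{w}_j\cdot(P_{\widetilde{w}_j}\backslash \Iw(\F))$. By Nakayama, $\overline{\beta}$ lifts to an eigenbasis $\beta_0$ of $\fM$, and the matrices $\tld{A}^{(j)}_0:=\Mat_{\beta_0}(\phi^{(j)}_{\fM,s_{j+1}(3)})$ then lie in $\Iw(R)\widetilde{w}_j\Iw(R)$ and reduce to the prescribed shape modulo $\fm_R$. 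However, the entries of $\tld{A}^{(j)}_0$ at positions \emph{not} occurring in Table \ref{table:lifts} will in general be nonzero; the goal of the iteration is to kill these excess entries modulo successively higher powers of $\fm_R$.

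The iteration proceeds as follows. At each step we produce an $f$-tuple $(I^{(j)}_n)_j\in\Iw(R)^f$ and apply Proposition \ref{changeofbasis}: after basis change the Frobenius matrix becomes
\begin{equation*}
\tld{A}^{(j)}_{n+1}\;=\;I^{(j+1)}_n\,\tld{A}^{(j)}_n\,\bigl(s_{j+1,j}\,I^{(j),\phz}_n\,s_{j+1,j}^{-1}\bigr),
\end{equation*}
where $I^{(j),\phz}_n\in\cD_3(R)$ by Proposition \ref{divisibility}, using the weak genericity of $\tau$. Since the right-hand factor is congruent to a diagonal matrix modulo $v^3$, its effect is small (roughly a diagonal twist) and the dominant transformation is left multiplication by $I^{(j+1)}_n$. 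Working inductively, at the $n$-th step one chooses $I^{(j+1)}_n$ so that its reduction modulo $\fm_R^{n+1}$ cancels the leading unwanted terms of $\tld{A}^{(j)}_n$ in the positions forbidden by Table \ref{table:lifts}, while preserving the constraint that $\tld{A}^{(j)}_n$ lie in the coset $\widetilde{w}_j\cdot P_{\widetilde{w}_j}\backslash\Iw(R)$. The freedom of such an $I^{(j+1)}_n$ is precisely captured by the factorization of $\Iw(R)$ into the chosen coset representatives times the stabilizer subgroup $\widetilde{w}_j P_{\widetilde{w}_j}\widetilde{w}_j^{-1}$, and is exactly enough to eliminate the forbidden monomials one by one.

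The main technical obstacle is convergence of the iteration in the combined $(u,\fm_R)$-adic topology. This follows the pattern of Lemma \ref{Frowoper} and Theorem \ref{Kisinvariety}: the second factor $s_{j+1,j}I^{(j),\phz}_n s_{j+1,j}^{-1}$ involves $\phz(I^{(j)}_n)^{-1}$, whose off-diagonal entries are divisible by $v^p$; combined with the bounds on the conjugation exponents coming from the weak genericity hypothesis (\ref{labelgencond}) and the height-$2$ condition on $\tld{A}^{(j)}$, one deduces divisibility estimates analogous to (\ref{key divisibility}), showing that the induced corrections at each iteration are strictly smaller in the $(u,\fm_R)$-adic topology than the corrections being applied. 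The $\fm_R$-adic improvement is automatic from the inductive choice of $I^{(j)}_n$, while the $v$-adic improvement in each fixed $\fm_R$-graded piece is exactly the content of the mod-$p$ argument in Lemma \ref{Frowoper}. The product $\prod_n(1+I^{(j)}_n-\mathrm{Id})$ therefore converges to a well-defined base change putting $\tld{A}^{(j)}$ into the form prescribed by Table \ref{table:lifts}.

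The hard part is not the construction itself but the verification that at each step the remaining freedom in $\Iw(R)$ (namely the coset representatives of $P_{\widetilde{w}_j}\backslash\Iw(R)$) matches the set of forbidden monomials in Table \ref{table:lifts} and that the reduction procedure truly terminates to give entries which are polynomials in $v$ of the controlled degrees listed, rather than merely power series. This is a shape-by-shape bookkeeping: one works through the nine elements of $\mathrm{Adm}(2,1,0)$ listed in Table \ref{table shapes mod p} and, using Lemma \ref{integrality} together with the precise length function on $\widetilde{W}$, verifies that the image of the conjugation-twisted Frobenius action on $\Iw(R)$ supports the expected cancellation of high-degree terms. The remaining $16$ admissible elements are deduced from these nine via the cyclic symmetry of Proposition \ref{cyclicsymmetry} and Corollary \ref{symmetry}.
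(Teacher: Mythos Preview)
Your high-level strategy---lift a gauge basis of $\overline{\fM}$, then iterate via Iwahori changes of basis using Proposition~\ref{changeofbasis} and the $v^3$-divisibility from Proposition~\ref{divisibility}---matches the paper. The gap is in the convergence argument, specifically your attempt to decouple the $m_R$-adic and $v$-adic directions.

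You assert that ``the $\fm_R$-adic improvement is automatic from the inductive choice of $I^{(j)}_n$.'' It is not. If $I^{(j+1)}_n-\Id$ lies in $m_R^n\Mat_3(R[\![v]\!])$ (as it must, to cancel an error of that size), then the induced right-hand factor $s_{j+1,j}I^{(j+1),\phz}_n s_{j+1,j}^{-1}$ acting on $\tld{A}^{(j+1)}$ reintroduces an error which is still only in $m_R^n$: Frobenius twist gains $v$-divisibility but no $m_R$-adic valuation. So pure $m_R$-adic iteration stalls. Your appeal to Lemma~\ref{Frowoper} for the $v$-adic part does not close this: that lemma treats the problem of absorbing a \emph{left} $\Iw_1$-factor into an actual change of basis over a \emph{field}, which is a different setup from the two-sided change of basis over $R$ needed here.

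The paper's device is a single \emph{defect} function $d_R(P)=\min_i\{3v_R(r_i)+i\}$ on $R[\![v]\!]$ (Definition~\ref{defndefect}), which weights the $m_R$-adic valuation by $3$ and adds the $v$-degree. The reintroduced error above, while still in $m_R^n$, is also divisible by $v^3$ (Proposition~\ref{rightmult}), hence has defect $\geq 3n+3$. Propositions~\ref{leftoperation} and~\ref{improvedefect} then show---via elementary row operations indexed by the \emph{pivots} of the shape (Definition~\ref{define pivot}, Lemma~\ref{otherentries})---that one can strictly increase $\min_j\delta(A^{(j)})$ at each pass, with the change-of-basis matrices themselves having defect $\geq\delta-2$. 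Convergence in the $(m_R,v)$-adic topology is then immediate.

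A minor point: your final paragraph suggests a shape-by-shape verification of the degree bounds. None is needed; the argument via pivots in Propositions~\ref{leftoperation}--\ref{improvedefect} is uniform in $\widetilde{w}_j$, and the degree bounds in Table~\ref{table:lifts} are read off directly from the pivot positions as explained in Remark~\ref{degreeintable}.
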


\subsection{Algorithm}
\label{subsec:algorithm}

Let $R$ be a complete local Noetherian $\cO$-algebra with maximal ideal $m_R$ and residue field $R/m_R\cong \F$.  Let $P \in R[\![u]\!]$.  For any $r \in R$, let $v_R(r) = \max \{ i \in \N \mid k \geq 0, r \in m_R^k\}$ . This is finite unless $r=0$, by Krull's intersection theorem.

\begin{defn} \label{defndefect} Let $P = \sum_i r_i v^i \in R[\![v]\!]$. Define
$$
d_R(P) = \min_i \{3 v_R(r_i) + i\}. 
$$
\end{defn} 
We define $\Tr_l: R[\![v]\!]\rightarrow R[\![v]\!]$
 to be the order $v^l$-truncation map, defined by $\Tr_l\big(\sum_i r_i v^i\big)\defeq \sum_{i\geq l+1} r_i v^i$.

Given a matrix  $M = (M_{ik}) \in \Mat_n(R[\![v]\!])$, we define
$$
d_R(M) = \min_{i,k}\{d_R(M_{ik})\}
$$
and $\Tr_l(M)\defeq (\Tr_l(M_{ik}))$.


\begin{prop}
\label{semi-valuation}
For any $P, Q \in \Mat_n(R[\![v]\!])$ and any $l\in\N$, we have
\begin{enumerate}
\item $d_R(P + Q)  \geq \min (d_R(P), d_R(Q))$;
\item $d_R(PQ) \geq d_R(P) + d_R(Q)$;
\item $d_R(\Tr_l(P))\geq d_R(P)$.
\end{enumerate}
\end{prop}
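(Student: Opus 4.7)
The proof is a routine exercise reducing each of the three inequalities to the analogous property of the $\fm_R$-adic valuation $v_R$ on $R$ combined with the $v$-adic order. My plan is to treat the scalar (polynomial) case first and then pass to matrices entrywise, using that the matrix version of $d_R$ is defined as the minimum of $d_R$ over entries.

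For part (1), write $P=\sum_i p_iv^i$ and $Q=\sum_i q_iv^i$ in $R[\![v]\!]$. Since $v_R$ satisfies $v_R(p_i+q_i)\geq \min(v_R(p_i),v_R(q_i))$, adding $i$ after multiplying by $3$ preserves this inequality coefficient by coefficient, so
\[
3v_R(p_i+q_i)+i\geq \min\bigl(3v_R(p_i)+i,\ 3v_R(q_i)+i\bigr)\geq \min(d_R(P),d_R(Q)),
\]
and the scalar statement follows by taking $\min_i$. The matrix statement is then immediate since $(P+Q)_{ik}=P_{ik}+Q_{ik}$ and one takes $\min$ over $(i,k)$.

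For part (2), in the scalar case I would expand $(PQ)_k=\sum_{i+j=k}p_iq_j$. Using that $\fm_R$ is an ideal, $v_R(p_iq_j)\geq v_R(p_i)+v_R(q_j)$, and hence by part (1) applied to $PQ$,
\[
3v_R\bigl((PQ)_k\bigr)+k\geq \min_{i+j=k}\bigl(3v_R(p_i)+i\bigr)+\bigl(3v_R(q_j)+j\bigr)\geq d_R(P)+d_R(Q).
\]
Taking $\min_k$ yields $d_R(PQ)\geq d_R(P)+d_R(Q)$. For matrices, I write $(PQ)_{ik}=\sum_{l}P_{il}Q_{lk}$; combining part (1) with the scalar multiplicativity yields
\[
d_R\bigl((PQ)_{ik}\bigr)\geq \min_l\bigl(d_R(P_{il})+d_R(Q_{lk})\bigr)\geq d_R(P)+d_R(Q),
\]
and taking $\min_{i,k}$ finishes the argument.

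Part (3) is essentially tautological: $\Tr_l(P)$ is obtained from $P$ by setting the coefficients of $v^i$ for $i\leq l$ to zero, so the defining minimum for $d_R(\Tr_l(P))$ is taken over a subset of the indices in the defining minimum for $d_R(P)$, giving $d_R(\Tr_l(P))\geq d_R(P)$. The matrix case is again entrywise. There is no real obstacle here; the only point worth flagging is the implicit convention that $d_R(0)=+\infty$, which makes the truncation inequality meaningful when $\Tr_l$ kills entries. With this setup all three inequalities follow without further work.
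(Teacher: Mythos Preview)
Your proof is correct and takes essentially the same approach as the paper, which simply notes that the inequalities follow from $v_R(ab)\geq v_R(a)+v_R(b)$ and $v_R(a+b)\geq \min(v_R(a),v_R(b))$; you have just spelled out the reduction to these scalar facts in more detail.
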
 
\begin{proof} This follows from $v_R(ab)\geq v_R(a)+v_R(b)$ and $v_R(a+b)\geq \min (v_R(a),v_R(b))$ for $a,b \in R$.
\end{proof} 

\begin{rmk}
In Definition \ref{defndefect}, we could have considered  $d_{R,m}(P) = \min_i \{m v_R(r_i) + i\}$ for $m\geq 3$; \emph{mutatis mutandis}, the statement of Proposition \ref{semi-valuation} still holds  true. 
\end{rmk}

The algorithm proceeds by successive row operations which we introduce now.  For any $x \in R[\![v]\!]$,  we define
$$
U_{12}(x) :=  \begin{pmatrix} 1 & x & 0 \\ 0 & 1 &  0\\ 0& 0  & 1 \end{pmatrix}, U_{13}(x) := \begin{pmatrix} 1 & 0 & x \\ 0 & 1 &  0\\ 0& 0  & 1 \end{pmatrix}, U_{23}(x) := \begin{pmatrix} 1 & 0 & 0 \\ 0 & 1 &  x\\ 0& 0  & 1 \end{pmatrix}.
$$
Similarly, for any $x \in  vR[\![v]\!]$, we define
$$
L_{21}(x) :=  \begin{pmatrix} 1 & 0 & 0 \\ x & 1 &  0\\ 0& 0  & 1 \end{pmatrix}, L_{31}(x) := \begin{pmatrix} 1 & 0 & 0 \\ 0 & 1 &  0\\ x& 0  & 1 \end{pmatrix}, L_{32}(x) := \begin{pmatrix} 1 & 0 & 0 \\ 0 & 1 &  0\\ 0& x  & 1 \end{pmatrix}.
$$
$$
D_{11}(x) :=  \begin{pmatrix} 1+x & 0 & 0 \\ 0 & 1 &  0\\ 0& 0  & 1 \end{pmatrix}, D_{22}(x) := \begin{pmatrix} 1 & 0 & 0 \\ 0 & 1+x &  0\\ 0& 0  & 1 \end{pmatrix}, D_{33}(x) := \begin{pmatrix} 1 & 0 & 0 \\ 0 & 1 &  0\\ 0& 0  & 1+x \end{pmatrix}.
$$

 
The essence of the algorithm is as follows.  
Let $\beta_n= (\beta_n^{(j)})$ be an eigenbasis for $\fM_R$ at the $n$-th step of the algorithm. If $A_n^{(j)}=\Mat_{\beta_n}(\phi_{\fM,s_{j+1}(3)}^{(j)})$, we can always write
$$
A^{(j)}_n = B^{(j)}_{\widetilde{w}_j,n} + E^{(j)}_n
$$
where the entries of $B^{(j)}_{\widetilde{w}_j,n}\in \Mat_3(R[\![v]\!])$ satisfy degree bound conditions according to the shape $\widetilde{w}_j$ of $\overline{\fM}_R^{(j)}$ (the degree bound conditions are listed in Table \ref{table:lifts}; this will be made precise in Definition \ref{definition defect} below.)

We call $E^{(j)}_n$ the \emph{error term} associated to $\beta_n$.  The inductive step is to show that there exists a new eigenbasis $\beta_{n+1}$ with error $E^{(j)}_{n+1}$ such that 
$$
\min_j (d_R(E^{(j)}_{n+1}) )> \min_j (d_R(E^{(j)}_n)).
$$ 

We start with a few definitions.

\begin{defn} 
\label{definition defect}
Let $A^{(j)}\in\Mat_3(R[\![v]\!])$ and 
$\widetilde{w}_j\in\widetilde{W}$ be the shape of $\overline{\fM}$ at $j$. 
Then there exists a unique decomposition in $\Mat_3(R[\![v]\!])$
$$
A^{(j)} = B^{(j)}_{\widetilde{w}_j} + E^{(j)}
$$
such that $B^{(j)}_{\widetilde{w}_j}\in \Mat_3(R[v])$ and for all $i,k$ one has 
$$
\deg_v\big(\big(B^{(j)}_{\widetilde{w}_j}\big)_{ik}\big)=\deg_v\big(\big(\tld{A}^{(j)}_{\widetilde{w}_j}\big)_{ik}\big)<\val_v(E^{(j)}_{ik})
$$ 
where $\deg_v\big(\big(\tld{A}^{(j)}_{\widetilde{w}_j}\big)_{ik}\big)\in\{-\infty,0,1,2\}$ is defined in the third column of Table \ref{table:lifts}.
(In the notations of Table \ref{table:lifts}, an entry of the form $i^*$, $i\in \N$, stands for a polynomial in $R[v]$ of degree $i$ and whose leading coefficient is a unit in $R$; an entry of the form $v(\leq i)$ stands for a polynomial in $R[v]$ of degree at most $i+1$ and which is moreover divisible by $v$; a similar comment applies for entries of the form $\leq i$ and $v(i^*)$.)

The \emph{defect} of $A^{(j)}$ at the entry $(ik)$ is defined as $\delta(A^{(j)}_{ik})\defeq d_R(E^{(j)}_{ik})$.
Similarly, the \emph{total defect} of $A^{(j)}$ is defined as
$$
\delta(A^{(j)})\defeq d_R(E^{(j)}).
$$
\end{defn} 

Typically, the matrix $A^{(j)}$ in definition \ref{definition defect} is either $\Mat_{\beta}(\phi^{(j)}_{\fM,s_{j+1}(3)})$ in some eigenbasis $\beta$ on $\fM_R$, or its modification by row and (adjoint $\phz$-twisted)-column operations by $U_{ik}(x^{(j)}),\,L_{ik}(x^{(j)})$ (cf. Proposition \ref{rightmult}, Proposition \ref{leftoperation}).

Let $(x^{(j)})$ denote an $f$-tuple of elements of $ R[\![v]\!]$.  An \emph{elementary operation} (associated to $(x^{(j)})$) is a change of eigenbasis on $\fM_R$ such that for each embedding $j$, the associated matrix $D^{(j)}$ as in Proposition \ref{changeofbasis} can be written as $D^{(j)}=\Ad_{s_j}(u^{\bf{a}_1},u^{\bf{a}_2},u^{\bf{a}_3}) \left( I^{(j)} \right)$ for some $I^{(j)} \in\{ U_{ik}(x^{(j)}),\,\,L_{ki}(x^{(j)})\ | \,i<k\}\cup\{D_{ii}(x^{(j)})\}$.



The following proposition controls the change in precision after right multiplication by 
$s_{j+1}^{-1}s_j I^{(j),\phz}s_j^{-1}s_{j+1}$ (cf.  (\ref{cob1})) in terms of the precision of an elementary operation $I^{(j)}$. 

\begin{prop}  
\label{rightmult}
Let $\fM \in Y^{[0,2],\tau}(R)$, with eigenbasis $\beta$, and let $A^{(j)} = B^{(j)}_{\widetilde{w}_j} + E^{(j)}$ as in Definition \ref{definition defect}, where we have set
$A^{(j)}\defeq\Mat_{\beta}(\phi^{(j)}_{\fM,s_{j+1(3)}})$.  
Let $(I^{(j)})$ be an elementary operation associated to the $f$-tuple $(x^{(j)})$
and define 
$$
I^{(j),\phz}\defeq 
\Ad(v^{a_{s_j(1),f-1-j}},v^{a_{s_j(2),f-1-j}},v^{a_{s_j(3),f-1-j}})\cdot \phz\big(I^{(j)}\big)^{-1}.
$$ 
Then one has
$$
A^{(j)}  s_{j+1}^{-1}s_jI^{(j),\phz}s_j^{-1}s_{j+1}= B^{(j)}_{\widetilde{w}_j} + E^{\prime (j)} 
$$
where
\begin{enumerate}
\item\label{prop:4.6-1} $d_R(E^{\prime(j)}) \geq \min( \delta(A^{(j)}), 3+d_R(x^{(j)}))$;
\item\label{prop:4.6-2} $d_R(E^{\prime(j)}-E^{(j)})\geq 3+d_R(x^{(j)})$.
\end{enumerate}


\end{prop}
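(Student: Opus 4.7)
The plan is to reduce both conclusions to the single key entrywise estimate
\[
d_R\big(I^{(j),\phz} - \mathrm{Id}_3\big) \geq 3 + d_R(x^{(j)}),
\]
and then to establish this by a short case analysis on the type of elementary operation. For the reduction, one writes
\[
A^{(j)}\, s_{j+1}^{-1}s_jI^{(j),\phz}s_j^{-1}s_{j+1} \;=\; A^{(j)} \;+\; A^{(j)}\cdot s_{j+1}^{-1}s_j\big(I^{(j),\phz}-\mathrm{Id}_3\big)s_j^{-1}s_{j+1},
\]
so that the required $E^{\prime(j)}$ of the proposition is forced to be $E^{(j)} + A^{(j)}\cdot s_{j+1}^{-1}s_j(I^{(j),\phz}-\mathrm{Id}_3)s_j^{-1}s_{j+1}$. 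Conjugation by a permutation matrix permutes entries and hence preserves $d_R$ entrywise; combined with the trivial bound $d_R(A^{(j)})\geq 0$ (valid since $A^{(j)}\in\Mat_3(R[\![v]\!])$) and submultiplicativity of $d_R$ (Proposition \ref{semi-valuation}(2)), the key estimate immediately yields (2). Claim (1) then follows from (2) via subadditivity of $d_R$ (Proposition \ref{semi-valuation}(1)) together with $d_R(E^{(j)})=\delta(A^{(j)})$.

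To prove the key estimate, observe that $I^{(j)}$ differs from $\mathrm{Id}_3$ by a single entry $x^{(j)}$ at some position $(a,b)$. Since the corresponding off-diagonal matrix unit is nilpotent, $\phz(I^{(j)})^{-1}-\mathrm{Id}_3$ is supported at the single position $(a,b)$, with value $-\phz(x^{(j)})$ in the unipotent cases and $-\phz(x^{(j)})(1+\phz(x^{(j)}))^{-1}$ in the torus case $D_{aa}$. The $\mathrm{Ad}$-twist then multiplies this entry by $v^{a_{s_j(a),f-1-j}-a_{s_j(b),f-1-j}}$, which by the orientation property (\ref{orientlead}) and weak genericity (Definition \ref{gencond}) is $\geq 3$ on strictly upper-triangular positions, $\geq -(p-4)$ on strictly lower-triangular positions, and $0$ on the diagonal. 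Combining this with the two elementary Frobenius properties $d_R(\phz(y))\geq d_R(y)$ for $y\in R[\![v]\!]$ (immediate from $\phz(v)=v^p$ and triviality of $\phz$ on $R$) and $\phz(y)\in v^pR[\![v^p]\!]$ for $y \in vR[\![v]\!]$, the case analysis becomes routine.

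For $U_{ik}(x^{(j)})$ ($i<k$ in the orientation) the positive $v$-shift $\geq 3$ gives the estimate at once. For $L_{ki}(x^{(j)})$, the negative $v$-shift of size at most $p-4$ is outweighed by the $v^p$-divisibility of $\phz(x^{(j)})$, since $p-(p-4)=4\geq 3$. The diagonal case $D_{aa}(x^{(j)})$ is the most delicate: the $\mathrm{Ad}$-twist contributes no shift, so one must instead exploit the column-wise structure of $A^{(j)}$ dictated by the shape $\widetilde w_j \in \mathrm{Adm}(2,1,0)$ (Table \ref{table:lifts}) to recover the missing factor of $v^3$ when forming the product $A^{(j)}\cdot(I^{(j),\phz}-\mathrm{Id}_3)$. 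The main obstacle is the lower unipotent case, where the weak genericity inequality $|a_{s_j(i)}-a_{s_j(k)}|\leq p-4$ is essential to ensure the Frobenius $v^p$-shift cancels the negative $\mathrm{Ad}$-twist; this explains why the algorithm requires weak, rather than merely mild, genericity on $\tau$.
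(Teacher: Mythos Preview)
Your overall strategy is the same as the paper's: write $I^{(j),\phz}=\mathrm{Id}+v^3X$ with $d_R(X)\geq d_R(x^{(j)})$ (equivalently, your ``key estimate''), expand the product, and read off (1) and (2). Your treatment of the upper and lower unipotent cases is correct and matches the paper.

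The diagonal case, however, is mishandled. You overlook that $D_{aa}(x)$ is only defined for $x\in vR[\![v]\!]$ (see the sentence immediately preceding the display introducing $L_{ki}$ and $D_{ii}$). Hence $\phz(x^{(j)})\in v^pR[\![v^p]\!]$, and since $p>3$ the entry $-\phz(x^{(j)})(1+\phz(x^{(j)}))^{-1}$ already lies in $v^3R[\![v]\!]$; the same computation as in your lower-unipotent case gives $d_R\big(I^{(j),\phz}-\mathrm{Id}\big)\geq d_R(x^{(j)})+(p-1)\geq d_R(x^{(j)})+3$ directly. So the diagonal case is in fact the easiest, not the most delicate, and no appeal to the shape of $A^{(j)}$ is needed. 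Your proposed workaround --- recovering $v^3$ from the column structure of $A^{(j)}$ via Table~\ref{table:lifts} --- would not work in general: for instance, in shape $\alpha\beta\alpha\gamma$ the $(3,3)$-entry of $A^{(j)}$ has degree~$0^*$, so the third column of $A^{(j)}$ contributes no $v$-divisibility whatsoever. Once you correct this oversight, your argument is complete and coincides with the paper's.
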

\begin{proof}
We saw in Proposition \ref{divisibility} that 
$$
I^{(j), \phz} = \Id + v^3 X
$$
where $X\in \Mat(R[\![v]\!])$.  The same calculation shows that $d_R(X) \geq d_R(x^{(j)})$.  Therefore
\begin{eqnarray*}
A^{(j)}\cdot  s_{j+1,j}\cdot I^{(j),\phz}\cdot s_{j+1,j}^{-1} = B^{(j)}_{\widetilde{w}_j} + E^{(j)}\left(\Id+v^3s_{j+1,j}Xs_{j+1,j}^{-1}\right)+v^3B^{(j)}_{\widetilde{w}_j}s_{j+1,j}Xs_{j+1,j}^{-1}
\end{eqnarray*}
which immediately implies items (\ref{prop:4.6-1}) and (\ref{prop:4.6-2}) in the statement of the Proposition.
Note that  $ E^{\prime (j)} $ is actually the error term associated to $A^{(j)}\cdot  s_{j+1,j}\cdot I^{(j),\phz}\cdot s_{j+1,j}^{-1}$, since $v^3|E^{\prime (j)}-E^{(j)}$ and the degree bounds appearing in Table \ref{table:lifts} are at most $2$.
\end{proof} 


\begin{cor}
\label{cor:rightmult}
In the setting of Proposition \ref{rightmult}, assume that in the elementary operation $(I^{(j)})$ only the element $I^{(j+1)}$ is not the identity.
Set $(A_1^{(j)})\defeq (A^{(j)})$ and let $(A_2^{(j)})$ be the $f$-tuple obtained by performing the elementary operation $(I^{(j)})$ on $(A_1^{(j)})$.
Then if $f>1$, one has: 
\begin{enumerate}
\item $A_2^{(j)}=I^{(j+1)}A_1^{(j)}$; and
\item $\delta(A_2^{(j+1)})\geq \min(\delta(A_1^{(j+1)}),3+d_R(x^{(j+1)}))$.
\end{enumerate}
If $f=1$, then $d_R(A_2^{(j)}-I^{(j+1)}A_1^{(j)})\geq 3+d_R(x^{(j+1)})$. 
\end{cor}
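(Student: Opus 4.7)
The plan is to read off everything directly from the change of basis formula (\ref{cob1}) in Proposition \ref{changeofbasis}, combined with the divisibility estimates already established in Proposition \ref{rightmult}. Explicitly, (\ref{cob1}) gives
\[
A^{(i)}_2 = I^{(i+1)}\, A^{(i)}_1 \, \bigl(s_{i+1}^{-1} s_i\, I^{(i),\phz}\, s_i^{-1} s_{i+1}\bigr)
\]
for every $i \in \Z/f\Z$. The hypothesis that only $I^{(j+1)}$ is non-trivial among the $f$-tuple $(I^{(\bullet)})$ means that $I^{(i)} = \Id$ (hence $I^{(i),\phz} = \Id$) for every $i \neq j+1$. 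Therefore only two indices $i$ may produce a non-trivial modification: the index $i = j$, where $I^{(j+1)}$ appears on the left, and the index $i = j+1$, where $I^{(j+1),\phz}$ appears on the right through the conjugate factor.

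Assume first $f > 1$, so that the indices $j$ and $j+1$ are distinct in $\Z/f\Z$. At index $i = j$ the right factor is trivial (since $I^{(j)} = \Id$) and we obtain $A^{(j)}_2 = I^{(j+1)} A^{(j)}_1$, which is part (1). At index $i = j+1$ the left factor $I^{(j+2)}$ is the identity (here we use $j+2 \neq j+1$, which requires $f > 1$), so
\[
A^{(j+1)}_2 = A^{(j+1)}_1 \cdot s_{j+2}^{-1} s_{j+1}\, I^{(j+1),\phz}\, s_{j+1}^{-1} s_{j+2},
\]
which is exactly the expression analyzed by Proposition \ref{rightmult} with index $j+1$ and elementary parameter $x^{(j+1)}$. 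Part (1) of that proposition then yields $\delta(A_2^{(j+1)}) \geq \min(\delta(A_1^{(j+1)}), 3 + d_R(x^{(j+1)}))$, which is part (2).

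When $f = 1$ the two indices collapse: the single equation from (\ref{cob1}) reads $A^{(0)}_2 = I^{(0)} A^{(0)}_1 I^{(0),\phz}$. By the computation already carried out in the proof of Proposition \ref{rightmult}, we can write $I^{(0),\phz} = \Id + v^3 X$ with $d_R(X) \geq d_R(x^{(0)})$, whence
\[
A_2^{(0)} - I^{(0)} A_1^{(0)} = I^{(0)} A_1^{(0)}\, v^3 X,
\]
and Proposition \ref{semi-valuation} gives $d_R(A_2^{(0)} - I^{(0)} A_1^{(0)}) \geq 3 + d_R(x^{(0)})$, as required. No genuine difficulty arises; the entire content is bookkeeping of which factors in (\ref{cob1}) are nontrivial and invoking the divisibility statement already contained in Proposition \ref{rightmult}.
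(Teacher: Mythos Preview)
Your proof is correct and follows exactly the intended approach: the paper states this result as a direct corollary (without proof) of Proposition~\ref{rightmult}, and you have simply spelled out the bookkeeping, namely that under the hypothesis only the indices $j$ and $j+1$ in the change of basis formula~(\ref{cob1}) are affected, the first giving part~(1) and the second reducing to Proposition~\ref{rightmult} for part~(2), with the $f=1$ case handled by the same $I^{(j),\phz}=\Id+v^3X$ decomposition from that proposition's proof.
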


We introduce the crucial notion of \emph{pivots} associated to a shape: 
\begin{defn}
\label{define pivot}
Let $\widetilde{w}_j\in \widetilde{W}$ be the shape at $j$ of $\overline{\fM} \in Y^{[0,2],\tau}(\F)$.
The \emph{pivots} of $\widetilde{w}_j$ are the pairs $(m, k)$ such that the $(m, k)$-entry of $\widetilde{w}_j\in N_{\GL_3}(T)(\F(\!(v)\!))$ is non-zero.

Let $\fM \in Y^{[0,2],\tau}(R)$ be a Kisin module and let $\widetilde{w}_j$ be the shape of $\overline{\fM} \defeq \fM \otimes_{R}\F$ at $j$. If  $A^{(j)}=\Mat_{\beta}(\phi^{(j)}_{\fM, s_{j+1}(3)})$ with respect to an eigenbasis $\beta$, and such that  $\overline{A}^{(j)}$ is given by the second column of Table \ref{table:lifts}, we say that the pair $(m,k)\in\{1,2,3\}^2$ is a \emph{pivot of $A^{(j)}$} if $(m,k)$ is a pivot of the shape $\widetilde{w}_j$.
We define the \emph{degree} of the pivot $(m,k)$ to be $\deg_v(\overline{A}^{(j)}_{mk})$. 
\end{defn}

\begin{rmk} \label{degreeintable} We explain how to produce the bounded degree conditions in the third column of Table \ref{table:lifts}, starting from the position and the degree of the pivots. (Note that the pivots of the matrices in the third column of Table \ref{table:lifts} are exactly the starred entries.)
Heuristically the effect of change of eigenbases on $(A^{(j)})$ is very close to left-multiplication by elements in $(\Iw_1(R))^f$.
If we literally use left multiplication, we would be able to make $A^{(j)}$ have polynomial entries and moreover if $(m,k)$ is a pivot of $A^{(j)}$ of degree $i$, we see that any entry above $(m,k)$ has degree strictly less than $i$ and any entry below $(m,k)$ has degree at most $i$.  
All the strict lower triangular entries of $A^{(j)}$ must be divisible by $v$ by construction.
\end{rmk} 

The key lemma that enables us to control the convergence is the following:     
\begin{lemma} \label{otherentries}  
Keep the notation as in Definition \ref{define pivot}. Assume that the eigenbasis $\beta$ lifts a gauge basis $\overline{\beta}$ of $\overline{\fM}_R$ $($cf. Definition $\ref{definition gauge basis mod p}$; in particular $\overline{A}^{(j)}=\overline{A}^{(j)}_{\tld{w}_j}$ as in Table $\ref{table shapes mod p})$.
 
Let $(m, k)$ be a pivot for $A^{(j)}$ of degree $i$. For all $k' \geq  k$, we can write 
$$
A^{(j)}_{mk'} = v^i P_{mk'}  + Q_{mk'}
$$
where all the coefficients of $Q_{mk'}$ lie in the maximal ideal of $R$.   For all $k' < k$, we can write 
$$
A^{(j)}_{mk'} = v^{i+1} P_{mk'}  + Q_{mk'}
$$
where all the coefficients of $Q_{mk'}$ lie in the maximal ideal of $R$. 

In particular, if $k' \geq  k$, then $d_R(A^{(j)}_{mk'}) \geq  i$ whereas if $k' < k$ then $d_R(A^{(j)}_{mk'}) \geq  i + 1$.  
\end{lemma}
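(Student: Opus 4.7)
The plan is to exploit the explicit form of the gauge basis provided by Theorem \ref{thm:classification}, which tells us that $\overline{A}^{(j)} = A^{(j)} \bmod \fm_R$ is a representative in the coset $\widetilde{w}_j \cdot (P_{\widetilde{w}_j}\backslash \Iw(\F))$. In particular we may write
$$
\overline{A}^{(j)} = \widetilde{w}_j \cdot \bar{I}
$$
for some $\bar{I} \in \Iw(\F)$. The whole statement of Lemma \ref{otherentries} will follow by chasing this factorization row by row.

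First I would reduce to the special fiber. The desired decomposition $A^{(j)}_{mk'} = v^i P_{mk'} + Q_{mk'}$ with $Q_{mk'}$ having coefficients in $\fm_R$ is equivalent to the mod-$\fm_R$ divisibility $v^i \mid \overline{A}^{(j)}_{mk'}$ (and $v^{i+1} \mid \overline{A}^{(j)}_{mk'}$ in the case $k' < k$): granted the divisibility, lift the quotient $\overline{A}^{(j)}_{mk'}/v^i$ arbitrarily from $\F[\![v]\!]$ to $P_{mk'} \in R[\![v]\!]$ and set $Q_{mk'} \defeq A^{(j)}_{mk'} - v^i P_{mk'}$; by construction $Q_{mk'}$ reduces to zero modulo $\fm_R$, so all its coefficients lie in $\fm_R$.

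Second, the required mod-$\fm_R$ divisibility is a direct computation using $\overline{A}^{(j)} = \widetilde{w}_j \bar{I}$. Since $\widetilde{w}_j$ is a monomial matrix and $(m,k)$ is the pivot in row $m$, the sole non-zero entry of row $m$ of $\widetilde{w}_j$ is $v^{i_m}$ at position $(m,k)$. Therefore row $m$ of $\widetilde{w}_j \bar{I}$ equals $v^{i_m}$ times row $k$ of $\bar{I}$:
$$
\overline{A}^{(j)}_{mk'} = v^{i_m}\, \bar{I}_{k k'}.
$$
Because $\bar{I} \in \Iw(\F)$, the entries of row $k$ of $\bar{I}$ lie in $\F[\![v]\!]$ and, since $\bar{I}$ is upper triangular modulo $v$, those in columns $k' < k$ lie in $v\F[\![v]\!]$. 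Moreover $\bar{I}_{kk}(0)\in \F^{\times}$, so $\deg_v\overline{A}^{(j)}_{mk} = i_m$, which by definition equals the pivot degree $i$. Thus for $k'\geq k$ we have $\overline{A}^{(j)}_{mk'} \in v^i\F[\![v]\!]$, and for $k'<k$ we have $\overline{A}^{(j)}_{mk'}\in v^{i+1}\F[\![v]\!]$, as required.

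Finally, the ``in particular'' assertion follows from Proposition \ref{semi-valuation}. On the one hand $d_R(v^i P_{mk'}) \geq i$ directly from Definition \ref{defndefect}. On the other hand, since every coefficient of $Q_{mk'}$ lies in $\fm_R$, each contributes at least $3\cdot v_R(\text{coeff}) \geq 3$ to $d_R(Q_{mk'})$, and since pivot degrees in $\mathrm{Adm}(2,1,0)$ satisfy $i\in\{0,1,2\}\leq 3$, we conclude $d_R(A^{(j)}_{mk'}) \geq \min(i,3) = i$. The case $k' < k$ is identical with $i$ replaced by $i+1$ (again $i+1 \leq 3$). The only ``obstacle'' here is purely bookkeeping---recognizing that the gauge-basis hypothesis collapses the problem into the product $\widetilde{w}_j \bar{I}$; everything else is forced by the definition of $\Iw$.
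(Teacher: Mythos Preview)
Your proof is correct and follows essentially the same approach as the paper's: both reduce modulo $\fm_R$ and use that $\overline{A}^{(j)}$ lies in $\widetilde{w}_j\cdot\Iw(\F)$, so that row $m$ is $v^{i}$ times the $k$-th row of an Iwahori matrix, giving the required divisibilities; the ``in particular'' then follows from $i\leq 2$ and $d_R(Q_{mk'})\geq 3$. Your write-up simply spells out in more detail what the paper compresses into two sentences.
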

\begin{proof}
Modulo $m_R$, the matrices $\overline{A}^{(j)}$ are of the form $\tilde{w}_j \cI$, and thus every entry to the left (in the same row) of a pivot of degree $i$ is divisible by $v^{i+1}$, while every entry to the right is divisible by $v^i$.
The last statement follows noting that $i\leq 2$ and $d_R(Q_{mk'})\geq 3$.
\end{proof}

The following Proposition shows that by suitable row operations via the elementary matrices $U_{ik}(x^{(j)}),\,L_{ik}(x^{(j)}), D_{ii}(x^{(j)})$, we can strictly increase the defect of an entry of $A^{(j)}$ \emph{without decreasing the total defect of $A^{(j)}$}.

\begin{prop} 
\label{leftoperation} 
Keep the notations and assumptions of Lemma \ref{otherentries}.
Assume that $(m,k)$ is a pivot of $A^{(j)}$.
There exists $x\in R[\![v]\!]$ such that, by letting
\begin{eqnarray*}
A^{\prime, (j)}\defeq \left\{\begin{array}{cc}
U_{m'm}(x) A^{(j)}\,\, \text{ if }\,\, m' < m,\\
D_{mm}(x) A^{(j)}\,\, \text{ if }\,\, m' = m,\\
L_{m'm}(x) A^{(j)}\,\, \text{ if }\,\, m' > m,
\end{array}\right.
\end{eqnarray*}
one has $\delta(A^{\prime, (j)})\geq\delta(A^{(j)})$ and moreover $\delta(A^{\prime, (j)}_{m'k})>\delta(A^{(j)}_{m'k})$, $\delta(A^{\prime, (j)}_{rs})\geq \min(\delta(A^{(j)}_{rs}),\delta(A^{(j)})+1)$ unless $r=m',s>k$.
\end{prop}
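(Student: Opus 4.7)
The strategy is to construct $x$ explicitly so that the row operation cancels the error term $E^{(j)}_{m'k}$ at the pivot column, and then to verify the remaining defect estimates at the other entries of row $m'$.

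First, since $(m,k)$ is a pivot of $A^{(j)}$ of degree $i$ and $\beta$ lifts a gauge basis, the entry $A^{(j)}_{mk}$ can be written as $v^i u_{mk}$ with $u_{mk} \in R[\![v]\!]^\times$, whose constant term reduces modulo $\fm_R$ to the leading coefficient of the pivot entry of $\overline{A}^{(j)}_{\widetilde{w}_j}$. This invertibility of the pivot (after normalizing by $v^i$) is the key structural input and is what distinguishes the pivot positions from the other entries. I will then set
\begin{equation*}
x \defeq \begin{cases} -v^{-i}\, E^{(j)}_{m'k}\, u_{mk}^{-1} & \text{if } m' \neq m,\\ -v^{-i}\, E^{(j)}_{mk}\, u_{mk}^{-1} & \text{if } m' = m. \end{cases}
\end{equation*}
Using the degree bounds recorded in Table \ref{table:lifts} (cf.\ Remark \ref{degreeintable}), one has $v^i \mid E^{(j)}_{m'k}$ when $m' < m$ and $v^{i+1} \mid E^{(j)}_{m'k}$ when $m' \geq m$, ensuring $x \in R[\![v]\!]$ always, and $x \in v R[\![v]\!]$ precisely when the row operation being used demands it (the case $m' > m$, and automatically in the diagonal case $m' = m$). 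Moreover, by Proposition \ref{semi-valuation} and the fact that dividing by $v^i$ shifts $d_R$ down by exactly $i$ on elements divisible by $v^i$, we have $d_R(x) \geq \delta(A^{(j)}_{m'k}) - i$.

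Next, by construction the row operation replaces $A^{(j)}_{m'k}$ by the unchanged polynomial part $B^{(j)}_{\widetilde{w}_j, m'k}$, so $\delta(A'^{(j)}_{m'k}) = +\infty$ and the strict increase $\delta(A'^{(j)}_{m'k}) > \delta(A^{(j)}_{m'k})$ is immediate. For $s \neq k$, the operation shifts the entry at $(m',s)$ by $x\,A^{(j)}_{ms}$; by Lemma \ref{otherentries}, $d_R(A^{(j)}_{ms}) \geq i$ for $s > k$ and $d_R(A^{(j)}_{ms}) \geq i+1$ for $s < k$, whence $d_R(x A^{(j)}_{ms}) \geq \delta(A^{(j)}_{m'k}) \geq \delta(A^{(j)})$ in the first case and $d_R(x A^{(j)}_{ms}) \geq \delta(A^{(j)}_{m'k}) + 1 \geq \delta(A^{(j)}) + 1$ in the second. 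Since the re-decomposition $A'^{(j)}_{m's} = B'^{(j)}_{m's} + E'^{(j)}_{m's}$ is governed by fixed degree bounds depending only on $(m',s)$, we obtain $\delta(A'^{(j)}_{m's}) \geq \min\bigl(\delta(A^{(j)}_{m's}), d_R(x A^{(j)}_{ms})\bigr)$, yielding the stated estimate when $s < k$; the weaker estimate in the case $s > k$ is precisely the exception $r = m', s > k$ excluded in the statement. Entries outside row $m'$ are untouched, so $\delta(A'^{(j)}) \geq \delta(A^{(j)})$ follows immediately.

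The principal obstacle is bookkeeping: carefully matching the $v$-divisibility of the error entries against the degree bounds in Table \ref{table:lifts} in the three separate cases $m' < m$, $m' = m$, $m' > m$, to guarantee that $x$ lies in the correct subring of $R[\![v]\!]$ and that the bounds on $d_R$ propagate correctly through the products $x A^{(j)}_{ms}$.
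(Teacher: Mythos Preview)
Your proposal has a genuine gap at the very first step. You claim that the pivot entry can be written as $A^{(j)}_{mk} = v^i u_{mk}$ with $u_{mk} \in R[\![v]\!]^\times$, but this is false in general. Since $\beta$ is only assumed to lift a gauge basis $\overline{\beta}$, the entry $A^{(j)}_{mk}$ is an arbitrary lift of the monomial $\overline{c}_{mk}^*\, v^i$; its coefficients at $v^0,\ldots,v^{i-1}$ need only lie in $\fm_R$, not vanish. Hence $A^{(j)}_{mk}$ is typically \emph{not} divisible by $v^i$ when $i>0$ (which occurs for most shapes, e.g.\ the degree-$2$ pivot in $\alpha\beta\alpha\gamma$), and your unit $u_{mk}$ does not exist. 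Consequently your definition of $x$, and the claimed exact cancellation $\delta(A'^{(j)}_{m'k}) = +\infty$, both collapse.

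The paper's proof avoids this by writing $A^{(j)}_{mk} = u_{mk} v^i + Q_{mk}$ with $u_{mk}\in R^\times$ a \emph{scalar} (the $v^i$-coefficient) and $Q_{mk}$ the remainder, all of whose coefficients lie in $\fm_R$, so that $d_R(Q_{mk})\geq 3$. One then takes $x = -u_{mk}^{-1} P_{m'k}$ (scalar inverse) where $E^{(j)}_{m'k} = v^i P_{m'k}$. The new error at $(m',k)$ is not zero but $x Q_{mk}$, and
\[
d_R(x Q_{mk}) \;\geq\; d_R(P_{m'k}) + d_R(Q_{mk}) \;\geq\; (\delta - i) + 3 \;>\; \delta
\]
since $i\leq 2$. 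Thus the cancellation is only approximate, and it is precisely the weighting in $d_R$ (which makes the $\fm_R$-small low-degree tail $Q_{mk}$ harmless) that yields the strict increase. The remainder of your argument---the estimates at the other entries of row $m'$ via Lemma~\ref{otherentries}---is correct and matches the paper's.
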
 
\begin{proof} 

Let us write $A^{(j)}=B_{\widetilde{w}_j}^{(j)}+E^{(j)}$ as in Definition \ref{definition defect} and let $\delta = \delta(A^{(j)})$ be the total defect of $A^{(j)}$.   
Let $i \in \{0,1,2\}$ be the degree of the pivot of $A^{(j)}$ at $(m, k)$.
As $\overline{A}^{(j)}_{mk}\in \F[\![v]\!]$ is a monomial in $v$ (cf. Definition \ref{define pivot}), we can write $A^{(j)}_{mk}=u_{mk}v^i+Q_{mk}$ for some unit $u_{mk}\in R^{\times}$ and some element $Q_{mk}\in R[\![v]\!]$ verifying $d_R(Q_{mk})\geq 3$. 

Let us consider the case $m' < m$. 
By the definition of the error term $E^{(j)}$, we have $E^{(j)}_{m'k}\in v^iR[\![v]\!]$. In particular, we can write  
$E^{(j)}_{m'k} = v^i P_{m'k}$ for some $P_{m'k}\in R[\![v]\!]$ verifying 
$d_R(P_{m'k})=d_R(E^{(j)}_{m'k}) - i \geq \delta - i$.
We set $x \defeq -u_{mk}^{-1} P_{m'k}$. We have $A^{\prime, (j)}_{m'k}=(B_{\widetilde{w}_j}^{(j)})_{m'k}+xQ_{mk}$. 
Letting $E^{\prime, (j)}$ be the error term of $A^{\prime, (j)}$, by  Proposition \ref{semi-valuation}, we have 
\begin{eqnarray*}
d_R(E^{\prime, (j)}_{m'k})&\geq& d_R(Q_{mk})+d_R(P_{m'k})\\
&\geq&3+\delta - i>\delta.
\end{eqnarray*} 

We now verify that $\delta(A^{\prime, (j)})\geq \delta$.
Indeed, we have $A^{\prime, (j)}_{ik'}=A^{(j)}_{ik'}$ for all $i\neq m'$ and $1\leq k'\leq 3$.

If $i=m'$ and $k'\neq k$, we have
$$
A^{\prime, (j)}_{m'k'}=A^{(j)}_{m'k'} - u_{mk}^{-1} P_{m'k}  A^{(j)}_{mk'}.
$$
By Lemma \ref{otherentries}, we conclude that $d_R(P_{m'k}  A^{(j)}_{mk'}) \geq i + d_R(P_{m'k}) \geq \delta$ and that the inequality is strict unless $k'>k$.
This completes the proof in the case $m'<m$. The other cases are similar.
\end{proof}  

\begin{rmk}
\label{rmk:crucial:algorithm}
 The element $x \in R[\![v]\!]$ used in the proof of Proposition \ref{leftoperation} always has the property that $d_R(x) \geq\delta(A^{(j)}) - 2$, since $i \leq 2$. 
\end{rmk}    

\begin{prop} 
\label{improvedefect}
Let $\fM \in Y^{[0,2],\tau}(R)$ and let $\overline{\beta}$ be a gauge basis of $\overline{\fM}$.
Let ${\beta}$ be an eigenbasis of $\fM$ lifting $\overline{\beta}$ and for all $0\leq j\leq f-1$ set $A^{(j)}=\Mat_{{\beta}}(\phi_{\fM,s_{j+1}(3)}^{(j)})$.  
There exists another eigenbasis $\beta^{\prime}$ lifting $\overline{\beta}$ 
such that 
$$
\min_j \delta(A^{\prime, (j)}) > \min_j \delta(A^{(j)})
$$
for all $0\leq j\leq f-1$, where $A^{\prime, (j)}\defeq\Mat_{{\beta}'}(\phi_{\fM,s_{j+1}(3)}^{(j)})$.
Furthermore, if 
$$
\Ad_{s_j}^{-1}(u^{\bf{a}_1},u^{\bf{a}_2},u^{\bf{a}_3}) \left(D^{(j)} \right) = I^{(j)}
$$
as in Proposition \ref{changeofbasis}, then $d_R(I^{(j)}) \geq \delta(A^{(j)}) - 2$
\end{prop}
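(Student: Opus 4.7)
The plan is to produce, for each embedding $j$, an elementary change of eigenbasis $I^{(j+1)}$ whose left-multiplication effect on $A^{(j)}$ raises the total defect from $\delta := \min_{j} \delta(A^{(j)})$ to at least $\delta + 1$, and then to show that the right-multiplication side effect forced by the descent-data twist (entering $A^{(j)}$ through $I^{(j)}$ via Proposition~\ref{changeofbasis}) is controlled by Proposition~\ref{rightmult}. Write $\sigma_j \in S_3$ for the permutation such that the pivots of $\widetilde{w}_j$ sit at $(\sigma_j(k), k)$, $k=1,2,3$.

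For fixed $j$, I would iteratively invoke Proposition~\ref{leftoperation} on every pair $(m', k) \in \{1,2,3\}^2$, using $U_{m', \sigma_j(k)}(x)$, $L_{m', \sigma_j(k)}(x)$, or $D_{\sigma_j(k), \sigma_j(k)}(x)$ according to whether $m' < \sigma_j(k)$, $m' > \sigma_j(k)$, or $m' = \sigma_j(k)$. The decisive scheduling is: \emph{for each fixed target row $m'$, process the pivot columns $k$ in strictly increasing order} (the outer loop over $m'$ may be in any order). Then at step $(m', k)$ the ``exception range'' $\{(m', s) : s > k\}$ of Proposition~\ref{leftoperation} contains only entries of row $m'$ not yet processed, while any previously improved entry $(m', k_0)$ with $k_0 < k$ sits outside the exception range and satisfies $\delta(A^{\prime}_{m', k_0}) \geq \min(\delta(A_{m', k_0}), \delta(A) + 1) \geq \delta + 1$. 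Operations targeting different rows do not disturb one another, since by construction the elementary matrix applied at step $(m',k)$ modifies only the $m'$-th row. After all nine operations, the product $I^{(j+1)}$ satisfies $\delta(I^{(j+1)} A^{(j)}) \geq \delta + 1$.

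By Remark~\ref{rmk:crucial:algorithm}, each elementary factor uses $x \in R[\![v]\!]$ with $d_R(x) \geq \delta - 2$, so $d_R(I^{(j+1)} - \Id) \geq \delta - 2$. Since $\beta$ lifts the gauge basis $\overline{\beta}$, each initial error entry $E^{(j)}_{m'k}$ vanishes modulo $m_R$; the scalars $x$ produced in the algorithm are $R$-linear in these error entries, hence lie in $m_R R[\![v]\!]$, and $I^{(j+1)} \equiv \Id \pmod{m_R}$. The matrix $D^{(j+1)} := \Ad_{s_{j+1}}(u^{\mathbf{a}_1}, u^{\mathbf{a}_2}, u^{\mathbf{a}_3})(I^{(j+1)})$ then lies in $\GL_3(R[\![u]\!])$ by Lemma~\ref{integrality}, defines a change of eigenbasis compatible with descent data, and reduces to the identity modulo $m_R$, so $\beta'$ still lifts $\overline{\beta}$.

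Finally, the coupling across embeddings is handled by Proposition~\ref{rightmult}: by Proposition~\ref{changeofbasis},
\[
A^{\prime, (j)} = I^{(j+1)} A^{(j)} \cdot s_{j+1,j}\, I^{(j),\phz}\, s_{j+1,j}^{-1},
\]
and applying Proposition~\ref{rightmult}(1) with $I^{(j+1)} A^{(j)}$ in place of $A^{(j)}$ (which has error defect $\geq \delta + 1$ by the previous paragraphs) and $I^{(j)}$ assembled from scalars with $d_R \geq \delta - 2$, yields $d_R(E^{\prime,(j)}) \geq \min(\delta + 1,\, 3 + d_R(x^{(j)})) \geq \delta + 1$. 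Hence $\delta(A^{\prime, (j)}) \geq \delta + 1$ for every $j$. The main obstacle is the combinatorial scheduling in the second paragraph: a naive column order (e.g.\ decreasing) would let the exception range reintroduce total-defect-$\delta$ contamination into just-improved entries in the same row, and only the increasing-column schedule -- coupled with the $v^3$-control on the descent-data twist from Proposition~\ref{divisibility} -- uniformly keeps both the left-multiplication improvement and the right-multiplication perturbation at defect $\geq \delta + 1$.
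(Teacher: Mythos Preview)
Your proof is correct and follows essentially the same approach as the paper's. The only cosmetic differences are that you schedule the nine elementary operations row-by-row (columns increasing within each row) whereas the paper proceeds column-by-column, and that you compose all operations into a single $I^{(j+1)}$ before applying the right-multiplication bound while the paper invokes Corollary~\ref{cor:rightmult} step by step; both schedules respect the same constraint (within each row, process columns in increasing order so the exception range of Proposition~\ref{leftoperation} never touches an already-improved entry) and lead to the same conclusion.
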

\begin{proof}
Let $(m_1,1)$, $(m_2,2)$ and $(m_3,3)$ be the pivot entries for $A^{(j)}$, and put $\delta=\min_j \delta(A^{(j)})$.  
We consider first the case $f>1$.
Using the operations as in 
Corollary \ref{cor:rightmult} with $I^{(j+1)}$ given by the matrices in Proposition \ref{leftoperation} for the pivot $(m_1,1)$, we can find a change of basis such that with respect to the new basis, the matrix $A^{(j)}$ will have entries in its first column of defect $>\delta$. Apply the same argument for the second and the third column, we can make also the second and third column entries have defect $>\delta$
noting that by performing the elementary operations in this order, the last part of Proposition \ref{leftoperation} guarantees that we do not lose the increased defect of an entry of $A^{(j)}$ that was already made to have defect $>\delta$.
During this process, Corollary \ref{cor:rightmult} and Remark \ref{rmk:crucial:algorithm} show that even though $\delta(A^{(j+1)})$ may decrease, whenever it decreases then the decreased value is automatically $\geq \delta+1$.
Thus, by performing this process for each $j$, we arrive at an eigenbasis satisfying the first part of the Proposition.

The claim on $d_R(I^{(j)})$ follows now by Remark \ref{rmk:crucial:algorithm}.
\end{proof}  
 
 \begin{lemma}  Let $(x_{\ell})_{\ell \geq 1}$ be elements of $R[\![v]\!]$.   If $\lim_{\ell \ra \infty} d_R(x_{\ell}) = \infty$, then there exists $x \in R[\![v]\!]$ such that $x = \sum_{\ell = 1}^{\infty} x_{\ell}$.  
 \end{lemma}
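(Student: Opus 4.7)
The plan is to reduce the convergence to the $\mathfrak{m}_R$-adic completeness of $R$ (guaranteed because $R$ is a complete local Noetherian ring) by working coefficient-by-coefficient, using the very definition of $d_R$ to control each coefficient separately. Write $x_\ell = \sum_{i \geq 0} r_{\ell, i} v^i$ with $r_{\ell, i} \in R$. By Definition \ref{defndefect}, for every $i$ one has $d_R(x_\ell) \leq 3 v_R(r_{\ell, i}) + i$, so that
\[
v_R(r_{\ell, i}) \geq \frac{d_R(x_\ell) - i}{3}.
\]
Since $d_R(x_\ell) \to \infty$, this implies that for each fixed $i$, $v_R(r_{\ell, i}) \to \infty$ as $\ell \to \infty$, i.e., $r_{\ell, i} \to 0$ in the $\mathfrak{m}_R$-adic topology on $R$.

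The key step is then to invoke $\mathfrak{m}_R$-adic completeness to define $r_i := \sum_{\ell \geq 1} r_{\ell, i} \in R$ for each $i \geq 0$, and to set $x := \sum_{i \geq 0} r_i v^i \in R[\![v]\!]$. To confirm that this $x$ really equals the infinite sum of the $x_\ell$ in the natural sense, I would verify that $d_R\bigl(x - \sum_{\ell = 1}^{N} x_\ell\bigr) \to \infty$ as $N \to \infty$: the $v^i$-coefficient of the tail is $\sum_{\ell > N} r_{\ell, i}$, which by the ultrametric inequality for $v_R$ satisfies $v_R\bigl(\sum_{\ell > N} r_{\ell, i}\bigr) \geq \min_{\ell > N} v_R(r_{\ell, i})$, and plugging this into the bound above and taking the minimum over $i$ gives
\[
d_R\Bigl(x - \sum_{\ell = 1}^{N} x_\ell\Bigr) \geq \min_{\ell > N} d_R(x_\ell),
\]
which tends to infinity by hypothesis.

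There is no real obstacle here; this lemma is a packaging statement, converting the mixed $(v, \mathfrak{m}_R)$-adic convergence encoded by $d_R \to \infty$ into an honest limit in $R[\![v]\!]$. The only point worth a moment's attention is the factor $3$ appearing in $d_R$, which couples the $v$-adic and $\mathfrak{m}_R$-adic filtrations; but since we only need qualitative convergence, the factor is harmless, and an identical argument would work with the variant $d_{R, m}$ of the remark following Proposition \ref{semi-valuation}.
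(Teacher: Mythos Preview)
Your argument is correct and is essentially an unpacking of the paper's one-line proof, which simply says ``This is because $R[\![v]\!]$ is $(m_R,v)$-adically complete.'' You have made explicit the implicit content of that sentence by reducing to the $\mathfrak{m}_R$-adic completeness of $R$ coefficient-by-coefficient, which is exactly how one verifies $(m_R,v)$-adic completeness of $R[\![v]\!]$; the paper just names the relevant topology and leaves the details to the reader.
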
  
 \begin{proof} This is because $R[\![v]\!]$ is $(m_R,v)$-adically complete.
 \end{proof}
 
 \begin{proof}[Proof of Theorem \ref{thm:classification1}:] 
By a repeated application of Proposition \ref{improvedefect}, we can find a sequence of bases whose change of basis matrix converge by the above Lemma. Taking the limit change of basis matrix produces an eigenbasis with respect to which $A^{(j)}$ has the desired form.
 \end{proof}

\subsection{Gauge basis}
\label{subsec:GaugeBasis}

We introduce the crucial notion of \emph{gauge basis} for a Kisin module $\fM \in Y^{[0,2], \tau}(R)$, and study some of its properties.

\begin{defn} \label{gauge} Let $R$ be a complete local Noetherian $\cO$-algebra and let $\fM \in Y^{[0,2], \tau}(R)$ lifting $\overline{\fM}$. An eigenbasis $\beta$ lifting $\overline{\beta}$ is called a \emph{gauge basis} if the matrix $\tld{A}_{\tld{w}_j}^{(j)} \defeq \Mat_{\beta}(\phi_{\fM,s_{j+1}(3)}^{(j)})$  satisfies the degree conditions in the third column, row $\widetilde{w}_j$ in Table \ref{table:lifts}.
\end{defn}

We now consider the question of the uniqueness of the gauge basis constructed by the algorithm from the previous section. While the basis is not unique, it is unique up to component-wise scaling by a torus.  Let $\fM \in Y^{[0,2], \tau}_{\overline{\fM}}(R)$. Any eigenbasis $\beta$ for $\fM$ induces an eigenbasis on $\fM/u\fM$ (i.e., a basis for $\fM^{(j)}/u\fM^{(j)}$ for each $j$ compatible with the linear action of descent datum).  We denote this by $\beta \mod u$. 

\begin{thm} \label{gaugeunique} Let $\fM, \overline{\fM}, \overline{\beta}$ be as in Definition \ref{gauge}.  The map 
$$
\beta \mapsto \beta \mod u
$$
induces a bijection between gauge bases of $\fM$ and eigenbases of $\fM/u\fM$ lifting $\overline{\beta} \mod u$. 
\end{thm}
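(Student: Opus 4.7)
The plan is to exploit a torus action on both sides. Let $T_1 := \ker(T(R)^f \to T(\F)^f) \cong (1+\mathfrak{m}_R)^{3f}$ act on an eigenbasis by scaling the chosen generator of each eigencomponent. This action makes the set of eigenbases of $\fM/u\fM$ lifting $\overline{\beta} \mod u$ into a $T_1$-torsor; the same action is defined on gauge bases of $\fM$ lifting $\overline{\beta}$, since scaling each eigen-generator by a unit preserves the degree and unit-leading-coefficient conditions of Table \ref{table:lifts}. The reduction map $\beta \mapsto \beta \mod u$ is $T_1$-equivariant, so it suffices to verify both surjectivity and injectivity.

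For surjectivity, given $\alpha$ lifting $\overline{\beta} \mod u$, I first lift it by Nakayama to an eigenbasis $\tilde\beta$ of $\fM$ with $\tilde\beta \mod u = \alpha$, then apply the algorithm of Theorem \ref{thm:classification1} to obtain a gauge basis $\beta = \tilde\beta \cdot D_{\mathrm{tot}}$. The key calculation is that each elementary operation used in Proposition \ref{leftoperation} has change-of-basis matrix $D^{(j)} = \Ad_{s_j}(u^{\mathbf{a}_1}, u^{\mathbf{a}_2}, u^{\mathbf{a}_3})(I^{(j)})$ whose reduction modulo $u$ lies in $T_1$: the orientation forces strictly upper-triangular $I^{(j)}$-entries to acquire a positive power of $u$ after conjugation, weak genericity combined with $I^{(j)} \in \Iw(R)$ (so strictly lower-triangular entries of $I^{(j)}$ are divisible by $v$) handles the strictly lower-triangular case, and the diagonal $D_{mm}(x)$ operations satisfy $d_R(x) \geq \delta - 2 \geq 1$ by Remark \ref{rmk:crucial:algorithm}, which forces the $v^0$-coefficient of $x$ into $\mathfrak{m}_R$. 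Passing to the convergent limit gives $D_{\mathrm{tot}} \mod u \in T_1$, and a $T_1$-correction of $\beta$ produces a gauge basis whose reduction equals $\alpha$.

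For injectivity, suppose $\beta_2 = \beta_1 \cdot D$ are two gauge bases lifting $\overline{\beta}$ with $D \equiv \mathrm{Id} \pmod u$. This forces the associated $I^{(j)} \in \Iw(R)$ to have diagonal entries congruent to $1$ modulo $v$, and Proposition \ref{divisibility} then gives $I^{(j),\phz} \equiv \mathrm{Id} \pmod{v^3}$. Setting $J^{(j)} := I^{(j)} - \mathrm{Id}$ and substituting into the change-of-basis formula $A_2^{(j)} = I^{(j+1)} A_1^{(j)} s_{j+1,j} I^{(j),\phz} s_{j+1,j}^{-1}$, the requirement that $A_2^{(j)}$ again lie in the gauge form of Table \ref{table:lifts} imposes rigidity: by a defect-increasing induction dual to the convergence of Proposition \ref{improvedefect}, if $\min_j d_R(J^{(j)})$ were finite then one could exhibit an entry of $A_2^{(j)}$ violating a degree or divisibility condition in Table \ref{table:lifts}. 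Hence $J^{(j)} = 0$ for all $j$, giving $D = \mathrm{Id}$.

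The main obstacle is this rigidity analysis, which is essentially the inverse of the convergence in Proposition \ref{improvedefect} and requires shape-by-shape inspection of the entries of Table \ref{table:lifts} to confirm that perturbations of a gauge basis preserving both the gauge form and its mod $u$ reduction must be trivial.
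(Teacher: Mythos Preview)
Your overall strategy matches the paper's: use the torus action for surjectivity, then establish injectivity by a defect bootstrap showing the change-of-basis must be trivial.

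\textbf{Surjectivity.} Your argument is correct but over-engineered. The paper simply observes that scaling any gauge basis by the diagonal torus produces another gauge basis; since Theorem~\ref{thm:classification1} supplies one gauge basis and the target set of eigenbases of $\fM/u\fM$ lifting $\overline{\beta}\bmod u$ is a $T_1$-torsor, surjectivity follows immediately from equivariance. There is no need to track the elementary operations of the algorithm modulo $u$.

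\textbf{Injectivity.} Here your sketch is in the right spirit but underspecified, and your last paragraph mischaracterizes the difficulty. The paper's argument is \emph{not} shape-by-shape: it is uniform, using only the abstract pivot structure of $\overline{A}^{(j)}$. From the hypothesis $\beta_1\bmod u=\beta_2\bmod u$ one gets that the entries of $I^{(j)}-\mathrm{Id}$ on and below the diagonal are divisible by $v$, and then the key identity
\[
\tld{A}_2^{(j)} + v^3 \tld{A}_2^{(j)} M^{(j)} \;=\; \tld{A}_1^{(j)} + I^{(j+1)}\tld{A}_1^{(j)}
\]
(with $d_R(M^{(j)})\ge d_R(I^{(j)})$) is analyzed column-by-column. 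The paper orders the three pivots $(k(l),m(l))$ of $\overline{A}^{(j)}$ so that the $m(l)$-th column of $\overline{A}^{(j)}$ is supported on rows $k(1),\dots,k(l)$; comparing the $(n,m(l))$ entries after applying the truncation operator $\Tr$ isolates $I^{(j+1)}_{n,k(l)}\cdot x_l^* v^{i(l)}$ and bounds all remaining terms by the induction hypothesis together with $d_R$-estimates on the column entries. This yields $d_R(I^{(j+1)}_{n,k(l)})\ge \delta+2$ for $l=1,2$ and $\ge \delta+1$ for $l=3$, hence $d_R(I^{(j+1)})\ge \delta+1$. Your phrasing ``exhibit an entry of $A_2^{(j)}$ violating a degree condition'' inverts the logic: both $\tld{A}_i^{(j)}$ satisfy the gauge constraints by hypothesis, and it is precisely the compatibility of those constraints with the displayed identity that forces the defect of $I^{(j+1)}$ to grow. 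The substance of the proof lies in making this growth explicit via the pivot ordering, which you have not supplied.
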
  

The key consequence of Theorem \ref{gaugeunique} which we will use in the next section is that the addition of a gauge basis is a formally smooth operation.
  
\begin{proof}
Given a gauge basis $\beta = ( \beta^{(j)} )$, scaling any $\beta^{(j)}$ by the diagonal torus $T(R)$ gives a new gauge basis.   Hence, the map is surjective.   

It suffices then to show that if $\beta_1$ and $\beta_2$ are two gauge bases such that 
\begin{equation}
\label{condition mod u}
\beta_1  \mod u = \beta_2 \mod u
\end{equation}
then $\beta_1 = \beta_2$.  

Let us write $\tld{A}^{(j)}_i\defeq \Mat_{\beta_i}(\phi_{\fM,s_{j+1}(3)})$ for $i=\{1,2\}$ (we omit the subscript $\tld{w}_j$ to ease notation). Then the change of basis formula (\ref{cob1}) gives us
\begin{equation*}
\label{cob1*} 
\tld{A}^{(j)}_2 s_{j+1}^{-1} s_j \big(\Ad(v^{a_{s_{j}(1), f-j - 1}}, v^{a_{s_{j}(2), f-j - 1}}, v^{a_{s_{j}(3), f-j -1}})\cdot \phz(\Id_3+I^{(j)})\big) s_j^{-1} s_{j+1}= (\Id_3+I^{(j+1)})\tld{A}^{(j)}_1
\end{equation*}
where all entries of $I^{(j)}$ which are on or below the diagonal are divisible by $v$.
By the weak genericity assumption, we see as in the proof of Proposition \ref{rightmult} that
$$
s_{j+1}^{-1} s_j \big(\Ad(v^{a_{s_{j}(1), f-j - 1}}, v^{a_{s_{j}(2), f-j - 1}}, v^{a_{s_{j}(3), f-j -1}})\cdot \phz(I^{(j)})\big) s_j^{-1} s_{j+1}= v^3 M^{(j)}
$$
where $M^{(j)}\in \Mat_3(R[\![v]\!])$ verifies $d_R(M^{(j)})\geq d_R(I^{(j)})$.
We obtain:
\begin{equation}
\label{key equation for gauge}
\tld{A}_2^{(j)}+v^3\tld{A}_2^{(j)}M^{(j)}=\tld{A}_1^{(j)}+I^{(j+1)}\tld{A}_1^{(j)}.
\end{equation} 

From equation (\ref{key equation for gauge}), we now show that for all $n\in \N$, $d_R(I^{(j)})\geq n$ for all $j=0,\dots,f-1$, i.e., that $I^{(j)}=0$, for all $j=0,\dots,f-1$.
Suppose we have $d_R(I^{(j)})\geq \delta$ for all $j$.

Set $\overline{A}^{(j)}\defeq \tld{A}_1^{(j)}\otimes_{R}{\F}=\tld{A}_2^{(j)}\otimes_{R}{\F}$.
We define a pivot $(k(1),m(1))\in\{1,2,3\}^2$ of degree $i(1)$ (cf. Definition \ref{define pivot})  via the requirement that $\overline{A}_{km(1)}=0$ for all $k\neq k(1)$ and $i(1)$ is minimal among the degrees of the pivots of $\overline{A}^{(j)}$. Similarly, we define a pivot $(k(2),m(2))\in\{1,2,3\}^2$ of degree $i(2)$ via the requirement that $\overline{A}_{km(2)}=0$ for all $k\neq k(1),\ k(2)$ and $i(2)$ is minimal among the degrees of the pivots of $\overline{A}^{(j)}$ which are different from $(k(1),m(1))$.
We write $(k(3),m(3))$ for the remaining pivot, of degree $i(3)$.  Table \ref{table shapes mod p} shows that a choice of pivots like this exists, because each $\overline{A}^{(j)}$ is obtained from an upper triangular matrix by permuting rows and columns. Note that $(i(1),i(2),i(3))=(0,1,2)$ or $(1,1,1)$.

For instance, in shape $\alpha\beta\alpha$, we have $(k(1),m(1))=(3,1)$, $(k(2),m(2))=(2,2)$ and $(k(3),m(3))=(1,3)$ and they all have degree $1$. 

For $l\in\{1,2\}$, we have $d_{R}((\tld{A}_1^{(j)})_{km(l)})\geq i(l)$ for all $k\in\{1,2,3\}$. Furthermore,  $d_{R}((\tld{A}_1^{(j)})_{km(1)})\geq 3$ if $k\neq k(1)$ and  $d_{R}((\tld{A}_1^{(j)})_{km(2)})\geq 3$ for $k\neq k(1), k(2)$.
 
If $i(3)=1$, then one still has $d_{R}((\tld{A}_1^{(j)})_{km(3)})\geq i(3)$ for all $k$ but, when $i(3)=2$ then one loses precision and we just have $d_{R}((\tld{A}_1^{(j)})_{km(3)})+1\geq i(3)$ for $k\neq k(3)$. Moreover, since a pivot reduces to a monomial modulo the maximal ideal of $R$, we have $(\tld{A}_1^{(j)})_{k(l)m(l)}=x_l^*v^{i(l)}+E_l$ where $d_R(E_{l})\geq 3$ and $x^*_l\in R^\times$.

For all $n\in\{1,2,3\}$, we now compare the $nm(1)$-th entry of equation (\ref{key equation for gauge}).  We use $\Tr_{s}$ for truncation (cf. Proposition \ref{semi-valuation})  which deletes the terms of degree $\leq s$. Taking $s=i(1)-\delta_{n<k(1)}$ (the degree at the entry $(nm(1))$), one has
\begin{align*}
\text{\small{$I^{(j+1)}_{nk(1)}x^*_1v^{i(1)}+I^{(j+1)}_{nk(1)}E_1 +\Tr_{s}\big(I^{(j+1)}_{nk(2)}(\tld{A}_1^{(j)})_{k(2)m(1)}\big)+
\Tr_{s}\big(I^{(j+1)}_{nk(3)}(\tld{A}_1^{(j)})_{k(3)m(1)}\big)=v^3\big(\tld{A}_2^{(j)}M^{(j)} \big)_{nm(1)}$}}.
\end{align*}
 Here we use that the truncation kills off the contribution of $\tld{A}_2^{(j)}-\tld{A}_1^{(j)}$, and that $v^{s-i(1)}| I^{(j+1)}_{nk(1)}$.
 Since every term in the equation except the leftmost term has $d_R\geq \delta+3$, we conclude that $d_R(I^{(j+1)}_{nk(1)})\geq \delta+2$.
 
 Similarly, by comparing the $nk(2)$ entries and truncating, using that  $d_R(I^{(j+1)}_{nk(1)})\geq \delta+2$, we also have $d_R(I^{(j+1)}_{nk(2)})\geq \delta+2$.
 Finally, comparing the $nk(3)$ entries and truncating, and using $d_R(I^{(j+1)}_{nk(l)})\geq \delta+2$ for $l=1,2$, we get  $d_R(I^{(j+1)}_{nk(3)})\geq \delta+1$ (note the loss of -1 in the lower bound for $d_R(I^{(j+1)}_{nk(3)})$, which is due to the weaker estimate  $d_{R}((\tld{A}_1^{(j)})_{km(3)})+1\geq i(3)$).
\end{proof}

\subsection{Height conditions} 


Let $\overline{\fM}\in Y^{\mu,\tau}_{\mathbf{w}}(\F)$.  We now compute the universal lift of $\overline{\fM}$ with height conditions. Fix a gauge basis $\overline{\beta}$ mod $p$ of $\overline{\fM}$ (Definition \ref{definition gauge basis mod p}).

We consider the problem of deforming $(\overline{\fM}, \overline{\beta})$. Recall the closed substack  $Y^{\mu, \tau} \subset Y^{[0,2], \tau}$ introduced in \S 3.1 and constructed in \cite[Proposition 5.2]{CL}. For any Artinian $\cO$-algebra $A$ with residue field $\F$, let $D^{\tau, \overline{\beta}}_{\overline{\fM}}(A)$ be the category of pairs $(\fM_A, \beta_A)$ deforming $(\overline{\fM}, \overline{\beta})$ where $\fM_A \in Y^{\mu, \tau}(A)$ and $\beta_A$ is a gauge basis of $\fM_A$. 
By Theorem \ref{gaugeunique}, the morphism $D^{\tau, \overline{\beta}}_{\overline{\fM}}\rightarrow Y_{\overline{\fM}}^{\mu, \tau}$ is a torsor for $\widehat{\mathbb{G}}_m^{3f}$.

The main result of this subsection is the following:

\begin{thm} \label{univfh} The deformation problem $D^{\tau, \overline{\beta}}_{\overline{\fM}}$ is representable by a complete local Noetherian $\cO$-algebra $R^{\tau, \overline{\beta}}_{\overline{\fM}}$.
Let $(\fM^{\univ}, \beta^{\univ})$ be the universal family over $R^{\tau, \overline{\beta}}_{\overline{\fM}}$.   Then $\Mat_{\beta^{\univ}}(\phi_{\fM^{\univ},s_{j+1}(3)}^{(j)})$ is given in column 4 of Table \ref{table:lifts}.  Furthermore, 
\begin{equation}
\label{eq:def:univ}
R^{\tau, \overline{\beta}}_{\overline{\fM}} \cong \widehat{\otimes}_j (R_{\widetilde{w}_j}^{\expl})^{p\text{-flat, red}}
\end{equation}
where $R_{\widetilde{w}_j}^{\expl}$ is given in the second column of Table \ref{table3} and $(R_{\widetilde{w}_j}^{\expl})^{p\text{-flat, red}}$  denotes its $p$-flat and reduced quotient.
\end{thm}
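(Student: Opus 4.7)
The plan hinges on the algorithm of Section \ref{subsec:algorithm} together with the uniqueness of the gauge basis established in Theorem \ref{gaugeunique}. For representability, I would observe that the forgetful map $D^{\tau, \overline{\beta}}_{\overline{\fM}} \to Y^{\mu, \tau}_{\overline{\fM}}$ is a $\widehat{\bG}_m^{3f}$-torsor by Theorem \ref{gaugeunique}, and that the target is representable, being a closed subspace of the representable stack $Y^{[0,2], \tau}_{\overline{\fM}}$ (and, via Corollary \ref{RKisin}, identified with the potentially crystalline deformation ring for any compatible $\rhobar$). The torsor structure then yields representability of $D^{\tau, \overline{\beta}}_{\overline{\fM}}$ by a complete local Noetherian $\cO$-algebra, namely a formally smooth $\widehat{\bG}_m^{3f}$-extension of the object on the base.

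For the shape of the universal Frobenius, I would apply Theorem \ref{thm:classification1} to the universal family $\fM^{\univ}$: there exists an eigenbasis $\beta'$ of $\fM^{\univ}$ with respect to which each $\Mat_{\beta'}(\phi^{(j)}_{\fM^{\univ}, s_{j+1}(3)})$ takes the form of Table \ref{table:lifts}. By Definition \ref{gauge}, $\beta'$ is itself a gauge basis, so by the uniqueness statement in Theorem \ref{gaugeunique} it differs from $\beta^{\univ}$ only by a diagonal torus rescaling at each embedding. Such rescalings preserve the form of column 4 up to a reparameterization of the free coefficients, so one may assume $\beta' = \beta^{\univ}$, giving the claimed matrix form.

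For the explicit presentation (\ref{eq:def:univ}), I would introduce the complete local $\cO$-algebra $S = \widehat{\otimes}_j S^{(j)}$, where $S^{(j)}$ is the formal power series ring over $\cO$ in the free coefficients appearing in row $\widetilde{w}_j$ of column 4 of Table \ref{table:lifts}. This data specifies a tautological $f$-tuple of matrices $\widetilde{A}^{(j)}$ equipped with descent data of type $\tau$. The requirement that these matrices arise from a Kisin module in $Y^{\mu, \tau}$ amounts, via the local model diagram of \cite{CL}, to pulling back the equations cutting out the Schubert cell attached to $\widetilde{w}_j$ in the Pappas-Zhu local model $M(\mu)$. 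Since $\mu$ is parallel, these conditions decouple across embeddings and translate to explicit polynomial relations on the coefficients, producing the rings $R_{\widetilde{w}_j}^{\expl}$ of Table \ref{table3}. Passing to the $p$-flat reduced quotient is then justified by the reducedness and $\cO$-flatness of $Y^{\mu, \tau}$, which are inherited from the corresponding properties of $M(\mu)$ for minuscule $\mu$.

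The main obstacle is the case-by-case extraction in the third step of the defining equations of $R_{\widetilde{w}_j}^{\expl}$ from the Kottwitz--Rapoport height and determinant conditions expressed in terms of minors of the tautological $\widetilde{A}^{(j)}$. This computation is sensitive to the positions and degrees of the pivots of $\widetilde{w}_j$ and to their interaction with the descent-datum-constrained off-diagonal entries; performing it uniformly for each of the nine shapes of Table \ref{table:lifts} is where the bulk of the explicit work lies.
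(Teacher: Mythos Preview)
Your approach is essentially the paper's, and the representability argument via the $\widehat{\bG}_m^{3f}$-torsor together with the use of Theorem \ref{thm:classification1} and Theorem \ref{gaugeunique} for the matrix form is correct. Two corrections are needed in your third step. First, the condition cutting out $Y^{\mu,\tau}$ inside $Y^{[0,2],\tau}$ is not a ``Schubert cell'' condition: Schubert cells stratify the special fiber by shape, and the shape is already fixed by the gauge-basis degree bounds. What you actually need is Proposition \ref{lemma height-determinant}, which for $p$-flat reduced $R$ characterizes membership in $Y^{\mu,\tau}$ by the determinant condition $\det A^{(j)}=x_j^* P(v)^3$ together with integrality of $P(v)^2(A^{(j)})^{-1}$ (equivalently, divisibility of all $2\times 2$ minors by $P(v)$). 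These are precisely the relations generating $R^{\expl}_{\widetilde{w}_j}$ after the partial $p$-saturation described in the text. Second, $\mu=(2,1,0)$ is not minuscule; the $p$-flatness and reducedness of $M(\mu)$ (hence of $R^{\tau,\overline{\beta}}_{\overline{\fM}}$ by equisingularity) are general results of Pappas--Zhu and do not require minusculeness.

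Finally, to close the isomorphism (\ref{eq:def:univ}) you should, as the paper does, exhibit the two maps and check they are inverse: one comes from universality, since the tautological Kisin module with gauge basis over the explicit ring lies in $Y^{\mu,\tau}$ by Proposition \ref{lemma height-determinant} (using that the explicit ring is $p$-flat and reduced); the other comes from your step 2, since the universal gauge-basis coefficients satisfy the height and determinant relations (using that $R^{\tau,\overline{\beta}}_{\overline{\fM}}$ is $p$-flat and reduced). Your proposal contains both ingredients but does not spell out this closure.
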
 

In order to prove Theorem \ref{univfh}, we need the following preliminary result from \cite{CL}.
Recall that the $p$-adic Hodge type $\leq(2,1,0)$ condition is imposed by flat closure from the generic fiber; Theorem 5.13 and Corollary 5.12 in \cite{CL} give a characterization of points of $Y^{\tau, \mu}$ for $p$-flat and reduced $\cO$-algebras $R$.  In our setting, this translates into the following:
\begin{prop} \label{lemma height-determinant} Let $R$ be a complete local Noetherian flat reduced $\cO$-algebra. Consider $\fM_{R}\in Y^{[0,h], \tau}(R)$ for some $h$ and let $A^{(j)}=\Mat_{{\beta}}\left(\phi_{\fM,s_{j+1(3)}}^{(j)}\right)$ for any eigenbasis $\beta$ of $\fM_R$.  Then $\fM_R \in Y^{\mu, \tau}(R)$ if and only if 
\begin{enumerate}
\item  $\det (A^{(j)}) = x^*_j P(v)^3$ for $x^*_j \in R[\![v]\!]^{\times}$; 
\item $P(v)^2 (A^{(j)})^{-1} \in \Mat_3(R[\![v]\!][1/p])$
\end{enumerate}   
$($recall that $P(v)=v+p$$)$.
\end{prop}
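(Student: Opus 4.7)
The strategy is to deduce this matrix-level characterization from the local model description of $Y^{\mu,\tau}$ in \cite{CL} (in particular Theorem 5.13 and Corollary 5.12 cited in the statement). Since $R$ is $p$-flat and reduced, membership in the closed substack $Y^{\mu,\tau}\subset Y^{[0,h],\tau}$ can be checked on the generic fiber and then propagated by flat closure, so I would first reduce to showing the equivalence over $R[1/p]$.

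First I would translate the local model condition into a condition on the matrices $A^{(j)}$. Via the local model diagram recalled before Definition \ref{defn:shape}, being in $Y^{\mu,\tau}$ with $\mu=(2,1,0)$ parallel is equivalent to asking that, at each embedding $j$, the Frobenius $\phi_{\fM}^{(j)}$ has relative position (with respect to any choice of Iwahori lattice) given by the cocharacter $(2,1,0)$. Since passing to the $s_{j+1}(3)$-isotypic component amounts to ``removing the descent datum'' and replacing $E(u)=u^{e}+p$ by $P(v)=v+p$ (cf.\ Proposition \ref{addingdd} and Lemma \ref{integrality}), this condition becomes exactly the assertion that the matrix $A^{(j)}\in \GL_3(R[\![v]\!][1/p])$ has elementary divisors $\bigl(P(v)^{0},P(v)^{1},P(v)^{2}\bigr)$ over the discrete valuation ring $R[1/p][\![v]\!]_{(P(v))}$ at each minimal prime of $R[1/p]$.

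Next, I would show that this elementary-divisor condition is equivalent to the conjunction of (1) and (2). Suppose $A^{(j)}$ has Smith normal form with exponents $a_1\leq a_2\leq a_3$ at $P(v)$ (nonneg\-ative by the height $h$ hypothesis). The determinant condition (1) is equivalent to $a_1+a_2+a_3=3$ together with the statement that $\det(A^{(j)})/P(v)^{3}$ is a unit in $R[\![v]\!]$ (the latter coming from the height condition combined with $P(v)$ being the only place where the determinant can vanish, by the generic \'etaleness at $v\ne -p$). Condition (2) says $P(v)^{2}(A^{(j)})^{-1}$ has no pole at $P(v)=0$, i.e.\ $a_3\leq 2$. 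Under the constraint $a_1+a_2+a_3=3$ with $0\leq a_i\leq a_3\leq 2$, the unique solution is $(a_1,a_2,a_3)=(0,1,2)$, which is exactly the $\mu=(2,1,0)$ condition. Conversely, elementary divisors $(0,1,2)$ immediately give both (1) and (2).

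The main obstacle is the bookkeeping in the first step: one must verify carefully that the Pappas--Zhu local model condition for parallel cocharacter $\mu=(2,1,0)$ at Iwahori level really does translate, after passage to the $s_{j+1}(3)$-isotypic component and the substitution $v=u^{e}$, into the elementary-divisor condition on $A^{(j)}$ with respect to $P(v)=v+p$ at \emph{each} embedding independently. This uses crucially the fact that each of the three isotypic Frobenii $\phi^{(j)}_{\fM,s_{j+1}(k)}$ determines the others via the commutative diagram \eqref{bigdiagram}, so it suffices to impose the condition on the $s_{j+1}(3)$-component. Once this translation is set up, the rest is the elementary linear-algebra verification sketched above, and the extension from $R[1/p]$ back to $R$ follows from $R$ being $p$-flat and reduced together with the fact that $Y^{\mu,\tau}$ is a closed substack defined by flat closure.
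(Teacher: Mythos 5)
Your overall strategy (reduce to $R[1/p]$ by $p$-flatness and reducedness, then translate the local model condition into an elementary-divisor condition on the matrices $A^{(j)}$ after removing descent data) is the right one and matches the spirit of the paper's proof, which is essentially a citation to [CL, Theorem 5.13 and Corollary 5.12] together with the bookkeeping translation from $E(u)$ to $P(v)=v+p$.

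However, your final counting step contains a genuine error. You assert that under $a_1+a_2+a_3=3$ with $0\le a_1\le a_2\le a_3\le 2$ the unique solution is $(0,1,2)$. This is false: $(1,1,1)$ also satisfies those constraints, as does the inequality $(1,1,1)\le (2,1,0)$ in dominance order. Correspondingly, your earlier claim that membership in $Y^{\mu,\tau}$ translates into relative position \emph{exactly} $\mu=(2,1,0)$ is also wrong: the Pappas--Zhu local model $M(\mu)$ is the flat closure whose generic fiber is the \emph{closed} Schubert variety, so $R[1/p]$-points are only required to have relative position $\le(2,1,0)$. (The paper itself is explicit about this: the discussion preceding the proposition says the condition is ``$p$-adic Hodge type $\le(2,1,0)$,'' and Remark \ref{rmk111} and the proof of Theorem \ref{thm:factors} note that $A^{(j)}$ divisible by $P(v)$, i.e.\ elementary divisors $(1,1,1)$, does occur inside $Y^{\mu,\tau}$.) You have therefore over-simplified \emph{both} sides of the equivalence in the same way, which is why the final statement you deduce is nonetheless true; but as written the argument is not rigorous, and if you were to use the ``exactly $(0,1,2)$'' characterization downstream you would be led astray (this is precisely the subtlety that forces the separate treatment of the $\mathrm{id}$-shape in Theorem \ref{thm:factors}). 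The correct statement for the counting step is: conditions (1) and (2) are equivalent to $a_1+a_2+a_3=3$ and $a_3\le 2$, whose solutions in nonincreasing nonnegative integers are $(2,1,0)$ and $(1,1,1)$; and these are exactly the $\lambda\le(2,1,0)$ with $|\lambda|=3$, which is the local model condition. The rest of your argument — the reduction to $R[1/p]$, the use of diagram \eqref{bigdiagram} to pass between isotypic Frobenii, and the observation that the unit factor in $\det(A^{(j)})$ forces itself from the height condition at each minimal prime — is sound.
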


We explain how \ref{lemma height-determinant} was used to generate column 4 of Table \ref{table:lifts} and hence column 2 of Table \ref{table3}.
Letting $\widetilde{A}^{(j)}_{\tld{w}_j}$ be the universal matrix lifting $\overline{A}^{(j)}$ which satisfies the degree conditions in the third column, row $\tld{w}_j$ of Table \ref{table:lifts}, then $R_{\widetilde{w}_j}^{\expl}$ is
obtained by first imposing the conditions
\begin{enumerate}
	\item[$i)$] for all $1\leq i,k\leq 3$ the $(ik)$-minor satisfies $\left(\widetilde{A}^{(j)}_{\tld{w}_j} \right)^{(ik)}\equiv 0$ modulo $P(v)$;
	\item[$ii)$] $\det\left(\widetilde{A}^{(j)}_{\widetilde{w}_j}\right)=x_j^*P(v)^3$.
\end{enumerate}
and then performing a partial $p$-saturation process of the relations.  For example, whenever we have the condition $(v+p)\mid v^k Q(v)$, we actually get $(v+p)\mid Q(v)$. The resulting $R_{\widetilde{w}_j}^{\expl}$ in rows from $\alpha\beta\alpha\gamma$ to $\alpha\beta\gamma$ in Table  \ref{table:lifts} are $p$-flat and reduced. We do not claim that the remaining rings are $p$-flat and reduced, and we will not need that information. 

\begin{proof}[Proof of Theorem \ref{univfh}]
Since $D^{\tau, \overline{\beta}}_{\overline{\fM}}$ is a formal torus torsor on $Y_{\overline{\fM}}^{\mu, \tau}$ and has no non-trivial automorphisms, we deduce that $D^{\tau, \overline{\beta}}_{\overline{\fM}}$ is representable.
The representing ring $R^{\tau, \overline{\beta}}_{\overline{\fM}}$ is $p$-flat and reduced, as it is equisingular to a complete local ring of $M(\mu)$.

Let $R$ denote the right hand side of (\ref{eq:def:univ}). Since $R$ is $p$-flat and reduced, the relations in $R$ implies that the obvious Kisin module $\fM$ with gauge basis over $R$ is actually inside $Y_{\overline{\fM}}^{\mu, \tau}$ by Proposition \ref{lemma height-determinant}. (The Kisin module $\fM$ is defined as the unique Kisin module with descent data of type $\tau$ endowed with an eigenbasis $\beta$ such that $\Mat_{{\beta}}\left(\phi_{\fM,s_{j+1(3)}}^{(j)}\right)=\tld{A}_{\widetilde{w}_j}^{(j)}$. Note that  $\fM$ has finite height since by construction it satisfies the determinant condition.)

Thus there exists a unique map $g:R^{\tau, \overline{\beta}}_{\overline{\fM}}\rightarrow R$ such that $g_*(\fM^{\univ}, \beta^{\univ})=(\fM,\beta)$.
On the other hand the definition of the gauge basis, the elementary divisors condition satisfied by $\fM^{\univ}$ and the fact that $R^{\tau, \overline{\beta}}_{\overline{\fM}}$ is reduced and $p$-flat show that there is a map $h:R\rightarrow  R^{\tau, \overline{\beta}}_{\overline{\fM}}$ such that $(\fM^{\univ}, \beta^{\univ})=h_*(\fM,\beta)$.
One easily checks that the maps $g$ and $h$ are inverse of each other.

\end{proof}
\begin{rmk}
Under the hypotheses of Theorem \ref{univfh}, it can be shown that $Y^{\mu,\tau}_{\overline{\fM}}$ has dimension $4f$ over $\cO$ which implies that $R^{\tau, \overline{\beta}}_{\overline{\fM}}$ has dimension $7f$ over $\cO$. However, we will not need this information in this paper.
\end{rmk}

We end this section by giving some sample computations of the partial $p$-saturation process mentioned above:
\subsubsection{The $\alpha\beta\alpha$ cell.}

Assume that $\overline{\fM}^{(j)}$ has shape $\widetilde{w}_{j}=\alpha\beta\alpha$. From Theorem \ref{thm:classification1}, we deduce that 
\begin{equation*}
\widetilde{A}^{(j)}_{\alpha\beta\alpha}=\begin{pmatrix}
c_{11}&c_{12}& c_{13}+P(v)c_{13}^*\\
0&\widetilde{c}_{22} + P(v)c_{22}^*&\widetilde{c}_{23}+P(v)c_{23}\\
c_{31}^*v&c_{32}v&c_{33}+P(v)c_{33}'
\end{pmatrix}
\end{equation*}
where $c_{13}^*,\,c_{31}^*,\,c_{22}^*$ are units.

Let us consider first condition $i)$. The congruence $\left(\tld{A}^{(j)}_{\alpha\beta\alpha}\right)^{(13)}\equiv 0$ produces (after killing off an extra $v$ factor) $\widetilde{c}_{22}=0$ and, similarly, 
$\left(\tld{A}^{(j)}_{\alpha\beta\alpha}\right)^{(12)}\equiv 0$ implies $\widetilde{c}_{23}=0$. This implies in particular that $\left(\tld{A}^{(j)}_{\alpha\beta\alpha}\right)^{(11)}\equiv 0$ and $\left(\tld{A}^{(j)}_{\alpha\beta\alpha}\right)^{(3k)}\equiv 0$ for all $k=1,2,3$.

Similarly, we deduce from $\left(\tld{A}^{(j)}_{\alpha\beta\alpha}\right)^{(2k)}\equiv 0$ that
\begin{eqnarray*}
c_{12}c_{33}=-pc_{32}c_{13},&
c_{11}c_{33}=-pc_{31}^*c_{13},
&c_{11}c_{32}=c_{31}^*c_{12}
\end{eqnarray*}
(for $k=1,2,3$ respectively).

Now condition $ii)$ becomes equivalent to
\begin{eqnarray*}
&&(c_{11}c_{33}+pc_{31}^*c_{13})+P(v)\left(c_{33}'c_{11}-c_{31}^*c_{13}+pc_{31}^*c_{13}^*\right)-c_{31}^*c_{13}^* P(v)^2 =x^*P(v)^2
\end{eqnarray*}
which implies that
\begin{eqnarray*}
&&P(v)\left(c_{33}'c_{11}-c_{31}^*c_{13}+pc_{31}^*c_{13}^*\right)-c_{31}^*c_{13}^* P(v)^2=x^*P(v)^2.
\end{eqnarray*}

We conclude that conditions $i)$ and $ii)$ and the partial $p$-saturation process above implies the following relations
\begin{eqnarray*}
c_{12}c_{33}=-pc_{32}c_{13},&
c_{11}c_{33}=-pc_{31}^*c_{13},&
c_{11}c_{32}=c_{31}^*c_{12}\\
&c_{33}'c_{11}-c_{31}^*c_{13}+pc_{31}^*c_{13}^*=0&
\end{eqnarray*}
On the other hand, these relations imply that conditions $i)$ and $ii)$ are satisfied. This explains the fourth column, $\alpha\beta\alpha$-row in the table \ref{table:lifts}. 

\subsubsection{The $\beta\alpha$ cell.}
Assume that $\widetilde{w}_{j}=\beta\alpha$. We have
\begin{eqnarray*}
\tld{A}_{\beta\alpha}^{(j)}=\begin{pmatrix}
c_{11}& c_{12}+P(v)c_{12}^* &c_{13}\\
0&c_{22} + P(v)c_{22}'&c_{23}+ P(v) c_{23}^*\\
vc_{31}^*&vc_{32}&c_{33}+P(v)c_{33}'
\end{pmatrix}
\end{eqnarray*}
where $c_{12}^*,\,c_{23}^*,\,c_{31}^*$ are units.

We consider first condition $i)$. 
From $\left(\tld{A}^{(j)}_{\beta\alpha}\right)^{(12)}\equiv 0$ and $\left(\tld{A}^{(j)}_{\beta\alpha}\right)^{(13)}\equiv 0$, we deduce $c_{23}=0$ and $c_{22}=0$ respectively.

These equations imply $\left(\tld{A}^{(j)}_{\beta\alpha}\right)^{(11)}\equiv 0$ and $\left(\tld{A}^{(j)}_{\beta\alpha}\right)^{(3k)}\equiv 0$ are automatically satisfied for all $k=1,2,3$. From 
$\left(\tld{A}^{(j)}_{\beta\alpha}\right)^{(2k)}\equiv 0$, we deduce, for $k=2,3$ respectively,
\begin{eqnarray}
\label{minors beta-alpha}
c_{11}c_{33}=-pc_{31}^*c_{13},\,&c_{11}c_{32}=c_{31}^*c_{12}&
\end{eqnarray}
These relations together with the fact $c_{31}^*$ being a unit implies $\left(\tld{A}^{(j)}_{\beta\alpha}\right)^{(21)}\equiv 0$.

As for the determinant condition, we obtain:
\begin{eqnarray*}
&&P(v)\left(c_{22}'(c_{11}c_{33}+pc_{31}^*c_{13})+pc_{23}^*(c_{32}c_{11}-c_{31}^*c_{12})\right)+\\
&&\quad P(v)^2\left(c_{11}c_{22}'c_{33}'+c_{12}c_{23}^*c_{31}^*-pc_{31}^*c_{12}^*c_{23}^*-c_{31}^*c_{22}'c_{13}-c_{11}c_{32}c_{23}^*\right)+\\
&&\quad\quad c_{12}^*c_{23}^*c_{31}^* P(v)^3 =x^*P(v)^3
\end{eqnarray*}
which gives the equation
\begin{eqnarray}
\label{determinant beta-alpha}
c_{11}c_{22}'c_{33}'+c_{12}c_{23}^*c_{31}^*-pc_{31}^*c_{12}^*c_{23}^*-c_{31}^*c_{22}'c_{13}-c_{11}c_{32}c_{23}^*=0.
\end{eqnarray}

As $c_{11}c_{32}c_{23}^*=c_{31}^*c_{12}c_{23}^*$, the equations (\ref{minors beta-alpha}), (\ref{determinant beta-alpha}) yield precisely the conditions appearing in the fourth column, $\beta\alpha$-row in the table \ref{table:lifts}. Conversely, these relations imply that conditions $i)$ and $ii)$ are satisfied.

The computations for the other cells are analogous and left to the reader.

\section{Monodromy and potentially crystalline deformation rings}
\label{sec:Monodromy and PCDR}

In the previous section, we essentially computed certain finite height $G_{K_{\infty}}$ Galois deformation rings.  We will now describe (framed) potentially crystalline deformation rings $R^{(2,1,0), \tau}_{\rhobar}$ of $p$-adic Hodge type $(2,1,0)$ at each embedding and Galois type $\tau$.   The codimension of  $\Spec R^{(2,1,0), \tau}_{\rhobar}[1/p]$ in the finite height $G_{K_{\infty}}$-deformation space is $f$, the difference being the existence of a monodromy operator (cf. \cite{KisinFcrys}).  We describe this condition explicitly in Theorem \ref{moncond}.  In most cases, it can be described by $f$ equations on the generic fiber (one for each embedding of $K$ into $\Qpbar$).  Although the equations involve power-series, they can be expressed as polynomial conditions plus a transcendental part which is divisible by a high power of $p$ (due to the genericity condition).  

In section \S 5.3, we obtain (in most cases) integral equations for the deformation rings by analyzing the $p$-flatness properties of these equations.  As a result, we obtain descriptions of the special fibers of the deformation spaces.  In \S \ref{sec:appl}, we use these descriptions to prove instances of the Serre weight conjectures and modularity lifting.             

\subsection{Monodromy condition} 

We begin by recalling some notations from \cite{KisinFcrys}.  Let $\cO^{\rig}$ denote the ring of rigid analytic functions on the open unit disc over $K$.  We fix an embedding $\cO^{\rig} \iarrow K[\![u]\!]$, i.e. identify $\cO^{\rig}$ with the ring of power series $\sum_{i = 0}^{\infty} a_n u^n$ where $a_n \in K$ verify $|a_n|_p r^n \ra 0$ for all $r < 1$ (and hence $\fS[1/p]$ is identified with the subring of bounded functions on the open unit disc).  
Set 
$$
\lambda = \prod_{n=0}^{\infty} \phz^n \left(\frac{E(u)}{p} \right) \in \cO^{\rig}.
$$ 
We define a derivation on $\cO^{\rig}$ by $N_{\nabla} \defeq - u \lambda \frac{d}{du}$; the Frobenius on $\fS$ extends to a Frobenius $\phz$ on $\cO^{\rig}$. If $\La$ is a finite flat $\cO$-algebra, we define  $\cO_{\La}^{\rig}\defeq \cO^{\rig} \otimes_{\Zp}\La$. For any Kisin module $\fM_{\La}\in Y^{[0,2],\tau}(\Lambda)$, we define its base change to $\cO^{\rig}$ as $\fM^{\rig}_{\La} \defeq \fM_{\La} \otimes_{\fS} \cO^{\rig}$.  We have a decomposition $\fM^{\rig}_{\La}=\oplus_{j=0}^{f-1} \fM^{\rig, (j)}_{\La}$.

One has the following important result:
\begin{thm} \label{thmKisin} 
The module $\fM^{\rig}_{\La}[1/\lambda]$ is equipped with a canonical derivation $N_{\fM^{\rig}_{\La}}$ over $N_\nabla$ such that 
\begin{equation} \label{commrel}
N_{\fM^{\rig}_{\La}} \phi_{\fM^{\rig}_{\La}} = E(u) \phi_{\fM^{\rig}_{\La}} N_{\fM^{\rig}_{\La}}
\end{equation}
and $N_{\fM^{\rig}_{\La}}\mod u = 0$.   The module $\fM^{\rig}_{\La}$ is stable under $N_{\fM^{\rig}_{\La}}$ if and only if $T^*_{dd}(\fM_{\La})[1/p]$ is the restriction to $G_{K_{\infty}}$ of a potentially crystalline representation of $G_K$ which becomes crystalline when restricted to $G_L$.  
\end{thm}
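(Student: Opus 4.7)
The plan is to follow Kisin's construction from \cite{KisinFcrys}~\S 1.3 closely: the $\La$-coefficients are harmless by base change, and the tame descent datum decomposes $\fM_\La$ into isotypic components that are transported through every step in the evident way. Thus the statement is essentially a coefficient-and-descent-data version of a classical result.

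First I would construct $N_{\fM^{\rig}_\La}$ on $\fM^{\rig}_\La[1/\lambda]$. Since $\fM_\La$ has $E$-height $\leq 2$, the Frobenius becomes an isomorphism on $\fM^{\rig}_\La[1/\lambda]$ $($as $\lambda$ is divisible by $\phi^n(E(u)/p)$ for all $n \geq 0)$. Uniqueness is seen by iterating the commutation relation $(\ref{commrel})$: any difference $H$ between two candidates is an $\cO^{\rig}_\La[1/\lambda]$-linear endomorphism satisfying $H\phi = E(u)\phi H$ and $H \equiv 0 \bmod u$, and the identity
\[
H = \prod_{i=0}^{n-1}\phi^i(E(u))\cdot \phi^n \circ H \circ \phi^{-n}
\]
combined with $\phi^n(u) = u^{p^n}$ forces $H = 0$. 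For existence, the same iterated formula, starting from the evident derivation $N_\nabla$ on the base ring $\cO^{\rig}_\La$, defines convergent partial sums in the natural Fr\'echet topology on $\fM^{\rig}_\La[1/\lambda]$ $($the $u$-adic vanishing of $\phi^i(E(u))$ dominates the $\phi^{-n})$. The base-ring compatibility $N_\nabla \phi = E(u)\phi N_\nabla$, a direct check using $\phi(\lambda) = p\lambda/E(u)$, guarantees that the limit is a derivation with the required properties, and the final condition $N \equiv 0 \bmod u$ is automatic since $N_\nabla(u) = -u\lambda \in (u)$.

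The characterization of $N$-stability of $\fM^{\rig}_\La$ in terms of potential crystallinity is the deeper half. If $\fM^{\rig}_\La$ is $N$-stable, reducing mod $u$ yields a $(\phi, N, \Delta)$-module $D_\La \defeq (\fM^{\rig}_\La/u\fM^{\rig}_\La)[1/p]$ over $L_0 = W(k)[1/p]$; the compatible embedding $\cO^{\rig} \iarrow \mathrm{B}^+_{\mathrm{cris}}$ sending $u$ to the Fontaine period attached to $(\pi_n)_n$ then lets one identify $T^*_{dd}(\fM_\La)[1/p]|_{G_L}$ with the crystalline representation associated to $D_\La$, and the $\Delta$-action glues this into a potentially crystalline $G_K$-representation. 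Conversely, given such a potentially crystalline extension of $T^*_{dd}(\fM_\La)[1/p]$, Kisin's functor from weakly admissible filtered $(\phi,N)$-modules to $F$-isocrystals on the open unit disc produces a $\phi$-module over $\cO^{\rig}$ carrying a regular monodromy operator, which by uniqueness of $N_{\fM^{\rig}_\La}$ must agree with $\fM^{\rig}_\La[1/p]$ inside $\fM^{\rig}_\La[1/\lambda]$, whence $N$-stability. The main obstacle is precisely this comparison: one has to verify equivariance of the embedding $\cO^{\rig} \iarrow \mathrm{B}^+_{\mathrm{cris}}$ with respect to $\phi$, $N_\nabla$, and $\Delta$ so that the rigid-analytic monodromy matches Fontaine's, which after passing to $G_L$ reduces to the arguments of \cite{KisinFcrys} and is then descended using $\Delta$-equivariance.
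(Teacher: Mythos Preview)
Your proposal is essentially correct and follows the same route as the paper, which simply invokes \cite[Corollary 1.3.15]{KisinFcrys} for the construction and characterization over $G_L$, and then descends. Your reconstruction of Kisin's convergence argument for the existence and uniqueness of $N_{\fM^{\rig}_\La}$ is accurate, and the paper in fact carries out a closely related computation in the lemma immediately following this theorem.

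There is, however, one point where your sketch is too vague and where the paper is sharper. In passing from Kisin's result over $G_L$ to the potentially crystalline statement over $G_K$, you write that ``the $\Delta$-action glues this into a potentially crystalline $G_K$-representation'' and that one ``descends using $\Delta$-equivariance''. The actual mechanism here is the \emph{full faithfulness} of the restriction functor from crystalline $G_L$-representations to $G_{L_\infty}$-representations (\cite[Corollary 2.1.14]{KisinFcrys}), which the paper cites explicitly. Without it, you do not know that the crystalline $G_L$-extension of $T^*_{dd}(\fM_\La)[1/p]|_{G_{L_\infty}}$ is canonical enough for the $\Gal(L_\infty/K_\infty)\cong\Delta$-action to lift: full faithfulness is precisely what guarantees that the $\Delta$-equivariance on the $G_{L_\infty}$-side promotes uniquely to $\Delta$-equivariance on the crystalline $G_L$-side, thereby producing the $G_K$-action. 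Your phrase ``$\Delta$-equivariance'' gestures at this, but the argument is incomplete until you name and use that full faithfulness.
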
 
\begin{proof} This is essentially \cite[Corollary 1.3.15]{KisinFcrys}. It is stated there without tame descent data, however, using the full faithfulness of the restriction from crystalline $G_{L}$-representations to $G_{L_{\infty}}$-representations (Corollary 2.1.14 loc. cit.) one can extend the result to the potentially crystalline case. 
\end{proof}

We remark that the monodromy operator $N_{\fM^{\rig}_{\La}}$ respects the decomposition $\fM^{\rig}_{\La}=\oplus_{j=0}^{f-1} \fM^{\rig, (j)}_{\La}$. In particular, one has $N_{\fM^{\rig}_{\La}}^{(j+1)} \phi_{\fM^{\rig}_{\La}}^{(j)} = E(u) \phi_{\fM^{\rig}_{\La}}^{(j)} N_{\fM^{\rig}_{\La}}^{(j)}$ where $N_{\fM^{\rig}_{\La}}^{(j)}$ is the monodromy operator induced by $N_{\fM^{\rig}_{\La}}$ on $\fM^{\rig, (j)}_{\La}$.

Let $\fM_{\La}\in Y^{[0,2],\tau}(\La)$ be as above and let $\beta=\{\beta^{(j)}\}$ be an eigenbasis for $\fM_{\La}$.  Given the finite height conditions on $\fM_{\La}$, we always have $N_{\fM^{\rig}_{\La}}(\fM_{\La}) \subset \frac{1}{\lambda} \fM^{\rig}_{\La}$ by same argument from \cite[Proposition 2.2.2]{KisinFcrys}.  In what follows, we set $C^{(j-1)}\defeq \Mat_{\beta}(\phi_{\fM_{\La}}^{(j-1)})$ and define the \emph{matrix of the monodromy at $j$} as $N^{(j)}_{\infty}\defeq\Mat_{\beta}(N_{\fM^{\rig}_{\La}}^{(j)})$.

The following Lemma shows that we can construct $N^{(j)}_{\infty}$ by successive approximation. We state it in a slightly greater generality than our specific situation. 
\begin{lemma} \label{mconverge} 
Let $\tau$ be a tame inertial type and let $\fM_{\La}\in Y^{[0,2],\tau}(\La)$ be a Kisin module over $\La$. Let $N_{0}^{(j)} = 0$ for all $j \in \Z/f\Z$.   
For each $i \geq 1$, set
$$
N_i^{(j)} \defeq E(u)C^{(j-1)} \phz(N_{i-1}^{(j-1)}) (C^{(j-1)})^{-1}  - N_{\nabla}(C^{(j-1)}) (C^{(j-1)})^{-1}.
$$ 
Then $N_i^{(j)}$ converges in $\frac{1}{\lambda} \Mat(\cO^{rig}_{\La})$ to $N^{(j)}_{\infty}$.
Moreover, $\Ad_{s_{j}}(u^{\bf{a}_1}, u^{\bf{a}_2}, u^{\bf{a}_3})\left(N^{(j)}_{\infty}\right)\in\Mat_3\left((\cO^{rig}_{\La})^{\Delta=1}\right)$.
\end{lemma}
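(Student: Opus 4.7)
The strategy is successive approximation: the commutation relation in Theorem \ref{thmKisin} is viewed as a fixed-point equation, and $N_\infty^{(j)}$ is realized as the limit of the iterates $N_i^{(j)}$. Writing $N^{(j)}_{\fM^{\rig}_\Lambda}\circ \phi^{(j-1)}_{\fM_\Lambda} = E(u)\,\phi^{(j-1)}_{\fM_\Lambda}\circ N^{(j-1)}_{\fM^{\rig}_\Lambda}$ in the eigenbasis $\beta$, and using that $N_\infty$ is a derivation acting on scalars by $N_\nabla$ together with the Leibniz rule, one finds
\begin{equation*}
N_\infty^{(j)}\, C^{(j-1)} + N_\nabla(C^{(j-1)}) = E(u)\,C^{(j-1)}\,\phz\bigl(N_\infty^{(j-1)}\bigr).
\end{equation*}
Right-multiplying by $(C^{(j-1)})^{-1}$, which exists in $\Mat_3(\cO^{\rig}_\Lambda[1/E(u)])$ by the height $\leq 2$ condition, produces exactly the recursion defining $(N_i^{(j)})_i$ once $N_\infty$ on the right-hand side is replaced by $N_{i-1}$ and $N_\infty$ on the left by $N_i$.

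For convergence, set $\Delta_i^{(j)}\defeq N_i^{(j)}-N_{i-1}^{(j)}$. The $N_\nabla(C^{(j-1)})(C^{(j-1)})^{-1}$-terms cancel when two consecutive recursions are subtracted, yielding
\begin{equation*}
\Delta_i^{(j)} \;=\; E(u)\,C^{(j-1)}\,\phz\bigl(\Delta_{i-1}^{(j-1)}\bigr)\,(C^{(j-1)})^{-1}.
\end{equation*}
Iterating $i-1$ times expresses $\Delta_i^{(j)}$ as a product involving $\phz^{i-1}\bigl(\Delta_1^{(j-i+1)}\bigr)$, multiplied by factors whose Gauss norms are uniformly bounded in $i$ on each Tate algebra of radius $r<1$. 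The crucial input is that $\phz$ contracts Gauss norms via $|\phz(h)|_r = |h|_{r^p}$, while the factor of $u$ in $N_\nabla=-u\lambda\tfrac{d}{du}$ forces $\Delta_1^{(j)}=N_1^{(j)}$ to be divisible by $u$. Hence $|\Delta_1^{(j-i+1)}|_{r^{p^{i-1}}}\to 0$ as $i\to\infty$, and the growth of the accumulated constants is only geometric, so $|\Delta_i^{(j)}|_r\to 0$ for every $r<1$. Thus $(N_i^{(j)})_i$ is Cauchy in the Frechet topology of $\tfrac{1}{\lambda}\Mat_3(\cO^{\rig}_\Lambda)$ and converges to some limit $N_{\mathrm{lim}}^{(j)}$.

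By continuity of $\phz$ and $N_\nabla$ in this topology, the collection $(N_{\mathrm{lim}}^{(j)})_j$ satisfies the commutation relation (\ref{commrel}). An induction from $N_0^{(j)}=0$ using $u\mid N_\nabla(C^{(j-1)})$ shows that each $N_i^{(j)}$ is divisible by $u$, so $N_{\mathrm{lim}}^{(j)}\equiv 0 \pmod u$ as well. The uniqueness of such a collection of derivations in Theorem \ref{thmKisin} forces $N_{\mathrm{lim}}^{(j)}=N_\infty^{(j)}$, which in particular confirms that the limit lies in $\tfrac{1}{\lambda}\Mat_3(\cO^{\rig}_\Lambda)$. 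For the second assertion, both $\phi_{\fM^{\rig}_\Lambda}$ and $N_\nabla$ commute with $\widehat{g}$ (the latter because $\widehat{g}(u)=\omega_\pi(g)u$ is a scalar), so the recursion is $\Delta$-equivariant at every step. Translating this to coordinates: since $f_k^{(j)}\in\fM_k^{(j)}$, the $(m,k)$-entry of $N_i^{(j)}$ lies in the $\eta_k\eta_m^{-1}$-eigenspace of $\cO^{\rig}_\Lambda$ under $\widehat{g}$; conjugation by $\Ad_{s_j}(u^{\mathbf{a}_1},u^{\mathbf{a}_2},u^{\mathbf{a}_3})$ is built precisely to multiply each entry by the power of $u$ that cancels its $\Delta$-character, yielding a matrix with entries in $(\cO^{\rig}_\Lambda)^{\Delta=1}$, a property stable under taking Frechet limits.

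The main obstacle is the convergence step: one must set up the Frechet topology on $\tfrac{1}{\lambda}\Mat_3(\cO^{\rig}_\Lambda)$ carefully and track $|\cdot|_r$-norms through the iteration, exploiting the contraction $r\mapsto r^p$ together with uniform bounds on $E(u)$, $C^{(j-1)}$ and $(C^{(j-1)})^{-1}$ as $r$ shrinks to $0$. The remaining steps (derivation of the recursion, identification with $N_\infty$ via uniqueness, and the descent-data assertion) are essentially formal.
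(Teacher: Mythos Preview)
Your overall strategy---iterate the fixed-point recursion, show the differences $\Delta_i^{(j)}$ are Cauchy via Frobenius contraction, then identify the limit with $N_\infty^{(j)}$ by the uniqueness in Theorem~\ref{thmKisin}---is exactly the paper's, and your treatment of the descent-data assertion is the same as well.

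There is one genuine gap in your convergence step. You assert that the iterated factors have ``Gauss norms uniformly bounded in $i$ on each Tate algebra of radius $r<1$,'' but this fails as written: the factor $(C^{(j-1)})^{-1}$ has poles along the zeros of $E(u)$ (the height condition only gives $E(u)^2(C^{(j-1)})^{-1}\in\Mat_3(\Lambda[\![u]\!])$), so $|(C^{(j-1)})^{-1}|_r=\infty$ once $r\ge|p|^{1/e}$. Merely declaring that you work in $\tfrac{1}{\lambda}\Mat_3(\cO^{\rig}_\Lambda)$ does not by itself repair this; one needs the functional equation for $\lambda$. The paper fixes it by multiplying through by $\lambda$ \emph{before} iterating: from $\phz(\lambda)=p\lambda/E(u)$ one obtains
\[
\lambda\,\Delta_i^{(j)} \;=\; \tfrac{E(u)^2}{p}\,C^{(j-1)}\,\phz\!\bigl(\lambda\,\Delta_{i-1}^{(j-1)}\bigr)\,(C^{(j-1)})^{-1},
\]
and now the scalar $E(u)^2$ combines with $(C^{(j-1)})^{-1}$ to give an honest element of $\Mat_3(\Lambda[\![u]\!])$ with Gauss norm $\le 1$ on every disk. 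From this the paper gets the clean $u$-adic estimate $\lambda\,\Delta_{i+1}^{(j)}\in u^{p^{i-1}}\Mat_3(\cO^{\rig}_\Lambda)$ by induction (using $\tfrac{1}{\lambda}N_\nabla(C^{(j-1)})\in u\,\Mat_3(\Lambda[\![u]\!])$ for the base case), and your Gauss-norm argument then goes through verbatim: $|\lambda\Delta_i|_r\le|p|^{-(i-1)}\,|\lambda\Delta_1|_{r^{p^{i-1}}}$, where the double-exponential decay in $r^{p^{i-1}}$ dominates the geometric growth $|p|^{-(i-1)}$. With this $\lambda$-trick inserted your proof is complete and coincides with the paper's.
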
 
\begin{proof} 
We show by induction that 
\begin{equation} \label{m3}
\lambda(N^{(j)}_{i+1} - N^{(j)}_{i}) \in u^{p^{i-1}} \Mat(\cO^{\rig}_{\La})
\end{equation}
for all $j$ and $i \geq 1$. This proves that $\lambda N^{(j)}_i$ converges to $\lambda \widetilde{N}^{(j)}_{\infty}$ in $\Mat(\La[1/p][\![u]\!])$ and satisfies the commutation relation with Frobenius, and thus we conclude that $\lambda \widetilde{N}^{(j)}_{\infty} = \lambda N^{(j)}_{\infty} \in \Mat_3(\cO^{\rig}_{\La})$ (a priori, the convergence happens in a formal power series ring, however one can estimate the Gauss norms to see that sequence actually converges in $\cO^{\rig}_{\La}$).     

The inductive step for $i\geq 1$ follows easily from the relation
\begin{equation*}
\lambda \left(N_{i+1}^{(j)} - N^{(j)}_i \right) = \frac{E(u)^2}{p} C^{(j-1)} \phz \left(\lambda \left(N_{i}^{(j-1)} - N^{(j-1)}_{i-1} \right) \right) (C^{(j-1)})^{-1} 
\end{equation*} 
since we have $E(u)^2 \big(C^{(j-1)}\big)^{-1}\in \Mat(\La[\![u]\!])$ by the height condition.  
For the base case, we consider 
$$
\lambda N_{1}^{(j)} = -\lambda N_{\nabla}(C^{(j-1)}) (C^{(j-1)})^{-1}. 
$$
By the height condition, $\lambda^2 \left( C^{(j-1)} \right)^{-1} \in \Mat(\cO^{\rig}_{\La})$ so it suffices to show that 
\begin{equation} \label{m4}
\frac{1}{\lambda} N_{\nabla}(C^{(j-1)}) \in u \Mat(\La[\![u]\!])
\end{equation}
which is obvious.

The last assertion is an immediate consequence of the compatibility between the descent data action on $\fM_{\La}^{\rig}$ and the monodromy operator.
\end{proof}

We now state the condition which controls the poles of the monodromy operator.  Recall that we fixed $\pi\defeq (-p)^{\frac{1}{p^{f} - 1}}$ as a uniformizer for $L$.
\begin{prop} 
\label{Mcond}
Let $\fM_{\La}\in Y^{[0,2],\tau}(\La)$ with eigenbasis $\beta$, and write
$\Mat_{\beta}(N_{\fM^{\rig}_{\La}}^{(j)})=N_{\infty}^{(j)}=\underset{i\rightarrow \infty}{\lim} N_i^{(j)}$ as in Lemma \ref{mconverge}.  
Then $\fM^{\rig}_{\La}$ is stable under $N_{\fM^{\rig}_{\La}}$ if and only if 
$$
\lambda N_{\infty}^{(j)} \mid_{u  = \pi} = 0
$$
for all $j$. 
\end{prop}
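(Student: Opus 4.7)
The easy direction is that integrality of $N^{(j)}_\infty$ forces $\lambda N^{(j)}_\infty$ to vanish at every zero of $\lambda$, in particular at $u = \pi$. For the converse, the plan is to show by induction on $n \geq 0$ that $N^{(j)}_\infty$, a priori only in $\frac{1}{\lambda}\Mat(\cO^{\rig}_{\La})$, is regular at $u = \pi^{1/p^n}$ and at all its $\Gal(\overline{K}/K)$-conjugates, for every $j\in\Z/f\Z$. Since the (simple) zeros of $\lambda = \prod_{n\geq 0} \phz^n(E(u)/p)$ in the open unit disc are precisely the Galois orbits of the points $\pi^{1/p^n}$, this would imply $N^{(j)}_\infty \in \Mat(\cO^{\rig}_{\La})$.

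The base case $n=0$ is equivalent to the hypothesis, since $\lambda$ has a simple zero at $u=\pi$. For the inductive step I would use the limit version of the recursion from Lemma \ref{mconverge},
$$
N^{(j+1)}_\infty = E(u)\, C^{(j)}\, \phz(N^{(j)}_\infty)\,(C^{(j)})^{-1} - N_\nabla(C^{(j)})\,(C^{(j)})^{-1}.
$$
For $n\geq 1$ one computes $E(\pi^{1/p^n}) = (-p)^{1/p^n} + p \neq 0$, so $(C^{(j)})^{-1}$—whose only possible poles in the open disc are at zeros of $E(u)$ by the height $\leq 2$ condition—is regular at $u = \pi^{1/p^n}$. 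The remaining factors $C^{(j)}$ and $N_\nabla(C^{(j)})$ are integral power series, hence a fortiori regular there. Finally, since $\phz$ substitutes $u\mapsto u^p$, the regularity of $\phz(N^{(j)}_\infty)$ at $u = \pi^{1/p^n}$ reduces to the regularity of $N^{(j)}_\infty$ at $u = \pi^{1/p^{n-1}}$, which holds by the inductive hypothesis. Combining these three observations shows $N^{(j+1)}_\infty$ is regular at $u = \pi^{1/p^n}$ for each $j$.

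The final ingredient is to pass from regularity at the single point $u = \pi^{1/p^n}$ to regularity at all its Galois conjugates. This is where the $\Delta$-equivariance established at the end of Lemma \ref{mconverge} enters: the twist $\Ad_{s_j}(u^{\mathbf{a}_1}, u^{\mathbf{a}_2}, u^{\mathbf{a}_3})(N^{(j)}_\infty)$ has entries in $(\cO^{\rig}_{\La})^{\Delta=1}$, so is a function of $v = u^e$ alone. Since at any $u\neq 0$ this $\Ad$-twist is conjugation by an invertible diagonal matrix, regularity of $N^{(j)}_\infty$ and of the twist are equivalent; and as a function of $v$, regularity at $v_0 = (-p)^{1/p^n}$ automatically covers all $e$-th roots of $v_0$, i.e.\ the whole Galois orbit of $\pi^{1/p^n}$. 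The whole argument is essentially bookkeeping: the main technical content—that the recursion propagates regularity—rests on the elementary observation that $E$ does not vanish at $\pi^{1/p^n}$ for $n\geq 1$, so no serious obstacle is expected.
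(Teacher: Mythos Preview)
Your overall strategy is correct and essentially matches the paper's: use the recursion to propagate regularity from level $n-1$ to level $n$, and use the $\Delta$-equivariance (the fact that the monodromy matrix is conjugate to something in the $v$-variable) to handle the base case. However, there is a genuine gap in how you close the induction.

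The issue is your claim that regularity at $v_0 = (-p)^{1/p^n}$ ``covers all $e$-th roots of $v_0$, i.e.\ the whole Galois orbit of $\pi^{1/p^n}$.'' For $n \geq 1$ this is false: the Galois orbit of $\pi^{1/p^n}$ over $K$ consists of all roots of $u^{ep^n} + p$, namely $\{\zeta\pi^{1/p^n} : \zeta \in \mu_{ep^n}\}$, which has $ep^n$ elements. Passing to the $v$-variable only identifies points differing by $\mu_e$; in the $v$-variable the zeros of $\phz^n(E)$ are still the $p^n$ distinct points $\{\zeta'(-p)^{1/p^n} : \zeta' \in \mu_{p^n}\}$. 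So $\Delta$-equivariance alone cannot promote regularity at a single point to regularity along the whole orbit when $n \geq 1$, and your induction does not close.

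The fix is simple and is exactly what the paper does: run the inductive step at \emph{every} conjugate simultaneously. If $N_\infty^{(j)}$ is regular at all $\zeta'\pi^{1/p^{n-1}}$ with $\zeta' \in \mu_{ep^{n-1}}$, then for any $\zeta \in \mu_{ep^n}$ we have $(\zeta\pi^{1/p^n})^p = \zeta^p\pi^{1/p^{n-1}}$ with $\zeta^p \in \mu_{ep^{n-1}}$, so $\phz(N_\infty^{(j)})$ is regular at $\zeta\pi^{1/p^n}$; combined with the regularity of $C^{(j)}$, $(C^{(j)})^{-1}$, and $N_\nabla(C^{(j)})$ at that point, the recursion gives regularity of $N_\infty^{(j+1)}$ there. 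The $\Delta$-equivariance is then only needed for the base case $n=0$, where the Galois orbit of $\pi$ really is $\{\zeta\pi : \zeta \in \mu_e\}$. Equivalently (and this is how the paper phrases it), one can argue top-down: iterating the recursion $n$ times reduces vanishing at $\zeta\pi^{1/p^n}$ to vanishing at $\zeta^{p^n}\pi$, and $\zeta^{p^n} \in \mu_e$.
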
 
\begin{proof}
Since $\lambda N_{\infty}^{(j)} \in \Mat(\cO^{\rig}_{\La})$, $\fM^{\rig}_{\La}$ is stable under $N_{\fM^{\rig}_{\La}}$ if and only if $\lambda N_{\infty}^{(j)}$ is divisible by $\lambda$.  Since $\lambda$ has simple zeroes exactly  at $\left\{ \zeta\pi^{1/p^n}\,|\, n\geq 0,\, \zeta\in \cO,\, \zeta^{p^ne}=1 \right\}$, it suffices to show that 
$$
\lambda N_{\infty}^{(j)} \mid_{u  = \zeta\pi^{1/p^n}} = 0
$$
for all $n\geq 0$, $\zeta$ and all $j$. 
The commutation relation $N_{\fM^{\rig}_{\La}}^{(j)} \phi_{\fM^{\rig}_{\La}}^{(j-1)} = E(u) \phi_{\fM^{\rig}_{\La}}^{(j-1)} N_{\fM^{\rig}_{\La}}^{(j-1)}$
translates into
$$
N_{\infty}^{(j)} C^{(j-1)}+ N_{\nabla}(C^{(j-1)})=E(u)C^{(j-1)}\phz\big(N_{\infty}^{(j-1)}\big).
$$
Since $C^{(j-1)}$ is invertible at $\zeta\pi^{1/p^n}$ when $n > 0$ and $N_{\nabla}(C^{(j-1)})$ is divisible by $\lambda$, we see that 
$$
\lambda N_{\infty}^{(j-1)} \mid_{u  = \zeta^p\pi^{1/p^{n-1}}} = 0 \implies   \lambda N_{\infty}^{(j)} \mid_{u  = \zeta\pi^{1/p^{n}}}=0.
$$
for all $n > 0$.  Thus, we are reduced to checking the pole condition at $u = \zeta\pi$ for $\zeta$ being an $e$-th root of unity.
As   $N_{\infty}^{(j)}$ are all  conjugate to matrices in $\La[1/p][\![v]\!]$ by Lemma \ref{mconverge}, we are reduced to check the condition at $u=\pi$.
\end{proof}

By construction, $N_{\infty}^{(j)}$ only depends on the $(C^{(j)})$.   So, of course, $\lambda N_{\infty}^{(j)}\mid_{u = \pi}$ also only depends on the $C^{(j)}$.  In general, however this could be a complicated condition on the coefficients of $(C^{(j)})$.   We now show in fact this condition can be written as an explicit polynomial equation plus an ``error'' term which is divisible by a power of $p$ depending on the genericity of $\tau$.  If $\tau$ is sufficiently generic, the special fiber of $R^{(2,1,0), \tau}_{\rhobar}$ will only depend on the ``leading term.''

We will want to apply our condition to the universal finite height deformations constructed in \S 4.2.  Let $R$ be any complete local Noetherian flat $\cO$-algebra with finite residue field.  Define $\cO^{\rig}_R$ to be the power series $\sum_{i = 0}^{\infty} a_n u^n$ with $a_n \in R[1/p]$ such that $p^n a_n^k \ra 0$ for all $k>0$.

Let $\fM_R \in Y^{[0,2], \tau}(R)$ equipped with an eigenbasis $\beta$.  As before, we let $C^{(j)}\defeq \Mat_{\beta}(\phi_{\fM_R}^{(j)})$ and write $\Mat_{\beta}(N_{\fM^{\rig}_{R}}^{(j)})=N_{\infty}^{(j)}=\underset{i\rightarrow \infty}{\lim} N_i^{(j)}$ as in Lemma \ref{mconverge}. Note that $N_{\infty}^{(j)} \in \frac{1}{\lambda} \Mat_3(\cO^{\rig}_{R})$.  

 We will now study the convergence in Lemma \ref{mconverge} more carefully. 
 
 \begin{lemma} \label{keylemma} We have 
 $$
 \frac{1}{\lambda} N_{\nabla}(C^{(j-1)}) =  \Ad_{s_j}(u^{\bf{a}_1}, u^{\bf{a}_2}, u^{\bf{a}_3})(A^{(j-1), \dagger})
 $$
 where $A^{(j), \dagger} \in \Mat_3(R[\![v]\!])$.  Furthermore, $A^{(j), \dagger} \mod v$ is upper triangular nilpotent.
 \end{lemma}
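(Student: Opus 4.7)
The key observation is that $\frac{1}{\lambda} N_\nabla$ is just the derivation $-u\pa_u$, so the entire statement reduces to a Leibniz rule computation once I rewrite $C^{(j-1)}$ using Proposition \ref{addingdd}. Set $D = \Diag(u^{\bf{a}^{(j)}_{s_j(1)}}, u^{\bf{a}^{(j)}_{s_j(2)}}, u^{\bf{a}^{(j)}_{s_j(3)}})$ and $\Lambda = \Diag(\bf{a}^{(j)}_{s_j(1)}, \bf{a}^{(j)}_{s_j(2)}, \bf{a}^{(j)}_{s_j(3)})$, so that $C^{(j-1)} = s_j\, D A^{(j-1)} D^{-1}\, s_j^{-1}$ and $u\pa_u D = \Lambda D$, $u\pa_u D^{-1} = -\Lambda D^{-1}$.

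Applying $-u\pa_u$ and using that $\Lambda$ commutes with $D$, a direct Leibniz computation yields
\[
-u\pa_u(DA^{(j-1)}D^{-1}) \;=\; D\,[A^{(j-1)},\Lambda]\,D^{-1} \;-\; D\bigl(u\pa_u A^{(j-1)}\bigr) D^{-1}.
\]
Conjugating back by $\Ad_{s_j}^{-1}(u^{\bf{a}_1},u^{\bf{a}_2},u^{\bf{a}_3})$ -- which, by Definition \ref{adconj}, is the operation that strips off both $s_j$ and $D$ -- I obtain the explicit formula
\[
A^{(j-1),\dagger} \;=\; [A^{(j-1)},\Lambda]\; -\; u\pa_u A^{(j-1)}.
\]

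Now I check integrality and the modulo-$v$ claim. Since $A^{(j-1)} \in \Mat_3(R[\![v]\!])$ and $\Lambda$ is a scalar diagonal matrix, the bracket $[A^{(j-1)},\Lambda]$ has entries in $R[\![v]\!]$. For the second term, the operator $u\pa_u$ acts on $R[\![v]\!]$ via $u\pa_u = ev\tfrac{d}{dv}$ (because $v = u^e$), so it not only preserves $R[\![v]\!]$ but actually lands in $v R[\![v]\!]$. This shows $A^{(j-1),\dagger} \in \Mat_3(R[\![v]\!])$ and that modulo $v$ the contribution from $u\pa_u A^{(j-1)}$ disappears.

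For the "upper triangular nilpotent mod $v$" assertion, the diagonal entries of $[A^{(j-1)},\Lambda]$ vanish identically (bracket with a diagonal matrix), so it suffices to know that $A^{(j-1)} \bmod v$ is upper triangular. This is exactly Lemma \ref{integrality} applied to $s_j^{-1} C^{(j-1)} s_j = \Ad(u^{\bf{a}^{(j)}_{s_j(1)}}, u^{\bf{a}^{(j)}_{s_j(2)}}, u^{\bf{a}^{(j)}_{s_j(3)}})(A^{(j-1)}) \in \Mat_3(R[\![u]\!])$, whose hypotheses $e > \bf{a}^{(j)}_{s_j(1)} - \bf{a}^{(j)}_{s_j(3)} > \bf{a}^{(j)}_{s_j(2)} - \bf{a}^{(j)}_{s_j(3)} > 0$ follow from the weak genericity of $\tau$ together with the defining inequalities of the orientation $(s_j)$ in Definition \ref{orientation}. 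There is no real obstacle here: the proof is a one-line Leibniz computation, and the only place where any hypothesis is used is invoking Lemma \ref{integrality} to get the upper-triangularity modulo $v$.
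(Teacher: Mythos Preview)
Your proof is correct and follows essentially the same approach as the paper: both apply the Leibniz rule to the relation $C^{(j-1)} = \Ad_{s_j}(u^{\bf{a}_1},u^{\bf{a}_2},u^{\bf{a}_3})(A^{(j-1)})$ from Proposition~\ref{addingdd} to obtain the explicit formula $A^{(j-1),\dagger} = [A^{(j-1)},\Lambda] - u\pa_u A^{(j-1)}$. Your argument is in fact slightly more complete than the paper's terse proof, since you explicitly invoke Lemma~\ref{integrality} to justify that $A^{(j-1)}\bmod v$ is upper triangular (and hence that the strictly lower-triangular entries of $A^{(j-1),\dagger}$ vanish mod $v$), whereas the paper only remarks that the diagonal vanishes mod $v$ and leaves the upper-triangularity implicit from the ambient setup.
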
 
 \begin{proof} Applying Leibniz rule to (\ref{change isotypical}), we see that 
\begin{equation}
\label{eq:Leibniz}
 A^{(j-1), \dagger} = - u \frac{d}{du} A^{(j-1)} - \mathrm{Diag}(\bf{a}^{(j)}_{s_j(1)}, \bf{a}^{(j)}_{s_j(2)}, \bf{a}^{(j)}_{s_j(3)}) A^{(j-1)} + A^{(j-1)}  \mathrm{Diag}(\bf{a}^{(j)}_{s_j(1)}, \bf{a}^{(j)}_{s_j(2)}, \bf{a}^{(j)}_{s_j(3)})
\end{equation}
 which is a matrix in $v$.  Furthermore, $- u \frac{d}{du} A^{(j-1)}$ is divisible by $v$ and the rest is 0  modulo $v$ along the diagonal.  
 \end{proof}

\begin{defn} \label{leadingterm} We define the  \emph{leading term} as
$$
P_{N}(A^{(j-1)}) \defeq   A^{(j-1), \dagger}  P(v)^2 (A^{(j-1)})^{-1}.
$$
Note that
\[
P_{N}(A^{(j-1)})\equiv 
z_j\left(-ev\frac{d}{dv}A^{(j-1)}+A^{(j-1)}  \mathrm{Diag}(\bf{a}^{(j)}_{s_j(1)}, \bf{a}^{(j)}_{s_j(2)}, \bf{a}^{(j)}_{s_j(3)})\right)\left(\frac{1}{P(v)}\mathrm{adj}(A^{(j-1)})\right)
\]
modulo $P(v)$, where $z_j\in R^*$ is a suitable unit.
\end{defn}

The following theorem is the main result of this section:
\begin{thm} \label{moncond}
\label{thm:monodr} Let $\fM_R \in Y^{[0,2], \tau}(R)$ equipped with an eigenbasis $\beta$  and let  $N_{\infty}^{(j)} \in \frac{1}{\lambda} \Mat_3(\cO^{\rig}_{R})$ be defined as in Lemma \ref{mconverge} above.  
Assume that $\tau$ is $n$-generic with $n\geq 2$. 
Then 
$$
\Ad^{-1}_{s_j}(u^{\bf{a}_1}, u^{\bf{a}_2}, u^{\bf{a}_3})  (\lambda N_{\infty}^{(j)})  \mid_{u = \pi} = z \left(P_{N}(A^{(j-1)}) \mid_{u  = \pi} - p^{n-1} M_{\mathrm{err}}^{(j)}\right)
$$
where $z$ is a unit in $R[1/p]$, $P_{N}(A^{(j-1)})$ is as in Definition \ref{leadingterm} and $M^{(j)}_{\mathrm{err}} \in \Mat_3(R)$.   

Moreover there exists a matrix $Z^{(j)}\in \Mat_3(R)$ such that
\[
M_{\mathrm{err}}^{(j)}=\big(A^{(j-1)}|_{v=-p}\big)Z^{(j)}\big(\big(P(v)^2(A^{(j-1)})^{-1}\big)|_{v=-p}\big).
\]
\end{thm}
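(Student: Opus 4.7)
My plan is to separate the leading behavior of $\lambda N^{(j)}_\infty$ at $u = \pi$ (coming from the first iterate $N^{(j)}_1$) from the error (coming from higher iterates). Using the identity $\lambda = (E(u)/p)\phz(\lambda)$ (so $\lambda^2/E(u)^2 = \phz(\lambda)^2/p^2$), the equality $E(u) = P(v)$, Lemma \ref{keylemma}, and $C^{(j-1)} = \Ad_{s_j}(u^{\bf{a}_1},u^{\bf{a}_2},u^{\bf{a}_3})(A^{(j-1)})$ from Proposition \ref{addingdd}, the first iterate $\lambda N^{(j)}_1 = -\lambda N_\nabla(C^{(j-1)})(C^{(j-1)})^{-1}$ rewrites as
\[
\lambda N^{(j)}_1 = -\frac{\phz(\lambda)^2}{p^2}\,\Ad_{s_j}(u^{\bf{a}_1},u^{\bf{a}_2},u^{\bf{a}_3})\bigl(A^{(j-1),\dagger}\,P(v)^2\,(A^{(j-1)})^{-1}\bigr).
\]
A direct computation yields $\phz(\lambda)(\pi) = \prod_{k\geq 1}(1-p^{p^k-1}) \in \cO^{\times}$, so $z := -\phz(\lambda)(\pi)^2/p^2$ is a unit in $E[1/p]\subset R[1/p]$, and applying $\Ad_{s_j}^{-1}(u^{\bf{a}_1},u^{\bf{a}_2},u^{\bf{a}_3})$ gives $\Ad_{s_j}^{-1}(\lambda N^{(j)}_1)|_{u=\pi} = z\cdot P_N(A^{(j-1)})|_{v=-p}$, matching the leading term.

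Next I would analyze the tail $\sum_{i\geq 2}\Delta^{(j)}_i$ where $\Delta^{(j)}_i := \lambda(N^{(j)}_i - N^{(j)}_{i-1})$. A short rearrangement of the recursion (again using the $\lambda$-identity) produces, for $i\geq 2$,
\[
\Delta^{(j)}_i = \tfrac{1}{p}\,C^{(j-1)}\,\phz(\Delta^{(j-1)}_{i-1})\,E(u)^2(C^{(j-1)})^{-1}.
\]
Applying $\Ad_{s_j}^{-1}(u^{\bf{a}_1},u^{\bf{a}_2},u^{\bf{a}_3})$ and evaluating at $u=\pi$ automatically produces the requested sandwich structure
\[
\Ad_{s_j}^{-1}(\Delta^{(j)}_i)|_{u=\pi} = \tfrac{1}{p}\,A^{(j-1)}|_{v=-p}\cdot Y^{(j-1)}_i\cdot (P(v)^2(A^{(j-1)})^{-1})|_{v=-p},
\]
with $Y^{(j-1)}_i := \Ad_{s_j}^{-1}\phz(\Delta^{(j-1)}_{i-1})|_{u=\pi}$. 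The remaining task is to show $\sum_{i\geq 2}Y^{(j-1)}_i$ equals $p^n$ times an element of $\Mat_3(R)$; after dividing by $p$ and by $z$, this produces the $-z\,p^{n-1}M^{(j)}_{\mathrm{err}}$ contribution with the required sandwich form, defining $Z^{(j)} \in \Mat_3(R)$ via the accumulated inner term.

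The $p^n$-divisibility comes from two combined mechanisms. First, the composition $\Ad_{s_j}^{-1}\circ\phz\circ\Ad_{s_{j-1}}(u^{\bf{a}_1},u^{\bf{a}_2},u^{\bf{a}_3})$ appearing when unpacking $\phz(\Delta^{(j-1)}_{i-1})$ simplifies, via the identity $p\bf{a}^{(j-1)}_k = \bf{a}^{(j)}_k + e\cdot a_{k,-j}$, to conjugation by $\Diag(v^{a_{s_j(1),-j}},v^{a_{s_j(2),-j}},v^{a_{s_j(3),-j}})$ post-composed with a permutation-twisted Frobenius $s_{j,j-1}\phz(\cdot)s_{j,j-1}^{-1}$. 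Second, because $A^{(j-2),\dagger}$ is upper-triangular nilpotent modulo $v$ (Lemma \ref{keylemma}), the matrix $P_N(A^{(j-2)})$ modulo $v$ has vanishing entries precisely where the $\Ad(v^a)$-twist would produce a $v$-pole. Combining these with the weak genericity bounds $|a_{s_j(k),-j} - a_{s_j(\ell),-j}| \in [n, p-1-n]$ gives an inductive estimate $Y^{(j-1)}_i \in p^n\Mat_3(R)$, uniformly in $i$, with rapid convergence coming from the fact that $\phz$ raises $u$-adic valuations by a factor of $p$ (so the $i$-th tail contribution picks up an extra $\pi^{p^{i-1}}$ at $u=\pi$).

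The main obstacle is the bookkeeping required to align the nilpotent structure of $A^{(j-2),\dagger}\bmod v$ with the $\Ad(v^a)$-twist under varying permutations $s_j \neq s_{j-1}$: the orientation is dictated by the leading digit at position $f-j-1$, while the twist factors $v^{a_{s_j(k),-j}}$ reference digits at position $-j$. Verifying that the off-diagonal entries of $\phz(P_N(A^{(j-2)}))|_{u=\pi}$ are either integral or carry exactly the compensating $v$-divisibility needed to absorb the $\Ad(v^a)$-poles amounts to a case-by-case inspection across all shapes in $\mathrm{Adm}(2,1,0)$; the hypothesis $n\geq 2$ is precisely what provides enough slack in the genericity bound to close the induction uniformly.
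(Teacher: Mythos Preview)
Your treatment of the leading term and of the sandwich structure via the one-step recursion $\Delta^{(j)}_i = \tfrac{1}{p}C^{(j-1)}\phz(\Delta^{(j-1)}_{i-1})E(u)^2(C^{(j-1)})^{-1}$ is correct and matches the paper. The gap is in the divisibility argument for the error term. Your key claim---that $P_N(A^{(j-2)})$ modulo $v$ vanishes precisely where the $\Ad(v^a)$-twist produces a pole---is false. From Lemma~\ref{keylemma}, $A^{(j-2),\dagger}\bmod v$ is strictly upper triangular, but $P_N(A^{(j-2)}) = A^{(j-2),\dagger}\cdot P(v)^2(A^{(j-2)})^{-1}$ is not: multiplying a strictly upper-triangular matrix on the right by an arbitrary matrix only forces the \emph{bottom row} of the product to vanish mod $v$. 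The $(2,1)$-entry of $P_N(A^{(j-2)})\bmod v$ equals $(A^{(j-2),\dagger})_{23}|_{v=0}\cdot(P(v)^2(A^{(j-2)})^{-1})_{31}|_{v=0}$, which is generically nonzero. Since the diagonal twist multiplies this entry by $v^{a_2-a_1}$ with $a_2<a_1$, evaluating at $v=-p$ introduces a genuine $p$-denominator that your argument does not control. (A minor related point: the exponents in the twist are $a_{s_{j-1}(k),f-j}$, ordered by $s_{j-1}$---the orientation at $j-1$ is determined by exactly those digits---not $a_{s_j(k),-j}$ as you wrote.)

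The paper avoids this by \emph{not} working with $P_N$ for the divisibility estimate. It unrolls the recursion completely, writing the $i$-th tail term as a product
\[
\Big(\prod_{k=0}^{i-1}\phz^k(C^{(j-k-1)})\Big)\cdot\phz^i\Big(u\frac{d}{du}C^{(j-i-1)}\Big)\cdot\Big(\prod_{k=i}^{0}\phz^k(E(u)C^{(j-k-1),*})\Big),
\]
and observes that after removing descent datum the outer factors land in $\Mat_3(R[\![v]\!])$ by the height condition. The middle factor becomes $\phz^i(A^{(j-i-1),\dagger})$ conjugated by a diagonal $v$-power matrix, and it is the strictly-upper-triangular structure of $A^{(j-i-1),\dagger}$ itself---not of $P_N$---that yields the $v^n$ (for $i=1$) or $v^{(n-1)p^{i-1}}$ (for $i>1$) divisibility uniformly, without any shape-by-shape inspection. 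The one-step recursion you wrote \emph{is} used in the paper, but only for the second assertion (the factorization of $M^{(j)}_{\mathrm{err}}$), and there the required $p$-divisibility of the inner factor at $v=-p$ is imported from the already-established $v$-divisibility of the fully-unrolled terms $Z_i^{(j)}$; it is not proved by an independent induction on the recursion.
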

\begin{proof} 
Let's examine the sequence from Lemma \ref{mconverge} in more detail. Consider that
\begin{align*}
N_{\infty}^{(j)} &= N_1^{(j)} + \sum^{\infty}_{i \geq 1} (N_{i+1}^{(j)} - N_i^{(j)}) \\
&=N_1^{(j)} + \sum^{\infty}_{i \geq 1} C^{(j-1)} \phz(N_{i}^{(j-1)} - N_{i-1}^{(j-1)})  E(u) (C^{(j-1)})^{-1} \\
&=N_1^{(j)} + \sum^{\infty}_{i \geq 1} \left(\prod_{k=0}^{i-1}\phz^{k}(C^{(j-k-1)})\right) \phz^i(N_{1}^{(j-i)})\left(\prod_{k=i-1}^{0}\phz^{k} (C^{(j-k-1), *}) \right)
\end{align*}
where $C^{(j), *} :=  E(u) (C^{(j)})^{-1}$. 

From Lemma \ref{keylemma}, we deduce that 
$$
\lambda N_1^{(j)} = - \frac{\phz(\lambda)^2}{p^2}    \Ad_{s_j}(u^{\bf{a}_1}, u^{\bf{a}_2}, u^{\bf{a}_3})(P_N(A^{(j-1)})).
$$
Let $z =  - \frac{\phz(\lambda)^2}{p^2}|_{u = \pi}$ which is in  $\frac{1}{p^2}\cO_K^{\times}$  since $\phz^n(E(u)/p)$ has constant term 1.  Now consider the trailing term
$$
p^2 \lambda \left(\prod_{k=0}^{i-1}\phz^{k}(C^{(j-k-1)})\right) \phz^i(N_{1}^{(j-i)})\left(\prod_{k=i-1}^{0}\phz^{k} (C^{(j-k-1), *}) \right)
$$
for $i \geq 1$. Substituting $N_1^{(j-i)} = \frac{\phz(\lambda)}{p} \left (u \frac{d}{du} C^{(j-i -1)} \right) C^{(j-i-1), *}$, we can rewrite this as 
\begin{equation} \label{x7}
X_i^{(j)} := \frac{\phz^{i+1}(\lambda)^2}{p^i} \left(\prod_{k=0}^{i-1}\phz^{k}(C^{(j-k-1)})\right) \phz^i\left(u \frac{d}{du} C^{(j-i -1)} \right) \left(\prod_{k=i}^{0}\phz^{k} \left(E(u) C^{(j-k-1), *} \right) \right).
\end{equation}
We would now ``remove the descent datum'' and write this as an expression in the $A^{(j)}$'s. Define
\begin{equation} \label{x8}
Z^{(j)}_i = \Ad^{-1}_{s_j}(u^{\bf{a}_1}, u^{\bf{a}_2}, u^{\bf{a}_3}) \left(\frac{1}{\phz^{i+1}(\lambda)^2} X_i^{(j)} \right);
\end{equation}
and note that so far we can write
\[
p^2\Ad^{-1}_{s_j}(u^{\bf{a}_1}, u^{\bf{a}_2}, u^{\bf{a}_3})  (\lambda N_{\infty}^{(j)})=
-\phz(\lambda)^2P_N(A^{(j-1)})+\sum_{i\geq 1}^{\infty}\phz^{i+1}(\lambda)^2Z_i^{(j)}.
\]
We inductively show that $Z^{(j)}_i \in \frac{v^{(n-1)p^{i-1}}}{p^i} \Mat(R[\![v]\!])$ when $i >1$ and $Z^{(j)}_1 \in \frac{v^{n}}{p} \Mat(R[\![v]\!])$ (recall that $n\leq \frac{p-1}{2}$). 
This suffices to prove the first part of the Theorem by evaluating at $u = \pi$ (i.e. $v = -p$), and shows in particular that
\begin{equation}
\label{eq:err:term:expl}
M_{\mathrm{err}}^{(j)}=\frac{1}{\phz(\lambda)^2}\sum_{i\geq 1}^{\infty}\phz^{i+1}(\lambda)^2\frac{Z_i^{(j)}|_{v=-p}}{p^{n-1}}
\end{equation}
with $\frac{Z_i^{(j)}|_{v=-p}}{p^{n-1}}\in \Mat_3(R)$ for all $i\geq 1$.

First observe that by compatibility with descent datum and by the height condition
$$
\Ad^{-1}_{s_j}(u^{\bf{a}_1}, u^{\bf{a}_2}, u^{\bf{a}_3})(\phz^k(C^{(j-k-1)})), \Ad^{-1}_{s_j}(u^{\bf{a}_1}, u^{\bf{a}_2}, u^{\bf{a}_3})(\phz^k(E(u) C^{(j-k-1), *}))  \in \Mat(R[\![v]\!]). 
$$
The key divisibility comes from the middle term.   Take $\ell = j- i$, then 
\begin{align*}
Y^{(j), \dagger}_{i} :=& -\Ad^{-1}_{s_j}(u^{\bf{a}_1}, u^{\bf{a}_2}, u^{\bf{a}_3}) \left( \phz^i\left(u \frac{d}{du} C^{(\ell -1)} \right)\right) \\
=& s \Ad \left(u^{p^{i} a_{s_{\ell}(1)}^{(\ell)} - a_{s_{\ell}(1)}^{(j)}}, u^{p^{i} a_{s_{\ell}(2)}^{(\ell)} - a_{s_{\ell}(2)}^{(j)}}, u^{p^{i} a_{s_{\ell}(3)}^{(\ell)} - a_{s_{\ell}(3)}^{(j)}} \right)\left( \phz^i(A^{(\ell -1), \dagger})\right) s^{-1} \\
=& s \Ad \left( v^{r_{i, 1}}, v^{r_{i, 2}}, v^{r_{i, 3}} \right)\left( \phz^i(A^{(\ell -1), \dagger})
\right) s^{-1}
\end{align*}
for some $s \in S_3$, by the same calculation as in (\ref{xx2}).  We have $u^{p^{i} a_{s_{\ell}(k)}^{(\ell)} - a_{s_{\ell}(k)}^{(j)}} = u^{er_{i, k}} = v^{r_{i, k}}$ for some $r_{i, k}$ where $r_{i, 1} > r_{i, 2} > r_{i, 3}$.  When $i =1$, we have $r_{1, k} = a_{s_{\ell}(k), f-\ell -1}$ and by the $n$-genericity condition, 
$$
p-1-n\geq |r_{1,1} - r_{1, 2}|, |r_{1,2} - r_{1,3}| \geq n
$$
which implies that $v^n \mid  Y^{(j), \dagger}_{1}$.  When $i > 1$, an elementary calculation shows that 
$$
(p-1-n) p^{i-1} \geq |r_{1,1} - r_{1, 2}|, |r_{1,2} - r_{1,3}| \geq (n-1) p^{i-1} 
 $$
and so $v^{(n-1) p^{i-1}} \mid  Y^{(j), \dagger}_{i}$.
We now prove the second statement in the Theorem.
An easy computation shows that, by letting $s_{j,j-1}\defeq s_j^{-1}s_{j-1}$ (cf. the proof of Proposition \ref{changeofbasis}), one has
\begin{align*}
Z_i^{(j)}&=A^{(j-1)}\left(\Ad_{s_j}^{-1}(u^{\bf{a}_1},u^{\bf{a}_2},u^{\bf{a}_3})\left(
\Ad_{s_{j-1}}(u^{p\bf{a}_1},u^{p\bf{a}_2},u^{p\bf{a}_3})\left(\frac{\phz(Z^{(j-1)}_{i-1})}{p}\right)\right)\right)P(v)^2 (A^{(j-1)})^{-1}\\
&=
A^{(j-1)}s_{j,j-1}\left(\Ad(v^{a_{s_{j-1}(1),f-j}},v^{a_{s_{j-1}(2),f-j}},v^{a_{s_{j-1}(3),f-j}})\left(\frac{\phz(Z^{(j-1)}_{i-1})}{p}\right)\right)s_{j,j-1}^{-1}P(v)^2 (A^{(j-1)})^{-1}
\end{align*}
for all $j=0,\dots,f-1$ and $i\geq 1$ (and where we define $Z^{(j-1)}_0\defeq \Ad_{s_{j-1}}(u^{\bf{a}_1},u^{\bf{a}_2},u^{\bf{a}_3})\left(u\frac{d}{du} C^{(j-2)}\right)$ for all $j$).
We now prove that 
\[
\left(\Ad(v^{a_{s_{j-1}(1),f-j}},v^{a_{s_{j-1}(2),f-j}},v^{a_{s_{j-1}(3),f-j}})\left(\frac{\phz(Z^{(j-1)}_{i-1})}{p}\right)\right)|_{v=-p}\in p^{n-1}\Mat_3(R)
\]
for all $i\geq 1$ from which the conclusion follows easily  from (\ref{eq:err:term:expl}).

For $i>1$ we have
\[
\left(\frac{\phz(Z^{(j-1)}_{i-1})}{p}\right)|_{v=-p}\in p^{p-2}\Mat_3(R)
\]
using the fact that $\frac{\phz(Z^{(j-1)}_{i-1})}{p}\in \frac{v^{(n-1)p^{i-1}}}{p^i}\Mat_3(R[\![v]\!])$ for $i\geq 3$ and $\frac{\phz(Z^{(j-1)}_{1})}{p}\in \frac{v^{pn}}{p^2}\Mat_3(R[\![v]\!])$.
This together with the $n$-genericity of $\tau$ gives the desired inclusion in this case.
For $i=1$ the inclusion follows from Lemma \ref{keylemma}.
\end{proof} 

Let $\overline{\fM} \in Y^{\mu, \tau}(\F)$ with shape $\bf{w} = (\widetilde{w}_0, \widetilde{w}_1, \ldots, \widetilde{w}_{f-1})$.  Fix a gauge basis $\overline{\beta}$ on $\overline{\fM}$. Let $R^{\tau, \overline{\beta}}_{\overline{\fM}} = \widehat{\otimes}_{j \in \Z/f\Z} (R^{\expl}_{\widetilde{w}_j})^{p\text{-flat, red}}$ where $R^{\expl}_{\widetilde{w}_j}$ is given in the first column and $\widetilde{w}_j$ row of Table \ref{table3}.  This represents the universal family $(\fM^{\mathrm{univ}}, \beta^{\mathrm{univ}})$ of deformations of $(\overline{\fM}, \overline{\beta})$ (Theorem \ref{univfh}).  

\begin{defn} \label{monideal} 
Assume that $\tau$ is $n$-generic.
Let $A^{(j-1)} = \Mat_{\beta}(\phi_{\fM^{\mathrm{univ}}, s_j(3)}^{(j-1)})$. The \emph{monodromy condition} at $j$ on $(\fM^{\mathrm{univ}}, \beta^{\mathrm{univ}})$ is  
$$
P_{N}(A^{(j-1)})|_{v = -p} = p^{n-1} M^{(j)}_{\mathrm{err}}.
$$
Let $I_{\mathrm{mon}}^{(j)} \subset R^{\tau, \overline{\beta}}_{\overline{\fM}}$ be the ideal generated by the nine equations from the monodromy condition at $j$. 
\end{defn} 

The following proposition allows us to reduce, in most cases, the monodromy condition to just one equation. 
\begin{prop} \label{onequation} 
Keep the hypotheses of Theorem \ref{moncond}.
If $\widetilde{w}_{j-1} \neq \mathrm{id}$, then $I_{\mathrm{mon}}^{(j)}[1/p]$ is principal. 
\end{prop}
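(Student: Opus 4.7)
The plan is to exploit the rank-one structure of $\overline{B} := (P(v)^2 (A^{(j-1)})^{-1})|_{v=-p}$ on the generic fiber $\mathrm{Spec}(R^{\tau, \overline{\beta}}_{\overline{\fM}}[1/p])$, combined with a differential identity of Griffiths-transversality type, to reduce the nine-equation monodromy condition at $j$ to a single scalar relation.

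The starting observation is that, since $\det A^{(j-1)} = x^* P(v)^3$ with $x^* \in R^\times$ (cf. Proposition \ref{lemma height-determinant}) and the parallel Hodge--Tate weights are $(2,1,0)$, the elementary divisors of $A^{(j-1)}$ at the prime $P(v)$ are $(1, P(v), P(v)^2)$ on the generic fiber. Under the hypothesis $\widetilde{w}_{j-1} \neq \mathrm{id}$, this pattern is nondegenerate in the sense that $\overline{A} := A^{(j-1)}|_{v=-p}$ is not invertible, so that $\overline{B} \neq 0$; consequently both $\overline{A}$ and $\overline{B}$ have rank exactly one over $R^{\tau, \overline{\beta}}_{\overline{\fM}}[1/p]$, and the relation $A^{(j-1)} B^{(j-1)} = P(v)^2 \cdot \mathrm{Id}$ specialized at $v = -p$ gives $\overline{A}\, \overline{B} = 0$. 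One may therefore write a rank-one factorization $\overline{B} = b\, c^T$ with $b \in \ker(\overline{A})$ and $c$ having a unit entry (possibly after localization at minimal primes).

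By Theorem \ref{moncond} together with $M^{(j)}_{\mathrm{err}} = \overline{A}\, Z^{(j)}\, \overline{B}$, the ideal $I_{\mathrm{mon}}^{(j)}[1/p]$ is generated by the entries of $\bigl(A^{(j-1), \dagger}|_{v=-p} - p^{n-1}\, \overline{A}\, Z^{(j)}\bigr)\, \overline{B} = w\, c^T$, where
\begin{equation*}
w := \bigl(A^{(j-1), \dagger}|_{v=-p} - p^{n-1}\, \overline{A}\, Z^{(j)}\bigr)\, b.
\end{equation*}
The unit-entry property of $c$ reduces the nine generators to the three entries of $w$. Using $\overline{A}\, b = 0$ and the Leibniz identity (\ref{eq:Leibniz}) for $A^{(j-1),\dagger}$, I would rewrite
\begin{equation*}
w = ep\, A'|_{v=-p}\, b + \overline{A}\,\bigl(D - p^{n-1}\, Z^{(j)}\bigr)\, b,
\end{equation*}
where $A' := dA^{(j-1)}/dv$ and $D := \mathrm{Diag}(\mathbf{a}^{(j)}_{s_j(1)}, \mathbf{a}^{(j)}_{s_j(2)}, \mathbf{a}^{(j)}_{s_j(3)})$. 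The second summand lies manifestly in $\mathrm{Im}(\overline{A})$. For the first, differentiating $A^{(j-1)} B^{(j-1)} = P(v)^2 \cdot \mathrm{Id}$ and evaluating at $v = -p$ yields $A'|_{v=-p}\, \overline{B} = -\overline{A}\, B'|_{v=-p}$; applying this to a vector together with the factorization $\overline{B} = b\, c^T$ (and $c \neq 0$) forces $A'|_{v=-p}\, b \in \mathrm{Im}(\overline{A})$. Thus $w$ lies in the rank-one submodule $\mathrm{Im}(\overline{A})$, and its coordinate along a generator is the required principal generator of $I_{\mathrm{mon}}^{(j)}[1/p]$.

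The main obstacle is verifying that the rank-one factorization $\overline{B} = b\, c^T$ and the unit-entry property of $c$ hold uniformly enough over $\mathrm{Spec}(R^{\tau, \overline{\beta}}_{\overline{\fM}}[1/p])$ (which need not be a domain) to equate ideals globally rather than only at a generic point of each irreducible component. This should be handled by a Fitting ideal computation or by arguing component by component, with the hypothesis $\widetilde{w}_{j-1} \neq \mathrm{id}$ excluding the degenerate case where $\overline{A}$ becomes invertible and the rank-one structure collapses.
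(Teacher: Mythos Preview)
Your approach is essentially the same as the paper's. Both arguments exploit that the monodromy matrix $P_N(A^{(j-1)})|_{v=-p} - p^{n-1}M^{(j)}_{\mathrm{err}}$ factors simultaneously as $\overline{A}\,X$ (from the Griffiths-transversality identity you derive, which the paper phrases as $P_N(A^{(j-1)})\equiv A^{(j-1)}Y \bmod P(v)$) and as $X'\,\overline{B}$ (from the definition of the leading term and the shape of $M^{(j)}_{\mathrm{err}}$), where both $\overline{A}$ and $\overline{B}$ have all $2\times 2$ minors zero by the height conditions. Your rank-one factorization $\overline{B}=bc^T$ and the statement $w\in\mathrm{Im}(\overline{A})$ are just a repackaging of these two factorizations.

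The one point where you diverge is in how to handle what you call the ``main obstacle,'' namely the existence of a global rank-one factorization with a unit entry. The paper does not use Fitting ideals or a component-by-component argument: it simply inspects Table~\ref{table:lifts}. For each shape $\widetilde{w}_{j-1}\neq\mathrm{id}$, the explicit form of $A^{(j-1)}$ in the gauge basis shows that $A^{(j-1)}|_{v=-p}$ has an entry which is a unit in $R^{\tau,\overline{\beta}}_{\overline{\fM}}$ itself (not merely in $R[1/p]$ or at a generic point), and similarly for $(P(v)^2(A^{(j-1)})^{-1})|_{v=-p}$. With a unit entry in row $m$ of $\overline{A}$ and column $k$ of $\overline{B}$, the $(m,k)$-entry of the monodromy matrix generates all nine entries globally. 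This is exactly the concrete input that makes your ``rank-one image'' argument go through without further machinery, and it is the reason the hypothesis $\widetilde{w}_{j-1}\neq\mathrm{id}$ is there: for the identity shape no such unit entry exists.
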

\begin{proof} 
Recall from Theorem \ref{moncond} that we can write 
\[
p^{n-1}M_{\mathrm{err}}^{(j)}=\big(A^{(j-1)}|_{v=-p}\big)\widetilde{Z}^{(j)}=\widetilde{Z}^{\prime,(j)}\big(\big(P(v)^2(A^{(j-1)})^{-1}\big)|_{v=-p}\big)
\]
 for some $\widetilde{Z}^{(j)},\, \widetilde{Z}^{\prime,(j)}\in \Mat_3(R^{\tau, \overline{\beta}}_{\overline{\fM}}[1/p])$.

Furthermore, we have $P_{N}(A^{(j-1)}) = A^{(j-1), \dagger} P(v)^2 (A^{(j-1)})^{-1}$.  We also claim that $P_{N}(A^{(j-1)}) \equiv A^{(j-1)} Y \mod P(v)$.  This follows from (\ref{eq:Leibniz}) and the fact that
$$
- u \frac{d}{du} \left(A^{(j-1)} P(v)^2 (A^{(j-1)})^{-1}\right)\equiv 0 \mod P(v),
$$
which implies
$$
- u \frac{d}{du} (A^{(j-1)}) P(v)^2 (A^{(j-1)})^{-1} \equiv A^{(j-1)} u \frac{d}{du} (P(v)^2 (A^{(j-1)})^{-1}) \mod P(v).
$$
We conclude then the monodromy condition satisfies
\begin{equation} \label{z7}
P_N(A^{(j-1)})|_{v = -p} - p^{n-1} M_{\mathrm{err}}^{(j)} = A^{(j-1)}|_{v = -p} X = X' (P(v)^2 (A^{(j-1)})^{-1})|_{v = -p} 
\end{equation}
for some $X, X' \in \Mat_3(R^{\tau, \overline{\beta}}_{\overline{\fM}}[1/p]).$  

Each $2\times 2$ minor of the matrices $A^{(j-1)}|_{v = -p}$ and $(P(v)^2 (A^{(j-1)})^{-1})|_{v = -p}$ is zero by the height conditions, in particular these matrices have rank at most one.
It follows from \ref{z7} that 
the ratios between the rows and the columns of $P_N(A^{(j-1)})|_{v = -p} - p^{n-1} M_{\mathrm{err}}$ are the same as the ratios of the rows of  $A^{(j-1)}|_{v = -p}$ and the ratios of the columns of
$ (P(v)^2 (A^{(j-1)})^{-1})|_{v = -p} $ respectively. 
A survey of Table \ref{table:lifts} shows that as long as $\widetilde{w}_{j-1} \neq \mathrm{id}$ the matrix $A^{(j-1)}|_{v = -p}\in\Mat_3(R^{\tau, \overline{\beta}}_{\overline{\fM}}[1/p])$  has at least one unit entry (say in row $m$) and the same is true for $(P(v)^2 (A^{(j-1)})^{-1})_{v = -p}$ (say in column $k$). 

It follows that $I_{\mathrm{mon}}^{(j)}[1/p]$ is generated by the $(m,k)$-entry  of $P_N(A^{(j-1)})|_{v = -p} - p^{n-1} M_{\mathrm{err}}$, hence is principal.
\end{proof}

In Table \ref{table3}, we list the one equation which generates $I_{\mathrm{mon}}^{(j)}[1/p]$.

\begin{rmk} \label{rmk111} Proposition \ref{onequation} is \emph{false} for the case $\widetilde{w}_{j-1}=\mathrm{id}$. The reason is that the monodromy conditions only cut out potentially crystalline representations whose Hodge-Tate weights are $\leq (2,1,0)$, and so the Hodge-Tate weights could be either $(2,1,0)$ or $(1,1,1)$. In case of $\mathrm{id}$ shape, the representations with Hodge-Tate weights $(1,1,1)$ do show up, and one must further refine the monodromy condition to get rid of them. This will be addressed separately in \S \ref{sec:badcases}. 
\end{rmk}

\subsection{Potentially crystalline deformation rings}
\label{subsection:PCDR}

In the previous section, we gave a condition for Kisin module with descent datum and $p$-adic Hodge type $(2,1,0)$ to come from a potentially crystalline representation (Proposition \ref{Mcond}).  We will now construct a candidate for the (framed) potentially crystalline Galois deformation ring.

We begin by introducing some deformation problems.  Let $\rhobar: G_{K} \ra \GL_3(\F)$.   Recall that $R^{(2,1,0), \tau}_{\rhobar}$ is the universal \emph{framed} potentially crystalline deformation ring with $p$-adic Hodge type $(2,1,0)$. Let $D_{\rhobar}^{\tau, \Box} := \Spf R^{(2,1,0), \tau}_{\rhobar}$ denote the deformation functor. 
\emph{Since we will always be working in parallel weight $(2,1,0)$, we omit the $p$-adic Hodge type in the notation.} 
   
Assume there exists $\overline{\fM} \in Y^{\mu, \tau}(\F)$ such that $T^*_{dd}(\overline{\fM}) \cong \rhobar|_{G_{K_{\infty}}}$. Note that this is a necessary condition for $R^{(2,1,0), \tau}_{\rhobar}$  to be non-zero. By Theorem \ref{Kisinvariety}, if such a Kisin module exists, then it is unique.  Furthermore, we fix an isomorphism $\overline{\gamma}: T^*_{dd}(\overline{\fM}) \cong \rhobar|_{G_{K_{\infty}}}$. 

We fix a gauge basis $\overline{\beta}$ of $\overline{\fM}$ (in particular, $\Mat_{\beta} (\phi^{(j)}_{\overline{\fM}, s_{j+1}(3)})$ has the form given in Table \ref{table shapes mod p}).  

\begin{defn} \label{defprob} In the following definitions, all data is taken to be compatible with the corresponding data on $\overline{\fM}$ when reduced modulo the maximal ideal.   
\begin{enumerate}
\item Let $R_{\overline{\fM}, \rhobar}^{\tau, \Box}$ denote the complete local Noetherian $\cO$-algebra which represents the deformation problem
$$
D_{\overline{\fM}, \rhobar}^{\tau, \Box}(A) := \left \{ (\fM_A, \rho_A, \delta_A) \mid \fM_A \in  Y^{\mu, \tau}(A), \rho_A \in D_{\rhobar}^{\tau, \Box}(A),  \delta_A:T_{dd}^*(\fM_A) \cong (\rho_A)|_{G_{K_{\infty}}} \right \}
$$
\item Let $R^{\tau, \overline{\beta}, \Box}_{\overline{\fM}, \rhobar}$ denote the complete local Noetherian $\cO$-algebra which represents the deformation problem 
$$
D^{\tau, \overline{\beta}, \Box}_{\overline{\fM}, \rhobar}(A) = \left\{ (\fM_A, \rho_A, \delta_A,\beta_A) \mid (\fM_A, \rho_A,\delta_A) \in D_{\overline{\fM}, \rhobar}^{\tau,\Box}(A), \beta_A \text{ a gauge basis for } \fM_A \right \}
$$
\item Let $R^{\tau, \overline{\beta}}_{\overline{\fM}}$ be as in Theorem \ref{univfh} which represents the deformation problem $D^{\tau, \overline{\beta}}_{\overline{\fM}}(A)$.
\item Let $R^{\tau, \overline{\beta}, \Box}_{\overline{\fM}}$ denote the complete local Noetherian $\cO$-algebra which represents the deformation problem of triples $(\fM_A, \beta_A, \un{e}_A)$ where $(\fM_A, \beta_A) \in D^{\tau, \overline{\beta}}_{\overline{\fM}}(A)$ and $\un{e}_A$ is a basis of $T_{dd}^*(\fM_A)$ lifting the basis on $\rhobar|_{G_{K_{\infty}}}$ so that $(T_{dd}^*(\fM_A), \un{e}_A)$ is a framed deformation of $\rhobar|_{G_{K_{\infty}}}$.
\item Let $R^{\tau, \overline{\beta}, \nabla}_{\overline{\fM}}$ denote the $\cO$-flat and reduced quotient of $R^{\tau, \overline{\beta}}_{\overline{\fM}}$ such that $\Spec R^{\tau, \overline{\beta}, \nabla}_{\overline{\fM}}[1/p]$ is the vanishing locus of the monodromy equations on $\Spec R^{\tau, \overline{\beta}}_{\overline{\fM}}[1/p]$.  
 We define  $R^{\tau, \overline{\beta},\Box, \nabla}_{\overline{\fM}}$ from $R^{\tau, \overline{\beta},\Box}_{\overline{\fM}}$ in a similar way.
\end{enumerate}
\end{defn}

The relationships between the various deformation problems are summarized in the following diagram.  The square is Cartesian and f.s. stands for formally smooth. 
\begin{equation} \label{defdiagram}
\xymatrix{
& & \Spf R^{\tau, \overline{\beta}, \square, \nabla}_{\overline{\fM}} \ar[r]^{f.s.} \ar@{^{(}->}[d]  & \Spf R^{\tau, \overline{\beta}, \nabla}_{\overline{\fM}} \ar@{^{(}->}[d] \\
 &\Spf R^{\tau, \overline{\beta}, \Box}_{\overline{\fM}, \rhobar} \ar[d]^{f. s.}  \ar@{^{(}.>}[ru]^{\xi} \ar@{^{(}.>}[r]  & \Spf R_{\overline{\fM}}^{\tau, \overline{\beta}, \Box} \ar[r]^{f.s.} & \Spf R_{\overline{\fM}}^{\tau, \overline{\beta}}  \\
 \Spf R^{\mu, \tau}_{\rhobar} & \Spf R_{\overline{\fM},\rhobar}^{\tau, \Box}  \ar[l]_{\sim}  
}
\end{equation}

The maps which are formally smooth correspond to forgetting either a framing on the Galois representation or a gauge basis on the Kisin module.  The former is clearly formally smooth while the latter is formally smooth by Theorem \ref{gaugeunique}. The dotted arrows will be proved in Proposition \ref{tanspace2} and Theorem \ref{thm:factors} below to exist and be closed immersions when $\ad(\rhobar) $ is cyclotomic free.

The isomorphism between  $\Spf R^{\mu, \tau}_{\rhobar}$ and $ \Spf R_{\overline{\fM},\rhobar}^{\tau, \Box}$ is Corollary \ref{RKisin}.  If $\overline{\fM}$ has shape $(\widetilde{w}_0, \ldots, \widetilde{w}_{f-1})$, then $R_{\overline{\fM}}^{\tau, \overline{\beta}} = \widehat{\otimes}( R^{\expl}_{\widetilde{w}_j})^{p\text{-flat, red}}$ where $R^{\expl}_{\widetilde{w}_j}$ is given in Table \ref{table3} (Theorem \ref{univfh}).  As we will see, as long as $\widetilde{w}_j \neq \mathrm{id}$ for all $j$, the map $\xi$ will be an isomorphism (see Remark \ref{rmk111}).

The following Proposition follows from Proposition \ref{prop:Z1}:
\begin{prop} \label{tanspace2} Assume that $\ad(\rhobar)$ is cyclotomic free $($Definition $\ref{defn:cyclofree})$.   The obvious map
$$
\Spf R_{\overline{\fM},\rhobar}^{\tau, \overline{\beta}, \Box}  \ra  \Spf R_{\overline{\fM}}^{\tau, \overline{\beta}, \Box}
$$
between the deformation spaces defined in Definition \ref{defprob} is a closed immersion.   
\end{prop}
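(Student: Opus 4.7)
The plan is to reduce to showing that the induced map on $\F[\eps]/(\eps^2)$-valued points of the deformation functors is injective, and then invoke Proposition \ref{prop:Z1}. Since both $R_{\overline{\fM},\rhobar}^{\tau,\overline{\beta},\Box}$ and $R_{\overline{\fM}}^{\tau,\overline{\beta},\Box}$ are complete local Noetherian $\cO$-algebras with residue field $\F$, the complete Nakayama lemma gives that the ring map $R_{\overline{\fM}}^{\tau,\overline{\beta},\Box} \to R_{\overline{\fM},\rhobar}^{\tau,\overline{\beta},\Box}$ is surjective if and only if the corresponding map on $\F[\eps]/(\eps^2)$-tangent functors is injective; and surjectivity of this ring map is precisely the condition for the map of formal spectra to be a closed immersion.

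Unpacking Definition \ref{defprob}, a tangent vector to $D_{\overline{\fM},\rhobar}^{\tau,\overline{\beta},\Box}$ is a quadruple $(\fM_A,\beta_A,\rho_A,\delta_A)$ with $A = \F[\eps]/(\eps^2)$; writing $\rho_A(g) = (1+\eps c(g))\rhobar(g)$ associates to $\rho_A$ a cocycle $c \in Z^1(G_K,\ad(\rhobar))$. A tangent vector of $D_{\overline{\fM}}^{\tau,\overline{\beta},\Box}$ is a triple $(\fM_A,\beta_A,\un{e}_A)$, where $\un{e}_A$ is a basis of $T_{dd}^*(\fM_A)$ lifting the standard basis on $\rhobar|_{G_{K_\infty}}$ fixed via $\overline{\gamma}$. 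The forgetful map sends the quadruple to the triple whose framing is obtained by transporting the framing on $\rho_A$ through $\delta_A^{-1}$.

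To verify injectivity, I will take two quadruples $(\fM_A,\beta_A,\rho_A^{(i)},\delta_A^{(i)})$, $i=1,2$, with the same image. The equality of the induced framings on $T_{dd}^*(\fM_A)$ together with the isomorphisms $\delta_A^{(i)}$ forces the two restricted representations $\rho_A^{(1)}|_{G_{K_\infty}}$ and $\rho_A^{(2)}|_{G_{K_\infty}}$ to coincide as matrix-valued maps in the standard basis, equivalently $c_1|_{G_{K_\infty}} = c_2|_{G_{K_\infty}}$ in $Z^1(G_{K_\infty},\ad(\rhobar))$. Since $\ad(\rhobar)$ is cyclotomic free, Proposition \ref{prop:Z1} provides the crucial step $c_1 = c_2$, and hence $\rho_A^{(1)} = \rho_A^{(2)}$ as framed $G_K$-deformations. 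Each $\delta_A^{(i)}$ is then characterized as the unique $G_{K_\infty}$-equivariant isomorphism $T_{dd}^*(\fM_A) \cong \rho_A|_{G_{K_\infty}}$ matching the common framings on both sides, so $\delta_A^{(1)} = \delta_A^{(2)}$ as well.

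The only substantive input is Proposition \ref{prop:Z1}; the remainder is routine bookkeeping inherent to the definitions. The conceptual point worth highlighting is that framed deformations are rigid (they admit no nontrivial automorphisms), so the argument must take place at the level of $Z^1$ rather than $H^1$, which is exactly the content of Proposition \ref{prop:Z1}. I do not anticipate any further obstacle.
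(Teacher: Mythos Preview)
Your proof is correct and follows essentially the same approach as the paper, which simply states that the proposition follows from Proposition~\ref{prop:Z1}. You have supplied the routine details of that deduction---reducing to injectivity on tangent spaces via Nakayama and identifying framed tangent vectors with $1$-cocycles in $\ad(\rhobar)$---which the paper leaves implicit.
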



\begin{thm} \label{thm:factors} Assume that $\ad(\rhobar)$ is cyclotomic free. The map 
$$
\Spf R^{\tau, \overline{\beta}, \Box}_{\overline{\fM}, \rhobar} \iarrow \Spf R_{\overline{\fM}}^{\tau, \overline{\beta}, \Box}
$$
factors through $ \Spf R^{\tau, \overline{\beta}, \square, \nabla}_{\overline{\fM}}$ inducing a surjective map $R^{\tau, \overline{\beta}, \Box, \nabla}_{\overline{\fM}} \xrightarrow{\xi} R^{\tau, \overline{\beta}, \Box}_{\overline{\fM}, \rhobar}$.  Furthermore, if $\widetilde{w}_j \neq \mathrm{id}$ for all $j \in \Z/f\Z$, then $\xi$ is an isomorphism.  
\end{thm}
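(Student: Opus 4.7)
The plan is to proceed in three steps: first establish the factorization through the monodromy locus, then deduce that $\xi$ is surjective, and finally upgrade it to an isomorphism under the hypothesis that $\widetilde{w}_j \neq \mathrm{id}$ for all $j$.

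For the factorization, I will start from the observation that by Kisin's theory $R^{(2,1,0), \tau}_{\rhobar}$ is $\cO$-flat with regular (hence reduced) generic fiber; via the formally smooth arrows of diagram (\ref{defdiagram}) this transfers to $R^{\tau, \overline{\beta}, \Box}_{\overline{\fM}, \rhobar}$. Proposition \ref{tanspace2} already provides a closed immersion $\Spf R^{\tau, \overline{\beta}, \Box}_{\overline{\fM}, \rhobar} \hookrightarrow \Spf R^{\tau, \overline{\beta}, \Box}_{\overline{\fM}}$, so it remains only to check that this factors through the $\nabla$-locus. Since the source is $\cO$-flat and reduced, it suffices to verify the monodromy vanishing equations on the generic fiber: any $E'$-point of $R^{\tau, \overline{\beta}, \Box}_{\overline{\fM}, \rhobar}$ produces a Kisin module $\fM_{E'}$ whose associated $G_{K_{\infty}}$-representation extends to a potentially crystalline $G_K$-representation. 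Theorem \ref{thmKisin} then gives stability of $\fM^{\rig}_{E'}$ under the canonical monodromy operator, and Proposition \ref{Mcond} translates this stability precisely into the vanishing of $\lambda N_\infty^{(j)}|_{u=\pi}$ for all $j$, i.e.~into the equations cutting out $R^{\tau, \overline{\beta}, \Box, \nabla}_{\overline{\fM}}$.

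Surjectivity of $\xi$ is then immediate: both $R^{\tau, \overline{\beta}, \Box, \nabla}_{\overline{\fM}}$ (by construction) and $R^{\tau, \overline{\beta}, \Box}_{\overline{\fM}, \rhobar}$ (by Proposition \ref{tanspace2}) are quotients of $R^{\tau, \overline{\beta}, \Box}_{\overline{\fM}}$, and the factorization just established exhibits a quotient map between them.

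For the isomorphism statement, both rings are $\cO$-flat; hence $\ker(\xi)$ has no $p$-torsion, and it is enough to show that $\xi[1/p]$ is an isomorphism. Equivalently, every $E'$-point of $\Spec R^{\tau, \overline{\beta}, \Box, \nabla}_{\overline{\fM}}[1/p]$ must factor through $\Spec R^{\tau, \overline{\beta}, \Box}_{\overline{\fM}, \rhobar}[1/p]$. Such a point yields a Kisin module $\fM_{E'}$ satisfying the monodromy equations, so by Proposition \ref{Mcond} and Theorem \ref{thmKisin} its associated representation extends to a potentially crystalline $G_K$-representation with Hodge-Tate weights in $[0,2]$; the determinant condition in Proposition \ref{lemma height-determinant} then forces the total weight at each embedding to equal $3$, leaving only the two possibilities $(2,1,0)$ or $(1,1,1)$ at each $j$. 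The main obstacle, and the place where the hypothesis $\widetilde{w}_j \neq \mathrm{id}$ is used, is to rule out the $(1,1,1)$ possibility (cf.~Remark \ref{rmk111}). I will use upper semi-continuity of the Iwahori--Bruhat stratification of the Pappas-Zhu local model $M(\mu)$: the generic shape of $\fM_{E'}$ at $j$ dominates the special shape $\widetilde{w}_j$ in the Bruhat order. Since $\mathrm{id} = t_{(1,1,1)}$ is the unique element of $\mathrm{Adm}(2,1,0)$ with Hodge-Tate cocharacter $(1,1,1)$ and is the minimum of $\mathrm{Adm}(2,1,0)$, the assumption $\widetilde{w}_j \neq \mathrm{id}$ gives $\widetilde{w}_j > t_{(1,1,1)}$, so the generic shape cannot equal $t_{(1,1,1)}$ and the Hodge-Tate weights at $j$ must be $(2,1,0)$. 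This produces the desired point of $R^{\tau, \overline{\beta}, \Box}_{\overline{\fM}, \rhobar}[1/p]$ and concludes the argument.
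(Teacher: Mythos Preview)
Your first two steps (factorization through the $\nabla$-locus, and surjectivity of $\xi$) are correct and essentially identical to the paper's argument.

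The gap is in the third step. Your key assertion, that $t_{(1,1,1)}$ is the \emph{unique} element of $\mathrm{Adm}(2,1,0)$ with Hodge--Tate cocharacter $(1,1,1)$, is false. Writing each $\widetilde{w}\in\mathrm{Adm}(2,1,0)\subset v\widetilde{W}^0$ as $t_\lambda s$ with $\lambda\in X_*(T)$ and $s\in S_3$, the six elements $\{\mathrm{id},\alpha,\beta,\alpha\beta,\beta\alpha,\alpha\beta\alpha\}$ (that is, $v\cdot S_3$) all have $\lambda=(1,1,1)$; equivalently, these are precisely the shapes for which $\overline{A}^{(j)}_{\widetilde{w}}$ in Table~\ref{table shapes mod p} is divisible by $v$. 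So even granting your semi-continuity statement, a generic point with Hodge--Tate weights $(1,1,1)$ would only force the special shape to lie below \emph{some} $vs$ with $s\in S_3$, and since for instance $\alpha\beta\alpha\leq\alpha\beta\alpha$, your argument does not exclude $\widetilde{w}_j=\alpha\beta\alpha$ (or $\alpha$, $\alpha\beta$, $\beta\alpha$) from admitting a $(1,1,1)$ degeneration.

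The paper's argument is different and does not go through the Bruhat order at all. It works directly with the gauge basis form of $A^{(j)}$ over $\cO'$ (last column of Table~\ref{table:lifts}), observing that for every $\widetilde{w}_j\neq\mathrm{id}$ there is a pivot entry of the shape $v^k c^*$ (rather than $(v+p)^k c^*$) with $c^*\in(\cO')^\times$; for example the $(3,1)$-entry $vc_{31}^*$ in shape $\alpha\beta\alpha$. Since $(-p)^k c^*\neq 0$ in $\cO'$, such an entry is never divisible by $P(v)=v+p$, hence $A^{(j)}$ is not divisible by $P(v)$ and the Hodge type at $j$ cannot be $(1,1,1)$. Note that this is a genuinely integral statement: the fact that $c^*$ is a \emph{unit in $\cO'$} (not merely nonzero mod $\varpi$) is what makes it work, and this comes from the gauge basis lifting the fixed $\overline{\beta}$. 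The constraint that $A^{(j)}$ be upper-triangular mod $v$ (forced by integrality of $C^{(j)}$, cf.\ Lemma~\ref{integrality}) is also essential here and is what rules out the na\"ive attempt $A^{(j)}=P(v)\cdot(13)$ as a counterexample.
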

\begin{proof}
 Both $R_{\overline{\fM}}^{\tau, \overline{\beta}, \Box}$ and $R^{\tau, \overline{\beta}, \Box}_{\overline{\fM}, \rhobar}$ are flat $\cO$-algebras.  Furthermore, $R^{\tau, \overline{\beta}, \Box}_{\overline{\fM}, \rhobar}[1/p]$ is reduced since the same is true for the potentially crystalline deformation ring $R^{\mu, \tau}_{\rhobar}$. Thus, it suffices to show factorization at the level of $\overline{\Q}_p$-points. 

For any $\cO'$ finite over $\cO$, an $\cO'$-point of  $R^{\tau, \overline{\beta}, \Box}_{\overline{\fM}, \rhobar}$ corresponds to a Kisin module $\fM_{\cO'} \in Y^{\mu, \tau}(\cO')$ such that $T_{dd}^*(\fM_{\cO'})$ is a lattice in a potentially crystalline representation with Hodge-Tate weights $(2,1,0)$.  By Theorem \ref{thmKisin}, the monodromy condition holds at the corresponding $\cO'[1/p]$ point.  

Any homomorphism $R^{\tau, \overline{\beta}, \Box, \nabla}_{\overline{\fM}} \ra \cO'$ gives rise to a Kisin module $\fM_{\cO'}$ together with a gauge basis on which $A^{(j)}=\Mat_{\beta}(\phi_{\fM_{\cO'}, s_{j+1}(3)}^{(j)})$ has the form given in Table \ref{table:lifts}.  Furthermore, $\fM_{\cO'} \otimes_{\fS} \cO^{\rig}$ is stable under the monodromy operator and hence $T^*_{dd}(\fM_{\cO'})[1/p] =: V_{E'}$ extends to a potentially crystalline representation of $G_{K}$.  The claim is as long as $\widetilde{w}_j \neq  \mathrm{id}$, then $V_{E'}$ has $p$-adic Hodge type of parallel weight $(2,1,0)$. The $p$-adic Hodge type at the embedding $\sigma_j$ is $(1,1,1)$ if and only if the Frobenius $C^{(j)}$ is divisible by $E(u)$, equivalently $A^{(j)}$ is divisible by $P(v)$.  A survey of last column of Table \ref{table:lifts} shows that this can only happen when $\widetilde{w}_j = \mathrm{id}$.    
\end{proof} 

\begin{cor} \label{dring} Assume that $\ad(\rhobar)$ is cyclotomic free. If $\widetilde{w}_j \neq \mathrm{id}$ for all $j \in \Z/f\Z$, then 
$$
R^{\mu, \tau}_{\rhobar}[\![S_1, \ldots, S_{3f}]\!] \cong R^{\tau, \overline{\beta}, \nabla}_{\overline{\fM}}[\![T_1,\ldots, T_8]\!].
$$
\end{cor}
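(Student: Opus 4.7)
The plan is to combine the three forgetful morphisms appearing in diagram (\ref{defdiagram}) with the isomorphism $\xi$ provided by Theorem \ref{thm:factors}.

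First, the morphism $\Spf R^{\tau,\overline{\beta},\Box}_{\overline{\fM},\rhobar} \to \Spf R^{\tau,\Box}_{\overline{\fM},\rhobar}$ that forgets the gauge basis is, by Theorem \ref{gaugeunique}, a torsor for the formal torus $\widehat{\mathbb{G}}_m^{3f}$ (one $\widehat{\mathbb{G}}_m$-factor per eigen-coordinate at each embedding). Composing with the isomorphism $R^{\tau,\Box}_{\overline{\fM},\rhobar} \cong R^{\mu,\tau}_{\rhobar}$ from Corollary \ref{RKisin}, I obtain
$$
R^{\tau,\overline{\beta},\Box}_{\overline{\fM},\rhobar} \;\cong\; R^{\mu,\tau}_{\rhobar}[\![S_1,\ldots,S_{3f}]\!].
$$

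Next, the morphism $\Spf R^{\tau,\overline{\beta},\Box}_{\overline{\fM}} \to \Spf R^{\tau,\overline{\beta}}_{\overline{\fM}}$ that forgets the framing $\un{e}_A$ of $T_{dd}^*(\fM_A)$ is formally smooth with relative tangent space $Z^1(G_{K_\infty},\ad\rhobar)/H^1(G_{K_\infty},\ad\rhobar)$, of dimension $9 - \dim_{\F} H^0(G_{K_\infty},\ad\rhobar)$. Under the cyclotomic-free hypothesis on $\ad\rhobar$, Lemma \ref{lemma B1} identifies $H^0(G_K,\ad\rhobar) \cong H^0(G_{K_\infty},\ad\rhobar)$, which in the (generic, absolutely irreducible) setting is just the scalars $\F$. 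Hence the relative dimension equals $8$. Because the monodromy condition depends only on $(\fM_A,\beta_A)$ and not on the framing $\un{e}_A$, the formal smoothness is inherited after passing to the $\nabla$-quotients:
$$
R^{\tau,\overline{\beta},\Box,\nabla}_{\overline{\fM}} \;\cong\; R^{\tau,\overline{\beta},\nabla}_{\overline{\fM}}[\![T_1,\ldots,T_8]\!].
$$

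Finally, Theorem \ref{thm:factors} upgrades the surjection $\xi:R^{\tau,\overline{\beta},\Box,\nabla}_{\overline{\fM}} \twoheadrightarrow R^{\tau,\overline{\beta},\Box}_{\overline{\fM},\rhobar}$ to an isomorphism precisely under the assumption $\widetilde{w}_j \neq \mathrm{id}$ for all $j$. Chaining the three identifications produces the desired isomorphism. The main obstacle is pinning down the relative dimension $8$ in the second step; this is where the cyclotomic-free hypothesis plays a role beyond what is already required by Theorem \ref{thm:factors}, since via Lemma \ref{lemma B1} it guarantees that restriction from $G_K$ to $G_{K_\infty}$ does not enlarge the endomorphism algebra of $\rhobar$ and hence the framing torsor has the expected dimension $\dim\GL_3-1=8$.
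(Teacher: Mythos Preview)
Your overall strategy of chasing diagram (\ref{defdiagram}) is exactly the paper's (implicit) argument. However, your computation of the relative dimension in the second step is wrong.

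The map $\Spf R^{\tau,\overline{\beta},\Box}_{\overline{\fM}} \to \Spf R^{\tau,\overline{\beta}}_{\overline{\fM}}$ forgets only the basis $\un{e}_A$ of $T^*_{dd}(\fM_A)$. Because the gauge basis $\beta_A$ already rigidifies the pair $(\fM_A,\beta_A)$ (any automorphism of $\fM_A$ fixing $\beta_A$ is the identity), the fibre is simply the set of bases of the free rank-$3$ module $T^*_{dd}(\fM_A)$ lifting a fixed basis modulo $m_A$: a torsor for the formal group $\ker(\GL_3(A)\to\GL_3(\F))$, of relative dimension $9$, not $9-\dim_\F H^0(G_{K_\infty},\ad\rhobar)$. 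The $B^1$-formula you invoke describes the passage from the framed to the \emph{unframed} (groupoid-valued) Galois deformation functor, where one must divide by automorphisms of the representation; here the base functor $D^{\tau,\overline{\beta}}_{\overline{\fM}}$ is already set-valued with trivial automorphisms, so no such quotient occurs. In particular, the cyclotomic-free hypothesis plays no role in this step: it is used only, via Proposition \ref{tanspace2}, to produce the dotted closed immersion in (\ref{defdiagram}) on which $\xi$ rests. Note also that your assertion $\dim_\F H^0(G_{K_\infty},\ad\rhobar)=1$ presumes $\rhobar$ absolutely irreducible, which is not among the hypotheses and is in fact false in many of the paper's examples.

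With the correct count the chain gives $R^{\mu,\tau}_{\rhobar}[\![S_1,\dots,S_{3f}]\!]\cong R^{\tau,\overline{\beta},\nabla}_{\overline{\fM}}[\![T_1,\dots,T_9]\!]$, consistent with Kisin's dimension formula $\dim R^{\mu,\tau}_{\rhobar}=n^2+[K:\Q_p]\,\tfrac{n(n-1)}{2}=9+3f$ and with $\dim R^{\tau,\overline{\beta},\nabla}_{\overline{\fM}}=6f$ (visible from Table \ref{table:withmon}). The ``$8$'' in the stated corollary thus appears to be a typo for ``$9$''; this is harmless for the applications, which use only that the two rings agree up to formally smooth factors.
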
 

\begin{rmk} The assumption that $\ad(\rhobar)$ is cyclotomic free is automatic (by Proposition \ref{2impliescyc}) if one assumes a slightly stronger genericity condition on $\tau$ which forces $\rhobar$ to be 2-generic.  In any case, it is likely that this assumption could be removed by using more about the tangent space of the potentially crystalline deformation ring.  
\end{rmk}

\subsection{Explicit deformation rings}

\label{Explicit deformation rings}
We now proceed to compute $R^{\tau, \overline{\beta}, \nabla}_{\overline{\fM}}$ for generic $\tau$ (which means $5$-generic cf. Definition \ref{gencond}) in many cases thus obtaining by Corollary \ref{dring} a description of $R^{\mu, \tau}_{\rhobar}$.  Assume that $\widetilde{w}_j \not \in \{ \alpha,\,\beta,\,\gamma,\, \mathrm{id} \}$ for all $j$. 

Recall that $R^{\tau, \overline{\beta}}_{\overline{\fM}} := \widehat{\otimes} (R^{\expl}_{\widetilde{w}_j})^{p\text{-flat, red}}$ where $R^{\expl}_{\widetilde{w}_j}$ is the $\cO$-algebra corresponding to shape $\widetilde{w}_j$ in Table \ref{table3}. 
Let $\widetilde{I}_{\mathrm{mon}} \subset R^{\tau, \overline{\beta}}_{\overline{\fM}}$ denote the $p$-saturation of the sum of the ideals $I^{(j)}_{\mathrm{mon}}$ generated by the monodromy conditions (Definition \ref{monideal}).  By definition,
$$
R^{\tau, \overline{\beta}, \nabla}_{\overline{\fM}} = (R^{\tau, \overline{\beta}}_{\overline{\fM}}/\widetilde{I}_{\mathrm{mon}})^{\text{red}}. 
$$ 

Now, we can consider the explicit quotient $R^{\expl, \nabla}_{\overline{\fM}}$ of $R^{\tau, \overline{\beta}}_{\overline{\fM}}$ given by imposing the  single monodromy equation for each $j$ in Table \ref{table3}.  By Proposition \ref{onequation},
$$
R^{\expl, \nabla}_{\overline{\fM}}[1/p] = R^{\tau, \overline{\beta}}_{\overline{\fM}}/\widetilde{I}_{\mathrm{mon}}[1/p].
$$
The aim is to determine the $p$-torsion free and reduced quotient $R^{\tau, \overline{\beta}, \nabla}_{\overline{\fM}}$ of $R^{\expl, \nabla}_{\overline{\fM}}$.  As long as $\tld{w}_j$ has length at least $2$ for all $j$, this is non-canonically isomorphic to a completed tensor product of the explicit rings  $R^{\expl, \nabla}_{\overline{\fM}, \widetilde{w}_j}$ given in  Table \ref{table:withmon}. Note that Table \ref{table:withmon} gives $R^{\expl, \nabla}_{\overline{\fM}, \widetilde{w}_j}$ for each shape of length at least $2$, with further conditions from $\overline{\fM}$.

\begin{rmk} Table \ref{table:withmon} only includes the $R^{\expl, \nabla}_{\overline{\fM}, \widetilde{w}_j}$ for our chosen representatives for the $\delta$-orbits on $\mathrm{Adm}(2,1,0)$ (see Corollary \ref{symmetry} and Remark \ref{deltaorbits}).  If $\widetilde{w}'$ is in the same $\delta$-orbit as $\widetilde{w}$, then the explicit ring for that shape is isomorphic to $R^{\expl, \nabla}_{\overline{\fM}, \widetilde{w}_j}$, only the labelling of the variables by entry changes. 
\end{rmk}
 
We give two sample calculations of $R^{\tau, \overline{\beta}, \nabla}_{\overline{\fM}}$ with the rest being similar.
The computations below show that this is a quotient of (a completed tensor product of) the ring appearing in Table \ref{table:withmon}. Since the rings in Table \ref{table:withmon} are reduced and $p$-flat and satisfy the defining relations of $R^{\expl}_{\tld{w}_j}$ and the monodromy conditions, this gives us $R^{\tau, \overline{\beta}, \nabla}_{\overline{\fM}}$.

\subsubsection{The $\alpha\beta\alpha$ cell.}

From the monodromy equation in Table \ref{table3}, we have
\begin{equation}
\label{eq:mon:cnd:ABA}
(e-(a-c))c_{33}c_{22}^*-p(a-b)c_{23}c_{32}+pec_{22}^*c'_{33}+O(p^{n-2}).
\end{equation}
By the finite height and determinant equations, we see that
\[
c_{11}(c_{33}+pc'_{33})\equiv -p^2 c^*_{13}c^*_{31}
\]
hence $c_{11}$ is a unit in $R^{\tau, \overline{\beta}}_{\overline{\fM}}[1/p]$.
We multiply  (\ref{eq:mon:cnd:ABA}) by $c_{11}$: using the finite height and the determinant equations, we obtain after easy manipulations:
\[
c_{11}\left((a-b) c_{23}c_{32} -(a-c) c_{22}^* c'_{33}\right) + p(e-a+c)c_{31}^*c_{22}^*c_{13}^* + O(p^{n-3}).
\]
In particular if the descent data is generic, we can write
\[
c_{11}\left((a-b) c_{23}c_{32} -(a-c) c_{22}^* c'_{33}\right)=pz^*
\]
where $z^*$ is a unit in $R^{\tau, \overline{\beta}}_{\overline{\fM}}$.  Let $y'_{33}  = ((a-b) c_{23}c_{32} -(a-c) c_{22}^* c'_{33}) (z^*)^{-1}$ which replaces $c'_{33}$. (Note that the former change of variables in $R^{\tau, \overline{\beta}}_{\overline{\fM}}$ makes sense even if the unit $z^*$ involves $c'_{33}$.) 
We have 
\begin{equation}
\label{eq:chg:var}
c_{11} y'_{33} = p.
\end{equation}
By $p$-flatness, $y'_{33}$ is not a zero-divisor, and we can multiply the first height equation by $y'_{33}$ to get
$$
y'_{33} c_{11} c_{33} = -p y'_{33} c_{13} c^*_{31} \stackrel{\text{\tiny{(\ref{eq:chg:var})}}}{\implies} c_{33} = - c_{13} y'_{33}c^*_{31}
$$
thereby eliminating $c_{33}$. The second finite height equation can be solved to eliminate $c_{13}$. We are left then with the one equation
$$
c_{11} y'_{33} = p.
$$
There are two cases. When $(a-b) \overline{c}_{23} \overline{c}_{32} -(a-c) \overline{c}_{22}^* \overline{c}'_{33} \neq 0$, then $y'_{33}$ is unit in which case we can solve for $c_{11}$. Otherwise, $y'_{33}$ is in the maximal ideal, and we are left with this one equation as in Table \ref{table:withmon}.

\subsubsection{The $\beta\alpha$ cell.}

From Table \ref{table3}, the monodromy condition gives 
\begin{equation}
\label{eq:mon:cnd:BA}
(-e+a-c)c_{33}c'_{22}+p(a-b)c_{32}c^*_{23}-pec'_{22}c'_{33}+O(p^{n-2}).
\end{equation}
By the finite height and determinant equations, we see that
\[
c_{11}(c'_{22}c'_{33}+\frac{1}{p}c_{33}c'_{22})\equiv p c^*_{12}c^*_{23}c^*_{31}
\]
hence $c_{11}$ is a unit in $R^{\tau, \overline{\beta}}_{\overline{\fM}}[1/p]$.
We multiply  (\ref{eq:mon:cnd:BA}) by $c_{11}$: using the finite height and the determinant equations, we obtain
\[
p\left(c_{11}((a-b)c_{32}c^*_{23}-(a-c)c'_{22}c'_{33})-p(e-a+c)c^*_{12}c^*_{23}c^*_{31}\right)+O(p^{n-2}).
\]
In particular if the descent data is generic, we can write
\[
c_{11}\big((a-b)c_{32}c^*_{23}-(a-c)c'_{22}c'_{33}\big) = p z^*
\]
where $z^*$ is a unit.  Let $y_{32}  = ((a-b)c_{32}c^*_{23}-(a-c)c'_{22}c'_{33}) (z^*)^{-1}$ which replaces $c_{32}$ so that we have
$$
c_{11} y_{32} = p.
$$
Multiplying the first height equation by $y_{32}$, we get
$$
y_{32} c_{11} c_{33} = - p c_{31}^* c_{13} y_{32} \implies c_{33} = - c_{31}^* c_{13} y_{32} 
$$
thereby eliminating $c_{33}$.  Let $y_{13} = -(c_{11} c'_{33} - c_{13} c^*_{31}) (c^*_{23} c^*_{12} c^*_{31})^{-1}$ which replaces $c_{13}$.  We have reduced the equations to 
$$
c_{11} y_{32} = p,\quad c'_{22} y_{13} = p.
$$ 
There are again two cases. When $\overline{c}_{32} \neq 0$, $y_{32}$ is a unit in which case we can solve the first equation.  Otherwise, $y_{32}$ is in the maximal ideal in which case this is a minimal set of equations. Note that $c_{13}\equiv 0$ modulo $\varpi$ so $y_{13}$ is never a unit.

\subsubsection{The $\alpha\beta$ cell.}  

The computations are similar to the $\beta\alpha$ case and we only outline how to obtain the relevant monodromy equations.
From Table \ref{table3}, the monodromy condition gives 
\begin{equation}
\label{eq:mon:cnd:AB}
(e-a+c)c_{31}c_{23}+p(e-a+b)c^*_{21}c'_{33}+p(a-b)c_{31}c'_{23}+O(p^{n-2}).
\end{equation}
Note that $c_{22}$ is a unit in $R^{\tau, \overline{\beta}}_{\overline{\fM}}[1/p]$.
Multiplying the determinant equation by $c_{22}$ and using the finite height relation $c_{22}c_{13}=c_{12}c_{23}$, we obtain

\[
c_{12}(c_{23}c^*_{32}-c'_{33}c_{22})= p c^*_{32}c^*_{13}c_{22}
\]
hence $c_{12}$ is a unit in $R^{\tau, \overline{\beta}}_{\overline{\fM}}[1/p]$.
We multiply  (\ref{eq:mon:cnd:AB}) by $c_{12}$: using the finite height equations, we obtain
\[
-(e-a+c)pc_{13}^*c^*_{21}c^*_{32}+p(e-a+b)c_{12}c^*_{21}c'_{33}+p(a-b)c_{12}c_{31}c'_{23}+O(p^{n-2}).
\]
Using now the determinant condition, we finally get
\[
p\left(c_{12}((a-b)c_{31}c'_{23}+(b-c)c^*_{21}c'_{33})-p(e-a+c)c^*_{21}c^*_{32}c^*_{13}\right)+O(p^{n-2}).
\]
In particular if the descent data is generic, we can write
\[
c_{12}((a-b)c_{31}c'_{23}+(b-c)c^*_{21}c'_{33}) = p z^*
\]
where $z^*$ is a unit. The computations are now similar to those of the $\beta\alpha$ case.

\section{Base change}
\label{sec:BC}

In this section, we extend the results of \S 4-5 to non-principal series tame types.  These tame inertial types provide more flexibility in isolating certain combinations of Serre weights in global applications.  The setup is similar to \cite{EGS} where deformation rings for tame cuspidal types are computed for $\GL_2$. The end result is that the deformation rings have essentially the same form and shapes as for the principal series types. 

Let $r\in\{2,3\}$ and define $f'\defeq fr$,  $K'\defeq\Qpf{f'}$. We write $e'\defeq p^{f'}-1$, let 
$\pi'\defeq {\pi}^{\frac{e}{e'}}$ and set $L'\defeq K'(\pi')$. We fix a sequence of $p$-power roots $\pi'_n$ of $\pi'$, such that ${\pi'_n}^{\frac{e}{e'}}=\pi_n$, and set $L'_{\infty}\defeq\underset{n\in\N}{\bigcup}L'(\pi'_n)$, $K'_{\infty}\defeq\underset{n\in\N}{\bigcup}K'(p_n)$, $L_{\infty}\defeq\underset{n\in\N}{\bigcup}L(\pi_n)$ and $K_{\infty}\defeq\underset{n\in\N}{\bigcup}K(p_n)$.

We have $\Gal(L'_\infty/K_\infty)\cong \Gal(L'/K)$  is generated by $\Delta'\defeq\Gal(L'/K')$ and $\tld{\sigma}$ subject to the relations $\tld{\sigma}g\tld{\sigma}^{-1}=g^{p^f}$ and $\tld{\sigma}^r=1$ (here $\tld{\sigma}$ is characterized by $\tld{\sigma}(\pi')=\pi'$ and $\tld{\sigma}(\zeta)=\zeta^{p^f}$ for $\zeta$ any $e'$-th root of unity). We will fix once and for all a lift of $\tld{\sigma}$ to $G_{K_\infty}$, and abusively also call it $\tld{\sigma}$. Note that the image of $\tld{\sigma}$ in $\Gal(K'_\infty/K_\infty)$ is a generator.

As in Section \ref{subsection:etale phi modules}, we have the rings $\cO_{\cE^{un}, K}= \cO_{\cE^{un}, K'}$ with an action of $G_{K_\infty}$, and subrings $ \cO_{\cE, K}$, $ \cO_{\cE, L}$, $\cO_{\cE, K'}$ and  $\cO_{\cE, L'}$ which are the ring of invariants under $G_{K_\infty}$, $G_{L_\infty}$, $G_{K'_\infty}$ and $G_{L'_\infty}$, respectively.

Recall the character $\omega_{\pi'}:I_{K'}\rightarrow W(k')^{\times}$. By fixing an embedding $\sigma_0':k'\ia \F$ extending $\sigma_0: k\ia\F$, we obtain a fundamental character $\omega_{f'}$ satisfying $\omega_{f'}^{\frac{e'}{e}}=\omega_f$. We also fix a compatible embedding $\sigma_0': W(k') \hookrightarrow \cO$, which allows us to regard $\omega_{f'}$ as an $\cO^\times$ -valued character.
\subsection{Tame descent datum}
\label{sec:baseChangeTameType}

Recall the notations and the general setting of \S\ref{KM with dd}. Consider $f$-tuples $\bf{a}_k\in\{0,\dots,p-1\}^f$ for $1 \leq k \leq 3$ 
and the following tame (non-principal series) inertial types $\tau$:
\begin{equation*}
\left\{\begin{matrix}
\omega_{f'}^{-\bf{a}_1^{(0)}-p^f\bf{a}_1^{(0)}} \oplus \omega_{f'}^{-\bf{a}_2^{(0)}-p^f\bf{a}_3^{(0)}}\oplus \omega_{f'}^{-\bf{a}_3^{(0)} - p^f\bf{a}_2^{(0)}} \text{ when  $r = 2$}
\\
\\
\omega_{f'}^{-\bf{a}_1^{(0)}-p^f\bf{a}_2^{(0)}-p^{2f}\bf{a}_3^{(0)}}\oplus \omega_{f'}^{-\bf{a}_2^{(0)}-p^f\bf{a}_3^{(0)}-p^{2f}\bf{a}_1^{(0)}}\oplus \omega_{f'}^{-\bf{a}_3^{(0)}-p^f\bf{a}_1^{(0)}-p^{2f}\bf{a}_2^{(0)}} \text{ when  $r =3$.} \\
\end{matrix}\right.
\end{equation*}
We write $\tau'$ for the base change of $\tau$ to $K'/K$ (which is just $\tau$ considered as a representation of $I_{K'}$). There is a triple $(\mathbf{a}'_1,\mathbf{a}'_2, \mathbf{a}'_3)$ with $\mathbf{a}'_k \in \{0, \ldots, p-1\}^{f'}$ associated to $\tau'$ such that $\tau' = \eta_1 \oplus \eta_2 \oplus \eta_3$ with $\eta_k = \omega_{f'}^{-\mathbf{a}_k'^{(0)}}$ (with the characters ordered as above).  We say that the type $\tau$ is \emph{$n$-generic} if $\tau'$ is \emph{$n$-generic} (equivalently, the triple $(\mathbf{a}_1,\mathbf{a}_2,\mathbf{a}_3)$ is $n$-generic). 
Similar conventions apply for the notion of genericity, weak genericity and strong genericity. 
Let $(s_j) \in S_3^f$ be the orientation of $(\mathbf{a}_1,\mathbf{a}_2,\mathbf{a}_3)$ (cf. Definition \ref{orientation})

The following lemma records the effect of base change on the orientation:
\begin{prop}
\label{basechange2}
Let $\tau$ be a weakly generic tame type. For $0 \leq j \leq f-1$ and $0 \leq i \leq r - 1$, define
$$
s'_{j + if} = s_{\tau}^{i+1} \circ s_{j}\in S_3
$$
where $s_{\tau}=(23)$ $($resp. $s_{\tau}=(123))$ if $r =2$ $($resp. $r =3)$. Then, the $f'$-tuple $(s'_{j'}) \in S_3^{f'}$ is an orientation of $\tau'$.
\end{prop}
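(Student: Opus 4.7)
The plan is to reduce the claim to a careful digit-tracking computation combined with the leading-coefficient characterization of orientations, which is valid once we verify that $\tau'$ inherits weak genericity from $\tau$.

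First I would unravel the definitions. Writing $\mathbf{a}_k'^{(0)}$ as a base-$p$ expansion of length $f' = rf$ and comparing with the given form of $\tau$, I would establish the key digit formula
\[
a'_{k,\,j+if} \;=\; a_{s_\tau^i(k),\,j}
\qquad (0\le j\le f-1,\ 0\le i\le r-1).
\]
For instance, when $r=2$ and $k=2$, the identity $\mathbf{a}_2'^{(0)} = \mathbf{a}_2^{(0)} + p^f \mathbf{a}_3^{(0)}$ produces the $2f$ digits $(a_{2,0},\dots,a_{2,f-1},a_{3,0},\dots,a_{3,f-1})$ of $\mathbf{a}_2'^{(0)}$, matching $s_\tau^0(2)=2$ and $s_\tau(2)=3$ with $s_\tau=(23)$; the remaining cases (and the analogous $r=3$ cases with $s_\tau=(123)$) are entirely similar.

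Next I would observe that $\tau'$ is itself weakly generic: for any $k\neq k'$ and any $0\le i\le r-1$, the permutation $s_\tau^i$ is a bijection of $\{1,2,3\}$, so
\[
|a'_{k,\,j+if} - a'_{k',\,j+if}| \;=\; |a_{s_\tau^i(k),\,j} - a_{s_\tau^i(k'),\,j}| \;\in\; [3,p-4]
\]
by the weak genericity of $\tau$. Consequently (by the discussion right after Definition \ref{orientation} applied to $\tau'$), the orientation $s'_{j'}$ of $\tau'$ at $j'$ exists, is unique, and is characterized by the strict inequality $a'_{s'_{j'}(1),\,f'-j'-1} > a'_{s'_{j'}(2),\,f'-j'-1} > a'_{s'_{j'}(3),\,f'-j'-1}$ on leading digits.

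Finally, for $j' = j+if$ I would compute $f'-j'-1 = (r-i-1)f + (f-j-1)$; combined with the digit formula this gives $a'_{k,\,f'-j'-1} = a_{s_\tau^{r-i-1}(k),\,f-j-1}$ for each $k$. Substituting into the leading-digit inequality characterizing $s'_{j'}$ and using the uniqueness of the orientation of $\tau$ at $j$ (which gives $a_{s_j(1),f-j-1} > a_{s_j(2),f-j-1} > a_{s_j(3),f-j-1}$) forces $s_\tau^{r-i-1}\circ s'_{j+if} = s_j$, and hence $s'_{j+if} = s_\tau^{-(r-i-1)}\circ s_j = s_\tau^{i+1}\circ s_j$ since $s_\tau^r = \mathrm{id}$. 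The main obstacle is purely bookkeeping: keeping straight the cyclic indexing conventions for digits of $\mathbf{a}_k^{(j)}$ versus $\mathbf{a}'^{(j')}_k$, and the inverse that appears when translating a permutation of digit positions into a permutation acting on the triple indexing the characters of $\tau$.
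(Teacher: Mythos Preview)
Your proposal is correct and follows the same approach the paper sketches: use the fact that under weak genericity the orientation at $j'$ is determined by the leading digits $(a'_{1,f'-1-j'},a'_{2,f'-1-j'},a'_{3,f'-1-j'})$, and then track these digits back to the digits of the original triple. The paper leaves this as a ``casewise computation'', whereas you package the cases into the single digit formula $a'_{k,j+if}=a_{s_\tau^i(k),j}$ and argue uniformly; this is a cleaner presentation of the same argument, and your index-chasing (in particular $f'-j'-1=(r-i-1)f+(f-j-1)$ and the resulting identity $s_\tau^{r-i-1}\circ s'_{j+if}=s_j$) is correct.
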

\begin{proof}
This is a casewise computation, remarking that the orientation at $j'$ on $(\bf{a}'_1,\bf{a}'_2,\bf{a}'_3)$ is determined, under the weak genericity assumption, by 
$(a'_{1,f'-1-j'},a'_{2,f'-1-j'},a'_{3,f'-1-j'})$.
\end{proof}

We now study Kisin modules with descent datum of type $\tau$ in relation to Kisin modules with descent datum of type $\tau'$. In the next subsection, we will apply this to potentially crystalline deformation rings with tame Galois type $\tau$.   We write $\sigma\in \Gal(K'/\Qp)$ for the absolute Frobenius on $K'$ and recall $\Delta'= \Gal(L'/K')$.
We also denote by $\sigma$ the automorphism of $W(k')[\![u]\!]$ which fixes $u$ and acts as $\sigma$ on $W(k)$. This extends to an automorphism of $\cO_{\cE, L'}$, also denoted by $\sigma$, and $\sigma^f$ agrees with the Galois automorphism $\tld{\sigma}\in G_{K_\infty}$ on $\cO_{\cE^{un}, K}$ restricted to $\cO_{\cE, L'}$.
We define a Frobenius-twist morphism
\begin{eqnarray*}
(\sigma^f)^{*}: Y^{[0,h], \tau'}\rightarrow Y^{[0,h], {(\tau')}^{p^{f}}}.
\end{eqnarray*} 
Let $R$ be any $\cO$-algebra and let $\fM \in Y^{[0,h], \tau'}(R)$.  Define 
$(\sigma^f)^{*}(\fM)$ to be the $(W(k')\otimes_{\Zp}R)[\![u]\!]$-module obtained from $\fM$ via the base change $\sigma^f: W(k')\ra W(k')$. We define the  Frobenius by $\phi_{(\sigma^f)^{*}(\fM)}\defeq (\sigma^{f})^*(\phi_{\fM})$ and an action of $\Delta'$ via the canonical isomorphism
$$
(\widehat{g^{p^f}})^*\left((\sigma^f)^*(\fM)\right)\cong (\sigma^f)^*\left(\widehat{g}^*(\fM)\right)
$$
using that $g \mapsto g^{p^f}$ is an automorphism of $\Delta'$.  In the Lemma below, we see that if $\fM$ has type $\tau'$, then $(\sigma^f)^* (\fM)$ has type $(\tau')^{ p^{-f}} := (\eta_1')^{ p^{-f}} \oplus (\eta_2')^{p^{-f}} \oplus (\eta_3')^{p^{-f}}$.  

We have a canonical $\sigma^{-f}$ semilinear bijection $(\sigma^f)^{*}(\fM)\rightarrow \fM$ given by $a\otimes m \mapsto \sigma^{-f}(a)m$.
\begin{lemma}
\label{lem basechange1}
Let $\fM\in Y^{[0,h], \tau'}(R)$. For all $j\in\{0,\dots,f'-1\}$, one has the following commutative diagram of $R[\![u]\!]$-modules
\begin{eqnarray*}
\xymatrix@=2pc{
\varphi^*(((\sigma^f)^*(\fM))^{(j)})\ar^{\phi^{(j)}_{(\sigma^f)^*(\fM)}}[rr]\ar^{\wr}[d]&&
((\sigma^f)^*(\fM))^{(j+1)}\ar^{\wr}[d]\\
\varphi^*(\fM^{(j-f)})\ar^{\phi^{(j-f)}_{\fM}}[rr]&&
\fM^{(j-f+1)}.
}
\end{eqnarray*}
Furthermore, for any character $\eta:\Delta' \ra \cO^{\times}$, one has the following commutative diagram of $R[\![v]\!]$-modules
\begin{eqnarray*}
\xymatrix@=2pc{
{}^\phz((\sigma^f)^*(\fM))_{\eta}^{(j)}\ar[r] \ar^{\wr}[d]&
((\sigma^f)^*(\fM))^{(j+1)}_{\eta} \ar^{\wr}[d]\\
{}^\phz\fM^{(j-f)}_{\eta^{p^f}}\ar[r]&
\fM^{(j-f+1)}_{\eta^{p^f}}.
}
\end{eqnarray*}
with horizontal maps induced by the Frobenius.
\end{lemma}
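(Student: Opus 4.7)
The plan is to prove both diagrams by unwinding the definitions with the help of the canonical $\sigma^{-f}$-semilinear bijection $\iota\colon (\sigma^f)^{*}(\fM)\to \fM$, $a\otimes m\mapsto \sigma^{-f}(a)m$ introduced just before the lemma. This map is an isomorphism of $R[\![u]\!]$-modules (since $\sigma^f$ only twists the $W(k')$-factor, leaving the $R[\![u]\!]$-structure unchanged), and both diagrams will follow by tracking how the various isotypic decompositions behave under $\iota$.

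First I would handle the first diagram by analyzing the effect of $\iota$ on the $W(k')$-isotypic decomposition. Since $\iota$ is $\sigma^{-f}$-semilinear, the new $W(k')$-action on $(\sigma^f)^{*}(\fM)$ transports to the $\sigma^{-f}$-twist of the old action on $\fM$; hence the $\sigma_j$-eigenspace for the new action (i.e.\ $((\sigma^f)^{*}(\fM))^{(j)}$) corresponds to the $\sigma_j\circ \sigma^f=\sigma_{j-f}$-eigenspace for the old action (i.e.\ $\fM^{(j-f)}$). Applying $\varphi^*$ yields the analogous identification for Frobenius-pullbacks, using that $\varphi$ and $\sigma^f$ commute on $W(k')$. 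The commutativity then follows from the definition $\phi_{(\sigma^f)^{*}(\fM)}\defeq (\sigma^f)^{*}(\phi_\fM)$, which under $\iota$ restricts to $\phi_\fM^{(j-f)}$ on the relevant isotypic pieces.

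For the second diagram, I would additionally track the $\Delta'$-eigencharacter decomposition. The key input is the commutation relation $\sigma^f\circ \widehat{g}=\widehat{g^{p^f}}\circ \sigma^f$ in $(W(k')\otimes R)[\![u]\!]$, which follows from $\sigma^f$ acting on roots of unity as the $p^f$-th power (and $\omega_{\pi'}(g)$ being such a root of unity). This relation gives rise to the canonical isomorphism $(\widehat{g^{p^f}})^{*}((\sigma^f)^{*}(\fM))\cong (\sigma^f)^{*}(\widehat{g}^{*}(\fM))$ used to define the new $\Delta'$-action. Unravelling, the new $\widehat{g^{p^f}}$-semilinear self-map of $(\sigma^f)^{*}(\fM)$ is obtained by applying $(\sigma^f)^{*}$ to the old $\widehat{g}$-semilinear self-map of $\fM$. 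A direct eigenvalue computation, using the tensor relation $1\otimes xm=\sigma^f(x)\otimes m$ (and that $\sigma^f$ acts as $p^f$-th power on roots of unity), will then show that the $\eta$-eigenspace of the new $\Delta'$-action corresponds under $\iota$ to the $\eta^{p^f}$-eigenspace of the old one. Combined with the $W(k')$-shift of $-f$ in the index, this produces the desired vertical isomorphisms, and the commutativity is again automatic from the $\Delta'$-equivariance of the Frobenius.

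The main obstacle will be the careful bookkeeping of the twists by powers of $\sigma^{\pm f}$: the new $\Delta'$-action of $h$ corresponds (after combining the $(\sigma^f)^{*}$-pullback of the old action with the $\sigma^f$-twist coming from the tensor relation) to a modification of the old action of $h^{p^{\pm f}}$, and one must chase these conventions through to recover the asserted $\eta^{p^f}$-shift. Once the conventions are fixed, both diagrams reduce to formal checks.
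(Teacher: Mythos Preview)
The paper states this lemma without proof, treating it as an immediate consequence of the definitions of $(\sigma^f)^*(\fM)$, its Frobenius, and its $\Delta'$-action. Your proposal correctly unwinds those definitions and would constitute a complete proof; the two key observations you isolate---that the $\sigma^{-f}$-semilinearity of $\iota$ shifts the embedding index by $-f$ (via $\sigma_j\circ\sigma^f=\sigma_{j-f}$), and that the commutation $\sigma^f\circ\widehat{g}=\widehat{g^{p^f}}\circ\sigma^f$ forces the $\eta$-isotypic piece for the new $\Delta'$-action to correspond to the $\eta^{p^f}$-isotypic piece for the old one---are exactly the computations required.
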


Since $\tau'$ is the base change of the tame inertial type for $I_K$, $(\tau')^{p^{-f}}=\tau'$ and the Frobenius-twist induces an automorphism of $Y^{\mu, \tau'}$. We define the `fixed points' of this automorphism:
 
\begin{defn} 
\label{defn:fixed}
For any $\cO$-algebra $R$, define
$$
Y^{\mu, \tau}(R) = \{ (\fM, \iota) \mid \fM \in Y^{\mu, \tau'}(R), \iota: (\sigma^f)^{*}(\fM) \stackrel{\sim}{\ra} \fM \}
$$
such that the following cocycle condition holds: $\iota \circ (\sigma^f)^*\iota = \mathrm{id}_{\fM}$ (resp. $\iota \circ (\sigma^f)^*\iota \circ (\sigma^{2f})^*\iota  = \mathrm{id}_{\fM}$) when $r = 2$ (resp. $r = 3$).

We define $Y^{[0,2], \tau}$ in a similar fashion.
\end{defn} 

A morphism $(\fM_1,\iota_1)\rightarrow (\fM_2,\iota_2)$ in $Y^{\mu, \tau}(R)$ is a morphism $\fM_1\rightarrow \fM_2$ in $Y^{\mu, \tau'}$ which commutes with the Frobenius twist.

Let $R$ be a complete local Noetherian $\cO$-algebra. Recall from Section \ref{subsection:etale phi modules} the functor $T^*_{dd}$ from $Y^{\mu, \tau'}(R)$ to $G_{K'_\infty}$-representations given by
$$
T^*_{dd}(\fM) = \bV^*_{dd}(\cM) = \Hom_{\phz,\cO_{\cE, L'}}(\cM, \cO_{\cE^{un}, K'})
$$ 
where $\cM=\fM\otimes_{W(k')[\![u]\!]}\cO_{\cE, L'}$ is the \'etale $\phz$-module with descent data corresponding to $\fM$.

If $(\rho,V)$ is a linear representation of a group $G$ and $\psi$ is an automorphism of $G$, the $\psi$-twist of $V$ is the $G$-representation obtained by the composition $\rho\circ \psi:G\to \GL(V)$. 

The following computes the effect of Frobenius twisting under $T^*_{dd}$:
\begin{prop}
\label{proposition:frob twist effect}
There is a canonical bijection 
$$
\mathrm{can}: T^*_{dd}(\fM) \stackrel{\sim}{\ra} T^*_{dd}((\sigma^f)^*(\fM)) 
$$
which identifies $T^*_{dd}((\sigma^f)^*(\fM))$ as the $\Ad(\tld{\sigma})$-twist of $T^*_{dd}(\fM)$
\end{prop}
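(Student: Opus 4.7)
The plan is to construct $\mathrm{can}$ using the chosen lift $\tld\sigma\in G_{K_\infty}$. By construction $\tld\sigma$ fixes $\pi'$ and acts as $\sigma^f$ on $W(k')$, so its restriction to $\cO_{\cE, L'}\subset\cO_{\cE^{un}, K}$ coincides with the arithmetic Frobenius $\sigma^f$ on that subring, and it commutes with $\phz$ on $\cO_{\cE^{un}, K}$. I will then set
\[
\mathrm{can}: T^*_{dd}(\fM)\lra T^*_{dd}((\sigma^f)^*(\fM)), \qquad f\longmapsto\bigl(a\otimes m\mapsto a\cdot\tld\sigma(f(m))\bigr),
\]
where elements of $(\sigma^f)^*\cM$ are tensors satisfying $a\sigma^f(b)\otimes m=a\otimes bm$. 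The equality $\tld\sigma|_{\cO_{\cE, L'}}=\sigma^f$ makes this well-defined and $\cO_{\cE, L'}$-linear, and the commutation with $\phz$ gives Frobenius equivariance. The inverse formula $h\mapsto\tld\sigma^{-1}\circ h(1\otimes-)$ shows $\mathrm{can}$ is a bijection of underlying sets.

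The second step is to identify the induced Galois action. Fix $g\in G_{K'_\infty}$ with image $\bar g\in\Delta'=\Gal(L'_\infty/K'_\infty)$. By the canonical isomorphism $(\widehat{g^{p^f}})^*((\sigma^f)^*\fM)\cong(\sigma^f)^*(\widehat{g}^*\fM)$ used to define the descent datum on $(\sigma^f)^*\fM$ in Lemma \ref{lem basechange1}, the descent action on $(\sigma^f)^*\cM$ translates, under the canonical $\sigma^{-f}$-semilinear identification $1\otimes m\leftrightarrow m$, into the identity $\widehat{\bar g}_{(\sigma^f)^*\cM}(1\otimes m)=1\otimes\widehat{\bar g^{p^{-f}}}_\cM(m)$. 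Plugging this into the formula $(g\cdot h)(n)=g(h(\widehat{\bar g}^{-1}(n)))$ for the $G_{K'_\infty}$-action on Hom, and using the relation $\tld\sigma\bar g\tld\sigma^{-1}=\bar g^{p^f}$ in $\Gal(L'_\infty/K_\infty)$ together with the fact that $\tld\sigma g\tld\sigma^{-1}\in G_{K'_\infty}$ (as $G_{K'_\infty}$ is normal in $G_{K_\infty}$), a direct computation will yield
\[
\mathrm{can}^{-1}\bigl(g\cdot\mathrm{can}(f)\bigr)=(\tld\sigma^{\pm 1}g\tld\sigma^{\mp 1})\cdot f,
\]
which is the asserted $\Ad(\tld\sigma)$-twist compatibility (the sign depending on the chosen convention for ``twist'').

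The main obstacle is the combinatorial bookkeeping in the Galois compatibility: one must simultaneously keep track of (i) the $\sigma^f$-twist in the base ring when passing from $\cM$ to $(\sigma^f)^*\cM$, (ii) the exponent shift $\bar g\mapsto\bar g^{p^{-f}}$ on the $\Delta'$-part of the descent action induced by this twist, and (iii) the action of $\tld\sigma$ on the target ring $\cO_{\cE^{un}, K}$, which intervenes both in the definition of $\mathrm{can}$ and in the action of elements of $G_{K'_\infty}$. The key sanity check, to isolate the correct direction of the twist, is the restriction to $G_{L'_\infty}$, where the descent datum plays no role and the statement reduces to the tautology that conjugating a $\cO_{\cE, L'}$-linear, $\phz$-equivariant Hom by $\tld\sigma$ matches the $\Ad(\tld\sigma)$-action on $G_{L'_\infty}$-representations.
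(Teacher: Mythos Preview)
Your approach is essentially the same as the paper's: you both define $\mathrm{can}$ by sending $h$ to $\tld\sigma\circ h$, reinterpreted as the $\cO_{\cE,L'}$-linear map $a\otimes m\mapsto a\cdot\tld\sigma(h(m))$ on $(\sigma^f)^*\cM$ via the universal property of base change. The paper dismisses the Galois-equivariance check with ``one easily checks it has the correct equivariance properties,'' whereas you spell out the ingredients (the exponent shift on the $\Delta'$-action under $(\sigma^f)^*$ and the relation $\tld\sigma\bar g\tld\sigma^{-1}=\bar g^{p^f}$) needed to carry it out; your hedging on the sign of the $\Ad$-twist is appropriate given that the paper fixes its convention only in the sentence just before the proposition.
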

\begin{proof} We construct the map by sending $h\in \Hom_{\phz,\cO_{\cE, L'}}(\cM, \cO_{\cE^{un}, K'})$ to $\tld{\sigma}\circ h$, which is a $\sigma^{f}$-semilinear map from $\cM$ to $  \cO_{\cE^{un}, K'}$ commuting with $\phz$, hence gives an element of $T^*_{dd}((\sigma^f)^*(\fM))$ (namely $\Id\otimes_{\sigma^f}h$). One easily checks it has the correct equivariance properties.
\end{proof}

\begin{lemma} 
\label{abstractextension}
Let $G$ be a group and $G'$ a normal subgroup of $G$ such that $G/G'$ is a cyclic group of order $d$. Let $g\in G$ be an element lifting a generator of $G/G'$. Let $V$ be a linear $G'$-representation. Then the data of an extension of the $G'$-action to a $G$-action is the same as the data of a linear isomorphism $h:V \stackrel{\sim}{\ra} V$ such that
\begin{enumerate}
\item
$h(g'v)=\left(\Ad(g)(g')\right)h(v)$,
\item
$h^d(v)=g^d(v)$,
\end{enumerate}
for all $v\in V$ and $g'\in G'$.
\end{lemma}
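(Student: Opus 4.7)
The proof will construct mutually inverse bijections between the two pieces of data. In one direction, given an extension of the $G'$-action to a representation $\rho : G \to \GL(V)$, we set $h := \rho(g)$. Then condition (1) is immediate from $\rho(g)\rho(g') = \rho(gg') = \rho((gg'g^{-1})g) = \rho(\Ad(g)(g'))\rho(g)$, and condition (2) follows from $h^d = \rho(g^d)$ together with the fact that $g^d \in G'$ acts on $V$ through the given $G'$-action.

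In the reverse direction, suppose $h$ satisfies (1) and (2). Using that $G/G'$ is cyclic of order $d$ with $g$ lifting a generator, every element of $G$ is uniquely of the form $g' g^k$ for some $g' \in G'$ and $0 \le k < d$. I will \emph{define}
\[
\rho(g' g^k) \, v \ \defeq\ g' \cdot h^k(v),
\]
where $g' \cdot (-)$ is the given $G'$-action. The main task is to check that $\rho$ is a group homomorphism; once that is done, it manifestly extends the $G'$-action (take $k=0$), and the assignments $\rho \mapsto \rho(g)$ and $h \mapsto \rho$ are tautologically inverse to one another.

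The key computational step is the following consequence of (1), obtained by an obvious induction on $k$: for all $k \geq 0$ and $g' \in G'$, one has the identity of operators $h^k \circ g' = \Ad(g^k)(g') \circ h^k$ on $V$. Given $g_i = g'_i g^{k_i}$ ($i=1,2$) with $0 \le k_i < d$, this identity yields
\[
\rho(g_1)\rho(g_2) \ =\ g'_1 \cdot \Ad(g^{k_1})(g'_2) \cdot h^{k_1+k_2}.
\]
If $k_1 + k_2 < d$ this matches $\rho(g_1 g_2)$ directly from the normal-form decomposition of $g_1 g_2$. If $k_1 + k_2 = d + k_3$ with $0 \le k_3 < d$, then $g_1 g_2 = g'_1 \cdot \Ad(g^{k_1})(g'_2) \cdot g^d \cdot g^{k_3}$, and by condition (2) the operator $h^{k_1+k_2} = h^d \circ h^{k_3}$ equals $g^d \cdot h^{k_3}$, so again $\rho(g_1)\rho(g_2) = \rho(g_1 g_2)$.

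The principal obstacle, such as it is, lies exactly in this second case: one has to ensure that condition (2) is strong enough to force the relation $g^d \in G'$ to be respected by the extension. This is clean because (2) is an equality of operators (rather than only up to conjugation), so iterating (1) and applying (2) once suffices. The two constructions are visibly inverse, completing the proof.
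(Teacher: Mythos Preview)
Your argument is correct and follows exactly the approach the paper has in mind; the paper's own proof is a one-line remark that ``the data of $h$ is exactly equivalent to the action of the element $g$, and the conditions are exactly what is needed to make it a group action of $G$,'' and you have simply written out this verification in full.
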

\begin{proof} The data of $h$ is exactly equivalent to the action of the element $g$, and the conditions are exactly what is needed to make it a group action of $G$.
\end{proof}
\begin{prop}
\label{proposition:extension}
Let $R$ be a complete local Noetherian $\cO$-algebra and let $(\fM, \iota) \in Y^{\mu,\tau}(R)$. Let $\cM$ be the \'etale $\phz$-module associated to $\fM$. Then the data of an extension of the $G_{K'_\infty}$-representation $T^*_{dd}(\fM)=\bV^*_{dd}(\cM)$ to a $G_{K_\infty}$-representation is equivalent to the data of an isomorphism 
$$ 
 \iota: (\sigma^f)^{*}(\cM) \stackrel{\sim}{\ra} \cM
$$
satisfying the cocycle condition (cf Definition \ref{defn:fixed}).
\end{prop}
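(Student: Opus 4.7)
The approach is to combine the abstract group-theoretic Lemma~\ref{abstractextension} with the anti-equivalence $\bV^*_{dd}$ and the canonical identification of Proposition~\ref{proposition:frob twist effect}. I apply Lemma~\ref{abstractextension} to $G = G_{K_\infty}$, $G' = G_{K'_\infty}$ (so $G/G'$ is cyclic of order $d = r$, generated by the image of $\tld{\sigma}$), the element $g = \tld{\sigma}$, and the underlying module $V = \bV^*_{dd}(\cM)$. The task is then to translate the data of an isomorphism $\iota: (\sigma^f)^*\cM \stackrel{\sim}{\to} \cM$ satisfying the cocycle condition into the data of an $R$-linear isomorphism $h: V \to V$ satisfying the two conditions in that lemma.

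First, I would construct the correspondence $\iota \leftrightarrow h$. Applying the contravariant anti-equivalence $\bV^*_{dd}$ to $\iota$ yields a morphism of $G_{K'_\infty}$-representations
\[
\bV^*_{dd}(\iota) : V \longrightarrow \bV^*_{dd}\bigl((\sigma^f)^*\cM\bigr).
\]
Proposition~\ref{proposition:frob twist effect} identifies the target, as a $G_{K'_\infty}$-representation, with the $\Ad(\tld{\sigma})$-twist of $V$. Reading $\bV^*_{dd}(\iota)$ as an endomorphism $h$ of the underlying $R$-module $V$ yields exactly condition (1) of Lemma~\ref{abstractextension}: $h(g' v) = \Ad(\tld{\sigma})(g')\,h(v)$ for all $g' \in G_{K'_\infty}$. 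Conversely, given $h$ satisfying condition (1), the anti-equivalence $\bV^*_{dd}$ produces a unique $\iota$ with $\bV^*_{dd}(\iota) = h$.

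Second, I would match the cocycle condition with condition (2) of Lemma~\ref{abstractextension}. Since $\sigma^{rf} = \sigma^{f'}$ acts trivially on $W(k')$, there is a canonical identification $(\sigma^{rf})^*\cM = \cM$; dually, by the explicit formula $\mathrm{can}(h) = \tld{\sigma}\circ h$ from the proof of Proposition~\ref{proposition:frob twist effect}, iterating $\mathrm{can}$ a total of $r$ times produces precisely the action of $\tld{\sigma}^r \in G_{K'_\infty}$ on $V$. Applying $\bV^*_{dd}$ to the cocycle equation
\[
\iota \circ (\sigma^f)^*\iota \circ \cdots \circ (\sigma^{(r-1)f})^*\iota = \mathrm{id}_{\cM}
\]
and tracing through these identifications then translates it into the relation $h^r = \tld{\sigma}^r$ on $V$, which is condition (2) of Lemma~\ref{abstractextension} with $d = r$. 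The converse direction runs by reversing the argument: an extension of the Galois action gives $h(v) := \tld{\sigma}(v)$, which automatically satisfies (1) and (2), and produces $\iota$ with the required cocycle via the anti-equivalence.

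The main obstacle is the bookkeeping around the iterated canonical identification in Proposition~\ref{proposition:frob twist effect}: one must verify that the $r$-fold composition of $\mathrm{can}$ recovers the action of $\tld{\sigma}^r$ rather than some twisted variant, so that the two cocycle conditions align exactly. This reduces to the explicit formula $\mathrm{can}(h) = \tld{\sigma}\circ h$, which makes the $r$-fold iterate precisely pre-composition by $\tld{\sigma}^r$; but it is the crux of the translation and deserves a careful unwinding.
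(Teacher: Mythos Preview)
Your proposal is correct and follows exactly the paper's approach: the paper's proof consists of the single sentence that the result follows from Proposition~\ref{proposition:frob twist effect}, Lemma~\ref{abstractextension}, and the fact that $\bV^*_{dd}$ is an anti-equivalence, together with the remark that the action of $\tld{\sigma}$ is given by $\bV^*_{dd}(\iota^{-1})\circ\mathrm{can}$. One small point worth noting in your bookkeeping: the paper's formula for the $\tld{\sigma}$-action is $\bV^*_{dd}(\iota^{-1})\circ\mathrm{can}$, which is the \emph{inverse} of your $h=\mathrm{can}^{-1}\circ\bV^*_{dd}(\iota)$, so your $h$ actually encodes the action of $\tld{\sigma}^{-1}$ rather than $\tld{\sigma}$; this is harmless (since $\tld{\sigma}^{-1}$ also lifts a generator of $G/G'$) and is exactly the kind of direction issue you correctly flag as the crux of the argument.
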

\begin{proof}
This follows from Proposition \ref{proposition:frob twist effect}, Lemma \ref{abstractextension} and the fact that $\bV^*_{dd}$ is an anti-equivalence.
Note that the action of $\tld{\sigma}$ on $\bV^*_{dd}(\cM)$ is given by $\bV^*_{dd}(\iota^{-1})\circ \mathrm{can}$.
\end{proof}
\begin{cor}
Let $R$ be a complete local Noetherian $\cO$-algebra and let $(\fM, \iota) \in Y^{\mu,\tau}(R)$. The $G_{K'_\infty}$-re\-pre\-sen\-ta\-tion $T_{dd}^*(\fM)$ admits a canonical extension to a $G_{K_{\infty}}$-re\-pre\-sen\-ta\-tion which we denote by $T^*_{dd'}(\fM)$.
\end{cor}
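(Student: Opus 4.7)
The plan is to directly invoke Proposition \ref{proposition:extension}: its hypothesis provides precisely the structure we need to produce the desired $G_{K_\infty}$-extension of $T^*_{dd}(\fM)$, so our task is essentially to transport the datum $\iota$ from the level of Kisin modules to the level of \'etale $\phz$-modules and check that the cocycle condition is preserved.

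First, I would start with the given $(\fM,\iota)\in Y^{\mu,\tau}(R)$ and form the associated \'etale $\phz$-module with descent datum $\cM \defeq \fM\otimes_{W(k')[\![u]\!]}\cO_{\cE,L'}$. The formation of $\cM$ commutes with the Frobenius-twist operation $(\sigma^f)^*$ (which is a base change along a ring automorphism acting trivially on $u$), so tensoring $\iota:(\sigma^f)^*\fM\stackrel{\sim}{\to}\fM$ with $\cO_{\cE,L'}$ yields a canonical isomorphism
\[
\iota_\cM : (\sigma^f)^*\cM \stackrel{\sim}{\longrightarrow}\cM
\]
in the category of \'etale $\phz$-modules with $\Delta'$-descent data. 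The cocycle identity $\iota \circ (\sigma^f)^*\iota=\mathrm{id}$ (resp.\ the threefold version when $r=3$) passes to $\iota_\cM$ by functoriality of base change.

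Next, I would apply Proposition \ref{proposition:extension} to $\iota_\cM$: this produces an extension of the $G_{K'_\infty}$-action on $\bV^*_{dd}(\cM)=T^*_{dd}(\fM)$ to a $G_{K_\infty}$-action. Explicitly, using the proof of Proposition \ref{proposition:extension} together with Proposition \ref{proposition:frob twist effect}, the action of our chosen lift $\tld{\sigma}\in G_{K_\infty}$ is given by the composition $\bV^*_{dd}(\iota_\cM^{-1})\circ \mathrm{can}$, while the $G_{K'_\infty}$-action is the pre-existing one coming from $\bV^*_{dd}$. We denote the resulting $G_{K_\infty}$-representation by $T^*_{dd'}(\fM)$.

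Finally, I would address canonicity. The construction of $\iota_\cM$ from $\iota$ is functorial in $(\fM,\iota)$, and the extension procedure of Proposition \ref{proposition:extension} depends only on the chosen lift $\tld{\sigma}\in G_{K_\infty}$, which was fixed at the start of the section. Thus $T^*_{dd'}$ defines a functor from $Y^{\mu,\tau}(R)$ to $R$-linear representations of $G_{K_\infty}$, and its restriction to $G_{K'_\infty}$ recovers $T^*_{dd}(\fM)$; no step in the construction involves any choice beyond those already fixed globally. There is no real obstacle here: the entire content has been prepared in Propositions \ref{proposition:frob twist effect} and \ref{proposition:extension}, and the corollary is essentially a packaging statement.
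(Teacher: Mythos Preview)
Your proof is correct and follows exactly the approach the paper intends: the corollary is stated without proof in the paper precisely because it is an immediate application of Proposition \ref{proposition:extension}, with the only step being to transport $\iota$ from $\fM$ to $\cM$ by base change (which preserves the cocycle condition). The explicit description you give of the $\tld{\sigma}$-action via $\bV^*_{dd}(\iota_\cM^{-1})\circ \mathrm{can}$ matches the remark following the corollary in the paper.
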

\begin{rmk}

The \'etale $\phi$-module $\cM$ over $\cO_{\cE, L'}$ associated to $\fM$ has an action of the group $\Delta'$ from the descent datum.  The isomorphism $\iota$ extends this to an action of $\Gal(L'/K)$. 

One can describe $T^*_{dd'}$ as 
$$
T^*_{dd'}(\fM) = \bV^*_{dd}(\cM) = \Hom_{\phz,\cO_{\cE, L'}}(\cM, \cO_{\cE^{un}, K'})
$$
with the  $G_{K_{\infty}}$-action given by $g\cdot f\defeq g\circ f\circ \overline{g}^{-1}$, where $G_{K_{\infty}}$-acts on $\cM$ through $\Gal(L'/K)$.
\end{rmk}


\begin{prop} \label{parallel} Let $(\overline{\fM},\overline{\iota})\in Y^{\mu, \tau}(\F)$.  If $\bf{w}=(\widetilde{w}_{0},\dots,\widetilde{w}_{f'-1})\in \widetilde{W}^{f'}$ is the shape of $\overline{\fM}$ considered as an element of $Y^{\mu, \tau'}(\F)$, then 
$$
\widetilde{w}_{j'_1}=\widetilde{w}_{j'_2}\qquad\text{whenever}\quad j'_1\equiv j'_2\ \mod{f}.
$$
In other words, $\overline{\fM}$ has \emph{parallel shape}. 
\end{prop}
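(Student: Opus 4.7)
The plan is first to use the existence of the isomorphism $\overline{\iota}: (\sigma^f)^*(\overline{\fM}) \stackrel{\sim}{\to} \overline{\fM}$ in $Y^{\mu, \tau'}(\F)$ to reduce the claim to showing that the shape of $(\sigma^f)^*(\overline{\fM})$ at embedding $j'$ equals the shape $\tld{w}_{j'-f}$ of $\overline{\fM}$ at embedding $j'-f$; the periodicity of $\mathbf{w}$ with period $f$ will then be an immediate consequence.

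To compare these two shapes, fix an eigenbasis $\beta$ of $\overline{\fM}$ and transport it to an eigenbasis $\beta'$ of $(\sigma^f)^*(\overline{\fM})$ via Lemma \ref{lem basechange1}: the $k$-th vector $f'^{(j')}_k$ of $\beta'^{(j')}$ corresponds, under the canonical identification $((\sigma^f)^*(\overline{\fM}))^{(j')}_{\eta_k} \cong \overline{\fM}^{(j'-f)}_{\eta_k^{p^f}}$, to the vector $f^{(j'-f)}_{s_\tau^{-1}(k)}$ of $\beta^{(j'-f)}$. Here I use a short direct computation with the explicit form of $\tau$ in \S \ref{sec:baseChangeTameType} (namely, Frobenius cyclically permutes the constituents of $\tau'$) to check that $\eta_k^{p^f}=\eta_{s_\tau^{-1}(k)}$, together with the orientation identity $s'_{j'+1}=s_\tau\circ s'_{j'-f+1}$ which is a direct consequence of Proposition \ref{basechange2}.

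Using either of these identities, one then verifies the integer identity
\[
\bf{a}'^{(j'+1)}_{s'_{j'+1}(k)}=\bf{a}'^{(j'-f+1)}_{s'_{j'-f+1}(k)}
\]
for all $k\in\{1,2,3\}$, which ensures that the basis $\beta'^{(j')}_{s'_{j'+1}(3)}$ of $(\sigma^f)^*(\overline{\fM})^{(j')}_{s'_{j'+1}(3)}$ from Lemma \ref{eigenbases} is mapped under the identifications of Lemma \ref{lem basechange1} \emph{literally} (and not merely up to rescaling) to the basis $\beta^{(j'-f)}_{s'_{j'-f+1}(3)}$ of $\overline{\fM}^{(j'-f)}_{s'_{j'-f+1}(3)}$. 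Since Lemma \ref{lem basechange1} also intertwines $\phi^{(j')}_{(\sigma^f)^*(\overline{\fM}), s'_{j'+1}(3)}$ with $\phi^{(j'-f)}_{\overline{\fM}, s'_{j'-f+1}(3)}$, we conclude that
\[
\Mat_{\beta'}\bigl(\phi^{(j')}_{(\sigma^f)^*(\overline{\fM}), s'_{j'+1}(3)}\bigr) = \Mat_\beta\bigl(\phi^{(j'-f)}_{\overline{\fM}, s'_{j'-f+1}(3)}\bigr),
\]
and in particular these matrices lie in the same Iwahori double coset. Thus the shape of $(\sigma^f)^*(\overline{\fM})$ at $j'$ is $\tld{w}_{j'-f}$, and combining with $\overline{\iota}$ we deduce $\tld{w}_{j'}=\tld{w}_{j'-f}$ for all $j'$, as required.

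The main hurdle is the combinatorial bookkeeping: one has to interlock three compatibilities coming from different parts of the setup, namely the permutation of characters $\eta_k \mapsto \eta_{s_\tau^{-1}(k)}$, the shift of orientation $s'_{j'+1}=s_\tau\circ s'_{j'-f+1}$, and the stability of exponents $\bf{a}'^{(j'+1)}_{s'_{j'+1}(k)}=\bf{a}'^{(j'-f+1)}_{s'_{j'-f+1}(k)}$. Once these are in place, the argument is a formal consequence of Lemma \ref{lem basechange1}.
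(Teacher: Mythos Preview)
Your proof is correct and follows essentially the same approach as the paper. The paper's argument is compressed into two sentences: it observes that $\overline{\iota}$ together with Lemma~\ref{lem basechange1} gives an isomorphism $\overline{\fM}^{(j')}_{\eta_{s'_{j'}(3)}}\cong \overline{\fM}^{(j'-f)}_{(\eta_{s'_{j'}(3)})^{p^f}}$ and then invokes the character identity $(\eta_{s'_{j'}(3)})^{p^f}=\eta_{s'_{j'-f}(3)}$; you unpack this by explicitly transporting an eigenbasis and checking the three compatibilities (character permutation, orientation shift, exponent stability) so that the matrices $A^{(j')}$ and $A^{(j'-f)}$ literally coincide.
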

\begin{proof}
The isomorphism $\overline{\iota}$ induces an isomorphism $\fM^{(j)}_{\eta_{s'_j(3)}} \cong\fM^{(j-f)}_{(\eta_{s'_j(3)})^{p^{f}}}$ (Lemma \ref{lem basechange1}). The Proposition follows from the fact that  
$(\eta_{s'_j(3)})^{p^{f}} = \eta_{s'_{j-f}(3)}$. 
\end{proof} 

\begin{defn} \label{rhobarshape2} Let $\rhobar:G_{K} \ra \GL_3(\F)$ such that $T_{dd'}^*(\overline{\fM}) \cong \rhobar|_{G_{K_{\infty}}}$ for some $(\overline{\fM},\overline{\iota})\in Y^{\mu, \tau}(\F)$.  Define $\bf{w}(\rhobar, \tau) = (\widetilde{w}_0, \ldots, \widetilde{w}_{f-1}) \in \mathrm{Adm}(2,1,0)^f$ where $\widetilde{w}_j$ is the shape of $\overline{\fM} \in Y^{\mu,\tau'}(\F)$ at $j'$ for any $j' \equiv j \mod f$.   This is well defined by Proposition \ref{parallel} and Theorem \ref{Kisinvariety} applied to $\rhobar|_{G_{K'}}$ and $\tau'$.
\end{defn} 

We now discuss gauge bases in the current setting.
\begin{defn} \label{gauge BC} Assume $\tau$ is weakly generic. Let $R$ be a complete local Noetherian $\cO$-algebra and let $(\fM,\iota) \in Y^{[0,2], \tau}(R)$ . A gauge basis of $(\fM,\iota)$ is a gauge basis $\beta$ of $\fM\in Y^{[0,2], \tau'}(R)$ which is compatible with $\iota$, that is, $\iota((\sigma^f)^*(\beta))=\beta$.
\end{defn}
\begin{prop}
\label{gaugeuniqueBC} Assume $\tau$ is weakly generic. Let $R$ be a complete local Noetherian $\cO$-algebra and let $(\fM,\iota) \in Y^{[0,2], \tau}(R)$. Then the set of gauge bases of $(\fM,\iota)$ is a torsor for $(\Res_{W(k')/\Zp} T(R))^{\sigma^f=\Id}=T(W(k')\otimes_{\Zp}R)^{\sigma^f=\Id}$
\end{prop}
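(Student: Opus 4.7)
The plan is to combine Theorem \ref{gaugeunique}, applied to $\fM$ viewed as an object of $Y^{\mu,\tau'}(R)$, with a Hilbert 90–type argument for the split torus $T$ under the cyclic group $\langle \sigma^f\rangle\cong\Gal(K'/K)$ of order $r$. Theorem \ref{gaugeunique} already shows that the set of gauge bases of $\fM$ (forgetting $\iota$) is a simply transitive torsor for $T(W(k')\otimes_{\Zp}R)$ via diagonal rescaling at each of the $f'$ embeddings. The task is then to analyze the extra constraint $\iota\circ(\sigma^f)^*\beta=\beta$.

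First I would check that for any gauge basis $\beta_0$ of $\fM$, the eigenbasis $\iota\circ(\sigma^f)^*(\beta_0)$ is again a gauge basis. This is because $\iota$ is an isomorphism in $Y^{\mu,\tau'}(R)$, so (by Lemma \ref{lem basechange1} and Proposition \ref{parallel}) it preserves the shape at each embedding together with all the degree constraints of Table \ref{table:lifts}. By Theorem \ref{gaugeunique} there is then a unique $D\in T(W(k')\otimes_{\Zp}R)$ such that
\[
\iota\circ(\sigma^f)^*(\beta_0) \;=\; \beta_0\cdot D.
\]
Unwinding the cocycle condition on $\iota$ from Definition \ref{defn:fixed}, a direct iteration shows that $D$ satisfies the multiplicative norm relation $\prod_{i=0}^{r-1}\sigma^{if}(D)=\Id$ in $T(W(k')\otimes_{\Zp}R)$, where $\sigma^f$ acts through its action on $W(k')$.

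Next I would establish existence of a gauge basis compatible with $\iota$ by showing that any such cocycle $D$ is a coboundary: there exists $D_0\in T(W(k')\otimes_{\Zp}R)$ with $D=D_0^{-1}\sigma^f(D_0)$, and then $\beta_0\cdot D_0$ is compatible with $\iota$. Since $E$ contains the images of all embeddings of $W(k')$, we have $W(k')\otimes_{\Zp}R\cong\prod_{j=0}^{f'-1}R$, and $\sigma^f$ cyclically permutes the $f'$ factors in orbits of size $r$. Hence $T(W(k')\otimes_{\Zp}R)\cong\prod_{j=0}^{f'-1}T(R)$ decomposes, as a $\langle\sigma^f\rangle$-module, into $f$ copies of the induced module $\mathrm{Ind}_1^{\Z/r\Z}T(R)$, and Shapiro's lemma yields $H^1(\langle\sigma^f\rangle, T(W(k')\otimes_{\Zp}R))=0$. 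This produces the required $D_0$.

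Finally, for the torsor statement: if $\beta$ and $\beta'$ are two gauge bases of $(\fM,\iota)$, Theorem \ref{gaugeunique} gives a unique $D\in T(W(k')\otimes_{\Zp}R)$ with $\beta'=\beta\cdot D$, and the compatibility with $\iota$ forces $\sigma^f(D)=D$; conversely, any element of $T(W(k')\otimes_{\Zp}R)^{\sigma^f=\Id}$ carries one compatible gauge basis to another. The main (mild) obstacle in executing this plan is bookkeeping the cocycle identities and semilinearity conventions correctly, especially for $r=3$; once these are set up properly, the Shapiro/Hilbert~90 step is purely formal, as $T$ is a split torus and the embeddings of $W(k')$ are all rational over~$\cO$.
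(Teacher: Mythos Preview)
Your proposal is correct and follows essentially the same approach as the paper: both arguments use Theorem \ref{gaugeunique} to identify the gauge bases of $\fM$ as a $T(W(k')\otimes_{\Zp}R)$-torsor, write $\iota\circ(\sigma^f)^*(\beta_0)=\beta_0\cdot D$ for a torus element $D$ satisfying the norm-one condition coming from the cocycle hypothesis on $\iota$, and then solve the resulting coboundary equation using the splitting $W(k')\otimes_{\Zp}R\cong\prod_{j}R$. The paper phrases the last step simply as ``$\Res_{W(k')/\Zp}$ splits over $\cO$,'' while you make the Shapiro/Hilbert~90 content explicit; these are the same argument. (One bookkeeping slip: with your conventions the coboundary should read $D=D_0\,\sigma^f(D_0)^{-1}$ rather than $D_0^{-1}\sigma^f(D_0)$, so it is $\beta_0\cdot D_0^{-1}$ that is $\iota$-compatible---exactly the kind of sign issue you flagged.)
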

\begin{proof} 
Let $\beta_1$ be a gauge basis of $\fM\in Y^{[0,2], \tau'}(R)$. Then $\beta_2=\iota((\sigma^f)^*(\beta_1))$ is also a gauge basis of $\fM$. By Theorem \ref{gaugeunique}, the set of gauge bases of $\fM$ is exactly $T(W(k')\otimes_{\Zp}R)\beta_1$ (note that the proof of Theorem \ref{gaugeunique} also implies that gauge bases of $\overline{\fM}$ are uniquely determined up scalings). 
Thus $\beta_2=c\beta_1$ for a unique $c\in T(W(k')\otimes_{\Zp}R)$. The cocycle condition satisfied by $\iota$ is equivalent to $c\cdot \sigma^f(c)\cdots \sigma^{(r-1)f}(c)=1$.
Observe that $\iota((\sigma^f)^*(t\beta_1))=\sigma^f(t)\iota((\sigma^f)^*(\beta_1))=\sigma^f(t)c\beta_1$. Thus the set of gauge bases of $(\fM,\iota)$ is exactly the set of solutions $t \in T(W(k')\otimes_{\Zp}R)$ to the equation
$t=\sigma^f(t)c$.
As $\Res_{W(k')/\Zp}$ splits over $\cO$, the equation has a solution, and the solution set is a torsor over $(\Res_{W(k')/\Zp} T(R))^{\sigma^f=\Id}$.
\end{proof}
Finally, we observe that by Lemma \ref{lem basechange1}, if $\beta$ is a gauge basis of $(\fM,\iota)$, and $A^{(j)} = \Mat_{\beta}\big(\phi^{(j)}_{\fM, s'_{j+1}(3)}\big)$, then $A^{(j)}=A^{(j+f)}$. The analogue of Theorem \ref{univfh} also holds in our setting, namely the problem of deforming a pair $(\overline{\fM},\overline{\iota})$ is representable by $\widehat{\otimes}_{j=1}^f (R_{\widetilde{w}_j}^{\expl})^{p\text{-flat, red}}$. This is obtained as the quotient of the universal deformation of $(\overline{\fM},\overline{\beta})$ over $Y^{\mu,\tau'}$ by imposing the condition $A^{(j)}=A^{(j+f)}$.

\subsection{Tame deformation rings}
Throughout this section, we assume that $\tau$ is weakly generic.
Let $(\overline{\fM},\overline{\iota})\in Y^{\mu, \tau}(\F)$.  Fix a gauge basis $\overline{\beta}$ of $(\overline{\fM},\iota)$, which exists by Proposition \ref{gaugeuniqueBC}. We define the same deformation problems from Definition \ref{defprob} but with $Y^{\mu, \tau}$ as in Definition \ref{defn:fixed}, using $T_{dd'}^*$ (Proposition \ref{proposition:extension}) in place of $T_{dd}^*$, and with the notion of gauge basis as in Definition \ref{gauge BC}.
For instance, we now have
\begin{eqnarray*}
{D}^{\tau, \Box}_{\overline{\fM},\rhobar}(R)&\defeq& \left\{
\begin{matrix}(\fM_R,\iota, \rho_R,\delta_R)\ \mid\ (\fM_R,\iota)\in Y^{\mu, \tau}_{\overline{\fM}}(R), \rho_R \in {D}_{\rhobar}^{\tau,\Box}(R)\\
\hspace{3cm} \text{and}\ \delta_R:\ T_{dd'}^*(\fM_R)\stackrel{\sim}{\ra}\rho_R|_{G_{K_{\infty}}}
\end{matrix}\right\}
\end{eqnarray*}
We obtain a diagram analogous to (\ref{defdiagram}). We stress that the universal Kisin module living over $R^{\tau, \overline{\beta}}_{\overline{\fM}}$ is a Kisin module of type $(\mu,\tau')$, and $R^{\tau, \overline{\beta}, \nabla}_{\overline{\fM}}$ is obtained by imposing the monodromy condition on a Kisin module of type $(\mu,\tau')$. The arguments from \S \ref{subsection:PCDR} will largely go through, so we will only discuss the modifications that need to be made.  

The fact that the map $\Spf R_{\overline{\fM},\rhobar}^{\tau, \Box} \ra \Spf R^{\mu, \tau}_{\rhobar}$ is an isomorphism is due to the following: By Corollary \ref{RKisin}, there is a unique Kisin module $\fM$ of type $(\mu,\tau')$ living over $\Spf R^{\mu, \tau}_{\rhobar}$. What needs to be checked is that there is a unique isomorphism $ \iota: (\sigma^f)^{*}(\fM) \stackrel{\sim}{\ra} \fM $ verifying the cocycle condition. Proposition \ref{proposition:extension} shows that the extension to $G_{K_\infty}$ of $T^*_{dd}(\fM)$ given by the universal Galois deformation corresponds exactly to an isomorphism  $\iota: (\sigma^f)^{*}(\cM) \stackrel{\sim}{\ra} \cM $ verifying the cocycle condition, where $\cM$ is the \'etale $\phz$-module of $\fM$. But the uniqueness of $\fM$ shows that this $\iota$ respects $\fM\subset \cM$.

The fact that adding gauge bases on $(\fM,\iota)$ is a formally smooth operation follows from Proposition \ref{gaugeuniqueBC}.

The analogue of Theorem \ref{thm:factors} holds by the following Lemma:
\begin{lemma}
Let $E'/E$ be a finite extension.  Let $V_{E'}$ be a continuous representation of $G_{K_{\infty}}$. Then $V_{E'}$ extends to a potentially $($for $L'/K)$ crystalline representation of $G_{K}$ if only if $V_{E'}$ extends to a potentially $($for $L'/K')$ crystalline representation of $G_{K'}$.  
\end{lemma}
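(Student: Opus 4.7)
The direction $(\Rightarrow)$ is essentially tautological: if $\rho: G_K \to \GL(V_{E'})$ is a potentially crystalline (for $L'/K$) extension of the given $G_{K_{\infty}}$-action, then $\rho|_{G_{K'}}$ is potentially crystalline for $L'/K'$ (its restriction to $G_{L'}$ remains crystalline), and extends $V_{E'}|_{G_{K_{\infty}}\cap G_{K'}}=V_{E'}|_{G_{K'_\infty}}$.

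For the direction $(\Leftarrow)$, suppose $\rho': G_{K'}\to \GL(V_{E'})$ is potentially crystalline for $L'/K'$ and agrees with $V_{E'}$ on $G_{K'_\infty}=G_{K_\infty}\cap G_{K'}$. Since $K'/K$ is unramified while $K_\infty/K$ is totally ramified, $K'$ and $K_\infty$ are linearly disjoint over $K$, so the natural map $G_{K_\infty}\twoheadrightarrow \Gal(K'_\infty/K_\infty)\cong \Gal(K'/K)$ is surjective. Pick $\tld{\sigma}\in G_{K_\infty}$ lifting a generator of $\Gal(K'/K)$, so $\tld{\sigma}^r\in G_{K'_\infty}$. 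We will apply Lemma \ref{abstractextension} with $G=G_K$, $G'=G_{K'}$, and with $h$ defined as $V_{E'}(\tld{\sigma})$. The condition $h^r(v) = \tld{\sigma}^r(v)$ is immediate because $\tld{\sigma}^r \in G_{K'_\infty}$ and $V_{E'}$ and $\rho'$ are assumed to agree there.

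The key point is the conjugation compatibility $V_{E'}(\tld{\sigma})\rho'(g')V_{E'}(\tld{\sigma})^{-1}=\rho'(\tld{\sigma}g'\tld{\sigma}^{-1})$ for $g'\in G_{K'}$. Both sides define $G_{K'}$-representations on $V_{E'}$: the right side via pullback along the automorphism $\Ad(\tld{\sigma})$ of $G_{K'}$ (well-defined because $L'/K$ is Galois, so $G_{K'}$ is normal in $G_K$), the left by conjugation of $\rho'$ by a fixed matrix. Both are potentially crystalline for $L'/K'$: the left obviously, the right because $\Ad(\tld{\sigma})$ preserves $G_{L'}$ and is induced by the field automorphism $\tld{\sigma}|_{L'}$, which preserves crystallinity. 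On $G_{K'_\infty}$ they coincide (using normality of $G_{K'_\infty}$ in $G_{K_\infty}$ and the hypothesis $\rho'|_{G_{K'_\infty}}=V_{E'}|_{G_{K'_\infty}}$). By the full faithfulness of restriction from potentially crystalline $G_{K'}$-representations to $G_{K'_\infty}$-representations (\cite[Corollary 2.1.14]{KisinFcrys}), the two $G_{K'}$-representations must coincide. This is the main obstacle in the argument.

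Lemma \ref{abstractextension} then produces a $G_K$-representation $\rho$ extending $\rho'$. Since $G_{K_\infty}$ is generated by $G_{K'_\infty}$ and $\tld{\sigma}$, and on both of these $\rho$ agrees with $V_{E'}$ by construction, we see that $\rho|_{G_{K_\infty}}=V_{E'}$. Finally, $\rho|_{G_{L'}}=\rho'|_{G_{L'}}$ is crystalline, so $\rho$ is potentially crystalline for $L'/K$, completing the proof.
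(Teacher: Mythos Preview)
Your proof is correct and follows the same essential approach as the paper, which gives only a one-line proof citing Kisin's full faithfulness result (\cite[Corollary 2.1.14]{KisinFcrys}); you have fleshed out the details by explicitly invoking Lemma~\ref{abstractextension} to construct the extension. One small point: Kisin's Corollary 2.1.14 is stated for \emph{crystalline} $G_{L'}$-representations restricting to $G_{L'_\infty}$, not for potentially crystalline $G_{K'}$-representations restricting to $G_{K'_\infty}$; the passage from the former to the latter uses that $G_{K'}$ is generated by $G_{L'}$ and $G_{K'_\infty}$ (since $L'\cap K'_\infty=K'$, the degree $[L':K']$ being prime to $p$), which you might make explicit.
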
 
\begin{proof}
This is a consequence of the fact that the restriction from crystalline $G_{L'}$-representations to $G_{L'_{\infty}}$ is fully faithful which is Corollary 2.1.14 in \cite{KisinFcrys}. 
\end{proof} 

We deduce, with the same hypotheses as in Corollary \ref{dring}, that 
$$
R^{\mu, \tau}_{\rhobar}[\![S_1, \ldots, S_{3f}]\!] \cong R^{\tau, \overline{\beta}, \nabla}_{\overline{\fM}}[\![T_1,\ldots, T_8]\!].
$$
Finally, we deduce an explicit description of $R^{\tau, \overline{\beta}, \nabla}_{\overline{\fM}}$ as in Section \ref{Explicit deformation rings}.
\begin{thm} \label{thm:dringBC}  Let $\tau$ be a generic type.
If $\widetilde{w}_{j} \notin \{ \alpha,\, \beta,\, \gamma,\,\mathrm{id} \}$ for all $j$, then
$$
R^{\tau, \overline{\beta}, \nabla}_{\overline{\fM}}\cong \widehat{\otimes}_{j \in \{0, \ldots, f - 1\}} R_{\overline{\fM},\widetilde{w}_j}^{\expl, \nabla}
$$
where $R_{\overline{\fM},\widetilde{w}_j}^{\expl, \nabla}$ is as in Table \ref{table:withmon}, using $e'$ in place of $e$ and~$\bf{a}_k'^{(j)}$ in place of $\bf{a}_k^{(j)}$. 
\end{thm}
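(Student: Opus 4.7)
The plan is to adapt the deformation ring computations of \S5 to the base-changed setting by leveraging the fact that a Kisin module $(\overline{\fM},\overline{\iota}) \in Y^{\mu,\tau}(\F)$ has parallel shape (Proposition \ref{parallel}) when viewed in $Y^{\mu,\tau'}(\F)$. Since $\tau$ being generic implies $\tau'$ is generic as well, all results of \S4-\S5 apply to the principal series type $\tau'$ over $K'$. In particular, by Theorem \ref{univfh}, the universal deformation of $(\overline{\fM},\overline{\beta})$ in $Y^{\mu,\tau'}$ together with gauge basis is pro-represented by $\widehat{\otimes}_{j'=0}^{f'-1} (R^{\expl}_{\widetilde{w}_{j'}})^{p\text{-flat, red}}$, using the numerical data $(e',\mathbf{a}'^{(j')})$ attached to $\tau'$.

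First I would invoke the remark at the end of \S\ref{sec:baseChangeTameType} that the deformation problem for triples $(\fM_R,\iota,\beta_R)$ is obtained from $R^{\tau',\overline{\beta}}_{\overline{\fM}}$ by imposing the Frobenius-twist equivariance $A^{(j')} = A^{(j'+f)}$ for all $j' \in \Z/f'\Z$, which is a direct consequence of Lemma \ref{lem basechange1} and Proposition \ref{gaugeuniqueBC}. By parallelism of the shape, these conditions collapse the tensor product: for each $j \in \{0,\dots,f-1\}$, the $r$ factors indexed by $j, j+f, \dots, j+(r-1)f$ are identified into a single copy of $(R^{\expl}_{\widetilde{w}_j})^{p\text{-flat, red}}$, giving $R^{\tau,\overline{\beta}}_{\overline{\fM}} \cong \widehat{\otimes}_{j=0}^{f-1} (R^{\expl}_{\widetilde{w}_j})^{p\text{-flat, red}}$.

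Next I would impose the monodromy condition. The canonical monodromy operator of Theorem \ref{thmKisin} is intrinsic to $\fM$, so it is automatically equivariant with respect to any isomorphism $\iota: (\sigma^f)^*(\fM) \cong \fM$; explicitly, both defining properties ($N_\nabla$-compatibility and vanishing mod $u$) are preserved under Frobenius twist. Consequently the monodromy conditions (Definition \ref{monideal}) at the embeddings $j'$ and $j'+f$ coincide after the Frobenius-twist identification, so imposing monodromy yields only $f$ genuinely new equations indexed by $j \in \{0,\dots,f-1\}$. By Proposition \ref{onequation} each such equation is principal on the generic fiber whenever $\widetilde{w}_{j-1} \neq \mathrm{id}$, and since in our hypothesis $\widetilde{w}_j \notin\{\alpha,\beta,\gamma,\mathrm{id}\}$ for all $j$ we are precisely in the length-$\geq 2$ regime treated in \S\ref{Explicit deformation rings}. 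The $p$-saturation and reduction calculations in that subsection then yield the rings of Table \ref{table:withmon}, with $(e,\mathbf{a}^{(j)})$ replaced throughout by $(e',\mathbf{a}'^{(j)})$ since the underlying computation occurs at the level of the principal series type $\tau'$.

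The main technical obstacle will be the careful verification that the monodromy equations at the redundant embeddings $j+if$ (for $1 \leq i < r$) really are implied by the one at $j$ modulo the Frobenius-twist identification. Once the canonical monodromy of Theorem \ref{thmKisin} is shown to commute with $\iota$ (which follows from its characterization together with the fact that $(\sigma^f)^*$ preserves both the commutation relation with $\phi$ and the condition of vanishing mod $u$), the rest of the proof is a direct transcription of the arguments of \S\ref{Explicit deformation rings}; in particular the same $p$-saturation manipulations leading from the leading-term condition $P_N(A^{(j-1)})|_{v=-p}$ to the presentations in Table \ref{table:withmon} go through verbatim after the substitution $e \mapsto e'$, $\mathbf{a}_k^{(j)} \mapsto \mathbf{a}_k'^{(j)}$.
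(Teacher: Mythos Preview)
Your proposal is correct and follows essentially the same route as the paper: reduce to the principal series setup over $K'$, use the $\iota$-compatibility of the gauge basis to get $A^{(j)}=A^{(j+f)}$, observe that the canonical monodromy operator is $\iota$-equivariant so the monodromy conditions at $j$ and $j+f$ coincide, and then invoke the \S\ref{Explicit deformation rings} computations with $e'$ and $\mathbf{a}_k'^{(j)}$ in place of $e$ and $\mathbf{a}_k^{(j)}$. The paper's proof is terser but the content is the same.
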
 
\begin{proof} Let $R=R^{\tau, \overline{\beta}}_{\overline{\fM}}$. The monodromy condition on $(\fM_R,\iota)\in Y^{\mu, \tau}_{\overline{\fM}}(R)$ is by definition the monodromy condition on $\fM_R\in Y^{\mu, \tau'}_{\overline{\fM}}(R)$ (with $\overline{\fM}$ being considered as an element of $Y^{\mu, \tau'}(\F)$ as well). Let $A^{(j)} = \Mat_{\beta}\big(\phi^{(j)}_{\fM, s'_{j+1}(3)}\big)$. We already saw that $A^{(j+f)}=A^{(j)}$. Furthermore, the isomorphism $\iota$ shows that the monodromy conditions at the $j$-th embedding and the $(j+f)$-th embedding are exactly the same.
\end{proof}

\section{Applications}
\label{sec:appl}

In this section, we apply the descriptions of the deformation rings to modularity lifting and the Serre weight conjectures.  Before stating the main theorems, we describe a global setup, the particulars of which are not so important.  The proofs of the main theorems only rely on the existence of patched modules satisfying the axioms spelled out in Definition \ref{minimalpatching}.

\subsection{Global setup}
\label{subsec:glsetup}

Let $F/\Q$ be a CM field with maximal totally real subfield $F^+\neq \Q$ and write $\Sigma_p^+$ (resp. $\Sigma_p$) for the places of $F^+$ (resp. of $F$) lying above $p$.
Let $c$ denote the generator of $\Gal(F/F^+)$ and assume that for all places $v\in \Sigma_p^+$, $v$ decomposes as $ww^c$ in $F$.

Let $G_{/F^+}$ be a reductive group which is an outer form for $\GL_3$ which is quasi-split at all finite places of $F^+$  and which splits over $F$.
Suppose that $G(F^+_v) \cong U_3(\R)$ for all $v|\infty$.
Recall from \cite[\S 7.1]{EGH} that $G$ admits a reductive model $\cG$ defined over $\cO_{F^+}[1/N]$, for some $N\in \N$ which is prime to $p$, together with an isomorphism
\begin{equation}
\label{iso integral}
\iota:\,\cG_{/\cO_{F}[1/N]} \stackrel{\iota}{\rightarrow}{\GL_3}_{/\cO_{F}[1/N]}
\end{equation}
which specializes to
$
\iota_w:\,\cG(\cO_{F^+_v})\stackrel{\sim}{\rightarrow}\cG(\cO_{F_w})\stackrel{\iota}{\rightarrow}\GL_3(\cO_{F_w})
$
for all places  $v\in \Sigma_p^+$.

Define $F_p^+:= F^+\otimes_{\Q}\Q_p$ and $\cO_{F^+,p}:=\cO_{F^+}\otimes_\Z\Z_p$. If $W$ is a finite $\cO$-module endowed with a continuous action of $\cG(\cO_{F^+,p})$ and 
$U\leq G(\A_{F^+}^{\infty,p})\times\cG(\cO_{F^+,p})$ is a compact open subgroup, the space of algebraic automorphic forms on $G$ of level $U$ and coefficients in $W$ is the $\cO$-module defined as:
\begin{equation}
S(U,W) \defeq \left\{f:\,G(F^{+})\backslash G(\A^{\infty}_{F^{+}})\rightarrow W\,|\, f(gu)=u_p^{-1}f(g)\,\,\forall\,\,g\in G(\A^{\infty}_{F^{+}}), u\in U\right\}.
\end{equation}

We recall that the level $U$ is said to be \emph{sufficiently small} if for all $t \in G(\bA^{\infty}_{F^+})$, the order of the finite group $t^{-1} G(F^+) t \cap U$
is prime to $p$.
For a finite place $v$ of $F^+$ that splits in $F$, we say that $U$ is \emph{unramified} at $v$ if one has a decomposition $U=\cG(\cO_{F_v^+})U^{v}$ for some compact open subgroup $U^v\leq G(\A^{\infty,v}_{F^+})$. If $w$ is a finite place of $F$ we say, with an abuse, that $w$ is an unramified place for $U$ if its restriction $w|_{F^+}$ is unramified for~$U$.

Let $\cP_U$ be the set of finite places $w$ of $F$ such that $v\defeq w|_{F+}$ is split in $F$, $v\nmid p$ and $U$ is unramified at $v$. For any subset $\cP\subseteq \cP_U$ of finite complement that is closed under complex conjugation, we write $\bT_{\cP}=\cO[T^{(i)}_w,\,\,w\in\cP,\,i\in\{0,1,2,3\}]$ for the universal Hecke algebra on $\cP$.
The space of algebraic automorphic forms $S(U,W)$ is endowed with an action of $\bT_{\cP}$, where $T_w^{(i)}$ acts by the usual double coset operator
$$
\iota_w^{-1}\left[ \GL_3(\cO_{F_w}) \left(\begin{matrix}
      \varpi_{w}\mathrm{Id}_i & 0 \cr 0 & \mathrm{Id}_{3-i} \end{matrix} \right)
\GL_3(\cO_{F_w}) \right].
$$

A \emph{Serre weight} (\emph{for} $\cG$) is an isomorphism class of a smooth, absolutely irreducible representation $V$ of $\cG(\cO_{F^+,p})$.
If $v|p$ is a place of $F^+$, a \emph{Serre weight at $v$} is an isomorphism class of a smooth, absolutely irreducible representation $V_v$ of $\cG(\cO_{F^+_v})$. 
Finally, if $w|p$ is a place of $F$, a \emph{Serre weight at $w$} is an isomorphism class of a smooth, absolutely irreducible representation $V_w$ of $\GL_3(\cO_{F_w})$.
Note that if $V_v$ is a Serre weight at a place $v$ such that $v=ww^c$ in $F$, then the Serre weights at $w^c$ defined by $V_v\circ\iota_w^{-1}\circ c$, $V_{v}\circ \iota_{w^c}^{-1}$ are dual to each other. Any Serre weight $V$ for $\cG(\cO_{F^+,p})$ can be written as $V\cong \underset{v|p}{\bigotimes}V_v$ where $V_v$ are Serre weights at $v$.

\begin{defn}
\label{definition modularity}
Let $\overline{r}:G_F\rightarrow \GL_3(\F)$ be a continuous Galois representation and let $V$ be a Serre weight for $\cG$. We say that $\overline{r}$ is \emph{automorphic of weight $V$} (or that $V$ is a Serre weight of $\overline{r}$) if there exists a compact open subgroup $U$ of $G(\bA^{\infty,p}_F)\times \cG(\cO_{F^+,p})$ which is unramified at places $v|p$, and a cofinite subset $\cP \subset \cP_U$ such that
$$
S(U,V)_{\overline{\mathfrak{m}}}\neq0
$$
where $\overline{\mathfrak{m}}$ is the kernel of the system of Hecke eigenvalues $\overline{\alpha}:\bT_{\cP}\rightarrow \F$ associated to $\overline{r}$, and $\overline{\alpha}$ satisfies the equality
$$
\det\left(1-\overline{r}^{\vee}(\mathrm{Frob}_w)X\right)=\sum_{j=0}^3 (-1)^j(\mathbf{N}_{F/\Q}(w))^{\binom{j}{2}}\overline{\alpha}(T_w^{(j)})X^j
$$
for all $w\in \cP$.
We write $W(\rbar)$ for the set of all Serre weights of $\rbar$.
We say that $\rbar$ is \emph{automorphic} if $W(\rbar)\neq \emptyset$.
\end{defn}

From now until the end of this subsection, we assume that $p$ splits completely in $F$. If $w|p$ is a place of $F$ and $w|_{F^+} = v$, following \cite{GHS} we write $(X_1^{(3)})_v$ for the set of $p$-restricted pairs $\{\un{a}_w,\un{a}_{w^c}\} \subset \Z^3$ such that $a_{i,w}+a_{2-i,w^c}=0$  for all $0\leq i\leq 2$ (recall that $p$-restricted means that $p-1\geq a_{i,w}-a_{i+1,w}\geq 0$ for $i\in\{0,1\}$). To a $p$-restricted element $\un{a}_w \in \Z^3$, we associate an irreducible representation $F_{\un{a}_w}$ of $\GL_3(k_w)$ and, by inflation, $\GL_3(\cO_{F_w})$ as in \cite[\S 3.1]{GHS} (cf. also \cite[(4.1.3)]{EGH}).
To an element $\un{a}_v = \{\un{a}_w,\un{a}_{w^c}\}\in (X_1^{(3)})_v$, we associate an irreducible representation $F_{\un{a}_v}\defeq F_{\un{a}_w}\circ \iota_w$ of $\cG(\cO_{F^+_v})$ that is independent of the choice of place $w$ dividing $v$.
Let $(X_1^{(3)})_0^{\Sigma_p}$ be the set of $\un{a} = (\un{a}_w)_{w|p}$ where $\{\un{a}_w,\un{a}_{w^c}\} \in (X_1^{(3)})_v$.
Given an element $\un{a} \in (X_1^{(3)})^{\Sigma_p}_0$, we associate an irreducible representation $F_{\un{a}}\defeq \underset{v\vert p}{\bigotimes} F_{\underline{a}_v}$ of $\cG(\cO_{F^+,p})$, or in other words a Serre weight for $\cG$.
All Serre weights are of the form $F_{\un{a}}$ for some $a \in (X_1^{(3)})_0^{\Sigma_p}$ and $F_{\un{a}} \cong F_{\un{a}'}$ if and only if $\un{a} \sim \un{a}'$ as in Section 3.1 of \cite{GHS}.  

\begin{defn}\label{definition genericity}  If $w\vert p$ and $\un{a}_w = (a,b,c) \in \Z^3$ is $p$-restricted, let $F(a,b,c) := F_{\un{a}_w}$ be the corresponding weight at $w$.   Then $F(a,b,c)$ is \emph{lower alcove} if $a - c < p-2$ and it is in the \emph{upper alcove} if $a- c > p - 2$.  
We say that $F(a,b,c)$ is \emph{regular} if $0\leq a-b,\, b-c<p$.
Following \cite[Theorem 5.2.5]{EGH}, we say that $F(a,b,c)$ is \emph{reachable} if either
\[
a-c\leq p-4 
\]
or 
\[
a-b , b- c \leq p-6 \text{ and } a - c \geq p+2.
\]
An inspection on Table \ref{table:JH} shows that $\JH(\sigmabar(\tau))$ consists of reachable weights as soon as $\tau$ is $6$-generic. 

Let $v|p$ be a place of $F^+$.
From the definition of $(X_1^{(3)})_v$, if $\un{a}_v = \{\un{a}_w,\un{a}_{w^c}\}\in (X_1^{(3)})_v$, then $F_{\un{a}_w}$ is regular (resp. ~reachable) if and only if $F_{\un{a}_{w^c}}$ is regular (resp. ~reachable.
For $\un{a}_v = \{\un{a}_w,\un{a}_{w^c}\}\in (X_1^{(3)})_v$, we say that $F_{\un{a}_v}$ is regular (resp. ~reachable) if $F_{\un{a}_w}$ is regular (resp. ~reachable).
Finally, if $\un{a} \in (X_1^{(3)})^{\Sigma_p}_0$, we say that $F_{\un{a}}=\underset{v\vert p}{\bigotimes} F_{\underline{a}_v}$ is regular (resp. ~reachable) if $F_{\underline{a}_v}$  is regular (resp. ~reachable)  for all $v\vert p$. 
If $\overline{r}:G_F\rightarrow \GL_3(\F)$ is as in Definition \ref{definition modularity}, we write $W_{reg}(\rbar)$ (resp. $W_{elim}(\rbar)$) to denote the set of regular (resp. reachable) elements of $W(\rhobar)$.
\end{defn} 


We now recall the tame types for $\GL_3(\Qp)$. 
Let $\tau:I_{\Qp}\rightarrow \cO^{\times}$ be a tame inertial type.  We define a $\GL_3(\F_p)$-representation $\sigma(\tau)$, valued in $E$, via  the ``inertial local Langlands correspondence'' (cf. \cite[Theorem 3.7]{CEGGPS}).
For each tame type $\tau$, $\sigma(\tau)$ is given in Table \ref{ILL}. 


If $\sigma(\tau)^{\circ}$ is  a $\GL_3(\F_p)$-stable $\cO$-lattice inside $\sigma(\tau)$, we write $\JH(\sigmabar(\tau))$ to denote the set of Jordan--H\"older constituents of $\overline{\sigma}(\tau)^{\circ}\defeq \sigma(\tau)^{\circ}\otimes_{\cO}\F$. The set $\JH(\sigmabar(\tau))$ does not depend on the choice of the lattice $\sigmabar(\tau)^{\circ}$.
When $\tau$ is weakly generic, the set $\JH(\sigmabar(\tau))$ consists of nine Serre weights which we list in Table \ref{table:JH}.

\subsection{Modularity lifting and Serre weight conjectures} 
\label{sbsec:ModLift}
We are now ready to state our main theorems.  
Fix once and for all an isomorphism $\imath:\Qpbar\stackrel{\sim}{\ra}\bC$.

If $r: G_F\rightarrow \GL_3(E)$ is a continuous Galois representation, we say (following \cite{BLGG}) that $r$ is \emph{automorphic} if there exists a RACSDC representation $\pi$ of $\GL_3(\bA_{F})$ such that $r\otimes_E\Qpbar\cong r_{\imath}(\pi)$ where $r_{\imath}(\pi):G_F\rightarrow \GL_3(\Qpbar)$ is the continuous representation attached to $\pi$ by \cite[Theorem 2.1.2]{BLGG}.
\begin{defn}
\label{TWconditions}
Let $\rbar: G_F\rightarrow \GL_3(\F)$ be a continuous Galois representation. We say that $\rbar$ satisfies the Taylor-Wiles conditions if
\begin{itemize}
	\item \label{adequate} $\rbar$ has image containing $\GL_3(\F_0)$ for some $\F_0 \subset \F$ with $\#\F_0>9$.
	\item $\overline{F}^{\ker \mathrm{ad} \rbar}$ does not contain $F(\zeta_p)$.
\end{itemize}
\end{defn}

From now on, we further assume that
\begin{itemize}
\item the extension $F/F^+$ is unramified at all finite places; and
\item If $\rbar:G_F \rightarrow \GL_3(\F)$ is ramified at a place $w$ of $F$, then $v = w|_{F^+}$ splits as $ww^c$ (split ramification).
\end{itemize}
We make these two assumptions in order to construct a minimal patching functor in Section \ref{sec:WMPM}. These assumptions can be removed by using not necessarily minimal patched modules, but we avoid this for ease of exposition.

\begin{thm} \label{thm:modularity}
Let $r: G_F\rightarrow \GL_3(E)$ be an absolutely irreducible Galois representation and write $\rbar$ for the reduction of a $G_F$-stable $\cO$-lattice in $r$.

Assume that:
\begin{enumerate}
	\item $p$ splits completely in $F^+$;
	\item $r$ is unramified almost everywhere and satisfies $r^c\cong r^{\vee} \epsilon^{-2}$;
	\item for all places $w\in \Sigma_p$, the representation $r|_{G_{F_w}}$ is potentially crystalline, with parallel Hodge type $(2,1,0)$ and with strongly generic tame inertial type $\tau_{\Sigma_p^+}=\otimes_{v\in \Sigma_p^+}\tau_v$ $($cf. Definition \ref{gencond}$)$;
	\item $\rbar$ verifies the Taylor-Wiles conditions $($cf. Definition \ref{TWconditions}$)$ and $\rbar$ has split ramification;
	\item $\rbar\cong \rbar_{\imath}(\pi)$ for a RACSDC representation $\pi$ of $\GL_3(\bA_{F})$ with trivial infinitesimal character such that $\otimes_{v\in \Sigma_p^+}\sigma(\tau_v)$ is a $K$-type for $\otimes_{v\in \Sigma_p^+}\pi_v$.
\end{enumerate}
Then $r$ is automorphic.
\end{thm}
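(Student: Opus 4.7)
The strategy is to apply the Kisin--Taylor--Wiles patching method, with the key new input being the connectedness of the generic fiber of the local potentially crystalline deformation rings $R^{(2,1,0),\tau_v}_{\rhobar|_{G_{F_v}}}$ established in Theorem \ref{thm1Intro} and its base-change variant Theorem \ref{thm:dringBC}. The conjugate self-duality hypothesis (2), together with (1) and (4), allows one to transfer from $\GL_3/F$ to an algebraic automorphic form on a compact-at-infinity unitary group $G/F^+$ of the form described in \S\ref{subsec:glsetup}, via the existence of $\pi$ in hypothesis (5) and standard descent/base-change results. The hypothesis (4) on $\rbar$ provides the Taylor--Wiles datum needed to carry out patching.

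Applying the Kisin--Taylor--Wiles patching construction (in the form developed in \cite{CEGGPS} and adapted in \cite{EGS}) to the space of algebraic automorphic forms of level equal to a principal series type $\sigma(\tau_{\Sigma_p^+}) = \otimes_{v \in \Sigma_p^+}\sigma(\tau_v)$ at $p$ and $U^p$ away from $p$, one obtains a patched module $M_\infty$ of finite type over a patched deformation ring
\[
R_\infty \;\cong\; R^{\mathrm{loc}}\,[\![x_1,\ldots,x_h]\!], \quad R^{\mathrm{loc}} \;=\; \Big(\widehat{\otimes}_{v \in \Sigma_p^+} R^{(2,1,0),\tau_v}_{\rhobar|_{G_{F_v}}}\Big) \,\widehat{\otimes}\, R^{\mathrm{loc},\Sigma}_{\mathrm{away}}.
\]
In this setup the patched module $M_\infty$ satisfies the axioms of a minimal patching functor (Definition \ref{minimalpatching}); in particular, $M_\infty$ is maximal Cohen--Macaulay over $R_\infty$ (by a dimension count using the Taylor--Wiles--Kisin method), and $M_\infty[1/p]$ is locally free on its support, a union of irreducible components of $\Spec R_\infty[1/p]$. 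Hypothesis (5), combined with the matching of $K$-types, ensures $M_\infty \neq 0$.

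The key step is now the support computation. By Theorem \ref{thm1Intro}, for each $v \in \Sigma_p^+$ the ring $R^{(2,1,0),\tau_v}_{\rhobar|_{G_{F_v}}}$ has geometrically connected generic fiber, and this is regular (being smoothly equivalent to a fiber of a Pappas--Zhu local model of Iwahori level, cf.~Theorem \ref{univfh}). The away-from-$p$ local rings are also formally smooth on the generic fiber after imposing the split-ramification hypothesis in (4). Therefore $\Spec R_\infty[1/p]$ is itself connected (indeed, a product of irreducible regular schemes up to formal power series), so $M_\infty[1/p]$, being nonzero and supported on a union of irreducible components, has full support $\Spec R_\infty[1/p]$. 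The representation $r$ defines a closed point of $\Spec R_\infty[1/p]$ by means of its characteristic-zero deformation class at $p$ and at the tamely ramified places in the globalization; this point therefore lies in the support of $M_\infty[1/p]$. Standard arguments (cf.~\cite{CEGGPS}) now yield automorphy of $r$.

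The main obstacle is the verification that the patching setup genuinely produces a nonzero $M_\infty$ with the requisite axioms under our hypotheses, namely the genericity conditions on $\tau_v$ and $\rhobar|_{G_{F_v}}$ needed to apply Theorem \ref{thm1Intro} (so that Corollary \ref{dring} gives the required local geometry, in particular cyclotomic-freeness of $\mathrm{ad}(\rhobar|_{G_{F_v}})$ via Proposition \ref{2impliescyc}). Once this foundational work is in place, the deduction of modularity is essentially formal, because the connectedness of the local deformation rings removes the need for any potential-diagonalizability hypothesis or fine component analysis: the patched module must fill up all of $\Spec R_\infty[1/p]$, so \emph{every} lift of the allowed type is automatic. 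This is exactly why the new local input in Theorem \ref{thm1Intro} is decisive, and it is what enables the theorem to allow arbitrary tame inertial types (not just principal series) and arbitrary residual representations $\rhobar$ (not just semisimple) within the generic range.
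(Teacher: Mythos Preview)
Your proposal is correct and takes essentially the same approach as the paper: the paper's proof is literally the single sentence ``Theorem \ref{thm:modularity} is a consequence of [Theorem \ref{thm:conngenfib}] using standard Kisin-Taylor-Wiles patching methods,'' and you have accurately unpacked what that sentence means, with the connectedness of $\Spec R_\infty(\tau)[1/p]$ forcing the nonzero patched module to have full support.

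A couple of minor points of precision: regularity of $R^{(2,1,0),\tau_v}_{\rhobar}[1/p]$ comes from Kisin's general theorem on potentially semistable deformation rings (\cite[Theorem~3.3.8]{KisinPSS}), not from the local model comparison (which is a mod-$p$ statement); and the relevant result you want to invoke for connectedness is Theorem \ref{thm:conngenfib} rather than Theorem \ref{thm1Intro} directly, since the former is the precise statement covering all shapes (including length $\leq 1$, where the strong genericity hypothesis is used). Also, the phrase ``minimal patching functor'' is slightly misplaced here: for the automorphy lifting argument one patches $M_\infty(\sigma(\tau)^\circ)$ as a single module and needs only its maximal Cohen--Macaulay property and nonvanishing, not the full functorial or multiplicity-one axioms of Definition \ref{minimalpatching} (those are used for the Serre weight results). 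None of these affect the substance of your argument.
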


\begin{rmk}  Note that we do not make any potentially diagonalizability assumption. In fact, we do not know whether or not $r|_{G_{F_w}}$ in the theorem is
potentially diagonalizable. We also do not assume that $\rbar|_{G_{F_w}}$ has any particular form.   
\end{rmk}

\begin{rmk} The first assumption and the strong genericity condition can both be relaxed if one assumes that at each place $w$ the shape is not one of $\{ \alpha, \beta, \gamma, \mathrm{id} \}$.  The difficulty comes from the absence of a general, explicit description of the deformation ring (rather than its special fiber) in those cases, where we need the Serre weight conjectures as input to show Theorem \ref{thm:conngenfib}.
\end{rmk}

Theorem \ref{thm:modularity} is a consequence of the following Theorem using standard Kisin-Taylor-Wiles patching methods. (Note that, in the setup of Theorem \ref{thm:modularity}, the representation $\rbar|_{G_{F_w}}$ satisfies the hypotheses of Theorem \ref{thm:conngenfib} for all $w\in\Sigma_p$.)
\begin{thm} \label{thm:conngenfib} Let $\rhobar:G_{\Qp} \ra \GL_3(\F)$ be a continuous Galois representation and let $\tau$ be a generic tame inertial type such that $R^{(2,1,0), \tau}_{\rhobar}\neq0$.   If $\bf{w}(\rhobar, \tau) \in \{ \alpha, \beta, \gamma, \mathrm{id} \}$, assume furthermore that $\tau$ is strongly generic.  Then the framed potentially crystalline deformation ring $R^{(2,1,0), \tau}_{\rhobar}$ with Hodge-Tate weights $(2,1,0)$ has connected generic fiber. 
\end{thm}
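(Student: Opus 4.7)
The plan is to reduce the statement to an inspection of the explicit presentations of $R^{\tau,\overline{\beta},\nabla}_{\overline{\fM}}$ produced in \S \ref{Explicit deformation rings} together with the auxiliary analysis in \S \ref{sec:badcases}, exploiting the fact that a formally smooth morphism of Noetherian formal schemes induces a bijection on connected components of the generic fiber, and that a completed tensor product (over $\cO$) of local $\cO$-flat rings with connected generic fiber still has connected generic fiber.

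First, since $R^{(2,1,0),\tau}_{\rhobar}\neq 0$, by Corollary \ref{RKisin} (and its base change analogue) there is a unique Kisin module $\overline{\fM}_{\rhobar}\in Y^{\mu,\tau}(\F)$ with $T^*_{dd'}(\overline{\fM}_{\rhobar})\cong \rhobar|_{G_{K_\infty}}$, hence a well-defined shape $\bf{w}(\rhobar,\tau)\in \mathrm{Adm}(2,1,0)^f$. The genericity hypothesis on $\tau$ forces $\rhobar$ to be $2$-generic (by inspecting the inertial weights of the Jordan–H\"older constituents of $\overline{\sigma}(\tau)$ appearing in $W^?(\rhobar)$), and so Proposition \ref{2impliescyc} gives that $\ad(\rhobar)$ is cyclotomic free. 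Consequently Corollary \ref{dring} and Theorem \ref{thm:dringBC} apply whenever every $\widetilde{w}_j$ has length $\geq 2$, and yield a formally smooth relation between $\Spf R^{(2,1,0),\tau}_{\rhobar}$ and a completed tensor product $\widehat{\bigotimes}_{j}R^{\expl,\nabla}_{\overline{\fM},\widetilde{w}_j}$. Formal smoothness preserves connectedness of generic fibers, and a completed tensor product has connected generic fiber once each factor does, so we are reduced to verifying that $\Spec R^{\expl,\nabla}_{\overline{\fM},\widetilde{w}_j}[1/p]$ is connected for each shape of length $\geq 2$.

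Next I would run through Table \ref{table:withmon} case by case. In every row of length $\geq 2$ the ring $R^{\expl,\nabla}_{\overline{\fM},\widetilde{w}_j}$ admits a presentation of one of the following shapes (after renaming variables): a formal power series ring over $\cO$; a single equation $xy=p$ of ``node'' type; or a pair of such nodal equations in disjoint pairs of variables. In all these cases the generic fiber is either smooth and irreducible or else a union of two smooth components glued along a point over $E$, and a direct check shows it is connected (indeed, $\Spec(E[\![x,y]\!]/(xy-p))$ has two irreducible components meeting at the closed point of the generic fiber, hence is connected). This completes the argument when every $\widetilde{w}_j$ has length $\geq 2$.

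The remaining (``bad'') cases are exactly the stratum $\bf{w}(\rhobar,\tau)\in\{\alpha,\beta,\gamma,\mathrm{id}\}$ at some embedding, where Proposition \ref{onequation} fails and the monodromy ideal is no longer principal after inverting $p$; moreover for the shape $\mathrm{id}$ one must additionally cut out the spurious Hodge–Tate weights $(1,1,1)$ locus, as flagged in Remark \ref{rmk111}. Under the strong genericity assumption, \S \ref{sec:badcases} carries out this refined analysis, and again one reads off the generic fiber directly from the resulting explicit equations and checks connectedness. I expect the technical heart of the proof, and the main obstacle, to lie in this last step: unlike the length-$\geq 2$ situation, where the monodromy condition reduces to a single transparent equation per embedding, the length-$\leq 1$ shapes require carefully analyzing the full matrix of monodromy equations together with the Hodge–Tate weight refinement, and proving that the resulting $p$-saturated reduced quotient still has connected generic fiber.
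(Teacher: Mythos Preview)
Your outline for shapes of length $\geq 2$ is essentially the paper's argument: by Corollary~\ref{dring} (and its tame analogue Theorem~\ref{thm:dringBC}) the deformation ring is, up to formal smoothness, the explicit ring $R^{\expl,\nabla}_{\overline{\fM},\widetilde{w}_j}$ from Table~\ref{table:withmon}, and each such ring is an integral domain, so its generic fiber is irreducible. One slip: your parenthetical is wrong as written, since $E[\![x,y]\!]/(xy-p)=0$ (the constant term $-p$ is a unit in $E$). The generic fiber of $\cO[\![x,y]\!]/(xy-p)$ is not $E[\![x,y]\!]/(xy-p)$; rather $\cO[\![x,y]\!]/(xy-p)$ is itself an integral domain (a regular local ring when $\cO$ is unramified), so its localization at $p$ is irreducible, not ``two components meeting at a point''. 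The two-component picture you have in mind is the \emph{special} fiber $\F[\![x,y]\!]/(xy)$.

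For shapes of length $\leq 1$ your expectation is off in an important way. You anticipate that \S\ref{sec:badcases} ``reads off the generic fiber directly from the resulting explicit equations''. It does not: the characteristic-zero monodromy equations there are too complicated to $p$-saturate by hand, and there is no direct computation of the generic fiber. Instead \S\ref{sec:badcases} (i) writes down an explicit candidate $\widetilde{R}$ for the \emph{special} fiber (Proposition~\ref{refinedfactor}, Corollary~\ref{cor:explicit}, Proposition~\ref{expl alpha1}); (ii) checks that $\widetilde{R}$ is reduced and Cohen--Macaulay with Hilbert--Samuel multiplicity equal to the predicted number of Serre weights (Propositions~\ref{id explicit} and~\ref{expl alpha2}); and (iii) uses \emph{global} automorphic input---the Serre weight results of Theorem~\ref{thm:serreweightconj} and Proposition~\ref{Prop-non-ss}, fed through a weak minimal patching functor---to force the surjection from a power series ring over $\widetilde{R}$ onto the actual special fiber to be an isomorphism (Proposition~\ref{prop:explicit2} and Lemma~\ref{lem:ring}). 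Connectedness of the generic fiber then follows a posteriori: since $R^{\mu,\tau}_{\rhobar}/\varpi$ is reduced and $R^{\mu,\tau}_{\rhobar}[1/p]$ is regular, $R^{\mu,\tau}_{\rhobar}$ is normal and local, hence a domain. The dependence on global arguments (and hence on the strong genericity needed to invoke Theorem~\ref{thm:serreweightconj}) is the genuine reason for the extra hypothesis in the bad shapes, not merely the complexity of the local equations.
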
 
\begin{proof}
If $\bf{w}(\rhobar, \tau) \notin \{ \alpha, \beta, \gamma, \mathrm{id} \}$, then this is immediate upon inspection of Table \ref{table:withmon}.
The remaining cases will be proved in \S \ref{sec:badcases}.
\end{proof}

If $\rhobar:G_{\Qp}\rightarrow \GL_3(\F)$ is a continuous semisimple Galois representation an explicit set of weights $W^{?}(\rhobar|_{I_{\Qp}})$ is defined in \cite[Conjecture 6.9]{herzig-duke}. The main conjecture in \emph{loc.~cit.}~ is that $W^{?}(\rhobar|_{I_{\Qp}})$ should give the set of regular modular weights.  More precisely, fix a place $\tld{v}$ above each $v \in \Sigma_p^{+}$, we prove the following generalization of the weight part of Serre's conjecture as conjectured in \cite[Conjecture 6.9]{herzig-duke} (cf. \S \ref{patching}):
\begin{thm} \label{thm:serreweightconj}
Assume that $p$ splits completely in $F$.
Let $\rbar:G_F\rightarrow \GL_3(\F)$ be a continuous  Galois representation, verifying  the Taylor-Wiles conditions. Assume that $\rbar|_{G_{F_{\tld{v}}}}$ is semisimple and $8$-generic $($Definition $\ref{defn n-gen})$ for all $v\in \Sigma_p^+$, that $\rbar$ is automorphic of some reachable Serre weight, and that $\rbar$ has split ramification outside $p$. Then 
$$
\underset{v\in \Sigma_p^+}{\bigotimes}F_{\un{a}_v}\in W_{elim}(\rbar)\Longleftrightarrow F_{\un{a}_v}\circ\iota_{\tld{v}}^{-1}\in W^{?}(\rbar|_{I_{F_{\tld{v}}}}) \quad\text{for all}\quad v\in \Sigma_p^+.
$$
\end{thm}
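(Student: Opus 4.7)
\subsection*{Proof proposal for Theorem \ref{thm:serreweightconj}}

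The inclusion $W_{elim}(\rbar) \subseteq \prod_{v \mid p} W^?(\rbar|_{I_{F_{\tld v}}})$ (weight elimination) is already available in the literature, via \cite{EGH, HLM, MP}, once one knows that $\rbar|_{G_{F_{\tld v}}}$ is sufficiently generic. So the plan is to focus on the converse: given a predicted Serre weight $V = \bigotimes_v V_v$ with $V_v \in W^?(\rbar|_{I_{F_{\tld v}}})$ and reachable, we must show $V \in W(\rbar)$.

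The plan is to use Kisin-Taylor-Wiles patching. Under the Taylor-Wiles hypotheses and split ramification, the global setup of \S\ref{subsec:glsetup} produces a minimal patching functor $M_\infty$ in the sense of Definition \ref{minimalpatching}, a module over a patched deformation ring $R_\infty$ whose specializations to tame types cut out the local potentially crystalline deformation rings $R^{(2,1,0),\tau_v}_{\rbar|_{G_{F_{\tld v}}}}$ at each $v\mid p$. Automorphy of $\rbar$ in some reachable weight translates into $M_\infty(V_0) \neq 0$ for some reachable $V_0$, and modularity of a weight $V$ is equivalent to $M_\infty(V) \neq 0$. Given a predicted weight $V = \bigotimes_v V_v$, first I would choose for each place $v \mid p$ a tame inertial type $\tau_v$ such that $V_v \in \JH(\sigmabar(\tau_v))$; this is possible because by the $8$-genericity hypothesis and Table \ref{table:JH}, each Serre weight in $W^?(\rbar|_{I_{F_{\tld v}}})$ lies in $\JH(\sigmabar(\tau_v))$ for some strongly generic $\tau_v$ compatible with the shape $\bf{w}(\rbar|_{G_{F_{\tld v}}}, \tau_v)$.

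The next step is to show that $M_\infty(\sigma(\tau)^\circ) \neq 0$, where $\sigma(\tau) = \bigotimes_v \sigma(\tau_v)$ and $\sigma(\tau)^\circ$ is any $\cO$-lattice. By the axioms of a minimal patching functor, $M_\infty(\sigma(\tau)^\circ)$ is a maximal Cohen-Macaulay module on $\Spec R^{(2,1,0),\tau}_\infty$, where $R^{(2,1,0),\tau}_\infty$ denotes the patched deformation ring with tame type $\tau$ and Hodge-Tate weights $(2,1,0)$ at all $p$-adic places. The connectedness of the generic fiber of each local deformation ring $R^{(2,1,0),\tau_v}_{\rbar|_{G_{F_{\tld v}}}}$ from Theorem \ref{thm:conngenfib}, together with the $\cO$-flatness and equidimensionality supplied by Theorem \ref{thm1Intro}, forces $M_\infty(\sigma(\tau)^\circ)[1/p]$ to have either zero or full support. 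To rule out the zero case, I would propagate nonvanishing from the known reachable weight $V_0$ by choosing a chain of tame types $\tau^{(0)}, \tau^{(1)}, \ldots, \tau^{(N)} = \tau$, each strongly generic, such that consecutive types share a Jordan-Hölder factor in $W(\rbar)$ lying in both $\JH(\sigmabar(\tau^{(i)}))$ and $\JH(\sigmabar(\tau^{(i+1)}))$.

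The final step is to extract modularity of $V$ itself from $M_\infty(\sigma(\tau)^\circ) \neq 0$. Reducing $\sigma(\tau)^\circ$ modulo $\varpi$ gives a filtration whose graded pieces are the weights in $\JH(\sigmabar(\tau))$, and thus $\overline{M}_\infty(\sigma(\tau)^\circ)$ has support equal to the union of supports of $M_\infty(W)$ for $W \in \JH(\sigmabar(\tau))$. Using the explicit presentation of the special fiber of $R^{(2,1,0),\tau_v}_{\rbar|_{G_{F_{\tld v}}}}$ in Table \ref{table:withmon} (together with the \S\ref{sec:badcases} analysis in short-length shape cases), combined with the geometric Breuil-Mézard conjecture as verified in Theorem \ref{thm1Intro}, the cycle of $\Spec R^{(2,1,0),\tau}_\infty \otimes \F$ decomposes as a sum over $\JH(\sigmabar(\tau))$ with the expected multiplicities. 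A case-by-case intersection argument — varying $\tau$ over the tame types whose Jordan-Hölder set contains $V$ and whose deformation rings are accessible — then pins down the support of $M_\infty(V)$. The main obstacle is the handling of the \emph{shadow weights}, for which no direct modularity-lifting argument is available because they do not appear as reductions of ``obvious'' crystalline lifts; these require the precise explicit special fiber computation and the combinatorial intersection argument across several cleverly chosen types, and this is where the genericity hypothesis and the shape classification of \S\ref{subsec:classif} genuinely enter the proof.
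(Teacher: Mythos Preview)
Your outline has a genuine circularity problem. You invoke Theorem \ref{thm:conngenfib} in full and the \S\ref{sec:badcases} analysis to handle short-length shapes, but the proofs in \S\ref{sec:badcases} (see Proposition \ref{prop:explicit2} and its analogue for shape $\alpha$) use Theorem \ref{thm:serreweightconj} as an input. The paper avoids this by arranging that every type $\tau_v$ appearing in the proof has $\bf{w}(\rhobar_v,\tau_v)$ of length $\geq 2$; this is exactly what the combinatorial Proposition \ref{prop:intersec} provides, and it is why the paper explicitly remarks that the proof of Theorem \ref{thm:serreweightconj} makes no use of \S\ref{sec:badcases}.

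Separately, your ``chain of types'' propagation and cycle-decomposition extraction are too vague and do not match the actual mechanism. The paper's argument is a numerical induction on Hilbert--Samuel multiplicities. One first reduces (nontrivially, via Proposition \ref{prop:intersec}(3),(4) and a minimality-of-$|S_p^+|$ multiplicity count) to the case where $\rbar$ is modular of an all-lower-alcove weight, whence the obvious weights are modular by \cite{BLGG}. Then for a target $\otimes_v V_v$ one inducts on the set $S_p^+$ of places where $V_v$ is a shadow: choosing $\tau_v$ as in Proposition \ref{prop:intersec}(1) for $v\notin S_p^+$ (length $4$, formally smooth) and as in Proposition \ref{prop:intersec}(2) for $v\in S_p^+$ (length $3$ shadow, multiplicity $2$), connectedness of the generic fiber and Proposition \ref{upperbound} give $e(M_\infty(\sigmabar(\tau)))=e(\overline{R}_\infty(\tau))=2^{|S_p^+|}$; the inductive hypothesis and weight elimination account for $2^{|S_p^+|}-1$ of this with multiplicity one each, forcing $e(M_\infty(\otimes_v V_v))=1$. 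Your sketch misses both the reduction step and this precise counting; getting these right is where Proposition \ref{prop:intersec} and the explicit multiplicities in Table \ref{table:withmon} actually enter.
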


\begin{rmk} Theorem \ref{thm:serreweightconj} is stated only for $\rbar$ which are semisimple above $p$ because those are the only representations for which there is an explicit conjecture.  Our computations together with work of \cite{HLM}, \cite{MP} suggest a set $W^{?}(\rbar|_{G_{F_{\tld{v}}}})$ for non-semisimple $\rbar|_{G_{F_{\tld{v}}}}$ for which the analogue of Theorem \ref{thm:serreweightconj} should hold. One example is worked out in Proposition \ref{Prop-non-ss}.  
A complete analysis for a set $W^{?}(\rbar|_{G_{F_{\tld{v}}}})$ when $\rbar|_{G_{F_{\tld{v}}}}$ is not semisimple is carried out in \cite{LLLM3}.
\end{rmk}

\begin{rmk}  \label{rem:WE}
The restriction to reachable weights in the statement of Theorem \ref{thm:serreweightconj} is due to the current weight elimination results. 
For the niveau 1 and 2 case the works \cite{HLM}, \cite{MP}, \cite{LMP} provide weight elimination for all weights (not just reachable ones). Specifically, \cite[\S 2.5]{HLM} and \cite{MP} deals with the niveau 1 case (when $(\rbar|_{G_{F_{\tld{v}}}})^{ss}$ is $3$-generic) and \cite[\S 3]{LMP} with the niveau 2 case (when $(\rbar|_{G_{F_{\tld{v}}}})^{ss}$ is 4-generic).
For the niveau 3 case the weight elimination for \emph{reachable} weights has been established in \cite[Theorem 5.2.5]{EGH} (this is the reason for our definition and restriction to reachable weights). This has been improved by unpublished work from John Enns (private communication) to eliminate also non-reachable weights when $\rbar|_{G_{F_{\tld{v}}}}$ is irreducible at places above $p$ and $9$-generic. In particular, granting the work of John Enns, Theorem \ref{thm:serreweightconj} would hold for $W(\rbar)$ instead of $W_{elim}(\rbar)$ and assuming only that $\rbar$ is automorphic.

On the other hand, our arguments and the elimination results of \cite{LLL} suffice to show Theorem \ref{thm:serreweightconj} when $W_{elim}(\rbar)$ is replaced by $W_{reg}(\rbar)$ (which is the analogue of Conjecture 6.9 in \cite{herzig-duke} in our setting).
If we make the stronger assumption that $\rhobar$ is $9$-generic, we can even replace $W_{reg}(\rbar)$ by $W(\rbar)$.
\end{rmk}

The rest of \S 7 is devoted to the proof of Theorem \ref{thm:serreweightconj}, which uses the Breuil-M\'ezard philosophy introduced in \cite{gee-kisin}.  Namely, we use the descriptions of the special fibers of deformation rings to determine the Hilbert-Samuel multiplicities of minimal patched modules. The argument only requires Theorem \ref{thm:conngenfib} in the case where $\bf{w}(\rhobar, \tau)$ has length at least 2, and thus makes no use of the results in \S 8. 

Assuming first that $\rbar$ is modular of a lower alcove weight, we use an inductive argument involving carefully chosen tame types to prove modularity of the shadow weights (Proposition \ref{prop:intersec}).
A slightly more intricate argument shows that if $\rbar$ is modular, then it is modular of a lower alcove weight.

\subsection{Weak minimal patched modules}
\label{sec:WMPM}

As before, let $F/F^+$ be a CM extension.
With an eye towards future applications, in this subsection, we assume that every place $v|p$ of $F^+$ splits in $F$, but nothing more about the splitting behavior at $p$.
Let $\rbar: G_F \rightarrow \GL_3(\F)$ be a Galois representation.
For each place $v|p$ of $F^+$, fix a place $\widetilde{v}$ of $F$ such that $\widetilde{v}|_{F^+} = v$.
Let $R_{\widetilde{v}}^\square$ denote the unrestricted universal $\cO$-framed deformation ring of $\rbar|_{G_{F_{\widetilde{v}}}}$.
Fix a natural number $h$ and let 
\[R_\infty = \Big( \widehat{\underset{v\in \Sigma_p^+}{\bigotimes}} R_{\widetilde{v}}^\square \Big)[\![x_1,x_2,\ldots, x_h]\!] \textrm{ and } X_\infty = \Spf R_\infty.\]
If $\tau_{\widetilde{v}}$ is an inertial type for $G_{F_{\widetilde{v}}}$, then let $R_{\widetilde{v}}^{\square,\tau_{\widetilde{v}}}$ be the universal $\cO$-framed potentially crystalline deformation ring of $\rbar|_{G_{F_{\widetilde{v}}}}$ of inertial type $\tau_{\widetilde{v}}$ (and $p$-adic Hodge type $(2,1,0)$).
If $\tau = \underset{v\in \Sigma_p^+}{\bigotimes} \tau_{\widetilde{v}}$, then let
\[R_\infty(\tau) = \Big( \widehat{\underset{v\in \Sigma_p}{\bigotimes}} R_{\widetilde{v}}^{\square,\tau_{\widetilde{v}}} \Big) [\![x_1,x_2,\ldots, x_h]\!] \textrm{ and } X_\infty(\tau) = \Spf R_\infty(\tau).\]
Let $d+1$ be the dimension of $R_\infty(\tau)$ (the dimension is independent of $\tau$ by Theorem 3.3.4 of \cite{KisinPSS}). Note that $X_\infty(\tau)[1/p]$ is regular by \cite[Theorem 3.3.8]{KisinPSS}.
We denote by $\overline{R}_{\widetilde{v}}^\square$, $\overline{R}_\infty$, etc. the reduction of these objects modulo $\varpi$.
The following definition is adapted from Definition 4.1.1 of \cite{GHS}.

\begin{defn}\label{minimalpatching}
A \emph{weak minimal patching functor for $\rbar$} is defined to be a covariant exact functor 
$M_{\infty}:\Rep_{K}(\cO)\ra \Coh(X_{\infty})$ satisfying the following axioms:
\begin{enumerate}
	\item Let $\tau\defeq \underset{v\in \Sigma^+_p}{\bigotimes}\tau_{\tld{v}}$, where for all $v\in \Sigma^+_p$, $\tau_{\tld{v}}$ is an inertial type,
and let $\sigma(\tau)\defeq \underset{v\in \Sigma^+_p}{\bigotimes}\sigma(\tau_{\tld{v}})\circ\iota_{\tld{v}}$ be the associated $K$-type as in \cite[Theorem 3.7]{CEGGPS}.
If $\sigma(\tau)^{\circ}$ an $\cO$-lattice in it, then $M_{\infty}(\sigma(\tau)^{\circ})$ is $p$-torsion free and is maximally Cohen-Macaulay over $R_\infty(\tau)$; \label{support}
	\item if $V=\underset{v\in \Sigma^+_p}{\bigotimes}V_{v}$, where for all $v\in \Sigma^+_p$ the $V_v$ are irreducible $\cG(k_v)$-representations over $\F$ (i.e.~$V$ is a Serre weight for $\cG$), the module $M_{\infty}(V)$ has nonempty support if and only if $\rbar$ is automorphic of weight $V$; furthermore if $M_{\infty}(V)\neq 0$ then its support is equidimensional of dimension $d$; and 
	\label{dimd}
	\item the sheaf $M_\infty(\sigma(\tau)^\circ)[1/p]$ over $X_\infty(\tau)[1/p]$ (which is locally free, being maximal Cohen-Macaulay over a regular scheme) has rank at most one on each connected component. \label{minimal}
\end{enumerate}
\end{defn}

\begin{rmk}
The adjective ``weak" corresponds to the fact that $M_\infty(\sigma(\tau)^\circ)$ is not assumed to have full support on $X_{\infty}(\tau)$ for all inertial types $\tau$ in contrast to Definition 4.1.1 of \cite{GHS}.
\end{rmk}

\begin{rmk} \label{minimalcondition}
The adjective ``minimal" corresponds to the multiplicity one property in condition (\ref{minimal}).
Our results on automorphy of global Serre weights could be proved without requiring minimality using the geometric perspective of \cite{EG}, but we have avoided this for ease of exposition.
\end{rmk}

Given a Noetherian ring $R$ and an $R$-module $M$, we denote the Hilbert-Samuel multiplicity of $M$ by $e(M,R)$.
If $R = R_\infty$, let $e(M) = e(M,R_\infty)$.
The following proposition is the key to relating automorphy of global Serre weights to multiplicities of deformation rings.

\begin{prop} \label{upperbound}
If $M_\infty$ is a weak minimal patching functor, then $e(M_\infty(\sigma(\taubar))) \leq e(\overline{R}_\infty(\tau))$, and we have equality if and only if $M_\infty(\sigma(\tau)^\circ)$ has full support on $X_{\infty}(\tau)$ $($for any choice of lattice $\sigma(\tau)^\circ)$.
\end{prop}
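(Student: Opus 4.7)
The plan is to reduce both multiplicities to a comparison on the generic fiber via a two-step reduction: first strip off $\varpi$ using that it is a nonzerodivisor, then apply the associativity formula for Hilbert--Samuel multiplicities.

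First, since $M_\infty$ is exact, tensoring the short exact sequence $0\to\sigma(\tau)^\circ\xrightarrow{\varpi}\sigma(\tau)^\circ\to\sigma(\taubar)\to 0$ gives $M_\infty(\sigma(\taubar))=M_\infty(\sigma(\tau)^\circ)/\varpi$, and by axiom (\ref{support}), $\varpi$ is a nonzerodivisor on $M_\infty(\sigma(\tau)^\circ)$. By \cite[Theorem 3.3.8]{KisinPSS}, $R_\infty(\tau)$ is $\cO$-flat (so $\varpi$ is a nonzerodivisor on $R_\infty(\tau)$) and equidimensional of dimension $d+1$. Using the standard fact that $e_{\mathfrak{m}}(N/xN)=e_{\mathfrak{m}}(N)$ for any finitely generated $R$-module $N$ and any nonzerodivisor $x\in\mathfrak{m}$ on $N$ (cf.~Matsumura, \emph{Commutative Ring Theory}, \S14), applied with $R=R_\infty$, $\mathfrak{m}=\mathfrak{m}_{R_\infty}$, and $x=\varpi$, we obtain
\begin{equation*}
e(M_\infty(\sigma(\taubar)))=e(M_\infty(\sigma(\tau)^\circ),R_\infty)\qquad\text{and}\qquad e(\overline{R}_\infty(\tau))=e(R_\infty(\tau),R_\infty).
\end{equation*}

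Next, apply the associativity formula for Hilbert--Samuel multiplicities. For a finitely generated $R_\infty$-module $N$ of dimension $e$,
\[
e(N,R_\infty)=\sum_{\mathfrak{p}}\ell_{(R_\infty)_\mathfrak{p}}(N_\mathfrak{p})\cdot e(R_\infty/\mathfrak{p},R_\infty),
\]
where $\mathfrak{p}$ runs over the primes of $R_\infty$ in $\mathrm{Supp}(N)$ with $\dim R_\infty/\mathfrak{p}=e$. Since both $M_\infty(\sigma(\tau)^\circ)$ and $R_\infty(\tau)$ have dimension $d+1$ and $M_\infty(\sigma(\tau)^\circ)$ is MCM (hence has support equal to a union of irreducible components of $R_\infty(\tau)$), in both cases the sum ranges over the (pullbacks to $R_\infty$ of the) minimal primes $\mathfrak{p}$ of $R_\infty(\tau)$. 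For each such $\mathfrak{p}$, the $\cO$-flatness of $R_\infty(\tau)$ forces $\varpi\notin\mathfrak{p}$, so that $(R_\infty(\tau))_\mathfrak{p}=(R_\infty(\tau)[1/p])_\mathfrak{p}$ is a field by the regularity of the generic fiber; in particular $\ell((R_\infty(\tau))_\mathfrak{p})=1$ and
\[
\ell\bigl((M_\infty(\sigma(\tau)^\circ))_\mathfrak{p}\bigr)=\mathrm{rank}_\mathfrak{p}\bigl(M_\infty(\sigma(\tau)^\circ)[1/p]\bigr),
\]
which is at most $1$ by axiom (\ref{minimal}).

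Combining these observations,
\[
e(M_\infty(\sigma(\taubar)))=\sum_\mathfrak{p} r_\mathfrak{p}\cdot e(R_\infty(\tau)/\mathfrak{p},R_\infty)\ \leq\ \sum_\mathfrak{p} e(R_\infty(\tau)/\mathfrak{p},R_\infty)=e(\overline{R}_\infty(\tau)),
\]
with $r_\mathfrak{p}\in\{0,1\}$, and equality if and only if $r_\mathfrak{p}=1$ for every minimal prime $\mathfrak{p}$ of $R_\infty(\tau)$. Since $M_\infty(\sigma(\tau)^\circ)[1/p]$ has constant rank on each connected component of $X_\infty(\tau)[1/p]$, and each such component meets every irreducible component of $X_\infty(\tau)$ whose generic point lies on it, this condition is precisely that $M_\infty(\sigma(\tau)^\circ)$ have full support on $X_\infty(\tau)$; in particular it is independent of the choice of lattice $\sigma(\tau)^\circ$.

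The main conceptual point here (the only step beyond formal commutative algebra) is the input from axiom (\ref{minimal}): the rank of $M_\infty(\sigma(\tau)^\circ)$ at each generic point is at most one, which is what allows us to replace a length computation by a boolean (support/no support) question.
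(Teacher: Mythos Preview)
Your proof is correct and follows essentially the same strategy as the paper's. The only difference is packaging: the paper introduces the faithful quotient $\T_\infty(\tau)$ of $R_\infty(\tau)$ acting on $M_\infty(\sigma(\tau)^\circ)$ and invokes \cite[Corollary~1.3.5]{kisin-fontaine-mazur} to obtain $e(M_\infty(\sigma(\taubar)))=e(\overline{\T}_\infty(\tau))$, then compares $\T_\infty(\tau)$ with $R_\infty(\tau)$; you instead unpack that citation directly via Matsumura's Theorem~14.11 (to strip off $\varpi$) and the associativity formula for multiplicities, reading off the rank $\le 1$ bound at each minimal prime from axiom~(\ref{minimal}). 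Your route is a touch more self-contained, while the paper's is shorter on the page; substantively they are the same argument.
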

\begin{proof}
Let $\T_\infty(\tau)$ be the quotient of $R_\infty(\tau)$ which acts faithfully on $M_\infty(\sigma(\tau)^\circ)$.
Then 
\[e(M_\infty(\sigma(\taubar)^\circ)) = e(\overline{\T}_\infty(\tau)) \leq e(\overline{R}_\infty(\tau))\]
where the equality follows from Definition \ref{minimalpatching}(3) and Corollary 1.3.5 of \cite{kisin-fontaine-mazur} and the inequality follows from the fact that $\dim \T_\infty(\tau) = \dim R_\infty(\tau)$ by Definition \ref{minimalpatching}(1).
The inequality is an equality if and only if $\T_\infty(\tau) = R_\infty(\tau)$ since $R_\infty(\tau)$ is reduced and equidimensional.
\end{proof}


We now construct a weak minimal patching functor for $\rbar$ under some hypotheses using the Taylor-Wiles method.
We write $\Sigma_0$ to denote the finite primes of $F$ where $\rbar$ ramifies and define $\Sigma_0^+\defeq\{w|_{F^+},\ w\in \Sigma_0\}$ and $\Sigma^+\defeq \Sigma_p^+\cup\Sigma_0^+$. 
Assume for the rest of this section that $\rbar$ satisfies the Taylor-Wiles conditions of Definition \ref{TWconditions}.
Note that the first condition, which is stronger than the usual condition of adequacy, allows us (see Section 2.3 of \cite{CEGGPS}) to choose a place $v_1\notin \Sigma^+$ of $F^+$ such that
\begin{itemize}
 \item $v_1$ splits in $F$ as $v_1 = \tld{v}_1 \tld{v}_1^c$;
 \item $v_1$ does not split completely in $F(\zeta_p)$; and
 \item $\overline{\rho}(\Frob_{\tld{v}_1})$ has distinct $\F$-rational eigenvalues, no two of which have ratio $(\mathbf{N}_{F^+/\Q} v_1)^{\pm 1}$.
\end{itemize}
In order to satisfy the minimality condition, recall (cf. \S \ref{sbsec:ModLift}) that we have made the following two further assumptions.
\begin{itemize}
\item The extension $F/F^+$ is unramified at all finite places.
\item If $\rbar:G_F \rightarrow \GL_3(\F)$ is ramified at a place $w$ of $F$, then $v = w|_{F^+}$ splits as $ww^c$.
\end{itemize}

As mentioned in Remark \ref{minimalcondition}, these two assumptions can be removed by working with weak patched modules which are not necessarily minimal.
For $v \in \Sigma_0^+$, let $\tau_{\tld{v}}$ be the type which is minimally ramified with respect to $\rbar|_{G_{F_w}}$ ($\tau_{\tld{v}}$ is the restriction to inertia of the Weil-Deligne representation attached to a Galois representation which is minimal in the sense of Definition 2.4.14 of \cite{CHT}).
Let $R_{\tld{v}}^{\square,\tau_{\tld{v}}}$ be the corresponding universal $\cO$-framed deformation ring of $\rbar|_{G_{F_{\tld{v}}}}$.
Let $R_{\tld{v}_1}^\square$ be the unrestricted universal $\cO$-framed deformation ring of $\rbar|_{G_{F_{\tld{v}_1}}}$.
Let \[R^{\textrm{loc}} = \widehat{\underset{v\in \Sigma_p^+}{\bigotimes}} R^\square_{\tld{v}} \widehat{\otimes}
\widehat{\underset{v\in \Sigma_0^+}{\bigotimes}} R^{\square,\tau_{\tld{v}}}_{\tld{v}} \widehat{\otimes} R^\square_{\tld{v}_1}.\]
Choose an integer $q \geq 3[F^+:\QQ]$ as in Section 2.5 of \cite{CEGGPS}, and let \[R_\infty = R^{\textrm{loc}}[[x_1,\ldots ,x_{q-3[F^+:\QQ]}]].\]
By \cite[Corollary 2.4.21]{CHT}, $R_{\tld{v}}^{\square,\tau}$ is formally smooth over $\cO$ for $v\in \Sigma_0^+$.
By the choice of $v_1$, $R^\square_{\widetilde{v}_1}$ is formally smooth over $\cO$ by \cite[Proposition 2.5]{CEGGPS}.
We conclude that $R_\infty$ is formally smooth over $\widehat{\underset{v\in \Sigma_p^+}{\bigotimes}} R^\square_{\tld{v}}$ and hence there is an isomorphism $R_\infty \cong \widehat{\underset{v\in \Sigma_p^+}{\bigotimes}} R^\square_{\tld{v}}[\![x_1,x_2,\ldots,x_h]\!]$ for some natural number $h$.

One can construct an $R_\infty[\![G(\cO_{F^+,p})]\!]$-module $M_\infty$ as in \cite[Section 4.2]{le} ($p$ is assumed to split in $F$ in \cite{le}, however the construction, results, and proofs of Section 4 extend verbatim).
Then define a covariant functor $M_{\infty}:\Rep_{\cG(\cO_{F^+,p})}(\cO)\ra \Coh(X_{\infty})$ by $M_\infty(W) = \Hom_{\cG(\cO_{F^+,p})}(W,M_\infty^\vee)^\vee$ where $\cdot^\vee$ denotes the Pontriagin dual.

\begin{prop}
\label{prop:WMPF}
If $E$ is sufficiently large, $M_\infty$ is a weak minimal patching functor.
\begin{proof}
This proof is adapted from various proofs in \cite{CEGGPS} and \cite{le}.
While the contexts differ slightly, the proofs apply verbatim.
Note that the definition of $M_\infty(\sigma(\tau)^\circ)$ agrees with the definition given after 4.13 of \cite{CEGGPS} by \cite[Remark 4.15]{CEGGPS}.
This definition guarantees that $M_\infty(\sigma(\tau)^\circ)$ is $p$-torsion free.
Exactness of $M_\infty$ follows from \cite[Proposition 2.10]{CEGGPS} (the choice of place $v_1$ guarantees projectivity).
(\ref{support}) and (\ref{minimal}) are proved similarly to \cite[Lemma 4.18(1)]{CEGGPS}.

If $V$ is a Serre weight for $\cG$, by Theorem 5.2.1(iii) of \cite{HLM} and Nakayama's lemma (using Theorem 5.2.1(i) of \cite{HLM}), $M_\infty(V)$ is nonzero if and only if $\rbar$ is automorphic of weight $V$.
By Theorem 4.1.4(2) of \cite{le}, $M_\infty(V)$ is maximal Cohen-Macaulay of depth $d$, which shows (\ref{dimd}).
\end{proof}
\end{prop}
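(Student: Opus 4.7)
The plan is to construct $M_\infty$ by the standard Taylor-Wiles-Kisin patching procedure adapted to unitary groups, and then verify each of the three axioms of Definition \ref{minimalpatching} by invoking results already available in the literature (principally \cite{CEGGPS}, \cite{HLM}, \cite{le}). First, I would take auxiliary data as already set up in the paragraph preceding the statement: an auxiliary Taylor-Wiles place $\tld{v}_1$ with the three properties listed (which exists by the Taylor-Wiles conditions, in particular the strong adequacy hypothesis $\#\F_0>9$), together with minimally ramified deformation conditions at each $v\in\Sigma_0^+$. The two global hypotheses (unramifiedness of $F/F^+$ and split ramification of $\rbar$) are exactly what is needed so that, after choosing compatible level subgroups $U_v$ at ramified places, the corresponding spaces of algebraic automorphic forms give minimal Hecke modules. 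One then forms the patched module $M_\infty$ as in \cite[\S4.2]{le} (whose construction works verbatim without the assumption that $p$ splits in $F$), and defines the functor $W\mapsto\Hom_{\cG(\cO_{F^+,p})}(W,M_\infty^\vee)^\vee$.

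Next I would verify the structural properties. Exactness of $M_\infty$ is immediate from the projectivity of $M_\infty^\vee$ as an $\cO[\![\cG(\cO_{F^+,p})]\!]$-module, which in turn follows from \cite[Proposition 2.10]{CEGGPS} once one knows that the level $U$ away from $p$ can be taken sufficiently small, as guaranteed by the choice of the auxiliary place $\tld{v}_1$. For axiom (1), the key identification is that $M_\infty(\sigma(\tau)^\circ)$ coincides with the patched module at level $\sigma(\tau)$ in the sense discussed after \cite[(4.13)]{CEGGPS}; $\varpi$-torsion freeness and the maximal Cohen-Macaulay property over $R_\infty(\tau)$ then follow from \cite[Remark 4.15, Lemma 4.18(1)]{CEGGPS}, using that the local deformation rings $R_{\widetilde{v}}^{\square,\tau_{\widetilde{v}}}$ at $v\in\Sigma_p^+$ are $\cO$-flat and equidimensional.

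For axiom (2), automorphy of a Serre weight $V$ is by Definition \ref{definition modularity} equivalent to nonvanishing of a localized space of mod $p$ automorphic forms; by \cite[Theorem 5.2.1(i)]{HLM} and Nakayama's lemma applied to the patching tower, this is equivalent to $M_\infty(V)\neq 0$. The equidimensionality of dimension $d$ of the support of $M_\infty(V)$ when it is nonzero, and in fact its maximal Cohen-Macaulayness of depth $d$ over its scheme-theoretic support, is \cite[Theorem 4.1.4(2)]{le}, whose proof only requires the setup of Taylor-Wiles patching and goes through verbatim in our situation.

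The main obstacle, and the only genuinely ``minimal" input, is axiom (3). The point here is that one must show the patched module has generic rank one on each component of $X_\infty(\tau)[1/p]$, which fails in general without minimality of the deformation conditions away from $p$. This is precisely where the unramifiedness of $F/F^+$ and the split ramification assumption are used: they allow one to choose $U$ away from $p$ such that the local factors of the relevant automorphic representations are determined up to isomorphism by the residual representation, so that the classical multiplicity-one theorem for $\mathrm{U}(3)$ gives rank one at each classical point. This is carried out in \cite[Lemma 4.18(1)]{CEGGPS} (as in the analogous step there), and by density of classical points on $X_\infty(\tau)[1/p]$ the generic rank on each component is at most one. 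Collecting these verifications completes the proof.
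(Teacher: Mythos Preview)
Your proposal is correct and follows essentially the same approach as the paper's proof: both invoke \cite[Proposition 2.10]{CEGGPS} for exactness, \cite[Remark 4.15 and Lemma 4.18(1)]{CEGGPS} for axioms (\ref{support}) and (\ref{minimal}), and \cite[Theorem 5.2.1]{HLM} together with \cite[Theorem 4.1.4(2)]{le} for axiom (\ref{dimd}). Your write-up is somewhat more expansive (in particular you spell out why minimality and multiplicity one enter axiom (\ref{minimal})), and the paper also invokes part (iii) of \cite[Theorem 5.2.1]{HLM} alongside part (i), but these are minor differences of emphasis rather than of strategy.
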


\subsection{Shapes and Serre weights}
\label{patching}

We now prove Theorem \ref{thm:serreweightconj}.  The key ingredient in the proof of Theorem \ref{thm:serreweightconj} is the description of the deformation rings in Table \ref{table:withmon} and combinatorics of the sets $W^{?}(\rhobar)$ and $\JH(\sigmabar(\tau))$ for generic tame types $\tau$.  We first recall the notion of shadow weight. For $\rhobar:G_{\Qp} \ra \GL_3(\overline{\F})$ semisimple and weakly generic, $W^{?}(\rhobar)$ contains a set of six obvious weights denoted $W_{\mathrm{obv}}(\rhobar)$ (three upper alcove weights and three lower alcove weights, cf. \cite[Definition 7.1.4 ]{GHS}) and an additional three weights called shadow weights, as summarized in Table \ref{table:SW}.  Each lower alcove weight $F(a,b,c) \in W_{\mathrm{obv}}(\rhobar)$ has a corresponding \emph{shadow weight} $F(p-2 + c, b, a- p+2) \in W^{?}(\rhobar)$ (cf. \cite[Definition 7.2.3]{GHS}). 

The following combinatorial result matches shapes with predicted Serre weights.  
The terms shadow and non-shadow shapes are defined in Table \ref{Table admissible elements}.

\begin{prop}
\label{prop:intersec} Let $\rhobar:G_{\Qp} \ra \GL_3(\F)$ be semisimple and $n$-generic with $n\geq 5$. 
\begin{enumerate}
\item If $F(a, b, c) \in W_{\mathrm{obv}}(\rhobar|_{I_{\Qp}})$, then there is a $(n-2)$-generic tame type $\tau$ such that $\JH(\sigmabar(\tau))\cap W^{?}(\rhobar|_{I_{\Qp}})= \{F(a, b, c)\}$ and  $\mathbf{w}(\rhobar, \tau)$ has length 4.

\item If $F(p-2 +c, b, a - p +2)\in W^{?}(\rhobar|_{I_{\Qp}})$ is a shadow weight, then there is a $(n-2)$-generic tame type $\tau$ such that $\JH(\sigmabar(\tau))\cap W^{?}(\rhobar|_{I_{\Qp}})= \{F(a, b, c), F(p-2 +c, b, a - p +2)\} $ and $\mathbf{w}(\rhobar, \tau)$ is a length 3 \emph{shadow} shape. 

\item If $F(a,b,c) \in W^{?}(\rhobar|_{I_{\Qp}})$ is an obvious upper $($resp. lower$)$ alcove weight, then there is a $(n-2)$-generic tame type $\tau$ such that $\JH(\sigmabar(\tau))\cap W^{?}(\rhobar|_{I_{\Qp}})$ contains $F(a,b,c)$ plus one other obvious lower $($resp. upper$)$ alcove weight and $\mathbf{w}(\rhobar, \tau)$ is a length 3 non-shadow shape.    

\item If $F(p-2 +c, b, a - p +2)\in W^{?}(\rhobar|_{I_{\Qp}})$ is a shadow weight, then there is a $(n-2)$-generic tame type $\tau$ such that $\JH(\sigmabar(\tau))\cap W^{?}(\rhobar|_{I_{\Qp}})$ contains $F(p-2 +c, b, a - p +2)$ plus three obvious weights of $ W^{?}(\rhobar|_{I_{\Qp}})$ and $\mathbf{w}(\rhobar, \tau)$ has length 2. 
\end{enumerate}
\end{prop}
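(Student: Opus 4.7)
The proof proceeds by explicit construction and case analysis, combining the classification of $(2,1,0)$-admissible shapes (Table \ref{Table admissible elements}), the explicit description of $\JH(\sigmabar(\tau))$ for a generic tame type (Table \ref{table:JH}), and the description of $W^?(\rhobar|_{I_{\Qp}})$ in terms of obvious and shadow weights (Table \ref{table:SW}). The strategy is: given a semisimple $\rhobar|_{I_{\Qp}} \cong \omega_f^{c_1}\oplus\omega_f^{c_2}\oplus\omega_f^{c_3}$ (with $f\in\{1,2,3\}$), we exhibit for each part of the proposition a tame inertial type $\tau$ (whose defining triple is adjusted from a fixed ``anchor'' triple), and then compute both $\JH(\sigmabar(\tau))\cap W^?(\rhobar|_{I_{\Qp}})$ and the shape $\bf{w}(\rhobar, \tau)$.

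The key observation is that, for a weakly generic $\tau$, there is a combinatorial correspondence between the nine elements of $\JH(\sigmabar(\tau))$ and the nine possible positions of a weight of $\rhobar|_{I_{\Qp}}$ in a fixed ``alcove picture'' around the triple parametrizing $\tau$. The shape $\bf{w}(\rhobar, \tau)$, in turn, measures the relative position of $\rhobar|_{I_{\Qp}}$ with respect to $\tau$: it has length $4$ (extremal) precisely when the unique common weight is an obvious weight at the ``center'' of $\sigmabar(\tau)$, it has length $3$ of shadow type when a shadow weight of $\rhobar$ appears in $\JH(\sigmabar(\tau))$ alongside its paired obvious weight, and it has smaller length as more of the nine JH constituents lie in $W^?(\rhobar|_{I_{\Qp}})$. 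This dictionary is established by comparing the explicit formulas for the JH factors (Table \ref{table:JH}) with the explicit description of the Kisin module shape in Definition \ref{rhobarshape} and the admissible set of Table \ref{Table admissible elements}.

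For part (1), given an obvious weight $F(a,b,c)\in W_{\mathrm{obv}}(\rhobar|_{I_{\Qp}})$, choose the type $\tau$ whose parametrizing triple places $F(a,b,c)$ as the unique element of $\JH(\sigmabar(\tau))$ lying in $W^?(\rhobar|_{I_{\Qp}})$; concretely, $\tau$ is chosen so that $\rhobar|_{I_{\Qp}}$ is ``diagonal'' with respect to $\tau$ in the sense that the shape is one of the six length-$4$ translations $t_{s(2,1,0)}$. For parts (2)-(4), we instead perturb $\tau$ in a controlled way, which moves $\rhobar|_{I_{\Qp}}$ to a non-extremal position and simultaneously makes additional JH constituents land in $W^?(\rhobar|_{I_{\Qp}})$. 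For each of the seven non-extremal, non-identity shapes in the chosen system of $\delta$-orbit representatives, a direct inspection of the pivot pattern shows that the corresponding type $\tau$ has $\JH(\sigmabar(\tau))\cap W^?(\rhobar|_{I_{\Qp}})$ consisting of precisely the weights claimed in the statement (a singleton plus a shadow weight for shadow shapes, two obvious weights of opposite alcove type for non-shadow length-$3$ shapes, and a shadow plus three obvious weights for length-$2$ shapes). The remaining shapes are obtained from these by applying the cyclic symmetry of Corollary \ref{symmetry} and Remark \ref{deltaorbits}.

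The main obstacle is the bookkeeping: one must match each of the nine classes of shapes (up to $\delta$-symmetry) with the correct recipe for $\tau$ in a way that is compatible with the orientation $(s_j)$ of $\tau$ and the parametrization of $\rhobar$. The genericity propagates automatically: the triple defining $\tau$ differs from the ``anchor'' triple defining $\rhobar|_{I_{\Qp}}$ by a bounded combinatorial shift (bounded by $2$), so $n$-genericity of $\rhobar$ guarantees $(n-2)$-genericity of $\tau$. Once the dictionary is set up, verification of each of the four items becomes a finite check against Tables \ref{table:JH} and \ref{table:SW}, so the proof reduces to the combinatorial construction described above.
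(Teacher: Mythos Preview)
Your overall strategy---explicit construction plus case analysis against Tables \ref{table:JH} and \ref{table:SW}---matches the paper, and the genericity propagation (shift by at most $2$) is correct. However, there is a genuine gap in how you handle the shape $\mathbf{w}(\rhobar,\tau)$.

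You assert a ``dictionary'' according to which the length and type of the shape are determined by the pattern of the intersection $\JH(\sigmabar(\tau))\cap W^{?}(\rhobar)$, and you say this dictionary ``is established by comparing'' the JH formulas with Definition \ref{rhobarshape} and Table \ref{Table admissible elements}. But Definition \ref{rhobarshape} only says that the shape is read off from the Iwahori double coset of the Frobenius matrix of the unique Kisin module $\overline{\fM}\in Y^{\mu,\tau}(\F)$ with $T^*_{dd}(\overline{\fM})\cong\rhobar|_{G_{K_\infty}}$; it gives no recipe for computing this from $\rhobar$ and $\tau$ directly. The correspondence you invoke is precisely the content of the proposition, not an input to it.

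The paper fills this gap by working in the opposite direction: for each shape $\widetilde{w}$ of the relevant length, it takes the Kisin module of shape $\widetilde{w}$ whose Frobenius matrix $\overline{A}^{(j)}_{\widetilde{w}}$ (from Table \ref{table shapes mod p}) is \emph{monomial} (all non-pivot entries set to zero), and then explicitly computes $T^*_{dd}$ of this module using Proposition \ref{prop:phif}. This produces a concrete semisimple $\rhobar|_{I_{\Qp}}$ for which $\mathbf{w}(\rhobar,\tau)=\widetilde{w}$ is now known by construction. The paper gives two worked examples of this matrix calculation (for $\alpha\beta\alpha$ with a principal series $\tau$, and for $\beta\gamma\beta$ with a cuspidal $\tau$). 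Only after the shape is pinned down this way does one compare $\JH(\sigmabar(\tau))$ with $W^{?}(\rhobar)$ via the tables. Your proposal skips the Kisin-module/\'etale $\phz$-module computation entirely, so the claimed shape is unverified.
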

\begin{proof}

The strategy is the same in all four cases so we focus on the proof of (2). The type $\tau$ for (2) is given in Table \ref{table: types}.  The table is constructed starting with a tame type $\tau$ over $\Qp$.   For each shape $\widetilde{w} \in \{ \alpha \beta \alpha, \gamma \beta \gamma, \alpha \gamma \alpha \}$, one can consider the mod $p$ Kisin modules of shape $\widetilde{w}$ as in Table \ref{table shapes mod p}. We  consider the special Kisin modules of this shape where $A_{\widetilde{w}}$ is a monomial matrix---these Kisin modules give rise to semisimple $\rhobar$. For example, when $\tau = \omega^{-a} \oplus \omega^{-b} \oplus \omega^{-c}$, the Kisin module with 
$$
A_{\alpha \beta \alpha} = \begin{pmatrix} 0&0&v \overline{c}_{13}^* \\0&v\overline{c}_{22}^*&0 \\v\overline{c}_{31}^*& 0&0 \end{pmatrix}
$$
corresponds (under $T^*_{dd}$) to a $G_{{\Q_{p,\infty}}}$-representation which extends to a $G_{\Qp}$-representation $\rhobar$ with $\rhobar|_{I_{\Qp}} = \omega^{b+1} \oplus \Ind(\omega_2^{(a +1) + p(c+1)})$.  
This confirms the shape $\bf{w}(\rhobar,\tau)$ in the 10th row of Table \ref{table: types}.

As another example, consider $\sigma(\tau)=\Ind_{P_1(\Fp)}^{\GL_3(\Fp)}\big(\teich{\omega}^{b+1}\otimes \Theta(\teich{\omega}_2^{(c-1)+pa})\big)$ and parallel shape $\beta\gamma\beta$ which appears in the third section of Table \ref{table: types}. Define $\tau'\defeq \omega_2^{(b+1)+p(b+1)}\oplus\omega_2^{(c-1)+pa}\oplus\omega_2^{a+p(c-1)}$ which has the orientation $s_0'=(12)$ and $s_1'=(132)$.  The special point of $(\overline{\fM}, \iota) \in Y^{\mu, \tau}(\F)$ of shape $\beta \gamma \beta$ has  
$$
A^{(j)}_{\beta \gamma \beta} = \begin{pmatrix}0&\overline{c}_{12}^{*}&0\\ \overline{c}_{21}^{*}v^2&0&0\\0&0& \overline{c}_{33}^{*}v\end{pmatrix}
$$ 
We are only interested in the restriction to inertia and so we can forget $\iota$ and set the constants to 1.  Let $\overline{\cM} \defeq \overline{\fM} \otimes_{ \F[\![u]\!]} \F(\!(u)\!)\in \Phi\text{-}\Mod^{\text{\'et}}_{dd}(\F)$ be the associated \'etale $\phz$-module with descent datum. By Proposition \ref{prop:phif}, the \'etale $(\phz^2,k'\otimes_{\Fp}\F(\!(v)\!))$-module $\overline{\cM}_0 \defeq \varepsilon_0\big((\overline{\cM})^{\Delta'=\Id}\big)$ is described, in an appropriate basis $\mathfrak{f}\defeq (e_1,e_2,e_3)$, as follows:
\begin{eqnarray*}
\Mat(\phi_{\cM})&=&
\left(s_0'A^{(1)}\text{\tiny$\begin{pmatrix}v^{c-1}&0&0\\0&v^{b+1}&0\\0&0&v^a\end{pmatrix}$}s_0'\right)\cdot
\phz\left(s_1'A^{(0)}\text{\tiny$\begin{pmatrix}v^{c-1}&0&0\\0&v^{b+1}&0\\0&0&v^a\end{pmatrix}$}(s_1')^{-1}\right)\\
&=&\text{\tiny$\begin{pmatrix}0&v^{(c+1)+p(a+1)}&0\\0&0&v^{(b+1)+p(c+1)}\\v^{(a+1)+p(b+1)}&0&0\end{pmatrix}$}
\end{eqnarray*}
up to constants. In particular, we see that the $\phz^{6}$-action on $e_1$ is described by $$e_1\mapsto v^{(p^3+1)((c+1)+p(a+1)+p^2(b+1))}e_1.$$ We conclude that $\rhobar := T_{dd'}^*(\overline{\fM})$ is tame with
\begin{eqnarray*}
\rhobar|_{I_{\Qp}} &\cong& \omega_3^{
(a+1)+p(b+1)+p^2(c+1)} \oplus \omega_3^{
(b+1)+p(c+1)+p^2(a+1)} \oplus \omega_3^{
(c+1)+p(a+1)+p^2(b+1)}.
\end{eqnarray*}
We conclude that (up to unramified twist) $\rhobar = \Ind_{G_{\Qpf{3}}}^{G_{\Qp}}\omega_3^{(a+1)+p(b+1)+p^2(c+1)}$ and so $\bf{w}(\rhobar, \tau) = \beta \gamma \beta$, confirming the 9th row of Table \ref{table: types}. 
Each semisimple $\rhobar$ arises in this way from exactly three types. Comparing $\JH(\sigmabar(\tau))$ and $W^{?}(\rhobar|_{I_{\Qp}})$ is a tedious but not difficult computation.  We see that in each case the intersection $\JH(\sigmabar(\tau))\cap W^{?}(\rhobar|_{I_{\Qp}})$ is exactly a lower alcove weight together with its shadow.

A similar computation can be done for the other shapes and cuspidal types using Proposition \ref{prop:phif}.
Briefly, regarding parts (1), (3), and (4), for each tame type $\tau$, there are six shapes of length 4, six non-shadow shapes of length 3, and six shapes of length 2.  For each semisimple $\rhobar$, there are exactly six types for which $\bf{w}(\rhobar, \tau)$ has length 4, six types for which $\bf{w}(\rhobar, \tau)$ has non-shadow length 3 and six types for which $\bf{w}(\rhobar, \tau)$ has length 2.  For each obvious weight, there is a unique type satisfying (1) and two types satisfying (3).  For each shadow weight, there are two types which work for part (4).
\end{proof}

\begin{proof}[Proof of Theorem \ref{thm:serreweightconj}]  For generic tame types $\tau_v$ and $\rhobar_v := \rbar|_{G_{F_{\widetilde{v}}}}$, if $\bf{w}(\rhobar_v, \tau_v)$ has length greater than or equal 2 for all $v \mid p$, then by Theorem \ref{thm:dringBC} and Table \ref{table:withmon},  $R_{\infty}(\tau)$ has connected generic fiber and so if $M_{\infty}(\sigma(\tau)^\circ)$ is nonzero, then it has full support.  By Proposition \ref{upperbound}, we know the Hilbert-Samuel multiplicity of $M_{\infty}(\overline{\sigma}(\tau)^\circ)$.  The strategy is to compute $e(M_{\infty}(\otimes_{v \in \Sigma^+_p} V_v))$ by varying the tame type.  In fact, we show that $e(M_{\infty}(\otimes_{v \in \Sigma^+_p} V_v)) = 1$ whenever $V_v \circ \iota^{-1}_{\tld{v}}\in W^{?}(\rbar|_{I_{F_{\tld{v}}}})$ for all $v$.  

Let $W^?(\rhobar_v)$ be $W^?(\rbar|_{I_{F_{\tld{v}}}}) \circ \iota_{\tld{v}}$.
As usual, this does not depend on the choice of place $\tld{v}|v$.
Similarly define $W_{\mathrm{obv}}(\rhobar_v)$.
Our assumptions imply that if $\rbar$ is modular of a reachable weight $\otimes_{v \in  \Sigma^+_p} V_v$ then $V_v\in W^?(\rhobar_v)$ for all $v\in\Sigma_p^+$ (cf. Remark \ref{rem:WE}), and all weights in $W^?(\rhobar_v)$ are reachable. 

First, assume that $\rbar$ is modular of a reachable weight $\otimes_{v \in  \Sigma^+_p} V_v$ such that all $V_v$ are lower alcove.  Under these hypotheses, the modularity of the obvious weights is known by \cite[Theorem 5.1.4]{BLGG} (alternatively, one could deduce the modularity of the obvious weights using the arguments two paragraphs below and Proposition \ref{prop:intersec}(3)).
Choose $V_v \in W^{?}(\rhobar_v)$ for each $v \in  \Sigma^+_p$.  Let $S^+_p \subset \Sigma^+_p$ be the set of places for which $V_v$ is a shadow weight.  For each $v \in S^+_p$, let $V'_v$ denote the lower alcove weight corresponding to $V_v$. We induct on the size of $S^+_p$.  If $S^+_p$ is empty, then by Proposition \ref{prop:intersec}(1), we can choose types $\tau_v$ for each $v  \in \Sigma^+_p$ such that $\JH(\sigma(\tau_v))\cap W^{?}(\rhobar_v)= \{V_v\}$.  In this case, $M_{\infty}(\otimes_{v \in \Sigma^+_p} \sigmabar(\tau_v)^\circ) = M_{\infty}(\otimes_{v \in \Sigma^+_p} V_v)$ is nonzero by Definition \ref{minimalpatching}(2) and modularity of the obvious weights, and so $e(M_{\infty}(\otimes_{v \in \Sigma^+_p} V_v)) = e(M_{\infty}(\otimes_{v \in \Sigma^+_p} \sigmabar(\tau_v)^\circ)) = 1$ since the corresponding deformation ring is a power-series ring. 

In general, for each $v \in \Sigma^+_p \backslash S^+_p$, we choose $\tau_v$ as in \ref{prop:intersec}(1).  For each $v \in S^+_p$, we choose $\tau_v$ as in \ref{prop:intersec}(2) to contain exactly $V_v$ and $V'_v$. Consider 
$
M_{\infty}(\overline{\sigma}(\tau)) 
$
over $\overline{R}_{\infty}(\tau)$.  The deformation rings for $v \notin S^+_p$ are again formally smooth.   By the previous paragraph, $\rbar$ is modular of all the obvious weights and so  $M_{\infty}(\overline{\sigma}(\tau))$ is nonzero.   By Table \ref{table:withmon} and Proposition \ref{upperbound}, we deduce that 
$$
e(M_{\infty}(\overline{\sigma}(\tau))) = e(\overline{R}_{\infty}(\tau)) = 2^{|S^+_p|}.
$$
By the inductive hypothesis, the contribution to $e(M_{\infty}(\overline{\sigma}(\tau)))$ of any Serre weight in $W^?(\rbar) \cap \JH(\sigmabar(\tau))$ not equal to $\otimes_{v \in \Sigma^+_p} V_v$ is 1.  We deduce that $e(M_{\infty}(\otimes_{v \in \Sigma^+_p} V_v)) = 1$.

Finally, we show that if $\rbar$ is modular of any reachable weight then it is modular of a reachable weight $\otimes_{v \in  \Sigma^+_p} V_v$ where $V_v$ lower alcove for each $v$.  
Assume $\rbar$ is modular of a reachable weight $\otimes_{v} V_v$.  As above, let $S^+_p \subset \Sigma^+_p$ be the set of places for which $V_v$ is a shadow weight.   Assume $|S^{+}_p|$ is minimal among such weights and is nonzero.   Choosing types $\tau_v$ as above, we conclude that $e(M_{\infty}(\overline{\sigma}(\tau)))  = 2^{|S^+_p|}$ and so 
$$
e(M_{\infty}(\otimes_{v \in \Sigma^+_p} V_v)) \leq 2^{|S^+_p|}. 
$$
Choose now a place $v_0 \in S^+_p$, and replace $\tau_{v_0}$  by a tame type $\tau'_{v_0}$ for the shadow weight $V_{v_0}$ as in Proposition \ref{prop:intersec}(4). (Note that since $\rhobar_v$ is $8$-generic, then $\tau'_v$ is $6$-generic; this implies that weights in $\JH(\sigmabar(\tau'_v))$ are reachable.) For the new type $\tau'$, we have
$$
 e(M_{\infty}(\overline{\sigma}(\tau'))) = 2^{|S^+_p| + 1}.
 $$ 
 By comparing multiplicities and using that $M_\infty$ vanishes on reachable weights $\otimes_{v} V'_v$ if $V'_v\notin W^?(\rhobar_v)$ for some $v\in\Sigma_p^+$, we conclude that $M_{\infty}(\otimes_{v} V'_v) \neq 0$ for some Serre weight $\otimes_{v} V'_v$ of $\rbar$ such that $V'_v \in W_{\mathrm{obv}}(\rhobar_v)$ if $V_v \in W_{\mathrm{obv}}(\rhobar_v)$ for all $v$ and $V'_{v_0}\in W_{\mathrm{obv}}(\rhobar_{v_0})$.
This contradicts the minimality of $|S^+_p|$.

To prove that $\rbar$ is modular of a weight $\otimes_{v \in \Sigma^+_p} V'_v$ with $V'_v$ reachable and lower alcove for all $v$, one repeats the argument of the last paragraph beginning with a modular weight $\otimes_{v \in \Sigma^+_p} V_v$ with $V_v$ reachable and obvious for all $v$, defining $S^+_p \subset \Sigma^+_p$ using upper alcove in place of shadow, and defining $\tau$ and $\tau'$ using Proposition \ref{prop:intersec}(1) and (3).
\end{proof}


We provide an example of a calculation of modular reachable Serre weights for a non-semisimple $\rhobar$. This will be needed in \S \ref{sbsb:nsemi} for the proof of Theorem \ref{thm:conngenfib}.

\begin{prop}\label{Prop-non-ss} Let $\tau$ be a strongly generic tame type. Let $\rhobar$ be a non-semisimple representation such that $\bf{w}(\rhobar, \tau) = \alpha$ $($i.e., $\overline{c}'_{22} \neq 0$ in Table $\ref{table shapes mod p})$. Then there is a subset $W^{?}(\rhobar) \subset W^{?}(\rhobar^{ss})$ consisting of six weights 
such that the following holds: 

There exists a CM field $F$ and a continuous Galois representation $\rbar: G_F\ra\GL_3(\F)$ as in the statement of Theorem \ref{thm:serreweightconj}, except that $\rbar|_{G_{F_{\tld{v}}}}\cong \rhobar$ at all places $v\in \Sigma_p^+$ and such that 
\[
\underset{v\in \Sigma_p^+}{\bigotimes}F_{\un{a}_v}\in W(\rbar)\Longleftrightarrow F_{\un{a}_v}\circ\iota_{\tld{v}}^{-1}\in W^{?}(\rhobar) \quad\text{for all}\quad v\in \Sigma_p^+.
\]
\end{prop}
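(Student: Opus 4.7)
The plan is to mimic the strategy used for the semisimple case in Theorem \ref{thm:serreweightconj}, but with the input data on deformation rings replaced by the more delicate analysis of length-$\leq 1$ shapes (to be carried out in \S \ref{sec:badcases}) combined with the fact that non-semisimplicity cuts down the set of non-vanishing potentially crystalline deformation rings.  First, I would globalize $\rhobar$: using standard potential automorphy together with the constructions of \cite{BLGG} (cf.~also the constructions in \cite{EGH}, \cite{CEGGPS}), one produces a CM field $F/F^+$ in which $p$ splits completely, an automorphic Galois representation $\rbar:G_F\to\GL_3(\F)$ with split ramification outside $p$, satisfying the Taylor--Wiles conditions, such that $\rbar|_{G_{F_{\tld v}}}\cong\rhobar$ for all $v\in\Sigma_p^+$.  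With these hypotheses in hand, the construction of \S \ref{sec:WMPM} provides a weak minimal patching functor $M_\infty$ associated to $\rbar$, and the goal becomes the computation, for every Serre weight $V=\otimes_v V_v$, of whether $M_\infty(V)\neq 0$.

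Second, I would identify the six weights: the assumption $\bf{w}(\rhobar,\tau)=\alpha$ forces $\rhobar$ to be a \emph{very specific} non-split extension (with $\overline{c}'_{22}\neq 0$), and the explicit description of \S \ref{sec:badcases} shows that among the nine tame principal series types whose Jordan--H\"older constituents meet $W^?(\rhobar^{ss})$ in a weight containing one particular shadow direction, exactly three have the property that $R^{(2,1,0),\tau'}_\rhobar=0$ (for those types the Kisin module attached to $\rhobar$ cannot be lifted to a Kisin module of the required shape because the monodromy condition forces $\overline{c}'_{22}=0$).  One then defines $W^?(\rhobar)\subset W^?(\rhobar^{ss})$ to be the subset of six weights consisting of those $V$ such that some choice of tame type $\tau'$ with $V\in\JH(\sigmabar(\tau'))\cap W^?(\rhobar^{ss})$ has the property that $R^{(2,1,0),\tau'}_\rhobar\neq 0$ and $\bf{w}(\rhobar,\tau')$ has length $\geq 2$ (equivalently, by Proposition \ref{prop:intersec} and the analysis of \S \ref{sec:badcases}, the three weights that must be removed are precisely the three weights only reachable via a length-$1$ or length-$0$ shape that forces semisimplicity of $\rhobar$).

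Third, I would run the same Hilbert--Samuel multiplicity inductive argument as in the proof of Theorem \ref{thm:serreweightconj}: weight elimination (Remark \ref{rem:WE}) restricts the possible modular reachable weights to $W^?(\rhobar^{ss})$; combining Proposition \ref{upperbound} with the (now length-$\geq 2$) explicit deformation ring descriptions from Table \ref{table:withmon} gives the exact Hilbert--Samuel multiplicity $e(M_\infty(\overline{\sigma}(\tau')))$ for each auxiliary type $\tau'$; comparing across the types $\tau'$ exhibited in the previous step shows on the one hand that every $V\in W^?(\rhobar)$ is automorphic (by the lower-alcove bootstrap and the shadow argument of Theorem \ref{thm:serreweightconj}), and on the other that the three removed weights are killed because the patched module has support exactly on the correct components of $\Spec R^{(2,1,0),\tau'}_\rhobar[1/p]$.

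The main obstacle is the explicit computation of $R^{(2,1,0),\tau'}_\rhobar$ when $\bf{w}(\rhobar,\tau')$ has length $\leq 1$ and $\rhobar$ is non-semisimple: for these shapes the monodromy condition cannot be reduced to a single equation (Remark \ref{rmk111}), and one must analyze the full $9$-equation system of Definition \ref{monideal} together with the non-semisimplicity constraint $\overline{c}'_{22}\neq 0$ to decide whether the deformation ring vanishes or not.  This is precisely the work carried out in \S \ref{sec:badcases}; once the special-fiber description of those rings is in hand, the three-step argument above closes out the proposition.
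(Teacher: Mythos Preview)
Your proposal has a genuine gap in the weight elimination step and a circularity in the use of \S\ref{sec:badcases}.

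First, the elimination of the three extra weights in $W^?(\rhobar^{ss})\setminus W^?(\rhobar)$ does not come from any deformation ring vanishing.  Your claim that ``exactly three [types] have the property that $R^{(2,1,0),\tau'}_{\rhobar}=0$'' is not how the paper proceeds, and indeed the length-$\leq 1$ deformation rings analyzed in \S\ref{sec:badcases} are nonzero in the non-semisimple case (the special fiber simply has five components rather than six).  The paper instead invokes weight elimination for the specific non-semisimple $\rhobar$ directly from \cite[Theorem 1.1]{MP}, which already gives $W(\rbar)\subset \otimes_v W^?(\rhobar)$ as a black box.  You have not supplied any argument for the ``$\Rightarrow$'' direction.

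Second, and more seriously, you cannot appeal to \S\ref{sec:badcases} here: the determination of the special fiber for the non-semisimple $\alpha$ shape in \S\ref{sbsb:nsemi} \emph{uses} Proposition~\ref{Prop-non-ss} as input (the analogue of Proposition~\ref{prop:explicit2} needs the global Serre weight result to match multiplicities).  The paper avoids this circularity entirely: its proof of Proposition~\ref{Prop-non-ss} uses only auxiliary types $\tau'$ for which $\bf{w}(\rhobar,\tau')$ has length $3$ or $4$ (one length-$4$ type isolating $F(a-1,b,c+1)$, three length-$3$ non-shadow types to reach the other obvious weights and the shadow $F(a,c,b-p+1)$, and length-$3$ shadow types for the remaining shadows).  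For each such $\tau'$ the deformation ring is already described in Table~\ref{table:withmon}, so the Hilbert--Samuel induction runs with no reference to \S\ref{sec:badcases}.  The nontrivial content you are missing is the explicit verification (via the \'etale $\phz$-module computation of Proposition~\ref{prop:phif}) that $\bf{w}(\rhobar,\tau')$ for the non-semisimple $\rhobar$ agrees with $\bf{w}(\rhobar^{ss},\tau')$ for each of these carefully chosen types; this is what allows the combinatorics of Proposition~\ref{prop:intersec} to be recycled.
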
  
\begin{proof} We focus on the case of principal series type as the other cases are similar.  Let $\tau = \omega^{-a} \oplus \omega^{-b} \oplus \omega^{-c}$ with $a- b, b-c >3$.  Let $\rhobar$  be the unique non-split representation of $G_{\Qp}$ of the form 
$$
\rhobar \cong \begin{pmatrix}
\chi_1 \omega^{a+1}&*&0\\
0&\chi_2 \omega^{b+1}&0\\
0&0&\chi_3 \omega^{c+1}
\end{pmatrix}
$$
for fixed unramified characters $\chi_i$. Then $\bf{w}(\rhobar, \tau) = \alpha$. To see this, consider a Kisin module $\fM\in Y^{\mu,\tau}(\F)$ such that $\Mat_{\overline{\beta}}(\phi_{\fM,\omega^c})=A_{\alpha}$. Then by Lemma \ref{Frowoper} which allows row operations, there is another eigenbasis $\overline{\beta}'$ for $\overline{\fM}$ such that $\Mat_{\overline{\beta}'}(\phi_{\fM,\omega^c})$ is given by
\[
A_{\alpha}'=
\begin{pmatrix}
-v \overline{c}_{21}^*\frac{\overline{c}_{12}^*}{\overline{c}'_{22}}&0&0\\
v \overline{c}_{21}^*&v \overline{c}'_{22}&0\\
0&0&v \overline{c}_{33}^*
\end{pmatrix}
\]
It can be seen, for instance via \cite[Lemma 2.2.7]{HLM}, that such Kisin module comes from a Fontaine-Laffaille module with Hodge-Tate weights $(a+1,b+1,c+1)$ and the corresponding Galois representation must be $\rhobar$ (up to unramified twist).

Note that $\rhobar$ admits a Fontaine-Laffaille and hence a potentially diagonalizable lift with Hodge-Tate weights $(a+1, b+1, c+1)$.
We can now apply Lemma A.5 and Proposition A.6 of \cite{EG} to produce an $\rbar:G_{F} \ra \GL_3(\overline{\F})$ such that $\rbar|_{G_{F_{\tld{v}}}}\cong \rhobar$, and such that the setting of \S \ref{sbsec:ModLift}  and Theorem \ref{thm:serreweightconj} holds. Furthermore we can arrange so that $\rbar$ has an automorphic lift $r$ such that $r|_{G_{F_{\tld{v}}}}$ is Fontaine-Laffaille with Hodge-Tate weights $(a+1,b+1,c+1)$ (this is achieved by choosing the potential diagonalizable lift of $r|_{G_{F_{\tld{v}}}}$ featuring in the proof of \cite{EG}, Lemma A.5---which refers back to Theorem 4.3.1 in \cite{BLGGT}---to be Fontaine-Laffaille with Hodge-Tate weights $(a+1,b+1,c+1)$).

Let 
$$
W^{?}(\rhobar) = \left \{ \begin{matrix}  F(a-1, b, c+1), F(p-2+ c, a, b+1), F(a-1, c, b-p+2), \\ F(p-1+b, a, c), F(c+p-1, b, a-p+1), F(a, c, b - p +1) \end{matrix} \right \} \subset W^{?}(\rhobar^{ss}).
$$  
The top row are obvious weights for $\rhobar^{ss}$; the bottom row are shadow weights.  Define $W_{\mathrm{obv}}(\rhobar)\defeq W_{\mathrm{obv}}(\rhobar^{ss})\cap W^?(\rhobar)$.

To prepare for the proof, we observe:
\begin{enumerate}
		\item \label{it:new:pf:0}  There exists a generic tame type $\tau'$ such that $\JH(\sigmabar(\tau'))\cap W^?(\rhobar)=\{F(a-1, b, c+1)\}$ and $\bf{w}(\rhobar,\tau')$ is a length 4 shape;
	\item\label{it:new:pf:1} There exists a generic tame type $\tau'$ such that $\JH(\sigmabar(\tau'))\cap W^?(\rhobar)=\{F(a-1, b, c+1),\,F(a-1,c,b-p+2)\}$ and $\bf{w}(\rhobar,\tau')$ is a length 3 non-shadow shape;
		\item\label{it:new:pf:2} There exists a generic tame type $\tau'$ such that $\JH(\sigmabar(\tau'))\cap W^?(\rhobar)=\{F(c+p-2, a, b+1),\,F(a-1,c,b-p+2)\}$ and $\bf{w}(\rhobar,\tau')$ is a length 3 non-shadow shape;
		\item\label{it:new:pf:3} For each $F\in W_{\mathrm{obv}}(\rhobar)$ such that $F$ is lower alcove with shadow $F'\in W^?(\rhobar)$, there is a generic tame type $\tau'$ such that $\bf{w}(\rhobar,\tau')$ is a length 3 shadow shape;
	\item\label{it:new:pf:4}	For the type $\tau' \defeq \omega_3^{-(a+1) - p (b-1) - p^2 c} \oplus \omega_3^{-(b-1)- p c - p^2 (a+1)} \oplus \omega_3^{-c - p (a+1) - p^2 (b-1)}$, we have that $\JH(\sigmabar(\tau')) \cap W^{?}(\rhobar) = \{F(c+p-2, a, b+1),  F(a, c, b-p+1) \}$ and $\bf{w}(\rhobar,\tau')$ is a length 3 non-shadow shape.

\end{enumerate}
For items (\ref{it:new:pf:0})-(\ref{it:new:pf:3}), we pick the type $\tau'$ which satisfies the analogous properties for $\rhobar^{ss}$ (cf. Proposition \ref{prop:intersec}).
What needs to be checked are the claims about the shapes of $\rhobar$ with respect to $\tau'$.

We will do it for item (\ref{it:new:pf:4}), the other cases will be similar. Note that by Proposition \ref{basechange2}, the orientation for the type $\tau'_v$ is $s_0' = (123), s_1' = (123)^2$ and $s_2' = \mathrm{id}$. We claim that $\bf{w}(\rhobar,\tau')=\alpha\beta\gamma$.
To do this, it suffices to show that there exists a matrix $A\in\Iw(\F)\alpha\beta\gamma\Iw(\F)$ such that the \'etale $\varphi$-module $\cM'$ with $\Mat(\phz^3_{\cM'})$ given by
\[
s'_0A{s'_0}^{-1}s'_1\begin{pmatrix}v^{a+1}&0&0\\
0&v^{b-1}&0\\ 0&0&v^{c}\end{pmatrix}
\phz(A){s'_1}^{-1}s'_2\begin{pmatrix}v^{p(a+1)}&0&0\\
0&v^{p(b-1)}&0\\ 0&0&v^{pc}\end{pmatrix}
\phz^2(A){s'_2}^{-1}\begin{pmatrix}v^{p^2c}&0&0\\
0&v^{p^2(a+1)}&0\\ 0&0&v^{p^2(b-1)}\end{pmatrix}
\]
(cf. Proposition \ref{prop:phif}) is isomorphic to the \'etale $\phz^3$-module $\cM$ associated to $\rhobar|_{G_{\Q_{p^3,\infty}}}$, which has matrix $\Mat(\phz^3_{\cM})$ given by
\[
A_\alpha\begin{pmatrix}v^{a}&0&0\\
0&v^{b}&0\\ 0&0&v^{c}\end{pmatrix}
\phz(A_\alpha)\begin{pmatrix}v^{pa}&0&0\\
0&v^{pb}&0\\ 0&0&v^{pc}\end{pmatrix}
\phz^2(A_\alpha)\begin{pmatrix}v^{p^2a}&0&0\\
0&v^{p^2b}&0\\ 0&0&v^{p^2c}\end{pmatrix}.
\]
Indeed we can set $A\defeq A_\alpha\text{\tiny$\begin{pmatrix}v^{-1}&0&0\\
0&v&0\\ 0&0&1\end{pmatrix}$}{s_1'}^{-1}s_0'$, and this has the required properties.

We now address the proof of the proposition.
We pick a weak minimal patching functor for $\rbar$ (it exists since $\rbar$ and $F/F^+$ satisfy the setup of \S \ref{sbsec:ModLift}). Note that $M_\infty(\otimes_{v\in\Sigma_p^+}F(a-1,b,c+1)\circ \iota_{\tld{v}})\neq 0$ in our situation.
First note that if $\otimes_{v}V_v\in W(\rbar)$ then $V_v \in W^{?}(\rhobar)$ for all $v\in\Sigma_p^+$ by Theorem 1.1 in \cite{MP}.
To finish the proof, it suffices to show that $e(M_\infty(\otimes_{v}V_v))=1$ if $V_v\in W^{?}(\rhobar)$ for all $v$.

We first show this statement in the case where $V_v\in \{F(a-1, b, c+1),\,F(a-1,c,b-p+2)\}$ for all $v$. 
Let $S$ be the set of $v\in\Sigma_p^+$ such that $V_v=F(a-1,c,b-p+2)$.
We induct on the size of $S$.
Consider the type $\otimes_{v\in\Sigma_p^+}\sigma(\tau'_v)$ where $\tau'_v$ is as in item (\ref{it:new:pf:0}).
Then $M_\infty(\otimes_{v\in\Sigma_p^+}\sigmabar(\tau'_v))=M_\infty(\otimes_{v\in\Sigma_p^+}F(a-1,b,c+1)\circ \iota_{\tld{v}})$. By Table \ref{table:withmon}, this gives the base case for the induction.
In general, we pick the type $\otimes_{v\in\Sigma_p^+}\sigma(\tau'_v)$ where $\tau'_v$ is as in item (\ref{it:new:pf:0}) if $v\notin S$ and $\tau'_v$ is as in item (\ref{it:new:pf:1}) otherwise.
By Table \ref{table:withmon}, we have $e(M_\infty(\otimes_{v\in\Sigma_p^+}\sigmabar(\tau'_v)))=2^{|S|}$.
Every $\otimes_{v\in\Sigma_p^+}V'_{v}\in \JH(\otimes_{v\in\Sigma_p^+}\sigmabar(\tau'_v))\cap W(\rbar)$ 
we have $V'_v\in \{F(a-1, b, c+1),\,F(a-1,c,b-p+2)\}$ for all $v$.
All such factors other than $\otimes_{v\in\Sigma_p^+}V_{v}$ will have $V'_v=F(a-1,c,b-p+2)$ for strictly less than $|S|$ embeddings $v$. From the inductive hypothesis we deduce that $e(M_{\infty}(\otimes_{v} V_v)) = 1$.
A similar argument using the type $\tau'$ as in item (\ref{it:new:pf:2}) deals with the case where  $V_v\in W_{\mathrm{obv}}(\rhobar)$.

Finally we deal with the shadow weights in $W^?(\rhobar)$ exactly as in Theorem \ref{thm:serreweightconj}, using the type $\tau'$ as in item (\ref{it:new:pf:4}) to deal with the weight $F(a, c, b-p+1)$ and the types as in item (\ref{it:new:pf:3}) for the remaining shadows.
\end{proof} 

We conclude this section with a counterexample to \cite[Conjecture 4.3.2]{gee-annalen} (the \emph{crystalline conjecture}).

\begin{prop} \label{geecounter}
Let $F=F(a-1,b,c+1)$ be a lower alcove weight such that the triple $(a,b,c)$ is generic and write $F_\textrm{Sh}\defeq F(c+p-1,b,a-p+1)$.
There exists a CM field $F$ where $p$ splits completely and a continuous automorphic Galois representation $\rbar:G_F\ra \GL_3(\F)$ such that $\otimes_{v\in\Sigma_p^+}F\circ\iota_{\tld{v}}\in W(\rbar)$ but $(\otimes_{v\in\Sigma_p^+\setminus S}F_{v}\circ\iota_{\tld{v}})\otimes (\otimes_{v\in S}F_\textrm{Sh}\circ\iota_{\tld{v}})\notin W(\rbar)$ for any $\emptyset \neq S\subseteq \Sigma_p^+$ and $F_v$ any weight at $v\in \Sigma_p^+\setminus S$.
Moreover, for any $v\in\Sigma_p^+$ the representation $\rbar|_{G_{F_{\tld{v}}}}$ has a crystalline lift of weight $(c+p+1,b+1,a-p+1)$.
\end{prop}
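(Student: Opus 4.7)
The plan is to follow the strategy of Proposition~\ref{Prop-non-ss}, but work with a \emph{different} non-semisimple $\rhobar$: one whose set $W^?(\rhobar)$ of modular Serre weights contains $F$ but not $F_{\textrm{Sh}}$, while $\rhobar$ still admits a crystalline lift of weight $(c+p+1, b+1, a-p+1)$. Although the $\rhobar$ appearing in Proposition~\ref{Prop-non-ss} admits $F_\textrm{Sh}$ as a modular Serre weight, here one constructs a different non-split extension class with the same semisimplification for which the modular set is strictly smaller.

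First I would choose a non-semisimple $\rhobar : G_{\Qp} \to \GL_3(\F)$ with semisimplification $\chi_1\omega^{a+1} \oplus \chi_2\omega^{b+1} \oplus \chi_3\omega^{c+1}$ and with non-split extension arranged so that the shape $\bf{w}(\rhobar, \tau')$ with respect to the shadow type $\tau'$ produced by Proposition~\ref{prop:intersec}(2) applied to the pair $\{F, F_\textrm{Sh}\}$ is a length-$4$ shape corresponding to $F$, rather than the length-$3$ shadow shape carried by $\rhobar^{ss}$. The extension class of $\rhobar$ will be taken to lie in the ``complementary'' direction to that of Proposition~\ref{Prop-non-ss}, using the classification of Kisin modules (Theorem~\ref{thm:classification}) and the entries of Table~\ref{table shapes mod p}. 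At the same time, one must arrange the same extension class to give $\bf{w}(\rhobar, \tau_{\text{obv}})$ a length-$4$ shape at the obvious type $\tau_{\text{obv}} = \omega^{-a} \oplus \omega^{-b} \oplus \omega^{-c}$, ensuring the modularity of $F$.

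By Theorem~\ref{thm:dringBC} and Table~\ref{table:withmon}, the deformation ring $R^{(2,1,0), \tau'}_{\rhobar}$ is nonzero (in fact formally smooth of expected dimension at a length-$4$ shape), so any $\Qpbar$-point produces the desired crystalline lift, whose Hodge-Tate weights are $(c+p+1, b+1, a-p+1)$ once the twist by $\tau'$ is absorbed. The Breuil--M\'ezard multiplicity argument of Proposition~\ref{Prop-non-ss}, applied to the patching functor of Proposition~\ref{prop:WMPF}, then computes $W^?(\rhobar)$; it shows that $F \in W^?(\rhobar)$ but $F_\textrm{Sh} \notin W^?(\rhobar)$, since the length-$4$ shape at $\tau'$ forces the unique modular element of $\JH(\overline{\sigma}(\tau')) \cap W^?(\rhobar^{ss}) = \{F, F_\textrm{Sh}\}$ to be $F$. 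The globalization step then proceeds exactly as in the proof of Proposition~\ref{Prop-non-ss}: applying Lemma~A.5 and Proposition~A.6 of \cite{EG} one produces a CM field $F$ with $p$ completely split and a Galois representation $\rbar : G_F \to \GL_3(\overline{\F})$ with $\rbar|_{G_{F_{\tld{v}}}} \cong \rhobar$ at every $v \in \Sigma_p^+$, satisfying the Taylor-Wiles conditions and split ramification, and arranged to admit an automorphic Fontaine-Laffaille lift of parallel Hodge-Tate weights $(a+1, b+1, c+1)$, giving modularity of $\bigotimes_v F$. The non-modularity of any tensor product with $F_\textrm{Sh}$ at some place is then immediate from the local condition $F_\textrm{Sh} \notin W^?(\rhobar)$.

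The main obstacle is the simultaneous prescription of the shape of $\rhobar$ with respect to \emph{both} types $\tau_{\text{obv}}$ and $\tau'$: these are two constraints on the same extension class of $\rhobar$, and one must verify that a non-split extension realizing both conditions actually exists. This requires explicit computations with Kisin modules over $\F$ for the two types in tandem, using Theorem~\ref{thm:classification}, Proposition~\ref{changeofbasis}, and the descent-datum formalism of Section~\ref{KM with dd}; one then cross-references the admissible shapes listed in Table~\ref{table shapes mod p} and the corresponding entries of Table~\ref{table: types} to extract the desired extension class.
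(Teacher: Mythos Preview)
Your crystalline-lift step is wrong. A $\Qpbar$-point of $R^{(2,1,0),\tau'}_{\rhobar}$ is a potentially crystalline lift with inertial type $\tau'$ and Hodge--Tate weights $(2,1,0)$; this is \emph{not} a crystalline lift (trivial inertial type) with Hodge--Tate weights $(c+p+1,b+1,a-p+1)$. There is no ``absorbing the twist'' operation that converts one into the other: becoming crystalline over a tame extension and being crystalline over $\Qp$ with shifted weights are distinct $p$-adic Hodge conditions. The paper obtains the crystalline lift by an entirely different mechanism: the Weyl module $W=W(c+p-1,b,a-p+1)$ satisfies $\JH(W)=\{F,F_{\textrm{Sh}}\}$, so once $F$ is known to be modular one has $M_\infty(\otimes_v W\circ\iota_{\tld v})\neq 0$, and local--global compatibility then produces an automorphic (hence crystalline) lift of parallel Hodge--Tate weight $(c+p+1,b+1,a-p+1)$.

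The non-modularity argument in the paper is also quite different from (and simpler than) what you propose. Rather than juggling two types and two simultaneous shape constraints, the paper works with the single principal series type $\tau=\omega^{-a}\oplus\omega^{-b}\oplus\omega^{-c}$, which already contains both $F$ and $F_{\textrm{Sh}}$ among its Jordan--H\"older factors. The representation $\rhobar$ is chosen to be \emph{maximally non-split} of shape $\alpha\beta\alpha$ at $\tau$, with the coefficients of its Kisin module satisfying the inequality $(a-b)\overline{c}_{23}\overline{c}_{32}-(a-c)\overline{c}_{22}^*\overline{c}'_{33}\neq 0$ of the first $\alpha\beta\alpha$ row in Table~\ref{table:withmon}; this is precisely the condition under which $R^{(2,1,0),\tau}_{\rhobar}$ is formally smooth, so $e(\overline{R}_\infty(\tau))=1$. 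The weight elimination of \cite[Theorem~D]{HLM} for this particular maximally non-split $\rhobar$ confines all modular weights to $\{F,F_{\textrm{Sh}}\}$. Since $e(M_\infty(\otimes_v F\circ\iota_{\tld v}))\geq 1$, exactness of $M_\infty$ forces $M_\infty$ to vanish on the kernel of $\overline{\sigma}(\tau)^\circ\twoheadrightarrow \otimes_v F\circ\iota_{\tld v}$, which contains every weight having $F_{\textrm{Sh}}$ at some place. Your approach, by contrast, would require verifying that a single extension class can realise prescribed shapes at two different tame types simultaneously, and you have not indicated how to do this.
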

\begin{proof}
We consider the generic type $\tau\defeq \omega^{-a}\oplus\omega^{-b}\oplus\omega^{-c}$. It satisfies $F,\, F_\textrm{Sh}\in\JH(\sigmabar(\tau))$.

Then there exists $\overline{\fM}\in Y^{\mu,\tau}(\F)$ over $\Fp\otimes_{\Fp}\F[\![u]\!]$ endowed with an eigenbasis $\overline{\beta}$ such that $A\defeq \Mat_{\beta}(\phi_{\fM,\omega^c})$ is given by the matrix in row $\alpha\beta\alpha$ in Table \ref{table shapes mod p}, satisfying $(a-b) \overline{c}_{23} \overline{c}_{32} -(a-c) \overline{c}_{22}^* \overline{c}'_{33} \neq 0$, $\overline{c}_{23} \overline{c}_{32} -\overline{c}_{22}^* \overline{c}'_{33} \neq 0$, $ \overline{c}_{23} \overline{c}_{32}\neq 0$, and $\overline{c}'_{33}\neq 0$. 
Moreover, $T_{dd}(\overline{\fM})\cong \rhobar|_{G_{\Q_{p,\infty}}}$ for a continuous Galois representation $\rhobar:G_{\Qp}\ra\GL_3(\F)$. 
This is checked as in the proof of Proposition \ref{Prop-non-ss} by showing that the \'etale $\phz$-module associated to $\overline{\fM}$ comes from a Fontaine-Laffaille module $M$ over $\Fp\otimes_{\Fp}\F$ with Hodge-Tate weights $(a+1,b+1,c+1)$. 
In fact it can be shown, using the Fontaine-Laffaille module $M$, that the associated $\rhobar$ is maximally non-split with  $\omega^{a+1}\subseteq \rhobar$ and $\rhobar\twoheadrightarrow\omega^{c+1}$, $\rhobar^{ss}\cong \omega^{a+1}\oplus\omega^{b+1}\oplus\omega^{c+1}$ and Fontaine-Laffaille parameter $\mathrm{FL}(\rhobar)\neq 0,\, \infty$: cf. ~\cite[Definition 2.1.10, Corollary 2.1.8 and Lemma 2.2.7]{HLM}.
Note that $\bf{w}(\rhobar,\tau)=\alpha\beta\alpha$ by construction.

By the same argument as in the proof of Proposition \ref{Prop-non-ss}, we find a CM field $F/F^+$ which is unramified at all finite places and an automorphic Galois representation $\rbar:G_F\ra\GL_3(\F)$ satisfying the Taylor-Wiles conditions such that $\rbar|_{G_{F_w}}\cong \rhobar$ for all $w\in \Sigma_p$ (note that $\rhobar$ admits a crystalline lift of weight $(a+1,b+1,c+1)$). 

Let $M_\infty$ be a weak minimal patching functor for $\rbar$ (which exists by Proposition \ref{prop:WMPF}). Let $\sigma(\tau)^\circ$ be the unique lattice in $\sigma(\tau)$ such that $\overline{\sigma}(\tau)^{\circ}$ has irreducible cosocle isomorphic to $\otimes_{v\in\Sigma_p^+}F\circ\iota_{\tld{v}}$. Then the kernel $\overline{N}$ of the surjection $\overline{\sigma}(\tau)^{\circ}\twoheadrightarrow \otimes_{v\in\Sigma_p^+}F\circ\iota_{\tld{v}}$ contains all the weights of the form $(\otimes_{v\in\Sigma_p^+\setminus S}F\circ\iota_{\tld{v}})\otimes (\otimes_{v\in S}F_\textrm{Sh}\circ\iota_{\tld{v}})$ where $\emptyset \neq S\subseteq \Sigma_p^+$.

By \cite[Theorem D]{HLM}, if a weight $\otimes_{v\in \Sigma_p^+}F_{v}\circ\iota_{\tld{v}}$ is modular, then necessarily $F_{v}\in\{F,\, F_\textrm{Sh}\}$ for all $v$. Thus for the first claim of the Proposition, it suffices to show $M_\infty(\overline{N})=0$.
Indeed on one hand, we have $e\left(M_\infty(\otimes_{v\in\Sigma_p^+}F\circ\iota_{\tld{v}})\right)\geq 1$ (since $\rbar$ is modular of the lower alcove weight $\otimes_{v\in\Sigma_p^+}F\circ\iota_{\tld{v}})$.
On the other hand, we have $e\left(M_\infty(\overline{\sigma}(\tau)^{\circ})\right)=e\left(\overline{R}_\infty(\tau)\right)=1$ since $R_\infty(\tau)$ is formally smooth over $\cO$ (cf. row 6 in Table \ref{table:withmon}).

By exactness of $M_\infty$, we conclude that $e(M_\infty(\overline{N}))$, and hence $M_\infty(\overline{N})$, is $0$ as required.

As for the last part of the statement, let $W\defeq W(c+p-1,b,a-p+1)$ denote the $\GL_3(\Fp)$-representation over $\F$ obtained by taking the $\GL_3(\Fp)$-rational points of the Weyl module of highest weight $(c+p-1,b,a-p+1)$, and extending the coefficient field to $\F$. Then $\JH(W)=\{F,\, F_{\mathrm{Sh}}\}$.

For any $\emptyset\neq S\subseteq \Sigma_p^+$, we have $M_\infty\left((\otimes_{v\in\Sigma_p^+\setminus S}F\circ\iota_{\tld{v}})\otimes (\otimes_{v\in S}W\circ\iota_{\tld{v}})\right)\neq 0$ and hence by classical local-global compatibility $\rbar|_{G_{F_{\tld{v}}}}$ admits a crystalline lift of weight $(c+p+1,b+1,a-p+1)$ for $v\in S$.
\end{proof}

\section{The $\alpha$ and $\mathrm{id}$ shapes}
\label{sec:badcases}

The aim of this section is complete the proof of Theorems \ref{thm:modularity} and \ref{thm:conngenfib} by studying the deformation rings in the most complicated cases when the shape has length 0 or 1. To recall the assumptions, $\tau$ will be a generic tame inertial type and $\rhobar:G_{\Qp} \ra \GL_3(\F)$ satisfies $\bf{w}(\rhobar, \tau) \in \{ \alpha, \mathrm{id}\}$.  
There are three cases to consider: the identity shape $\bf{w}(\rhobar, \tau)=\mathrm{id}$ (\S ~\ref{sbs:Id}), the case where $\bf{w}(\rhobar, \tau)=\alpha$ and $\rhobar$ is semisimple (\S ~\ref{sbsb:semi}) and the case where $\bf{w}(\rhobar, \tau)=\alpha$ and $\rhobar$ is non-semisimple (\S ~\ref{sbsb:nsemi})
In the three cases, the cardinality of the intersection $\JH(\sigmabar(\tau)) \cap W^{?}(\rhobar)$ is 6, 6 and 5 respectively (see Proposition \ref{Prop-non-ss} when $\rhobar$ is non-semisimple). 

The difficulty of these cases is that the monodromy equations in characteristic 0 become too complicated to manipulate, and in particular it is hard to see exactly the effect of $p$-saturation. Nevertheless, one can guess an explicit candidate for the mod $p$ fiber of the deformation ring, because the monodromy equations (and the relations implied by $p$-saturation) become much simpler mod $p$. A priori, this candidate could be strictly larger than the mod $p$ fiber of the deformation ring, but we then invoke global arguments in the form of the Serre weight conjectures (Theorem \ref{thm:serreweightconj} and Proposition \ref{Prop-non-ss}) to show that this does not happen.

The genericity condition is used to guarantee that the error terms in the monodromy equations are divisible by a large enough power of $p$ so that they can be ignored in our manipulations after reducing modulo $p$.


With respect to the notations of \S~\ref{KM with dd}, we have $f=1$ and set $a\defeq a_{s_0(1),0}$, $b\defeq a_{s_0(2),0}$ and $c\defeq a_{s_0(3),0}$.

\subsection{The identity shape} 
\label{sbs:Id}
We now assume that $\tau$ is $n$-generic with $n\geq 4$ and that $\bf{w}(\rhobar,\tau)=\Id$.  Let
$$
A = \begin{pmatrix}
c_{11}+c_{11}^*(v+p)&c_{12}&c_{13}\\
vc_{21}&c_{22}+ c_{22}^*(v+p) &c_{23}\\
vc_{31}&vc_{32}&c_{33}+c_{33}^*(v+p)
\end{pmatrix}.
$$
In this section, we work over the ring $R^{\mathrm{aux}}$ which is defined to be the $p$-saturation of the quotient of $\cO[c_{ij},\, (c^{*}_{ii})^{\pm 1},\, 1\leq i,j\leq 3]$ by the following relations:
all 2 by 2 minors of 
$$
A|_{v = -p} = \begin{pmatrix}
c_{11} &c_{12}&c_{13}\\
-pc_{21}&c_{22} &c_{23}\\
-pc_{31}&-pc_{32}& c_{33}
\end{pmatrix}
$$
vanish and the determinant condition 
$$
c_{11}c^*_{22}c^*_{33}+c_{22}c^*_{33}c^*_{11}+c_{33}c^*_{11}c^*_{22}-c_{11}^*c_{23}c_{32}-c_{22}^*c_{13}c_{31}-c_{33}^*c_{12}c_{21}+c_{21}c_{13}c_{32}=0.
$$
We have
\begin{equation*}
\resizebox{1.0 \textwidth}{!}
{$
-A^{\dagger}|_{v = -p} = \begin{pmatrix}
-pec^*_{11} &(a-b)c_{12}& (a- c)c_{13}\\
-p (e + b - a)c_{21}& -pec^*_{22} &(b-c)c_{23}\\
-p (e + c - a)c_{31}&-p(e + c - b)c_{32}& -pec^*_{33}
\end{pmatrix}, \frac{\det(A)}{P(v)} A^{-1} \mid_{v = -p} = \begin{pmatrix}
c_{22}^* c_{33} + c_{33}^* c_{22} - c_{23} c_{32}  & c_{13} c_{32} - c^*_{33} c_{12} & -c_{22}^* c_{13}\\
p c_{33}^* c_{21} & c_{11}^* c_{33} + c_{33}^* c_{11} - c_{13} c_{31}&c_{13} c_{21} - c_{11}^*c_{23}\\
-p c_{21} c_{32} + p c^*_{22} c_{31}  &  pc_{11}^* c_{32} & c_{11}^* c_{22} + c_{22}^* c_{11} - c_{12} c_{21}
\end{pmatrix}$.
}
\end{equation*}
In the following lemmas, let us abbreviate $U\defeq A|_{v = -p}$ and $V\defeq \frac{\det(A)}{P(v)} A^{-1} \mid_{v = -p}$.
\begin{lemma}
\label{lem:invertible}
The ring $R^{\mathrm{aux}}$ is a domain and $c_{ij}\neq 0$ in $R^{\mathrm{aux}}$ for all $1\leq i,j\leq 3$.
\end{lemma}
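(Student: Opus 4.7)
The first step is to reduce both claims to the generic fiber. Since $R^{\mathrm{aux}}$ is by construction a $p$-saturation, it is $p$-torsion free, so the natural map $R^{\mathrm{aux}}\hookrightarrow R^{\mathrm{aux}}[1/p]$ is injective. It therefore suffices to prove that $R^{\mathrm{aux}}[1/p]$ is a non-zero domain and that each $c_{ij}$ is a non-zero element of it.

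The vanishing of the $2\times 2$ minors forces the matrix $U=A|_{v=-p}$ to have rank $\leq 1$ over $R^{\mathrm{aux}}[1/p]$. I would parametrize the rank-one locus explicitly: on the open subset where some entry of $U$ is a unit, introduce auxiliary parameters $u_1,u_2,u_3,v_1,v_2,v_3$ with $U_{ij}=u_iv_j$, giving
\[
c_{1j}=u_1v_j\ \ (j=1,2,3),\quad c_{22}=u_2v_2,\quad c_{23}=u_2v_3,\quad c_{33}=u_3v_3,
\]
\[
c_{21}=-u_2v_1/p,\quad c_{31}=-u_3v_1/p,\quad c_{32}=-u_3v_2/p,
\]
thereby identifying a suitable localization of $R^{\mathrm{aux}}[1/p]$ with a localized quotient of $E[u_i,v_j,(c_{ii}^*)^{\pm1}]$ by the single polynomial obtained by substituting the above into the determinant condition.

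A direct computation (clearing $p^{-2}$) gives the transformed relation
\begin{multline*}
p^2\bigl(u_1v_1c^*_{22}c^*_{33}+u_2v_2c^*_{11}c^*_{33}+u_3v_3c^*_{11}c^*_{22}\bigr)\\
+p\bigl(c^*_{11}u_2u_3v_2v_3+c^*_{22}u_1u_3v_1v_3+c^*_{33}u_1u_2v_1v_2\bigr)+u_1u_2u_3v_1v_2v_3=0,
\end{multline*}
which is visibly linear in each of $c^*_{11},c^*_{22},c^*_{33}$. Solving, say, for $c^*_{11}$ (whose coefficient is a non-zero-divisor polynomial in distinct variables) realizes the parametrizing ring as a localization of a polynomial ring in $u_i,v_j,c^*_{22},c^*_{33}$, and hence as a domain. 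Pulling back along the identification shows that $R^{\mathrm{aux}}[1/p]$ is itself a domain, while each $c_{ij}$ corresponds under the parametrization to a non-zero Laurent monomial in the auxiliary variables, establishing the non-vanishing claim.

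The main technical obstacle will be making the passage from the local parametrization to the full ring rigorous: each choice of a non-zero entry of $U$ only gives a parametrization of an open chart of $\mathrm{Spec}\,R^{\mathrm{aux}}[1/p]$, and one must either glue these charts coherently or, equivalently, check via a primary decomposition argument that the ideal of $2\times 2$ minors of $U$ together with the displayed determinant relation, after $p$-saturation and inversion of $p$, remains prime in $E[c_{ij},(c_{ii}^*)^{\pm1}]$. Put differently, one must rule out spurious components supported on the locus where $U$ has rank zero, so that the birational identification above is actually an isomorphism of irreducible schemes.
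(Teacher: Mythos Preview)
Your reduction to $R^{\mathrm{aux}}[1/p]$ and the basic strategy are sound, but the paper takes a cleaner and more conceptual route that sidesteps exactly the obstacle you flag at the end.

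Rather than introduce the Segre parametrization $U_{ij}=u_iv_j$ and work chart by chart, the paper simply considers the forgetful map
\[
\Spec R^{\mathrm{aux}}[1/p]\ \longrightarrow\ \mathrm{Det},
\]
where $\mathrm{Det}\subset\mathbb{A}^9_E$ is the determinantal variety cut out by the $2\times 2$ minors of $U$. The point is that $\mathrm{Det}$ is classically known to be an integral (indeed normal, Cohen--Macaulay) variety---it is the affine cone on the Segre embedding of $\mathbb{P}^2\times\mathbb{P}^2$. The paper then asserts that the forgetful map is smooth and surjective, from which both claims follow at once: integrality of the source, and non-vanishing of each $c_{ij}$ (since, up to the unit $\pm p$, these are pullbacks of the coordinate functions $U_{ij}$, none of which vanishes on the integral scheme $\mathrm{Det}$).

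Your explicit parametrization is essentially a local computation of this map over the open locus of $\mathrm{Det}$ where all entries of $U$ are units. The transformed determinant relation you write down (which, incidentally, can be repackaged as $(c_{11}+pc_{11}^*)(c_{22}+pc_{22}^*)(c_{33}+pc_{33}^*)=p^3c_{11}^*c_{22}^*c_{33}^*$ using the minor relations) is exactly the equation of the generic fiber. What the paper's formulation buys is that the ``spurious component'' worry disappears: once you know $\mathrm{Det}$ is integral and the map to it has the right properties, there is nothing left to glue. Your approach would work too, but to close it you would effectively be reproving that $\mathrm{Det}$ is irreducible and then running a fiber-dimension argument---precisely what invoking $\mathrm{Det}$ directly avoids.
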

\begin{proof}
It suffices to prove the statements for $R^{\mathrm{aux}}[1/p]$. Let $\mathrm{Det}$ be the determinantal variety over $E$ on the entries of $U$ obtained by imposing that the 2 by 2 minors of $U$ vanish.
Then the map $\Spec(R^{\mathrm{aux}}[1/p])\ra \mathrm{Det}$ obtained by forgetting $c_{ii}^*$ is smooth and surjective.
This gives both statements of the lemma.
\end{proof}

\begin{lemma}
\label{lem:fix:ref}
Keep the setting and notation above. For any matrix $Z=(Z_{ij})_{1\leq i,j\leq 3}$ of formal variables $Z_{ij}$, we have the following equality
\[
\left(A|_{v = -p} \right)Z \left(\frac{\det(A)}{P(v)} A^{-1} \mid_{v = -p}\right)=(U_{ij}X_j)_{1\leq i,j\leq 3}
\]
with $pX_j\in R^{\mathrm{aux}}[Z_{ij},\, 1\leq i,j\leq 3]$.
\end{lemma}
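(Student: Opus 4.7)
The strategy is to exploit the rank-one structure of $U = A|_{v=-p}$, which is forced by the vanishing of its $2 \times 2$ minors in the definition of $R^{\mathrm{aux}}$. First I would check that the given determinant relation is precisely the vanishing of the coefficient of $(v+p)^2$ in $\det(A)$; combined with the vanishing of $\det(A)|_{v=-p}$ and of the coefficient of $(v+p)$ in $\det(A)$ (the latter being automatic from the $2 \times 2$ minor conditions via $d \det(A)/dv = \Tr(\adj(A) \cdot dA/dv)$), this yields $\det(A) = (c_{11}^* c_{22}^* c_{33}^*) P(v)^3$ in $R^{\mathrm{aux}}$, and hence $UV = VU = 0$ via the identity $A \cdot \adj(A) / P(v) = (\det(A)/P(v)) \cdot I$.

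Next, I would work in the fraction field $K$ of $R^{\mathrm{aux}}$ (a domain by Lemma \ref{lem:invertible}) and factor $U = a b^T$, taking $a$ to be a nonzero column of $U$ and $b_j = U_{1j}/a_1$. The rank-one relation $U_{ik} U_{i'j} = U_{ij} U_{i'k}$ then gives $(UZV)_{ij} = a_i \cdot (b^T Z V^j)$, where $V^j$ denotes the $j$-th column of $V$, while $U_{ij} = a_i b_j$; so the candidate
\begin{equation*}
X_j \defeq (b^T Z V^j)/b_j \in K[Z_{kl}]
\end{equation*}
gives the desired identity, and is well-defined (independent of the choice of $i$) since $b_k/b_j = U_{ik}/U_{ij}$ is independent of $i$ by the minor relations.

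The remaining task is to verify $pX_j \in R^{\mathrm{aux}}[Z_{kl}]$, which, by multilinearity in $Z$, reduces to the divisibilities
\begin{equation*}
U_{ij} \bigm| p \cdot U_{ik} V_{lj} \quad \text{in } R^{\mathrm{aux}}, \qquad i, j, k, l \in \{1,2,3\}.
\end{equation*}
Each such divisibility should be established by a direct computation using the explicit entries of $U$ and $V$ displayed above the statement of the lemma, together with the $2\times 2$ minor relations (typified by $c_{11} c_{22} + p c_{12} c_{21} = 0$, which gives $c_{11} \mid p c_{12} c_{21}$), the determinant relation (needed for products involving the ``diagonal'' entries $V_{ii}$ of $V$), and the $p$-flatness of $R^{\mathrm{aux}}$, which allows one to upgrade intermediate relations like $p c_{11} c_{32} = p c_{12} c_{31}$ to the cleaner $c_{11} c_{32} = c_{12} c_{31}$. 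The hard part will be the bookkeeping: though naively $81$ cases appear, the trivial ones $k = j$ together with the $\delta$-cyclic symmetry of Corollary \ref{symmetry} reduce this to only a handful of genuinely distinct divisibilities, each a short manipulation.
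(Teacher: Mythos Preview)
Your approach is essentially the same as the paper's: reduce to the pointwise identity $U_{ik}V_{lj}=U_{ij}X_{jkl}$ with $X_{jkl}$ independent of $i$ and $pX_{jkl}\in R^{\mathrm{aux}}$, checking independence of $i$ on the locus where the $U_{ij}$ are invertible (your rank-one factorization $U=ab^T$ is just a repackaging of this). Two small points: (i) your first paragraph establishing $UV=0$ is correct but never used in the argument, so you can drop it; (ii) the reference to Corollary~\ref{symmetry} is misplaced---that corollary concerns shapes of Kisin modules, not any symmetry of the matrices $U,V$ here.

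For the divisibility step, the paper avoids your anticipated casework by the single observation that each $V_{lj}$ is an $R^{\mathrm{aux}}$-linear combination of the entries $c_{mj}$ (i.e.\ of the $j$-th column of $U$, up to signs and factors of $p$); for the diagonal entries $V_{jj}$ this uses the determinant relation (e.g.\ $c_{22}^*c_{33}+c_{33}^*c_{22}-c_{23}c_{32}$ is rewritten via the determinant condition as a combination of $c_{11},c_{21},c_{31}$ divided by the unit $c_{11}^*$). Once $V_{lj}=\sum_m \lambda_m c_{mj}$, the minor relations convert $U_{ik}\cdot c_{mj}$ into $U_{ij}\cdot c_{mk}$ at the cost of at most one factor of $p$, giving the divisibility uniformly. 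This is both shorter and conceptually cleaner than the case-by-case verification you outline, though your route would also go through.
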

\begin{proof}
It suffices to show that for any $1\leq i,k,l,j\leq 3$, one has
\begin{equation}
U_{ik} V_{lj}=U_{ij}X_{jkl}
\end{equation}
for some $X_{jkl}\in\frac{1}{p}\cO[c_{ij},\, c_{ii}^*,\, 1\leq i,j,\leq 3]$ independent of $i$.

The fact that the $(U_{ik} V_{lj})\in\frac{c_{ij}}{p}\cO[c_{ij},\, c_{ii}^*,\, 1\leq i,j,\leq 3]$ follows from the fact that $V_{lj}$ is a $R^{\mathrm{aux}}$-linear combination of $c_{mj}$'s (for example, $c^*_{22}c_{33} + c^*_{33}c_{22} - c_{23} c_{32}=-(c^*_{22}c^*_{33}c_{11}+c^*_{22}c_{13}c_{31}+c^*_{33}c_{12}c_{21}-c_{21}c_{13}c_{32})/c^*_{11}$), and the minor conditions allows us to convert from $c_{ij}c_{kl}$ to $c_{il}c_{kj}$, at possibly a cost of a $p$ in the denominator.

The fact that $X_{jkl}$ is independent of $i$ can also be checked on the locus (using Lemma \ref{lem:invertible}) where all the $U_{ij}$'s are invertible, where it follows immediately from the 2 by 2 minor condition ($\frac{U_{ik}}{U_{ij}}=\frac{U_{i'k}}{U_{i'j}}$).
\end{proof}

We now study the deformation ring.
We observe that the finite height ring $R_{\overline{\fM}}^{\tau, \overline{\beta}, \Box}$ is an $R^{\mathrm{aux}}$-algebra in an obvious way. 

Recall the $\cO$-algebra $R^{\tau, \overline{\beta}, \Box}_{\overline{\fM}, \rhobar}$ from Definition \ref{defprob} (2). 
By (\ref{defdiagram}) and Theorem \ref{gaugeunique}, it is isomorphic to a power series ring in 3 variables over the potentially crystalline ring $R^{\mu, \tau}_{\rhobar}$, and hence has relative dimension 14 over $\cO$.
Recall also (cf. (\ref{defdiagram})) that we have a surjection  $\pi: R_{\overline{\fM}}^{\tau, \overline{\beta}, \Box}\twoheadrightarrow R^{\tau, \overline{\beta}, \Box}_{\overline{\fM}, \rhobar}$, where we quotient out by the monodromy equations (and take the reduced and $p$-flat quotient of the result). 

The leading term for the monodromy is given by $(A^{\dagger} P(v)^2 A^{-1})_{v = -p}$ (cf. Definition \ref{leadingterm}). Define
\begin{align*}
\mathrm{Mon}_1=&(e-a+c)c^*_{22}c_{33}+(e-a+b)c_{22}c^*_{33}-(e-a+c)c_{23}c_{32}+pec^*_{22}c^*_{33}
\\
\mathrm{Mon}_2=&(a-b)c^*_{33}c_{11}+(e-b+c)c_{33}c^*_{11}-(a-b)c_{13}c_{31}+pec^*_{33}c^*_{11}
\\
\mathrm{Mon}_3=&(b-c)c^*_{11}c_{22}+(a-c)c_{11}c^*_{22}-(b-c)c_{12}c_{21}+pec^*_{11}c^*_{22}.
\end{align*}


A direct computation shows that in $R^{\mathrm{aux}}$ we have the equality
\[
(A^{\dagger} P(v)^2 A^{-1})_{v = -p}=A|_{v = -p}\cdot \begin{pmatrix}\mathrm{Mon}_1&0&0\\
0&\mathrm{Mon}_2&0\\0&0&\mathrm{Mon}_3\end{pmatrix}
\]
Thus Theorem \ref{moncond} and Lemma \ref{lem:fix:ref} show that in $R^{\tau, \overline{\beta}, \Box}_{\overline{\fM}, \rhobar}$ we have the equation 
\[
A|_{v = -p}\cdot \begin{pmatrix}\mathrm{Mon}_1+O_1(p^{n-2})&0&0\\
0&\mathrm{Mon}_2+O_2(p^{n-2})&0\\0&0&\mathrm{Mon}_3+O_3(p^{n-2})\end{pmatrix}=0
 \]
where $O_i(p^{n-2})$  stands for an error term which is divisible by $p^{n-2}$.

The following proposition refines Theorem \ref{thm:factors} in the present situation: 
\begin{prop} \label{refinedfactor}
The surjection $\pi$ factors through the quotient of $R_{\overline{\fM}}^{\tau, \overline{\beta}, \Box}$ by the relations
\begin{equation}
\label{rel:id}
\mathrm{Mon}_i+O_i(p^2)=0, \forall i\in \{1,2,3\}.
\end{equation}
\end{prop}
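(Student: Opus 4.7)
The plan is to derive the three scalar equations $\mathrm{Mon}_i + O_i(p^2) = 0$ by extracting them from the matrix monodromy identity of Theorem \ref{moncond}. First, I apply Theorem \ref{moncond} to the universal Kisin module over $R := R^{\tau, \overline{\beta}, \Box}_{\overline{\fM}, \rhobar}$ equipped with its tautological eigenbasis coming from the gauge basis: this gives $P_N(A)|_{v=-p} = p^{n-1} M_{\mathrm{err}}$ as an identity of matrices in $R$, where $M_{\mathrm{err}}$ has the factorized form $A|_{v=-p}\,Z\,(P(v)^2 A^{-1})|_{v=-p}$ for some $Z \in \mathrm{Mat}_3(R)$.

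The second step is a computational unpacking. Direct expansion of the derivative formula \eqref{eq:Leibniz} applied to the explicit identity-shape matrix $A$, combined with the defining relations of $R^{\mathrm{aux}}$ (minors and determinant), simplifies $P_N(A)|_{v=-p}$ to $A|_{v=-p}\cdot \mathrm{diag}(\mathrm{Mon}_1, \mathrm{Mon}_2, \mathrm{Mon}_3)$, which is the observation displayed just above the proposition. For the right-hand side, Lemma \ref{lem:fix:ref} expresses $A|_{v=-p}\,Z\,(P(v)^2 A^{-1})|_{v=-p}$ as $(U_{ij}X_j)_{i,j}$ with the $X_j$ independent of $i$ and $pX_j$ integral. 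Comparing coefficients entry by entry, the matrix monodromy identity becomes the nine scalar relations
\[
U_{ij}\bigl(\mathrm{Mon}_j - p^{n-1}X_j\bigr) = 0 \qquad (1 \leq i, j \leq 3)
\]
in $R$. Setting $\widetilde{\mathrm{Mon}}_j := \mathrm{Mon}_j - p^{n-1}X_j$, the hypothesis $n \geq 4$ guarantees $\widetilde{\mathrm{Mon}}_j \in \mathrm{Mon}_j + p^2 R$.

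The main obstacle, where the geometric content lies, is the extraction of the three scalar equations $\widetilde{\mathrm{Mon}}_j = 0$ from the nine matrix equations $U_{ij}\widetilde{\mathrm{Mon}}_j = 0$. Since $R$ is $\cO$-flat and $R[1/p]$ is reduced, it suffices to check vanishing on each irreducible component of $\mathrm{Spec}(R[1/p])$. On such a component, the corresponding potentially crystalline representation has a fixed ordering of the Hodge--Tate weights $(2,1,0)$, so the columns of $A|_{v=-p}$ are divisible by $P(v)$ to an order determined by this ordering, and a dual calculation shows that $(P(v)^2 A^{-1})|_{v=-p}$ has a matching row-vanishing pattern. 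Exploiting the two available factorizations of $P_N(A)|_{v=-p}$ (as $A|_{v=-p}\cdot\mathrm{diag}(\mathrm{Mon}_j)$ and, dually, $\mathrm{diag}(\mathrm{Mon}'_i)\cdot(P(v)^2A^{-1})|_{v=-p}$, analogously to what is used in the proof of Proposition \ref{onequation}), each $\widetilde{\mathrm{Mon}}_j$ can then be forced to vanish on every component, hence in $R[1/p]$, hence in $R$ by $\cO$-flatness. The delicate point is that on any fixed component at most one column of $A|_{v=-p}$ is nonzero (the weight-zero column), so one cannot naively divide by $U_{ij}$; the symmetric role played by the adjugate-side factorization is essential to recover the remaining $\widetilde{\mathrm{Mon}}_j$ equations.
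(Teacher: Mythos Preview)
Your setup (the first two paragraphs) is correct and matches the paper exactly: you correctly arrive at the nine scalar relations $U_{ij}\widetilde{\mathrm{Mon}}_j = 0$ in $R$, and you correctly identify that the crux is extracting $\widetilde{\mathrm{Mon}}_j = 0$ from these.

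However, your resolution of this crux has a genuine gap. You invoke a ``dual factorization'' $P_N(A)|_{v=-p} = \mathrm{diag}(\mathrm{Mon}'_i) \cdot (P(v)^2 A^{-1})|_{v=-p}$ that is neither established in the paper nor verified in your argument; and even granting such a factorization (with some unspecified $\mathrm{Mon}'_i$), you do not explain why combining it with the column-side factorization forces \emph{every} $\widetilde{\mathrm{Mon}}_j$ to vanish. On a component where $A|_{v=-p}$ has rank one, you only directly obtain $\widetilde{\mathrm{Mon}}_{j_0} = 0$ for those $j_0$ with some $U_{ij_0} \neq 0$; the dual side would at best give vanishing of the $\mathrm{Mon}'_{i_0}$, which are different quantities. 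Your assertion that ``at most one column of $A|_{v=-p}$ is nonzero'' is also imprecise: rank one means the columns are proportional, not that only one is nonzero.

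The paper's argument is different and concrete: it is a \emph{dimension count}. Working on the $p$-flat closure $R$ of a connected component of $\Spec R^{\tau,\overline\beta,\Box}_{\overline\fM,\rhobar}[1/p]$ (a domain), assume for contradiction that $\widetilde{\mathrm{Mon}}_1 \neq 0$; then $c_{i1} = 0$ for all $i$. Since the Hodge--Tate weights are \emph{exactly} $(2,1,0)$, not all $c_{ij}$ vanish, so some other column is nonzero and hence (say) $\widetilde{\mathrm{Mon}}_2 = 0$. The explicit form of $\mathrm{Mon}_2$ then forces $c_{33} \in pR^\times$, whence $\widetilde{\mathrm{Mon}}_3 = 0$ and $c_{22} \in pR^\times$. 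Feeding this back into the minor relations eliminates further variables and shows $c_{23}c_{32} = 0$ in $R/\varpi$, so $R/\varpi$ is a quotient of a ring of dimension at most $13$. This contradicts the fact (from diagram~(\ref{defdiagram})) that $R^{\tau,\overline\beta,\Box}_{\overline\fM,\rhobar}$ has relative dimension $14$ over $\cO$. This dimension contradiction is the missing idea.
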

\begin{proof} We already know that $(\mathrm{Mon}_j+O_j(p^2))c_{ij}=0$. The refinement will come from the fact that $R^{\mu,\tau}_{\overline{\rho}}$ classifies representations with Hodge-Tate weights \textit{exactly} $(2,1,0)$ instead of just being in $[0,2]$.

Since $R^{\mu,\tau}_{\overline{\rho}}$ is flat and  $R^{\mu,\tau}_{\overline{\rho}}[\frac{1}{p}]$ is regular, it suffices to show that the equations (\ref{rel:id}) hold in the $p$-flat closure of each connected component of $R^{\tau, \overline{\beta}, \Box}_{\overline{\fM}, \rhobar}[\frac{1}{p}]$. Let $R$ denote the $p$-flat closure of a connected component of $R^{\tau, \overline{\beta}, \Box}_{\overline{\fM}, \rhobar}[\frac{1}{p}]$ and assume that we have $\mathrm{Mon}_1+O_1(p^2)\neq 0$ in $R$. 
Since $R$ is a domain, we conclude that $c_{i1}=0$ in $R$ for all $i$. 
On the other hand, $c_{ij}$ cannot be $0$ in $R$ for all $i,j$ since the Hodge--Tate weights are exactly $(2,1,0)$, so at least one of the equations $\mathrm{Mon}_j+O_j(p^2)=0$ for $j=2,3$ holds in $R$. 
Assume without loss of generality that $\mathrm{Mon}_2+O_2(p^2)=0$. This implies that $c_{33}\in pR^{\times}$ and in particular $c_{33}\neq 0$. Thus $\mathrm{Mon}_3+O_3(p^2)=0$ and hence $c_{22}\in p R^\times$.
Now the relation $c_{12}c_{33}=-pc_{13}c_{32}$ gives $c_{12}=\frac{-pc_{13}c_{32}}{c_{33}}$ in $R$ and $c_{23}c_{32}=-\frac{1}{p}c_{22}c_{33}=0$ in $R/\varpi$. Thus we see $R/\varpi$ is a quotient of a power series ring in $8$ variables over $\F[\![c_{13},\, c_{32},\, c_{23}, c_{ii}^*-[\overline{c}_{ii}^*],\, i=1,\,2,\,3]\!]/(c_{23}c_{32})$. This shows that $R/\varpi$ has dimension at most 13, a contradiction.
\end{proof}
\begin{cor} 
\label{cor:explicit}
Let $\widetilde{R}$ be the quotient of $\F[\![c_{ij},\ 1\leq i,j,\leq 3]\!]$  by the relations:
\begin{align*}
&c_{ii}c_{jj}=0,&&\text{for $i\neq j$};\\
&c_{11}c_{23}=0;&&
c_{31}c_{22}=0;
&& c_{12}c_{23}=c_{22}c_{13}; 
&& c_{11}c_{32}=c_{12}c_{31}; 
&& c_{21}c_{33}=c_{31}c_{23};
\end{align*}
\begin{align*}
& (e-a+c)c_{33}+(e-a+b)c_{22}-(e-a+c)c_{23}c_{32}=0;
\\ 
&(b-c)c_{22}+(a-c)c_{11}-(b-c)c_{12}c_{21}=0;
\\
& (a-b)c_{11}+(e-b+c)c_{33}-(a-b)c_{13}c_{31}=0;
\\
&c_{11}+c_{22}+c_{33}-c_{12}c_{21}-c_{13}c_{31}-c_{23}c_{32}+c_{21}c_{13}c_{32}=0.
\end{align*}
Then the ring $R^{\tau, \overline{\beta}, \Box}_{\overline{\fM}, \rhobar}/\varpi$ is a power series ring over a quotient of $\widetilde{R}$.
\end{cor}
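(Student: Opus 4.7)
The plan is to combine Proposition \ref{refinedfactor} with an explicit computation of the defining relations of $R^{\tau,\overline{\beta},\Box}_{\overline{\fM}}/\varpi$ and a change of variables that absorbs the units $c_{ii}^*$. The units $c_{ii}^*$ will produce formal smooth variables $y_i\defeq c_{ii}^*-[\overline{c}_{ii}^*]$, which form the power series direction; the remaining variables $c_{ij}$ will be rescaled by $1/c_{ii}^*$ to match the presentation of $\widetilde{R}$.

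By Proposition \ref{refinedfactor}, $R^{\tau,\overline{\beta},\Box}_{\overline{\fM},\rhobar}$ is a quotient of $R^{\tau,\overline{\beta},\Box}_{\overline{\fM}}$ modulo the three relations $\mathrm{Mon}_i+O_i(p^2)=0$. Reducing modulo $\varpi$ kills the error terms $O_i(p^2)$ and the summands $pec_{ii}^*c_{jj}^*$ inside each $\mathrm{Mon}_i$. I would then make explicit the defining relations of $R^{\tau,\overline{\beta},\Box}_{\overline{\fM}}$ itself: these come from the vanishing at $v=-p$ of the nine cofactors of $A$ and from $\det(A)=c_{11}^*c_{22}^*c_{33}^*\cdot P(v)^3$. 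A direct calculation shows that three cofactors (at positions $(1,2)$, $(1,3)$, $(2,3)$) vanish only after dividing out an overall factor of $p$; using the $p$-flatness of $R^{\tau,\overline{\beta},\Box}_{\overline{\fM}}$ (Theorem \ref{univfh}) to $p$-saturate and then reducing modulo $\varpi$ yields the relations $c_{21}c_{33}=c_{31}c_{23}$, $c_{22}c_{31}=0$, and $c_{11}c_{32}=c_{12}c_{31}$. The six remaining cofactors, reduced modulo $\varpi$, produce $c_{ii}c_{jj}=0$ for $i\neq j$, $c_{11}c_{23}=0$, $c_{12}c_{23}=c_{13}c_{22}$, together with an extra relation $c_{12}c_{33}=0$ that does not appear in $\widetilde{R}$. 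The $v^2$-coefficient of the determinant equation, reduced modulo $\varpi$, then yields the cubic relation appearing in $\widetilde{R}$.

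The final step is the change of variables $\tilde{c}_{ij}\defeq c_{ij}/c_{ii}^*$ for all $i,j\in\{1,2,3\}$; this clears the units $c_{ii}^*c_{jj}^*$ and $c_{11}^*c_{22}^*c_{33}^*$ from each of the relations above (for the three monodromy relations one divides by $c_{jj}^*c_{kk}^*$ with $\{i,j,k\}=\{1,2,3\}$), and a short computation shows that each such relation becomes exactly one of the defining relations of $\widetilde{R}$ in the $\tilde{c}_{ij}$. This produces the desired surjection $\widetilde{R}[\![y_1,y_2,y_3]\!]\twoheadrightarrow R^{\tau,\overline{\beta},\Box}_{\overline{\fM},\rhobar}/\varpi$, possibly factoring through an additional quotient arising from $\tilde{c}_{12}\tilde{c}_{33}=0$ (and from any further consequences of the determinant and height conditions at the remaining $v$-coefficients), which does not affect the statement.

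The hard part is the $p$-saturation step: three cofactors vanish only after dividing out a factor of $p$, and only after this division (justified by the $p$-flatness of $R^{\tau,\overline{\beta},\Box}_{\overline{\fM}}$) and subsequent reduction modulo $\varpi$ does one recover the two-variable identities $c_{21}c_{33}=c_{31}c_{23}$, $c_{22}c_{31}=0$, $c_{11}c_{32}=c_{12}c_{31}$. A secondary check is that the integer coefficients $(e-a+c)$, $(a-b)$, and their cyclic analogues appearing in the $\mathrm{Mon}_i$ are nonzero modulo $p$; this follows from the $n$-genericity hypothesis with $n\geq 4$, since each such coefficient is a small integer combination of $e=p-1$ and the differences $a-b,\,a-c,\,b-c$, all of whose absolute values lie in $[n,\,p-1-n]$.
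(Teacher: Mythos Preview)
Your proposal is correct and follows essentially the same route as the paper: invoke Proposition~\ref{refinedfactor}, reduce all height, determinant, and monodromy relations modulo $\varpi$, then absorb the units $c_{ii}^*$ by a diagonal rescaling of the $c_{ij}$. The paper compresses this into one sentence and uses the column rescaling $c_{ij}\mapsto c_{ij}/c_{jj}^*$, whereas you use the row rescaling $c_{ij}\mapsto c_{ij}/c_{ii}^*$; both work, since every listed relation (each $2\times 2$ minor, the determinant identity, and each $\mathrm{Mon}_i$) is homogeneous for either scaling and hence sheds a common unit factor. Your explicit tracking of which cofactors need a preliminary division by $p$ before reducing mod $\varpi$ is exactly the content the paper packages into the phrase ``$p$-saturation'' in the definition of $R^{\mathrm{aux}}$, and your observation that the extra relation $\tilde c_{12}\tilde c_{33}=0$ survives but is harmless (since the statement only claims a \emph{quotient} of $\widetilde{R}$) is also correct.
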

\begin{proof} This is a direct consequence of Proposition \ref{refinedfactor} and the observation that replacing $c_{ij}$ by $\frac{c_{ij}}{c^*_{jj}}$ eliminates the $c^*_{ii}$ from all the equations.
\end{proof}
The following Proposition gives basic structural information about $\widetilde{R}$:
\begin{prop} \label{id explicit}
The ring $\widetilde{R}$ is a reduced 3-dimensional Cohen-Macaulay ring. It has 6 minimal primes and each irreducible component is formally smooth over $\F$. Thus $e(\widetilde{R})=6$.
\end{prop}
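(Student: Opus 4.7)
My plan is to exhibit $\Spec\widetilde R$ explicitly as a union of six three-dimensional, formally smooth components, each corresponding to one of the six length-four elements in $\mathrm{Adm}(2,1,0)$, from which all the assertions of the proposition will follow. Geometrically, this reflects the fact that in the Bruhat closure of $\mathrm{Adm}(2,1,0)$ the six maximal cells all specialize to the $\mathrm{id}$ shape, so the local deformation ring at an $\mathrm{id}$-shape point should split into six branches; on the Serre-weight side, this also matches the cardinality $6$ of $\JH(\overline{\sigma}(\tau))\cap W^?(\overline{\rho})$ in the identity case.

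The first step is to use the three monodromy-type equations to linearly eliminate $c_{11}$, $c_{22}$, $c_{33}$: they form a $3\times 3$ linear system in these variables whose coefficient matrix has determinant $-(a-c)(e-a+b)(e-b+c) - (a-b)(b-c)(e-a+c)$, which is nonzero in $\F$ under the genericity hypothesis (this is where genericity of $\tau$ enters crucially). Solving yields each $c_{ii}$ as an explicit $\F$-linear combination of the three diagonal products $c_{12}c_{21}$, $c_{13}c_{31}$, $c_{23}c_{32}$; substituting back presents $\widetilde R$ as a quotient of $\F[\![c_{12},c_{13},c_{21},c_{23},c_{31},c_{32}]\!]$ by an ideal coming from the six-variable images of the relations $c_{ii}c_{jj}=0$, $c_{11}c_{23}=0$, $c_{31}c_{22}=0$, the three minor equations, and the determinant. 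A direct case analysis on this presentation, using the non-degeneracy of the linear combinations coming from the elimination step, will show that the minimal primes are precisely six explicit ideals, each generated by three of the six off-diagonal variables with the other three free. The six admissible vanishing patterns correspond bijectively to the six length-four elements of $\mathrm{Adm}(2,1,0)$ via their pivot configurations.

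From this explicit description the structural consequences follow: reducedness is automatic, since $\widetilde R$ is the quotient by the intersection of six prime ideals; each component is visibly $\F[\![x,y,z]\!]$ in the three non-vanishing off-diagonal coordinates, hence formally smooth of dimension three; equidimensionality gives $\dim\widetilde R=3$; and the Hilbert--Samuel multiplicity is additive over top-dimensional primes, giving $e(\widetilde R)=6$ since each component contributes one. The main obstacle will be the Cohen--Macaulay assertion, which does not follow automatically from a reduced equidimensional union of smooth components that meet along lower-dimensional loci. I plan to verify CM directly by exhibiting three $\F$-linear forms in the $c_{ij}$ that restrict to a system of parameters on each of the six components, thereby producing a regular sequence of length three in $\widetilde R$; equivalently, I will show that after the monodromy-based elimination, the residual ideal in six variables is generated by a regular sequence of three elements, realizing $\widetilde R$ as a three-dimensional complete intersection in a six-variable regular ring.
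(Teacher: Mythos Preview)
Your elimination step rests on a false claim. The determinant of the $3\times 3$ coefficient matrix of the three monodromy equations in $(c_{11},c_{22},c_{33})$, reduced modulo $p$ using $e\equiv -1$, is (up to sign) $\alpha^2+\alpha\beta+\beta^2+\alpha+\beta$ where $\alpha=a-b$, $\beta=b-c$. This does \emph{not} avoid zero under the paper's genericity hypothesis: for $p=23$, $(\alpha,\beta)=(7,8)$ is $4$-generic and gives $49+56+64+15=184=8\cdot 23$; for $p=47$, $(\alpha,\beta)=(15,15)$ is strongly generic and gives $3\cdot 225+30=15\cdot 47$. So the linear elimination of all three diagonal entries fails in general, and the sentence ``this is where genericity of $\tau$ enters crucially'' is simply wrong. (A repair is available: the $3\times 3$ minor using two monodromy equations together with the determinant equation equals $1+(a-c)\not\equiv 0$ under genericity, so one can eliminate $c_{11},c_{22},c_{33}$ that way; but this is not the argument you wrote.)

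Your Cohen--Macaulay plan is also a gap rather than a sketch. The claim that after elimination the residual ideal in six variables is generated by a regular sequence of three elements---i.e.\ that $\widetilde R$ is a complete intersection---has no supporting evidence, and inspection of the relations (or of the paper's Gr\"obner basis, which has eleven elements) makes this extremely implausible. Producing an explicit length-three system of parameters and verifying it is a \emph{regular} sequence (not just an SOP) is a genuine computation that you have not done; regularity after the first quotient is exactly where such arguments tend to fail.

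The paper proceeds quite differently. It computes a Gr\"obner basis for the polynomial ideal with respect to a lex order and observes that the initial ideal is generated by square-free monomials; this gives radicality directly (if $f^k\in I$ then $\mathrm{in}(f)\in\mathrm{in}(I)$, and one reduces). For Cohen--Macaulayness it invokes the standard flat degeneration to $\F[c_{ij}]/\mathrm{in}(I)$ as in \cite[\S 15.8]{Eisenbud:CA}: one checks that this Stanley--Reisner ring is Cohen--Macaulay, and then openness of the CM locus in the family transfers CM back to $\widetilde R$. The description of the six smooth components is left as a routine verification.
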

\begin{proof} Observe that the relations defining $\tilde{R}$ are actually polynomials instead of genuine power series, so $\tilde{R}$ can be viewed as a completion of a quotient of a polynomial ring by the ideal $I$ generated by the relations above. We use some standard terminology from the theory of Gr\"obner bases \cite{Eisenbud:CA}. We pick the monomial order on $\F[c_{ij}]\defeq \F[c_{ij},\ 1\leq i,j\leq 3]$ given by $c_{11}>c_{12}>c_{13}>c_{21}>c_{22}>c_{23}>c_{31}>c_{32}>c_{33}$ and write $I$ for the ideal generated by the relations of Proposition \ref{cor:explicit}.
An easy but tedious calculation (using Buchberger's algorithm, for example) shows that the ideal $I$ has the following Gr\"obner basis with our choice of monomial order:
\begin{align*}
&c_{11}-c_{13}c_{31}+\frac{e-b+c}{a-b}c_{33},
\\
&c_{12}c_{21}-\frac{a-c}{b-c}c_{13}c_{31}-\frac{e-a+c}{e-a+b}c_{23}c_{32}+\Big(\frac{(a-c)(e-b+c)}{(b-c)(a-b)}-\frac{e-a+c}{e-a+b}\Big)c_{33},
\\
&c_{12}c_{23}-\frac{e-a+c}{e-a+b}c_{13}c_{33},
\\
&c_{12}c_{33},\  c_{13}c_{21}c_{32}-\frac{a-c}{b-c}c_{13}c_{31}-c_{23}c_{32}+\frac{e}{b-c}c_{33},
\\
&c_{12}c_{23}c_{31}-\frac{e-b+c}{a-b}c_{23}c_{33},\  c_{13}c_{31}c_{33}-\frac{e-b+c}{a-b}c^2_{33},
\\
&c_{21}c_{33}-c_{23}c_{31},\  c_{22}-\frac{e-a+c}{e-a+b}c_{23}c_{32}+\frac{e-a+c}{e-a+b}c_{33},
\\
&c_{23}c_{31}c_{32}-c_{31}c_{33},\  \textrm{ and } \ c_{23}c_{32}c_{33}-c^2_{33}.
\end{align*}
In each of the above polynomials, the leading monomial is exactly the left-most term. Thus we see that the initial ideal $\mathrm{in}(I)$ of $I$ is generated by square-free monomials. This implies that $I$ is radical: Suppose $f^k\in I$, then $\mathrm{in}(f)^k\in \mathrm{in}(I)$, so $\mathrm{in}(f)\in \mathrm{in}(I)$. But then we can divide $f$ by elements in $I$ and get some $f'<f$ with $f'^k\in I$. Continuing this way, we see that $f\in I$.

Furthermore, as in \cite[\S 15.8]{Eisenbud:CA}, we can realize $\F[c_{ij}]/I$ as the fiber $\cF_t$ (for any $t \neq 0$) of a flat family $\cF$ over $\F[t]$, such that the fiber $\cF_0$ is $\F[c_{ij}]/\mathrm{in}(I)$. Since this quotient is Cohen-Macaulay by an explicit check, and the Cohen-Macaulay locus is open in $\cF$, we conclude that $\cF$ is Cohen-Macaulay at some closed point of the form $c_{ij}=0$ for all $i$ and $j$, and $t=t_0\neq 0$. But then $(t-t_0)$ is a regular element in the localization of $\cF$ at this point, and hence the localization of $\F[c_{ij}]/I$ at the closed point $c_{ij}=0$ for all $i$ and $j$ is Cohen-Macaulay.

The computation of the irreducible components is left as an easy exercise to the reader.
\end{proof} 

For the following Proposition, we need to assume that $\tau$ is \emph{strongly generic}.

\begin{prop}  
\label{prop:explicit2}
Assume that $\tau$ is strongly generic.
Then $R^{\tau, \overline{\beta}, \Box}_{\overline{\fM}, \rhobar}/\varpi$ is isomorphic to a power series ring over $\widetilde{R}$.
\end{prop}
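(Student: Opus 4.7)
The strategy is to combine the explicit local description from Corollary \ref{cor:explicit} with a matching lower bound on the Hilbert--Samuel multiplicity obtained via globalization and a weak minimal patching functor.

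By Corollary \ref{cor:explicit}, there is a surjection $\pi : \widetilde{R}[\![X_1,\ldots, X_{11}]\!] \twoheadrightarrow R^{\tau, \overline{\beta}, \Box}_{\overline{\fM}, \rhobar}/\varpi$, the number of power series variables being dictated by dimension count (both sides are equidimensional of Krull dimension $14$, and $\dim \widetilde{R}=3$). By Proposition \ref{id explicit}, $\widetilde{R}$ is a reduced Cohen--Macaulay $3$-dimensional $\F$-algebra with exactly $6$ formally smooth irreducible components, hence $\widetilde{R}[\![X_1,\ldots,X_{11}]\!]$ is also reduced and Cohen--Macaulay with $6$ formally smooth components, and Hilbert--Samuel multiplicity $e(\widetilde{R}[\![X]\!])=e(\widetilde{R})=6$. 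In particular $e(R^{\tau, \overline{\beta}, \Box}_{\overline{\fM}, \rhobar}/\varpi)\leq 6$.

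The main step is the matching lower bound. I would globalize $\rhobar$ (using the machinery of Lemma A.5 and Proposition A.6 of \cite{EG}, as in the proof of Proposition \ref{Prop-non-ss}) to a continuous automorphic $\rbar: G_F\to \GL_3(\F)$ satisfying the Taylor--Wiles conditions with $\rbar|_{G_{F_{\tld v}}}\cong\rhobar$ for every $v\in\Sigma_p^+$, and automorphic of some reachable Serre weight. The strong genericity of $\tau$ ensures $\rhobar$ is $8$-generic, so Theorem \ref{thm:serreweightconj} applies and identifies the modular reachable Serre weights of $\rbar$ with $\bigotimes_v W^?(\rhobar)$; a combinatorial check analogous to Proposition \ref{prop:intersec} (using Tables \ref{table shapes mod p} and \ref{table:JH}) yields $|\JH(\sigmabar(\tau))\cap W^?(\rhobar)|=6$. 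Fix a weak minimal patching functor $M_\infty$ for $\rbar$ (Proposition \ref{prop:WMPF}). By exactness of $M_\infty$ applied to a Jordan--H\"older filtration of $\sigmabar(\tau)^\circ$, and using that $e(M_\infty(V))=1$ for each modular reachable Serre weight $V$ (as in the proof of Theorem \ref{thm:serreweightconj}), we obtain
\[
e(M_\infty(\sigmabar(\tau)^\circ))\;\geq\;\sum_{V\in \JH(\sigmabar(\tau))\cap W(\rbar)} e(M_\infty(V))\;=\;6^{|\Sigma_p^+|}.
\]
By Proposition \ref{upperbound} this gives $e(\overline{R}_\infty(\tau))\geq 6^{|\Sigma_p^+|}$. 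Since $R_\infty(\tau)$ is a completed tensor product of the local framed potentially crystalline deformation rings over the $v \in \Sigma_p^+$, together with a formally smooth factor, this forces $e(R^{\tau, \overline{\beta}, \Box}_{\overline{\fM}, \rhobar}/\varpi)\geq 6$.

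Combining the two inequalities, $e(R^{\tau, \overline{\beta}, \Box}_{\overline{\fM}, \rhobar}/\varpi)=6$. It remains to conclude that $\pi$ is an isomorphism. Let $J=\ker \pi$ and suppose $J\neq 0$. Since $\widetilde{R}[\![X]\!]$ is reduced, $J$ is not contained in every minimal prime, so $\Spec(\widetilde{R}[\![X]\!]/J)$ is a union of strictly fewer than $6$ of the formally smooth irreducible components of $\Spec\widetilde{R}[\![X]\!]$. Each generic local ring of $\widetilde{R}[\![X]\!]$ is a field (formal smoothness), hence remains a field after passing to the quotient by $J$, so every surviving component contributes multiplicity exactly $1$. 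Thus $e(\widetilde{R}[\![X]\!]/J)\leq 5$, contradicting $e(R^{\tau, \overline{\beta}, \Box}_{\overline{\fM}, \rhobar}/\varpi)=6$. The main obstacle will be verifying the combinatorial count $|\JH(\sigmabar(\tau))\cap W^?(\rhobar)|=6$ and arranging the globalization so that Theorem \ref{thm:serreweightconj} applies uniformly at all places of $F^+$ above $p$; the rest of the argument is a multiplicity-matching chase that is by now standard in the Breuil--M\'ezard framework.
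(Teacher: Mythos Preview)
Your proposal is correct and follows the same overall strategy as the paper: globalize $\rhobar$, build a weak minimal patching functor, use the Serre weight results to get a lower bound on $e(\overline{R}_\infty(\tau))$, and match it against the upper bound coming from Corollary~\ref{cor:explicit} and Proposition~\ref{id explicit}. The endgame where you deduce that $\pi$ is an isomorphism from equality of multiplicities is exactly the content of the paper's Lemma~\ref{lem:ring}, just argued directly via localization at minimal primes.

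The one genuine difference is your choice of global type. You take $\tau_v=\tau$ at \emph{every} place $v\in\Sigma_p^+$, which forces you to count $6^{|\Sigma_p^+|}$ modular weights and then invoke multiplicativity of Hilbert--Samuel multiplicity under completed tensor products, $e(\widehat{\bigotimes}_v R^{\mu,\tau}_{\rhobar}/\varpi)=e(R^{\mu,\tau}_{\rhobar}/\varpi)^{|\Sigma_p^+|}$, to extract $e(R^{\mu,\tau}_{\rhobar}/\varpi)\geq 6$. The paper instead chooses $\tau_v$ at all places $v\neq v_0$ so that $\bf{w}(\rhobar,\tau_v)$ has length $4$; by Table~\ref{table:withmon} those local deformation rings are formally smooth, so $\overline{R}_\infty(\otimes_v\tau_v)$ is a power series ring over $R^{\mu,\tau}_{\rhobar}/\varpi$ alone, and the intersection $\JH(\otimes_v\sigmabar(\tau_v))\cap W^?$ has exactly $6$ elements (one factor at $v_0$, one at each other place). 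This isolates the multiplicity at a single place without appealing to tensor-product multiplicativity, and makes the combinatorial count $|\JH(\sigmabar(\tau))\cap W^?(\rhobar)|=6$ (which you flag as the main obstacle) appear only once. Both routes are valid; the paper's is slightly more streamlined and avoids an extra (though standard) commutative algebra fact.
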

\begin{proof}
We globalize $\rhobar$ to a $\overline{r}: G_F \to \GL_3(\F)$ such that the following conditions hold:
\begin{itemize}
\item The assumptions of Theorem \ref{thm:serreweightconj} are satisfied;
\item $\overline{r}$ is unramified away from $p$;
\item $p$ splits completely in $F$. Make a choice $\tilde{v}$ above each place $v|p$ in $F^+$;
\item For each $\tilde{v}|p$, there is an isomorphism $F_{\tilde{v}}\cong \mathbb{Q}_p$ and $\overline{r}|_{G_{F_{\tilde{v}}}}\cong \rhobar$
\end{itemize}
(cf. the proof of Proposition \ref{Prop-non-ss}).
With this global setting, we can choose a weak minimal patching functor $M_\infty$.
We now choose a tame type $\otimes_{v\in\Sigma_p^+}\tau_v$ such that $\overline{r}|_{G_{F_{\tilde{v}}}}$ has a shape of length 4 with respect to $\tau_v$ for all but one place $v_0|p$, while  $\overline{r}|_{G_{F_{\tilde{v}_0}}}$ has $\mathrm{id}$ shape with respect to $\tau_{v_0}\defeq \tau$. 
Since $\tau$ is strongly generic, we deduce from Lemma \ref{lem:gen-2} below that $\rhobar$ is $8$-generic. Moreover $\JH(\sigmabar(\tau))$ consists of reachable weights.
With this choice, the intersection
\[
\JH(\otimes_{v\in\Sigma_p^+}\sigmabar(\tau_v))\cap W^?(\otimes_{v\in\Sigma_p^+}\rbar|_{G_{F_{\tilde{v}}}})
\] 
(with obvious notation) consists of exactly 6 weights. 
By Theorem \ref{thm:serreweightconj}, we conclude that if $W$ is any Serre weight for $\cG$ in this intersection we have $M_\infty(W)\neq 0$. 
Thus we have 
$$
e(M_\infty(\overline{\sigma}(\otimes_{v\in\Sigma_p^+}\tau_v))) \geq 6.
$$
On the other hand, by our choice of $\tau_v$ and the knowledge of Galois deformation rings for length 4 shapes (Corollary \ref{dring} and Table \ref{table:withmon}), $\overline{R}_\infty(\otimes_{v\in\Sigma_p^+}\tau_v)$ is isomorphic to a power series ring over $R^{\mu,\tau_{v_0}}_{\rhobar}$. It follows that $e( R^{\mu,\tau}_{\rhobar}/\varpi)\geq 6$.
The Proposition now follows from Lemma \ref{lem:ring} below, the fact that $R^{\tau, \overline{\beta}, \Box}_{\overline{\fM}, \rhobar}/\varpi$ receives a surjection from a power series ring over $\widetilde{R}$ (Corollary \ref{cor:explicit}) and has the same Hilbert-Samuel multiplicity as $R^{\mu,\tau}_{\rhobar}/\varpi$.
\end{proof}

\begin{lemma}
\label{lem:gen-2}
Let $n\geq 2$.
Let $\tau_0$ be an $n$-generic inertial type for $I_{\Qp}$ and $\rhobar_0:G_{\Qp} \ra \GL_3(\F)$ be a Galois representation such that there exists $\cM \in \Phi\text{-}\Mod^{\text{\'et}}_{dd}(\F)$ with $Y^{\mu,\tau_0}_{\cM}(\F)\neq \emptyset$ $($and thus a single point by Theorem \ref{Kisinvariety}$)$ and $\bV_{dd}^*(\cM) \cong \rhobar_0|_{G_{\Q_{p,\infty}}}$.
Then $\rhobar_0$ is an $(n-2)$-generic continuous Galois representation.
\end{lemma}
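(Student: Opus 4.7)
The plan is to compute the inertial character of $\rhobar_0^{ss}$ explicitly from the shape of the (by Theorem \ref{Kisinvariety} unique) Kisin module $\overline\fM \in Y^{\mu,\tau_0}(\F)$ realizing $\rhobar_0|_{G_{\Q_{p,\infty}}}$ via $T^*_{dd}$, and then to verify the $(n-2)$-genericity condition directly. The statement is vacuous for $n \leq 2$, so assume $n \geq 3$.

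Let $\widetilde w_0 \in \mathrm{Adm}(2,1,0)$ be the shape of $\overline\fM$. By Proposition \ref{prop:phif} applied with $f=1$, the \'etale $(\phz, \F(\!(v)\!))$-module $\varepsilon_0(\cM^{\Delta=1})$ has Frobenius matrix $s_0 \cdot A \cdot \mathrm{diag}(v^{a_{s_0(k),0}})_k \cdot s_0^{-1}$ with $A \in \Iw(\F)\widetilde w_0 \Iw(\F)$. Since the Iwahori part does not alter the semisimplification of the associated Galois representation, we may replace $A$ by a standard representative of $\widetilde w_0$ in $N_{\GL_3}(T)(\F(\!(v)\!))$, namely $P_{w_0}\cdot \mathrm{diag}(v^{x_1},v^{x_2},v^{x_3})$, where $\widetilde w_0 = t_\mu w_0$ in the semidirect decomposition $\widetilde W = X_*(T)\rtimes S_3$, with $\mu = (x_1,x_2,x_3)$. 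The key combinatorial input is that the admissibility condition $\widetilde w_0 \leq t_{s\cdot(2,1,0)}$ for some $s \in S_3$ forces $\mu \in \mathrm{Conv}(S_3\cdot(2,1,0))\cap \Z^3 = (S_3\cdot(2,1,0))\cup\{(1,1,1)\}$; in particular $x_i \in \{0,1,2\}$ and $|x_i-x_j|\leq 2$ for all $i,j$.

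Combining these, in a suitable basis the semisimplified Frobenius on $\varepsilon_0(\cM^{\Delta=1})$ takes the monomial form $P_\pi \cdot \mathrm{diag}(v^{b_1},v^{b_2},v^{b_3})$ with $\pi = s_0 w_0 s_0^{-1} \in S_3$ and $b_i = a_{i,0} + x_{s_0^{-1}(i)}$. Decomposing $\pi$ into cycles, each $r$-cycle $\{k,\pi(k),\ldots,\pi^{r-1}(k)\}$ contributes a niveau-$r$ irreducible summand of $\rhobar_0^{ss}$; after base change to $\Qpf{r}$ this summand splits into $r$ characters whose exponents are the Frobenius conjugates of $\gamma_k = \sum_{i=0}^{r-1} p^i b_{\pi^i(k)}$ (up to unramified twist, which does not affect inertial weights). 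Under the hypothesis $n\geq 3$, the bound $b_i \in [0,p-1]$ can be arranged after a uniform shift, so the $p$-adic digits of $\gamma_k$ are precisely the $b_{\pi^i(k)}$. Letting $f'\in\{1,2,3\}$ be the least common multiple of the cycle lengths and $K' = \Qpf{f'}$, the $f'$-tuples $\bf a_k$ encoding $\rhobar_0^{ss}|_{I_{K'}}$ satisfy $a_{k,j} = b_{m(k,j)}$ for some bijection $k\mapsto m(k,j)$ of $\{1,2,3\}$ at each position $j$.

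To conclude, for any $k\neq \ell$ and any $j$,
\[
|a_{k,j}-a_{\ell,j}| \;=\; \bigl|(a_{m(k,j),0}-a_{m(\ell,j),0}) + (x_{s_0^{-1}(m(k,j))}-x_{s_0^{-1}(m(\ell,j))})\bigr|,
\]
which lies in $[n-2,p-1-(n-2)]$ by the $n$-genericity of $\tau_0$ (first term) and the bound $|x_i-x_j|\leq 2$ (second term). This yields the $(n-2)$-genericity of $\rhobar_0$. The main technical points are the description of the translation part of admissible elements (used in the second paragraph) and careful control of possible $p$-adic carries when reading off the digits of $\gamma_k$; both are handled by the hypothesis $n\geq 3$ together with the combinatorics of $\mathrm{Adm}(2,1,0)$.
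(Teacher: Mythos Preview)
Your approach matches the paper's intent (``direct computation using Proposition~\ref{prop:phif}''), and the numerical bound $|x_i-x_j|\le 2$ on the translation part of $\mathrm{Adm}(2,1,0)$ is exactly the right combinatorial input. The final inequality is also correct. However, there is a genuine gap in the middle of the argument.

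The assertion ``the Iwahori part does not alter the semisimplification of the associated Galois representation'' is false as stated, and in particular your deduction that each $r$-cycle of $\pi$ contributes a niveau-$r$ summand to $\rhobar_0^{ss}$ does not hold. Concretely, take shape $\widetilde w_0=\alpha$ (so $w_0=(12)$, $(x_1,x_2,x_3)=(1,1,1)$) with principal series type and $s_0=\mathrm{id}$. From Table~\ref{table shapes mod p} one computes, via Proposition~\ref{prop:phif}, that the relevant $2\times 2$ block of the Frobenius is $\begin{pmatrix}0&v^{b+1}\overline c_{12}^*\\v^{a+1}\overline c_{21}^*&v^{b+1}\overline c'_{22}\end{pmatrix}$. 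When $\overline c'_{22}=0$ this gives an irreducible niveau-$2$ piece, but when $\overline c'_{22}\neq 0$ one checks (as in the proof of Proposition~\ref{Prop-non-ss}) that the representation is a nonsplit extension of $\omega^{b+1}$ by $\omega^{a+1}$, hence niveau~$1+1$. So the cycle structure of $\pi$ does \emph{not} determine the niveau.

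What \emph{is} true, and what you actually need, is the weaker statement that after base change to $K'=\Q_{p^{f'}}$ (with $f'$ divisible by $6$), the multiset of $p$-adic digits of the inertial exponents of $\rhobar_0^{ss}|_{I_{K'}}$ at each position is $\{b_1,b_2,b_3\}$. In the $\alpha$ example above, both cases give digits $\{a+1,b+1,c+1\}$ at every position, even though the niveau differs. This weaker statement suffices for the $(n-2)$-genericity bound, but it does not follow from your monomial replacement; it requires either a shape-by-shape verification (finding explicit filtrations of the $\phi$-module, which is straightforward since the gauge-basis matrices in Table~\ref{table shapes mod p} are sparse) or a general argument about how the multiset $\{b_k\}$ is preserved under passage to Jordan--H\"older factors of the Kisin module. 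The paper's ``direct computation'' presumably intends the former.
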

\begin{proof}
This follows from a direct computation using Proposition \ref{prop:phif}.
\end{proof}

\begin{lemma}
\label{lem:ring} 
Suppose $R$, $S$ are complete Noetherian local rings over $\F$, with $R\twoheadrightarrow S$. Assume that $R$ is reduced, that $R$ and $S$ are equidimensional with $\dim R=\dim S$ and that $e(R)=e(S)$.
Then $R\cong S$.
\end{lemma}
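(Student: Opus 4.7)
Let $I = \ker(R \twoheadrightarrow S)$; the goal is to show $I = 0$. Since $R$ is reduced, the nilradical of $R$ is zero, i.e.\ the intersection of the minimal primes of $R$ vanishes, so it suffices to show that $I \subseteq \mathfrak{p}$ for every minimal prime $\mathfrak{p}$ of $R$. The plan is to extract this containment from the hypothesis $e(R)=e(S)$ via the associativity formula for Hilbert--Samuel multiplicity.

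First I would compute $e(R)$ and $e(S)$ separately using the associativity formula. Since $R$ is a complete Noetherian local ring over $\F$, it is universally catenary, so a prime $\mathfrak{p}$ of $R$ satisfies $\dim R/\mathfrak{p} = \dim R$ if and only if $\mathfrak{p}$ is a minimal prime (using that $R$ is equidimensional). Applying the associativity formula to $R$ itself, together with the fact that $R$ is reduced (so $\ell(R_\mathfrak{p})=1$ at every minimal prime $\mathfrak{p}$), yields
\[
e(R) \;=\; \sum_{\mathfrak{p}\,\text{min in } R} e(R/\mathfrak{p}).
\]
Next, I would apply the associativity formula to the $R$-module $S$. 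Since $S$ is equidimensional with $\dim S = \dim R$, the relevant primes are again the minimal primes of $R$, and the summand at $\mathfrak{p}$ is nonzero exactly when $I\subseteq \mathfrak{p}$, in which case $\mathfrak{p}/I$ is a minimal prime of $S$ and $S_{\mathfrak{p}/I}=R_\mathfrak{p}/I_\mathfrak{p}$ is a nonzero Artinian local ring. Thus
\[
e(S) \;=\; \sum_{\substack{\mathfrak{p}\,\text{min in } R\\ I\subseteq\mathfrak{p}}} \ell_{R_\mathfrak{p}}(S_{\mathfrak{p}/I})\,e(R/\mathfrak{p}),
\]
using that $\mathfrak{m}_R\cdot S = \mathfrak{m}_S$ so that multiplicity of $S$ as an $R$-module agrees with $e(S)$.

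Finally, the hypothesis $e(R)=e(S)$ combined with $\ell_{R_\mathfrak{p}}(S_{\mathfrak{p}/I})\geq 1$ and $e(R/\mathfrak{p})\geq 1$ for every minimal prime $\mathfrak{p}$ of $R$ forces the two sums to contain the same indexing set, i.e.\ $I\subseteq \mathfrak{p}$ for \emph{every} minimal prime $\mathfrak{p}$ of $R$, and moreover $\ell_{R_\mathfrak{p}}(S_{\mathfrak{p}/I})=1$ in each term. Intersecting over the minimal primes of $R$ gives $I\subseteq \bigcap_\mathfrak{p}\mathfrak{p}=0$, whence $R\cong S$. The only mildly delicate point in the argument is justifying that the associativity formula may be restricted to minimal primes (which uses equidimensionality together with universal catenarity of complete local Noetherian rings), but this is standard and should pose no serious obstacle.
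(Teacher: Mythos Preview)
Your proof is correct and follows essentially the same approach as the paper: both show $I_\mathfrak{p}=0$ for every minimal prime $\mathfrak{p}$ of $R$ and then invoke reducedness. The paper phrases this more tersely---from $e(R)=e(S)$ and $\dim R=\dim S$ it concludes that the support of $I$ misses every minimal prime---while you unpack the same multiplicity comparison explicitly via the associativity formula.
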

\begin{proof}
Let $I$ be the kernel of $R\twoheadrightarrow  S$.
Because $e(R)=e(S)$ and $\dim R=\dim S$, the support of $I$ as an $R$-module does not contain any minimal primes, hence $I_{\mathfrak{p}}= 0$ for all minimal primes $\mathfrak{p}$ of $R$. But this implies that $I$ is inside the intersection of all the minimal primes of $R$, hence $I=0$ because $R$ is reduced.
\end{proof}

\begin{cor} Assume that $\tau$ is strongly generic. The ring $R^{\mu,\tau}_{\rhobar}$ is normal, Cohen-Macaulay, and $R^{\mu,\tau}_{\rhobar}[\frac{1}{p}]$ is a domain.
\end{cor}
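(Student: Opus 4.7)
The strategy is to deduce all three properties of $R^{\mu,\tau}_{\rhobar}$ from the explicit description of the special fiber of the intermediate ring $R^{\tau, \overline{\beta}, \Box}_{\overline{\fM}, \rhobar}$ furnished by Proposition~\ref{prop:explicit2}, and then to transfer them down via a formally smooth morphism.

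First I would establish that $R^{\tau, \overline{\beta}, \Box}_{\overline{\fM}, \rhobar}$ is Cohen--Macaulay. By Proposition~\ref{prop:explicit2} its mod-$\varpi$ reduction is a power series ring over $\widetilde{R}$, and by Proposition~\ref{id explicit} $\widetilde{R}$ is itself Cohen--Macaulay. Since $R^{\tau, \overline{\beta}, \Box}_{\overline{\fM}, \rhobar}$ is $\cO$-flat (it is formally smooth over the $\cO$-flat ring $R^{\mu,\tau}_{\rhobar}$ via diagram (\ref{defdiagram}) and Corollary~\ref{RKisin}), the element $\varpi$ is a non-zero-divisor, and Cohen--Macaulayness lifts from the mod-$\varpi$ fiber.

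Next I would check Serre's $R_1$ condition on $R^{\tau, \overline{\beta}, \Box}_{\overline{\fM}, \rhobar}$. A height one prime $\mathfrak{p}$ either avoids $\varpi$, in which case the local ring is a localization of the regular ring $R^{\tau, \overline{\beta}, \Box}_{\overline{\fM}, \rhobar}[1/p]$ (regular by \cite[Theorem 3.3.8]{KisinPSS}), or $\mathfrak{p}$ is a minimal prime of $(\varpi)$. In the latter case $\mathfrak{p}$ is the generic point of an irreducible component of the special fiber; by Proposition~\ref{id explicit} these components of $\widetilde{R}$ are \emph{each} formally smooth over $\F$, so the localization of the special fiber at $\mathfrak{p}$ is a field, and hence the localization of $R^{\tau, \overline{\beta}, \Box}_{\overline{\fM}, \rhobar}$ at $\mathfrak{p}$ is a DVR with uniformizer $\varpi$. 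By Serre's criterion, $R^{\tau, \overline{\beta}, \Box}_{\overline{\fM}, \rhobar}$ is normal, and hence as a local Noetherian normal ring is a domain.

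Finally, the formally smooth morphism $\Spf R^{\tau, \overline{\beta}, \Box}_{\overline{\fM}, \rhobar}\to \Spf R^{\mu,\tau}_{\rhobar}$ coming from (\ref{defdiagram}) together with Corollary~\ref{RKisin} identifies $R^{\tau, \overline{\beta}, \Box}_{\overline{\fM}, \rhobar}$ with a power series ring over $R^{\mu,\tau}_{\rhobar}$. The Cohen--Macaulay, normal, and domain properties all descend through such an extension, so $R^{\mu,\tau}_{\rhobar}$ inherits them, and its generic fiber $R^{\mu,\tau}_{\rhobar}[1/p]$, being a localization of a domain, is a domain as well. The main obstacle is the $R_1$ verification at generic points of special fiber components; this is precisely made possible by the stronger conclusion in Proposition~\ref{id explicit} that the \emph{individual components} of $\widetilde{R}$ — not merely $\widetilde{R}$ itself — are formally smooth over $\F$.
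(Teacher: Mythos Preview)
Your argument is correct and in fact slightly streamlines the paper's own proof. Both proofs obtain Cohen--Macaulayness by lifting from the special fiber via Propositions~\ref{id explicit} and~\ref{prop:explicit2}, and both verify Serre's $R_1$ criterion by splitting height-one primes into those avoiding $\varpi$ (handled by regularity of the generic fiber) and minimal primes of $(\varpi)$. The paper, however, inserts a separate direct argument showing $R^{\mu,\tau}_{\rhobar}[1/p]$ has no non-trivial idempotents (using reducedness of the special fiber to force any idempotent to live in the local ring $R^{\mu,\tau}_{\rhobar}$ itself), and only afterwards deduces normality. You bypass this entirely by observing that normal $+$ local already forces $R^{\mu,\tau}_{\rhobar}$ to be a domain, whence so is its localization at $p$; this is more efficient.

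One small point of emphasis: your final remark overstates what is needed for the $R_1$ check at minimal primes of $(\varpi)$. You invoke the formal smoothness of the \emph{individual components} of $\widetilde{R}$, but all that is required is that the special fiber be \emph{reduced} (so that its localization at each minimal prime is a field). Reducedness of $\widetilde{R}$ is part of Proposition~\ref{id explicit} and is exactly what the paper uses; the stronger smoothness of components plays no role here.
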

\begin{proof} 
By Propositions \ref{id explicit}, \ref{prop:explicit2} above, $R^{\tau,\overline{\beta},\Box}_{\overline{\fM},\rhobar}/\varpi$ is reduced and Cohen-Macaulay, hence $R^{\mu,\tau}_{\rhobar}/\varpi$ inherits those properties by formal smoothness (cf. (\ref{defdiagram})).
This implies Cohen-Macaulayness.

  Since  $R^{\mu,\tau}_{\rhobar}[\frac{1}{p}]$ is regular, to show it is a domain it suffices to show it has no non-trivial idempotent. Suppose $e$ is a non-trivial idempotent. Then there is a maximal $k\in \Z$ such that $\varpi^{-k}e\in R^{\mu,\tau}_{\rhobar}$. By maximality and $e=e^2\in \varpi^{2k}R^{\mu,\tau}_{\rhobar}$, we have $k\geq 2k$. On the other hand $(\varpi^{-k}e)^2=\varpi^{-k}(\varpi^{-k}e)$ and $\varpi^{-k}e\neq 0$ mod $\varpi$, hence we must have $k=0$ since $R^{\mu,\tau}_{\rhobar}/\varpi$ is reduced. But then $e$ is an idempotent of the local ring $R^{\mu,\tau}_{\rhobar}$, hence it must be a trivial idempotent.

Finally, since $R^{\mu,\tau}_{\rhobar}[\frac{1}{p}]$ is regular and $R^{\mu,\tau}_{\rhobar}/\varpi$ is reduced, $R^{\mu,\tau}_{\rhobar}$ satisfies conditions $R_1$ and $S_2$, hence is normal. 
\end{proof}

\begin{rmk}
The reason for which we need $\tau$ to be $10$-generic in Proposition \ref{prop:explicit2} is due to Lemma \ref{lem:gen-2} and the $8$-genericity assumption on $\rhobar$ appearing in Theorem \ref{thm:serreweightconj}.
By Remark \ref{rem:WE} an improvement on weight elimination for a niveau three $\rhobar$ could relax the genericity assumption on $\tau$. (An inspection of the proof of Theorem \ref{thm:serreweightconj} and Table \ref{table: types} shows however that any improvement of Theorem \ref{thm:serreweightconj} based on stronger weight elimination results will require $\rhobar$ to be at least $7$-generic: in order to obtain a further relaxation on the genericity hypotheses on $\rhobar$, hence on $\tau$, one should be able to perform the explicit computations in \S~\ref{Explicit deformation rings} with types which are $m$-generic with $m\leq 4$).

We also observe that in the specific situation of Proposition \ref{prop:explicit2} we could have avoided Lemma \ref{lem:gen-2}, noting that $\bf{w}(\rhobar,\tau)=\Id$ implies that $\rhobar$ and $\tau$ have the same degree of genericity. However, this is no longer true when $\bf{w}(\rhobar,\tau)=\gamma$, in which case we need to invoke Lemma \ref{lem:gen-2} to have the analogous statement of Proposition \ref{prop:explicit2} for shape $\gamma$.
\end{rmk}

\subsection{The $\alpha$ shape} 
We assume that $\tau$ is $n$-generic with $n\geq 4$ and that $\bf{w}(\rhobar,\tau)=\alpha$.
The universal family of shape $\alpha$ is given by 
$$
A = \begin{pmatrix}
c_{11}&c_{12}+ (v+p)c_{12}^*&c_{13}\\
c_{21}^*v&c_{22}+ (v+p)c_{22}' &c_{23}\\
c_{31}v&c_{32}v&\left(c_{33}+(v+p) c_{33}^*\right)
\end{pmatrix}
$$
subject to the condition that all 2 by 2 minors of 
$$
A|_{v = -p} = \begin{pmatrix}
c_{11} &c_{12}&c_{13}\\
-pc^*_{21}&c_{22} &c_{23}\\
-pc_{31}&-pc_{32}& c_{33}
\end{pmatrix}
$$
vanish and the determinant condition 
$$
c_{11}c_{22}'c_{33}^*+c_{13}c_{21}^*c_{32}-c_{13}c_{22}'c_{31}-c_{12}c_{21}^*c_{33}^*+pc_{21}^*c_{12}^*c_{33}^*=0.
$$
We have
\begin{equation*}
\resizebox{1.0 \textwidth}{!}
{$
-A^{\dagger}_{v = -p} = \begin{pmatrix}
0 & (a-b) c_{12} - e p c^*_{12} & (a-c) c_{13} \\
-p(e - a + b) c_{21}^* & -e p c'_{22}& (b-c) c_{23} \\
-p(e - a + c) c_{31} & -p(e -b + c)c_{32} & - p e c^*_{33} 
\end{pmatrix}, \frac{\det(A)}{P(v)} A^{-1} \mid_{v = -p} = \begin{pmatrix}
-c_{23} c_{32} + c_{22} c^*_{33} + c_{33} c'_{22} & c_{13} c_{32} - c_{12} c^*_{33} - c^*_{12} c_{33}& c_{12}^* c_{23} - c_{13} c'_{22} \\
p c^*_{33} c^*_{21} & - c_{13} c_{31} + c_{11} c^*_{33} &  c^*_{21} c_{13} \\
- p c^*_{21} c_{32} + p c'_{22} c_{31} & p c_{31} c^*_{12} &  - c_{21}^* c_{12} + c_{11} c'_{22}  + p c^*_{12} c^*_{21} 
\end{pmatrix}
$}
\end{equation*}
Define $\widetilde{c}_{32}\defeq c_{32}-\frac{c'_{22}c_{31}}{c^*_{21}}$.
By looking at the $(1,1),(2,1),(3,3)$ entries of $(A^{\dagger} P(v)^2 A^{-1})_{v = -p}$ (the leading term for monodromy, cf. Definition \ref{leadingterm}) we get the following monodromy equations: 
\begin{align*}
&(a-b) c_{12} c^*_{33}  - (a-c)c_{13}\widetilde{c}_{32} = pe c^*_{12} c^*_{33} +O(p^2) 
\\
&(e-a + c)  c_{23}\widetilde{c}_{32}  - (e - a +b) c_{22} c^*_{33}  = ep c'_{22} c^*_{33}+O(p^2)  
\\
&\text{\Small$(e-a+c)c_{31}c_{23}c^*_{12}-(e-a+c)c_{31}c_{13}c'_{22}+(e-b+c)c_{32}c_{13}c^*_{21}-ec_{12}c^*_{33}c^*_{21}+ec_{11}c'_{22}c^*_{33}+pec^*_{12}c^*_{21}c^*_{33}=O(p^2)$}
\end{align*}

\subsubsection{The semisimple case.}
\label{sbsb:semi}
Let us first consider the case when $\overline{c}'_{22}=0$, that is $\rhobar$ is semisimple.
\begin{prop}
\label{expl alpha1}
Let $\widetilde{R}$ be the quotient of $\F[\![c_{11},c_{13},c_{23},c_{31},\widetilde{c}_{32},c'_{22}]\!]$ by the relations:
\begin{align*}
&c_{11}c_{23}=0; && c_{11}\widetilde{c}_{32}=c_{13}c_{31}\widetilde{c}_{32}; && c_{11}c'_{22}=\frac{b-c}{a-b}c_{13}\widetilde{c}_{32};&& c_{13}c_{23}\widetilde{c}_{32}=0; && c_{23}c_{31}\widetilde{c}_{32}=0;
\end{align*}
\begin{align*}
(a-b)c_{13}c_{31}c'_{22}+(c-b)c_{13}\widetilde{c}_{32}+(e-a+c)c_{23}c_{31}=0.
\end{align*}
Then the ring  $R^{\tau, \overline{\beta}, \Box}_{\overline{\fM}, \rhobar}/\varpi$ is a power series ring over a quotient of $\widetilde{R}$.
\end{prop}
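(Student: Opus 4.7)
The proof will proceed in close parallel to the identity-shape case (Proposition~\ref{refinedfactor} and Corollary~\ref{cor:explicit}), with the main adjustment being a preliminary change of variables to $\widetilde{c}_{32} := c_{32} - c'_{22}c_{31}/c^*_{21}$, which is the natural coordinate because it is exactly the combination that appears in the three monodromy equations displayed just before the Proposition.

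The plan is to first establish a refined factorization analogous to Proposition~\ref{refinedfactor}: the surjection $\pi : R^{\tau,\overline{\beta},\Box}_{\overline{\fM}} \twoheadrightarrow R^{\tau,\overline{\beta},\Box}_{\overline{\fM},\rhobar}$ factors through the quotient by the three listed monodromy relations together with their $O(p^2)$ error terms. Just as in Proposition~\ref{refinedfactor}, the only nontrivial point is that a priori one only knows each $\mathrm{Mon}_i + O_i(p^{n-2})$ becomes zero after multiplying by a column/row of $A\mid_{v=-p}$ (via Theorem~\ref{moncond} and an analogue of Lemma~\ref{lem:fix:ref}). To upgrade this, one uses $p$-flatness and reducedness of $R^{\mu,\tau}_{\rhobar}[\tfrac{1}{p}]$ together with the fact that the $p$-adic Hodge type is \emph{exactly} $(2,1,0)$ rather than $\le(2,1,0)$, ruling out by a dimension count any irreducible component of $R^{\tau,\overline{\beta},\Box}_{\overline{\fM},\rhobar}[\tfrac{1}{p}]$ on which one of the $\mathrm{Mon}_i + O_i(p^2)$ would not vanish.

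Next I would reduce modulo $\varpi$ and systematically eliminate variables. The first two monodromy equations, whose leading coefficients $(a-b), (e-a+b)$ are units by genericity, allow one to solve
$$
c_{12} \equiv \tfrac{a-c}{a-b}\tfrac{c_{13}\widetilde{c}_{32}}{c^*_{33}}, \qquad c_{22} \equiv \tfrac{e-a+c}{e-a+b}\tfrac{c_{23}\widetilde{c}_{32}}{c^*_{33}} \pmod{\varpi},
$$
while the $2\times 2$ minor $c^*_{21}c_{33}-c_{23}c_{31}=0$ of $A\mid_{v=-p}$ (coming from the $(1,2)$-cofactor divided by $-p$) lets one eliminate $c_{33}$. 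The substitution $c_{32} = \widetilde{c}_{32} + c'_{22}c_{31}/c^*_{21}$ is invertible, so it is a genuine change of variable. The remaining generators are $c_{11}, c_{13}, c_{23}, c_{31}, \widetilde{c}_{32}, c'_{22}$ (together with power-series directions coming from $c^*_{12}, c^*_{21}, c^*_{33}$ minus their reductions, plus $c^*_{31}$). Each listed relation in $\widetilde{R}$ is then checked by direct substitution: $c_{11}c_{23}=0$ is immediate from the $(3,2)$-minor; $c_{11}c'_{22}=\tfrac{b-c}{a-b}c_{13}\widetilde{c}_{32}$ follows from the determinant condition after plugging in $c_{12}$ and $c_{32}$ (the $c'_{22}c_{31}$-contribution from $c_{32}$ cancelling $-c_{13}c'_{22}c_{31}$); the relation $c_{11}\widetilde{c}_{32}=c_{13}c_{31}\widetilde{c}_{32}$ arises from the $(2,3)$-minor $c_{11}c_{32}=c_{12}c_{31}$ after substitution and use of the previous identity for $c_{11}c'_{22}$; the vanishings $c_{13}c_{23}\widetilde{c}_{32}=0$ and $c_{23}c_{31}\widetilde{c}_{32}=0$ come from the height relations $c_{12}c_{23}=c_{13}c_{22}$ and $c_{22}c_{31}=0\pmod{\varpi}$ respectively, each producing a genericity-nonzero coefficient; finally the three-term relation is obtained by substituting $c_{12}, c_{32}$ and the identity for $c_{11}c'_{22}$ into monodromy~3, after which the $c'_{22}$ coefficients collapse cleanly by the computation $(e-b+c)(a-b)-e(a-c)=(c-b)(e+a-b)$, combined with the known value of $c_{11}c'_{22}c^*_{33}$.

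The main obstacle, as in the identity case, is the refined factorization step: making it rigorous that the surjection factors through the quotient by the \emph{individual} monodromy equations (not merely $c_{ij}\cdot(\mathrm{Mon}_k+O_k(p^2))=0$) requires a casewise analysis ruling out pathological components of the generic fiber on which an entire column of $A\mid_{v=-p}$ would vanish. The $\alpha$-shape has one pivot in the middle of the matrix (the $c'_{22}v$ entry), so the argument of Proposition~\ref{refinedfactor} does not carry over verbatim; one must separately show, using the fact that $\overline{c}'_{22}=0$ in the semisimple setting but $c'_{22}$ is not forced to vanish on any component of the generic fiber, that no component of $\Spec R^{\tau,\overline{\beta},\Box}_{\overline{\fM},\rhobar}[\tfrac{1}{p}]$ sits inside $\{c_{13}=c_{23}=c_{33}=0\}$ (which would be incompatible with the Hodge-Tate weights being exactly $(2,1,0)$). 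Once this refined factorization is in place, the mod-$\varpi$ manipulations above produce the desired surjection from a power-series ring over $\widetilde R$, as claimed.
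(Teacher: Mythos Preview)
Your mod-$\varpi$ computations are correct and essentially identical to the paper's: after normalizing away the units $c^*_{12},c^*_{21},c^*_{33}$, the first two monodromy equations solve for $c_{12}$ and $c_{22}$, the determinant yields $c_{11}c'_{22}=\tfrac{b-c}{a-b}c_{13}\widetilde c_{32}$, and the remaining relations follow from the $2\times 2$ minor identities together with the third monodromy equation, exactly as you outline.

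Where you diverge from the paper is in the ``refined factorization'' step. You treat this as the main obstacle, proposing an analogue of Proposition~\ref{refinedfactor} and a component-by-component argument on $\Spec R^{\tau,\overline{\beta},\Box}_{\overline{\fM},\rhobar}[\tfrac1p]$. The paper does \emph{not} do this, and for a good reason: it is unnecessary in the $\alpha$ case. The three displayed monodromy equations are literally the $(1,1)$, $(2,1)$, $(3,3)$ entries of the full $3\times 3$ monodromy condition $P_N(A)|_{v=-p}=p^{n-1}M_{\mathrm{err}}$, and a direct computation (row~$i$ of $-A^\dagger|_{v=-p}$ against column~$j$ of $\tfrac{\det A}{P(v)}A^{-1}|_{v=-p}$, simplified using $c^*_{21}c_{33}=c_{23}c_{31}$ and the definition of $\widetilde c_{32}$) shows each of these three entries is $p$ times a \emph{unit} (namely $c^*_{21}$ for $(1,1)$ and $(2,1)$, and $1$ for $(3,3)$) times the listed expression. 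Since $R^{\tau,\overline{\beta},\Box}_{\overline{\fM},\rhobar}$ is $p$-flat, dividing by $p\cdot(\text{unit})$ is legitimate and yields the three equations immediately, with error $O(p^{n-2})$.

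The contrast with the identity shape is precisely that in the $\mathrm{id}$ case \emph{every} entry of $A|_{v=-p}$ lies in the maximal ideal, so the factorization $P_N(A)|_{v=-p}=A|_{v=-p}\cdot\mathrm{diag}(\mathrm{Mon}_i)$ could not be inverted without the geometric argument of Proposition~\ref{refinedfactor}. For shape $\alpha$, the unit pivot $c^*_{21}$ removes this difficulty entirely. Your proposed argument would work, but it is over-engineered; the paper's route is shorter and more transparent.
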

\begin{proof} We need to check that the relations defining $\tilde{R}$ are satisfied in $R^{\tau, \overline{\beta}, \Box}_{\overline{\fM}, \rhobar}/\varpi$. Throughout the proof, we work modulo $\varpi$. First, observe that replacing $c_{i1}$ with $\frac{c_{i1}}{c^*_{21}}$, $c_{i2}$ with $\frac{c_{i2}}{c^*_{12}}$ and $c_{i3}$ with $\frac{c_{i3}}{c^*_{33}}$, we eliminate $c^*_{12}, c^*_{21},c^*_{33}$ from all equations, so we can assume $c^*_{12}=c^*_{21}=c^*_{33}=1$ in what follows. The monodromy equations mod $\varpi$ solves $c_{12}$, $c_{22}$ in terms of the remaining variables:
\begin{align}
\label{rel alpha1}
c_{12}=&\frac{a-c}{a-b}c_{13}\widetilde{c}_{32},\\
\label{rel alpha2}
c_{22}=&\frac{e-a+c}{e-a+b}c_{23}\widetilde{c}_{32}.
\end{align}
The determinant condition thus gives
\begin{align}
\label{rel alpha3}
c_{11}c'_{22}=&\frac{b-c}{a-b}c_{13}\widetilde{c}_{32}.
\end{align}
From the relation $c_{11}c_{32}=c_{12}c_{31}$, using (\ref{rel alpha1}), the definition of $\widetilde{c}_{32}$  and (\ref{rel alpha3}), we obtain:
\begin{align*}
c_{11}\widetilde{c}_{32}=&c_{13}c_{31}\widetilde{c}_{32}.
\end{align*}
Multiplying (\ref{rel alpha2}) by $c_{13}$, (\ref{rel alpha1}) by $c_{23}$ and using the relation $c_{12}c_{23}=c_{13}c_{22}$, we obtain:
\begin{align*}
c_{13}c_{23}\widetilde{c}_{32}=&0.
\end{align*}
Using $c_{22}c_{31}=0$ and (\ref{rel alpha2}), we get
\begin{align*}
c_{23}c_{31}\widetilde{c}_{32}=&0.
\end{align*}
Finally, using the third monodromy equation and the previous relations, we obtain
\begin{align*}
(a-b)c_{13}c_{31}c'_{22}+(c-b)c_{13}\widetilde{c}_{32}+(e-a+c)c_{23}c_{31}=0.
\end{align*}
\end{proof}
\begin{prop} 
\label{expl alpha2}
The ring $\widetilde{R}$ is a reduced 3-dimensional Cohen-Macaulay ring. It has 6 minimal primes, and each irreducible component is formally smooth over $\F$. Thus $e(\widetilde{R})=6$. Furthermore, the minimal primes of $\widetilde{R}$ are exactly
\begin{align*}
&(c_{11}-c_{13}c_{31},c_{23},(a-b)c_{31}c'_{22}+(c-b)\widetilde{c}_{32}); && (c_{11},(a-b)c_{13}c'_{22}+(e-a+c)c_{23},\widetilde{c}_{32});
\end{align*}
\begin{align*}
&(c_{11},c_{13},c_{23}); && (c_{11},c_{13},c_{31}); &&(c_{11},c_{31},\widetilde{c}_{32}); && (c_{23},\widetilde{c}_{32},c'_{22}).
\end{align*}
\end{prop}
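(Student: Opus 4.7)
The proof will mirror the strategy of Proposition \ref{id explicit}. Since the defining relations of $\widetilde{R}$ are polynomials in the generators, we may regard $\widetilde{R}$ as the completion at the origin of a polynomial quotient $\F[c_{11},c_{13},c_{23},c_{31},\widetilde{c}_{32},c'_{22}]/I$, where $I$ is the ideal generated by the six explicit relations listed in Proposition \ref{expl alpha1}. The overall plan is: (i) compute a Gr\"obner basis of $I$ with respect to a suitable monomial order, (ii) deduce radicality and Cohen-Macaulayness from the initial ideal, (iii) identify the minimal primes directly by elementary elimination, and (iv) conclude the multiplicity and dimension statements.

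For step (i), using the genericity hypothesis (so that $a-b$, $b-c$, $c-b$, $e-a+c$ are all units in $\F$), I would pick a lex-type monomial order such as $c_{11}>c_{13}>c_{31}>c_{23}>\widetilde{c}_{32}>c'_{22}$ and apply Buchberger's algorithm. Following the pattern of Proposition \ref{id explicit}, the expected outcome is that the initial ideal $\mathrm{in}(I)$ is generated by square-free monomials, which gives the radicality of $I$ by the standard argument. The Cohen-Macaulayness of $\widetilde{R}$ can then be deduced via the flat family over $\F[t]$ specializing $\F[c_{ij}]/I$ to $\F[c_{ij}]/\mathrm{in}(I)$, using openness of the Cohen-Macaulay locus (an explicit check shows $\F[c_{ij}]/\mathrm{in}(I)$ is Cohen-Macaulay of dimension $3$).

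For step (iii), I would verify each of the six listed primes $\mathfrak{p}_i$ separately: the containment $I \subseteq \mathfrak{p}_i$ is a direct substitution using the defining relations, while primality with $\dim\widetilde{R}/\mathfrak{p}_i = 3$ follows from elimination. In each case, using the unit scalars supplied by genericity, exactly three of the six generators can be solved in terms of the other three, exhibiting $\widetilde{R}/\mathfrak{p}_i$ as a power series ring in three variables over $\F$, hence formally smooth. For instance, the prime $(c_{11}-c_{13}c_{31},\,c_{23},\,(a-b)c_{31}c'_{22}+(c-b)\widetilde{c}_{32})$ eliminates $c_{11}$, $c_{23}$, and $\widetilde{c}_{32}$ (the last using that $c-b\in\F^{\times}$) to give $\F[\![c_{13},c_{31},c'_{22}]\!]$; analogous eliminations handle the remaining five primes.

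The main obstacle lies in step (iv): showing that the six listed primes exhaust the minimal primes of $I$, equivalently that $I = \bigcap_{i=1}^{6} \mathfrak{p}_i$. One inclusion is immediate from step (iii). For the reverse, given radicality from step (ii), it suffices to show the inclusion of zero loci $V(I) \subseteq \bigcup_i V(\mathfrak{p}_i)$ set-theoretically. I would carry out a case analysis driven by the relations $c_{13}c_{23}\widetilde{c}_{32}=0$ and $c_{23}c_{31}\widetilde{c}_{32}=0$ (together with $c_{11}c_{23}=0$), branching on which of $c_{13}, c_{23}, c_{31}, \widetilde{c}_{32}$ vanish, and in each branch using the remaining relations of $\widetilde{R}$ (in particular $c_{11}\widetilde{c}_{32}=c_{13}c_{31}\widetilde{c}_{32}$, $c_{11}c'_{22}=\tfrac{b-c}{a-b}c_{13}\widetilde{c}_{32}$, and the displayed three-term relation) to match the point to one of the six components. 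Once this is done, the multiplicity computation $e(\widetilde{R}) = \sum_{i=1}^{6} e(\widetilde{R}/\mathfrak{p}_i) = 6$ follows, as each component contributes multiplicity $1$ by formal smoothness.
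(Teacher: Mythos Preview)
Your proposal is correct and follows essentially the same route as the paper: both mirror Proposition \ref{id explicit} by passing to the polynomial quotient, using a Gr\"obner basis argument to obtain a square-free monomial initial ideal (hence radicality), and then deducing Cohen-Macaulayness via the flat degeneration to the initial ideal. The paper uses the monomial order $c_{11}>c_{13}>c_{23}>c_{31}>\widetilde{c}_{32}>c'_{22}$ (swapping your $c_{23}$ and $c_{31}$) and observes that with this choice the six defining relations already form a Gr\"obner basis, so Buchberger's algorithm is unnecessary; the identification of minimal primes is left implicit as an easy exercise, whereas you spell out the elimination and case analysis in more detail.
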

\begin{proof} The proof is very similar to the proof of Proposition \ref{id explicit}, so we will only sketch it. The relation ideal $I$ defining $\widetilde{R}$ consists of polynomials, and indeed form a Gr\"obner basis with respect to the monomial order $c_{11}>c_{13}>c_{21}>c_{23}>c_{31}>\widetilde{c}_{32}>c'_{22}$. Since the initial ideal of $I$ is generated by square-free monomials, we get reducedness. Cohen-Macaulayness follows as in Proposition \ref{id explicit}.
\end{proof}

\subsubsection{The non-semisimple case.}
\label{sbsb:nsemi}

Finally, we handle the case where $\rhobar$ is non-semisimple. Then we have that $c'_{22}$ is a unit instead of a topologically nilpotent element. The relations of Propositions \ref{expl alpha1}, \ref{expl alpha2} continue to hold, the only difference is that $\widetilde{R}$ is not a quotient of 
$\F[\![c_{11},c_{13},c_{21},c_{23},c_{31},\widetilde{c}_{32},c'_{22}]\!]$, but rather a quotient of 
$\F[\![c_{11},c_{13},c_{21},c_{23},c_{31},\widetilde{c}_{32},c'_{22}-[\overline{c}'_{22}]]\!]$. The effect of $c'_{22}$ being a unit is that $\widetilde{R}$ only has $5$ minimal primes instead of $6$ (the minimal prime $(c_{23},c_{32},c'_{22})$ is no longer present). 

The proofs of the following results for the $\alpha$ shape are exactly the same as the proofs for the $\mathrm{id}$ shape, so we will not repeat them.
\begin{prop}  Assume that $\tau$ is strongly generic. Then $R^{\tau, \overline{\beta}, \Box}_{\overline{\fM}, \rhobar}/\varpi$ is isomorphic to a power series ring over $\tilde{R}$.
\end{prop}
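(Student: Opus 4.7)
The plan is to mirror the argument used for the identity shape (Proposition \ref{prop:explicit2}), replacing Theorem \ref{thm:serreweightconj} with its non-semisimple counterpart Proposition \ref{Prop-non-ss}. We already know from the preceding discussion that $\widetilde{R}$ has exactly $5$ minimal primes (the minimal prime $(c_{23},\widetilde{c}_{32},c'_{22})$ disappears because $c'_{22}$ is a unit) and that each component is formally smooth, so $e(\widetilde{R})=5$. On the other hand, Proposition \ref{expl alpha1} and its non-semisimple variant show that $R^{\tau,\overline{\beta},\Box}_{\overline{\fM},\rhobar}/\varpi$ receives a surjection from a power series ring over $\widetilde{R}$; hence by Lemma \ref{lem:ring} it suffices to establish the lower bound $e(R^{\mu,\tau}_{\rhobar}/\varpi)\geq 5$.

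To produce this lower bound, I would first globalize $\rhobar$ to a Galois representation $\rbar:G_F\to \GL_3(\F)$ exactly as in Proposition \ref{Prop-non-ss}: one takes $F/F^+$ with $p$ split completely in $F$, $F/F^+$ unramified at all finite places, arranges $\rbar|_{G_{F_{\tilde v}}}\cong\rhobar$ for every $v\in\Sigma_p^+$, and insists that $\rbar$ be automorphic with split ramification and satisfy the Taylor--Wiles conditions. This is possible because $\rhobar$ is non-split with cocycle in the Fontaine--Laffaille position (the $\overline{c}'_{22}\neq 0$ assumption), so it has a Fontaine--Laffaille crystalline lift to which one can apply Lemma A.5 and Proposition A.6 of \cite{EG}. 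Pick a weak minimal patching functor $M_\infty$ for $\rbar$ as in Proposition \ref{prop:WMPF}.

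Next, choose a single place $v_0\mid p$ and take $\tau_{v_0}\defeq\tau$. For every other $v\mid p$ choose a tame type $\tau_v$ for which $\bf{w}(\rhobar,\tau_v)$ has length~$4$ (so that the corresponding local deformation ring is a power series ring). Since $\tau$ is strongly generic, Lemma \ref{lem:gen-2} gives that $\rhobar$ is $8$-generic, and in particular the set $W^?(\rhobar)\subseteq W^?(\rhobar^{ss})$ of Proposition \ref{Prop-non-ss} is defined and consists of reachable weights. A direct combinatorial check (using the tables describing $\JH(\sigmabar(\tau))$ together with the explicit $W^?(\rhobar)$ given in the proof of Proposition \ref{Prop-non-ss}) shows that
\[
\bigl|\JH(\otimes_{v\in\Sigma_p^+}\sigmabar(\tau_v))\cap W^?(\otimes_{v\in\Sigma_p^+}\rbar|_{G_{F_{\tilde v}}})\bigr|=5,
\]
the five surviving weights being those of $W^?(\rhobar)$ that lie in $\JH(\sigmabar(\tau))$ at the place $v_0$. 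By Proposition \ref{Prop-non-ss}, each such global Serre weight is automorphic, so Definition \ref{minimalpatching}(2) gives $M_\infty(W)\neq 0$ for each of these five weights.

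Finally, combining these five nonzero contributions with Proposition \ref{upperbound} yields
\[
e\bigl(\overline{R}_\infty(\otimes_v\tau_v)\bigr)\geq e\bigl(M_\infty(\overline{\sigma}(\otimes_v\tau_v))\bigr)\geq 5.
\]
Since $\tau_v$ has length~$4$ shape for $v\neq v_0$, the corresponding factors of $\overline{R}_\infty(\otimes_v\tau_v)$ are power series rings, so $e(R^{\mu,\tau}_{\rhobar}/\varpi)\geq 5$, and hence $e(R^{\tau,\overline{\beta},\Box}_{\overline{\fM},\rhobar}/\varpi)\geq 5$ as well. With the upper bound already in hand from $\widetilde{R}$, Lemma \ref{lem:ring} closes the argument. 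The main obstacle is the combinatorial verification that the intersection has exactly five elements: one has to know precisely which weights appear in $W^?(\rhobar)$ for a non-semisimple $\rhobar$ of this shape, which is the content of Proposition \ref{Prop-non-ss} and must be invoked rather than reproved here.
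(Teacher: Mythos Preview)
Your argument is correct and follows exactly the strategy the paper has in mind: reproduce the proof of Proposition~\ref{prop:explicit2} for the $\mathrm{id}$ shape, replacing the appeal to Theorem~\ref{thm:serreweightconj} by Proposition~\ref{Prop-non-ss} in the non-semisimple case, so that the combinatorial count becomes $5$ rather than $6$ and matches $e(\widetilde{R})=5$. This is precisely what the paper means by ``exactly the same as the proofs for the $\mathrm{id}$ shape.''

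One small remark: as stated, the proposition is meant to cover the whole $\alpha$ shape, so also the semisimple subcase of \S\ref{sbsb:semi}. There the argument is \emph{literally} the $\mathrm{id}$-shape proof---$\rhobar$ is semisimple, $\widetilde R$ has $6$ minimal primes by Proposition~\ref{expl alpha2}, one globalizes and invokes Theorem~\ref{thm:serreweightconj} directly to obtain six modular weights in the intersection, and Lemma~\ref{lem:ring} closes. You have written only the non-semisimple half; it would be worth one sentence to note that the semisimple half goes through verbatim with Theorem~\ref{thm:serreweightconj} and the count $6$.
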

\begin{cor} Assume that $\tau$ is strongly generic. Then the ring $R^{\mu,\tau}_{\rhobar}$ is normal, Cohen-Macaulay, and $R^{\mu,\tau}_{\overline{\fM}}[\frac{1}{p}]$ is a domain.
\end{cor}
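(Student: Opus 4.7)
The plan is to mimic verbatim the proof of the analogous corollary in the $\mathrm{id}$-shape case in \S\ref{sbs:Id}, using the $\alpha$-shape counterparts of Propositions \ref{id explicit} and \ref{prop:explicit2} that have just been established (both in the semisimple case $\overline{c}'_{22}=0$, where $\widetilde{R}$ has six minimal primes, and the non-semisimple case $\overline{c}'_{22}\in\F^\times$, where it has five). The three input facts are: (i) $\widetilde{R}$ is reduced and Cohen--Macaulay, each minimal prime cutting out a formally smooth component (the $\alpha$-shape version of Proposition \ref{expl alpha2}); (ii) $R^{\tau,\overline{\beta},\Box}_{\overline{\fM},\rhobar}/\varpi$ is a power series ring over $\widetilde{R}$ (the strong genericity assumption on $\tau$ enters here, via the analogue of Proposition \ref{prop:explicit2}); and (iii) the map $\Spf R^{\tau,\overline{\beta},\Box}_{\overline{\fM},\rhobar}\to \Spf R^{\mu,\tau}_{\rhobar}$ is formally smooth, by diagram (\ref{defdiagram}) and Theorem \ref{gaugeunique}.

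First I would combine (i), (ii), and (iii) to conclude that $R^{\mu,\tau}_{\rhobar}/\varpi$ is reduced and Cohen--Macaulay: formal smoothness preserves both properties under faithfully flat descent. Cohen--Macaulayness of $R^{\mu,\tau}_{\rhobar}$ itself then follows because $\varpi$ is a regular element (the ring is $\cO$-flat by \cite{KisinPSS}) and reducing modulo a regular element preserves Cohen--Macaulayness.

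Next, to show that $R^{\mu,\tau}_{\rhobar}[1/p]$ is a domain, I would argue, as in the $\mathrm{id}$-shape case, that it suffices to rule out non-trivial idempotents. Given such an idempotent $e$, choose the largest $k\in\Z$ with $\varpi^{-k}e\in R^{\mu,\tau}_{\rhobar}$; the identity $e=e^2$ together with the inequality $k\geq 2k$ forces $k\leq 0$, while the equation $(\varpi^{-k}e)^2=\varpi^{-k}(\varpi^{-k}e)$ combined with reducedness of $R^{\mu,\tau}_{\rhobar}/\varpi$ forces $k=0$; but then $e$ is a non-trivial idempotent of the local ring $R^{\mu,\tau}_{\rhobar}$, contradiction. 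Regularity of $R^{\mu,\tau}_{\rhobar}[1/p]$ is already known from \cite[Theorem 3.3.8]{KisinPSS}.

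Finally, for normality I would invoke Serre's criterion: $R^{\mu,\tau}_{\rhobar}[1/p]$ being regular gives condition $R_1$ on the generic locus, and Cohen--Macaulayness of the whole ring implies $S_2$; combined with reducedness of $R^{\mu,\tau}_{\rhobar}/\varpi$ (which gives $R_0$ in codimension one along the special fibre), this yields $R_1+S_2$, i.e. normality.

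The only non-routine step is ensuring that the $\alpha$-shape analogue of Proposition \ref{prop:explicit2} actually applies in both the semisimple and non-semisimple subcases; here the main obstacle (already handled in \S\ref{sbsb:nsemi} via Proposition \ref{Prop-non-ss}) is that when $\rhobar$ is non-semisimple we cannot use Theorem \ref{thm:serreweightconj} directly but must instead invoke the modified conjectural weight set $W^?(\rhobar)$ of Proposition \ref{Prop-non-ss} to produce the Hilbert--Samuel multiplicity lower bound $e(R^{\mu,\tau}_{\rhobar}/\varpi)\geq 5$ matching $e(\widetilde{R})=5$. Once the relevant globalisation and patched-module calculation is done, the remainder of the argument is purely commutative-algebraic and identical to the $\mathrm{id}$-shape treatment.
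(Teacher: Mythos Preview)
Your proposal is correct and follows exactly the same approach as the paper, which explicitly states that the proofs for the $\alpha$ shape are the same as for the $\mathrm{id}$ shape and does not repeat them. Your reproduction of the idempotent argument for connectedness of the generic fiber and the use of Serre's criterion for normality match the paper's $\mathrm{id}$-shape proof verbatim, and your remark on invoking Proposition~\ref{Prop-non-ss} in the non-semisimple subcase correctly identifies the only additional ingredient needed for the $\alpha$ shape.
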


This completes the proof of Theorem \ref{thm:conngenfib}.

\section{Appendix: Tables}

\begin{table}[h]
\caption{\textbf{The $(2,1,0)$-admissible elements}} \label{Table admissible elements} 
\begin{center}
\adjustbox{max width=\textwidth}{%
\begin{tabular}{| c | c |}
\hline
& \\
Length 4 & $\begin{matrix}
\alpha \beta \alpha \gamma=t_{(2,1,0)},&\beta \gamma \alpha \gamma=t_{(1,2,0)},& 
\beta \gamma \beta \alpha=t_{(0,2,1)},\\
\gamma \alpha \beta  \alpha=t_{(0,1,2)},& \alpha \gamma \alpha \beta=t_{(1,0,2)},& 
\alpha \beta \gamma \beta=t_{(2,0, 1)}
\end{matrix}$\\ 
\hline
Length 3 (ordinary) & $\begin{matrix}\gamma \alpha \beta,& \alpha \gamma \beta,& \alpha \beta \gamma & \beta \alpha \gamma,&
\beta \gamma \alpha,& \gamma \beta \alpha&\end{matrix}$ \\
\hline
Length 3 (shadow) & $\begin{matrix}\gamma \alpha \gamma,&\alpha \beta \alpha,&
\beta \gamma \beta&\end{matrix}$ \\
\hline
Length 2 & $\begin{matrix}\gamma \alpha,&\alpha \gamma,&\beta \alpha,&
 \alpha \beta,&\beta \gamma,&\gamma \beta\end{matrix}$ \\
\hline
Length 1 & $\begin{matrix}\alpha,&\beta,&\gamma\end{matrix}$ \\
\hline
Length 0 & $\begin{matrix}\Id\end{matrix}$\\
\hline
\end{tabular}}
\end{center}
\caption*{\Small{There are 25 different $(2,1,0)$-admissible elements. For simplicity, we label them by the corresponding element in the affine Weyl group of $\SL_3$, e.g. $\alpha\beta \alpha \gamma$ corresponds to $v (\alpha \beta \alpha \gamma)$ in $\widetilde{W}$.  If $(x,y,z) \in X_*(T) \cong \Z^3$ is a  cocharacter, we write $t_{(x,y,z)}$ for the image of translation by $(x,y,z)$ in $\widetilde{W}$.}}
\end{table}

\begin{table}[h]
\caption{\textbf{Inertial local Langlands}} \label{ILL} 
\begin{center}
\adjustbox{max width=\textwidth}{%
\begin{tabular}{| c | c |}
\hline
$\tau$ & $\sigma(\tau)$ \\
\hline
\hline
& \\
${\omega}_{f}^{-\bf{a}_1^{(0)}}\oplus \omega_f^{-\bf{a}_2^{(0)}}\oplus\omega_f^{-\bf{a}_3^{(0)}}$& $\Ind_{\B(k)}^{\GL_3(k)}(\tld{\omega}_{f}^{\bf{a}_1^{(0)}}\otimes \tld{\omega}_f^{\bf{a}_2^{(0)}}\otimes\tld{\omega}_f^{\bf{a}_3^{(0)}})$ \\
& \\
\hline
& \\
$\omega_{f}^{-\bf{a}_1^{(0)}} \oplus \omega_{2f}^{-\bf{a}_2^{(0)}-p^f\bf{a}_3^{(0)}}\oplus \omega_{2f}^{-\bf{a}_3^{(0)} - p^f\bf{a}_2^{(0)}}$& $\Ind_{P_2(k)}^{\GL_3(k)}\tld{\omega}_f^{\bf{a}_{1}^{(0)}}\otimes
\Theta(\tld{\omega}_{2f}^{\bf{a}_{2}^{(0)}+p^f\bf{a}_{3}^{(0)}})$ \\
& \\
\hline
& \\
$\omega_{3f}^{-\bf{a}_1^{(0)}-p^f\bf{a}_2^{(0)}-p^{2f}\bf{a}_3^{(0)}}\oplus \omega_{3f}^{-\bf{a}_2^{(0)}-p^f\bf{a}_3^{(0)}-p^{2f}\bf{a}_1^{(0)}}\oplus \omega_{3f}^{-\bf{a}_3^{(0)}-p^f\bf{a}_1^{(0)}-p^{2f}\bf{a}_2^{(0)}}$ & $\Theta(\tld{\omega}_{3f}^{\bf{a}_{1}^{(0)}+p^f\bf{a}_{2}^{(0)}+p^{2f}\bf{a}_{3}^{(0)}})$ \\
& \\
\hline
\end{tabular}}
\end{center}
\caption*{\Small{
In the table above, we set $P_2\defeq\text{\tiny{$\maq{*}{*}{*}{}{*}{*}{}{*}{*}$}}$ and write $\Theta(\psi)$ for the cuspidal representation of $\GL_{r}(k)$ associated to a $k'$-primitive character $\psi:(k')^\times\rightarrow E^{\times}$ as in \cite[Lemma 4.4]{herzig-duke}.}
}
\end{table}

\begin{table}[h]
\caption{\textbf{Jordan--H\"older factors of Deligne--Lusztig $\GL_3(\F_p)$-representations}} \label{table:JH} 
\begin{center}
\adjustbox{max width=\textwidth}{%
\begin{tabular}{| c | c |}
\hline
$\sigma(\tau)$ & $\JH(\sigmabar(\tau))$ \\
\hline
\hline
& \\
$\Ind_{\B(\F_p)}^{\GL_3(\F_p)}(\tld{\omega}_{1}^a\otimes \tld{\omega}_1^b\otimes\tld{\omega}_1^c)$& $\begin{matrix}
F(a,b,c),\\ F(c+p-1,a,b),\\ F(b,c,a-p+1),\\
F(a, c, b-p+1),\\ F(c+p-1,b,a-p+1),\\ F(b+p-1,a,c),\\
F(a-1,b,c+1),\\ F(c+p-2,a,b+1),\\ F(b-1,c,a-p+2)
\end{matrix}$ \\
& \\
\hline
& \\
$\Ind_{P_2(\F_p)}^{\GL_3(\F_p)}\tld{\omega}_1^a\otimes
\Theta(\tld{\omega}_2^{b+pc})$& $\begin{matrix}
F(a,b-1,c+1),\\ F(c+p-2,a,b+1),\\ F(b-1,c+1,a-p+1),\\
F(a, c, b-p+1),\\ F(c+p-1,b,a-p+1),\\ F(b+p-2,a,c+1),\\
F(a-1,b,c+1),\\ F(c+p-1,a,b),\\ F(b-1,c,a-p+2)
\end{matrix}$ \\
& \\
\hline
& \\
$\Theta(\tld{\omega}_3^{a+pb+p^2c})$ & $\begin{matrix}
F(a-2,b+1,c+1),\\ F(c+p-2,a,b+1),\\ F(b-1,c+1,a-p+1),\\
F(a-1, c, b-p+2),\\ F(c+p-1,b,a-p+1),\\ F(b+p-1,a-1,c+1),\\
F(a-1,b,c+1),\\ F(c+p-1,a-1,b+1),\\ F(b,c,a-p+1)
\end{matrix}$ \\
& \\
\hline
\end{tabular}}
\end{center}
\caption*{\Small{
In the table above, the triple $(a,b,c)$ is assumed to be weakly generic, we set $P_2\defeq\text{\tiny{$\maq{*}{*}{*}{}{*}{*}{}{*}{*}$}}$, and we write $\Theta(\psi)$ for the cuspidal representation of $\GL_{r}(\F_p)$ associated to a $\F_{p^r}$-primitive character $\psi:\F_{p^r}^\times\rightarrow E^{\times}$ as in \cite[Lemma 4.4]{herzig-duke}. See also \cite[Theorem 5.1]{florian-thesis}.}
}
\end{table}

\begin{table}[h]
\caption{\textbf{Shapes of Kisin modules over $\F$}}\label{table shapes mod p}
\centering
\begin{tabular}{| c | c || c | c |}
\hline
& & &\\
$\widetilde{w}_j $ & $\overline{A}^{(j)}_{\widetilde{w}_j}$ & $\widetilde{w}_j $ & $\overline{A}^{(j)}_{\widetilde{w}_j}$\\ 
& & &\\
\hline
& & &\\
$\alpha \beta \alpha \gamma$ &  $\begin{pmatrix} v^2\overline{c}_{11}^* & 0 & 0 \\ v^2\overline{c}_{21} & v \overline{c}_{22}^* & 0 \\ \overline{c}_{31}v+\overline{c}'_{31}v^2 & v\overline{c}_{32} & \overline{c}_{33}^*\end{pmatrix}$  & $\beta \gamma \alpha \gamma$  &$\begin{pmatrix} v\overline{c}_{11}^* & v\overline{c}_{12} & 0 \\ 0 & v^2\overline{c}_{22}^* & 0 \\ v\overline{c}_{31} & \overline{c}_{32}v+\overline{c}'_{32}v^2 & \overline{c}_{33}^*\end{pmatrix}$\\
& & &\\
\hline
& & &\\
$\beta \alpha \gamma$ & $\begin{pmatrix} 0&v \overline{c}_{12}^*&0\\v^2\overline{c}_{21}^*&0&0\\  v\overline{c}_{31}+v^2\overline{c}'_{31}&v\overline{c}_{32}&\overline{c}_{33}^* \end{pmatrix}$ &$\alpha \beta \gamma$ &  $\begin{pmatrix} v^2\overline{c}_{11}^*&0&0\\v\overline{c}_{21}+v^2\overline{c}'_{21}&0& \overline{c}_{23}^* \\v^2\overline{c}_{31}&v\overline{c}_{32}^*&0 \end{pmatrix}$\\
& & &\\
\hline
& & &\\
$\alpha \beta \alpha$ & $\begin{pmatrix} 0&0&v \overline{c}_{13}^* \\0&v\overline{c}_{22}^*&v\overline{c}_{23}\\v\overline{c}_{31}^*&v\overline{c}_{32}&v\overline{c}_{33} \end{pmatrix}$ &   & \\
& & &\\
\hline \hline
& & &\\
$\alpha \beta$ & $\begin{pmatrix} 0&0&v \overline{c}_{13}^* \\v \overline{c}_{21}^* & 0 &v\overline{c}_{23}\\0 &v\overline{c}_{32}^*&v\overline{c}_{33} \end{pmatrix}$ & $\beta \alpha$ & $\begin{pmatrix} 0&v \overline{c}_{12}^* & 0 \\ 0 & 0 &v  \overline{c}_{23}^*\\v\overline{c}_{31}^*& v \overline{c}_{32} & v\overline{c}_{33} \end{pmatrix}$ \\
& & &\\
\hline
& & &\\
$\alpha$ & $\begin{pmatrix} 0&v \overline{c}_{12}^* & 0 \\ v\overline{c}_{21}^*  & v \overline{c}_{22} & 0 \\ 0 & 0 &  v\overline{c}_{33}^* \end{pmatrix}$ &
$\mathrm{id}$ & $\begin{pmatrix} v \overline{c}_{11}^* & 0 & 0 \\ 0 & v \overline{c}_{22}^* &  0\\ 0& 0  & v \overline{c}_{33}^*& \end{pmatrix}$
 \\
& & &\\
\hline
\end{tabular}
\caption*{In the above, we have $\overline{c}_{ik}, \overline{c}'_{ik} \in \overline{\F}$ and $\overline{c}_{ik}^* \in \overline{\F}^{\times}$.   }
\end{table}

\begin{table}[h]
\caption{\textbf{Deforming $\overline{\fM}$ by shape (without monodromy)}}
\label{table:lifts}
\centering
\adjustbox{max width=\textwidth}{
\begin{tabular}{| c || c | c | c |}
\hline
& & &\\
$\widetilde{w}_j$ & $\overline{A}^{(j)}_{\widetilde{w}_j}$ & $\deg(\tld{A}^{(j)}_{\widetilde{w}_j})$ & $\tld{A}^{(j)}_{\widetilde{w}_j}$ with height/det conditions\\ 
& & &\\
\hline
& & &\\
$\alpha \beta \alpha \gamma$ &  $\begin{pmatrix} v^2\overline{c}_{11}^* & 0 & 0 \\ v^2\overline{c}_{21} & v \overline{c}_{22}^* & 0 \\ v\overline{c}_{31}+v^2\overline{c}'_{31} & v\overline{c}_{32} & \overline{c}_{33}^*\end{pmatrix}$  & $\begin{pmatrix}2^* & \leq 0 & -\infty \\ v(\leq 1) & 1^* & -\infty \\ v(\leq 1) & v(\leq 0) & 0^* \end{pmatrix}$ & $\begin{pmatrix} (v + p)^2 c_{11}^* & 0 & 0 \\ v(v+p) c_{21} & (v + p) c_{22}^* & 0 \\ v (c_{31}+ (v+p) c'_{31}) & v c_{32} & c_{33}^*\end{pmatrix}$\\
& & &\\
\hline
& & &\\
$\beta \gamma \alpha \gamma$ &  $\begin{pmatrix} v\overline{c}_{11}^* & v\overline{c}_{12} & 0 \\ 0 & v^2\overline{c}_{22}^* & 0 \\ v\overline{c}_{31} & v \overline{c}_{32}+v^2 \overline{c}'_{32} & \overline{c}_{33}^*\end{pmatrix}$ & $\begin{pmatrix} 1^* & \leq 1 & -\infty \\ v( \leq 0) & 2^* & -\infty \\  v(\leq 0) & v(\leq 1) & 0^* \end{pmatrix}$ & $\begin{pmatrix} (v + p) c_{11}^* & (v + p)c_{12} & 0 \\ 0 & (v+p)^2 c_{22}^* & 0 \\ v c_{31} & v (c_{32}+(v+p) c'_{32})& c_{33}^*\end{pmatrix}$ \\
& & &\\
\hline
\hline
& & &\\
$\beta \alpha \gamma$ & $\begin{pmatrix} 0&v \overline{c}^*_{12}&0\\v^2\overline{c}_{21}^*&0&0\\v\overline{c}_{31}+v^2\overline{c}_{31}'&v\overline{c}_{32}&\overline{c}_{33}^* \end{pmatrix}$ & $\begin{pmatrix} \leq 1 &  1^* & -\infty \\  v(1^*) & \leq 1 & -\infty \\ v(\leq 1) & v(\leq 0) & 0^* \end{pmatrix}$
 & $\begin{matrix} \begin{pmatrix} (v + p) c_{11} & (v + p) c_{12}^* & 0\\ v(v + p) c_{21}^*& (v + p) c_{22} &0\\v (c_{31}+(v + p)  c_{31}')&v c_{32}& c_{33}^* \end{pmatrix} \\ c_{11} c_{22} = -p c_{12}^* c_{21}^* \end{matrix}$ \\
& & &\\
\hline
& & &\\
$\alpha \beta \gamma$ &  $\begin{pmatrix} v^2\overline{c}_{11}^*&0&0\\v\overline{c}_{21}+v^2\overline{c}_{21}'&0&\overline{c}_{23}^* \\v^2\overline{c}'_{31}&v\overline{c}_{32}^*&0 \end{pmatrix}$ &  $\begin{pmatrix} 2^* &  \leq 0 & -\infty \\  v(\leq 1) & \leq 0 & 0^* \\ v(\leq 1) & v(0^*) & \leq 0 \end{pmatrix}$
& $\begin{matrix} \begin{pmatrix} (v + p)^2 c_{11}^* &0&0\\v(c_{21} + (v+p) c_{21}') & c_{22}& c_{23}^*\\v(c_{21} c_{33} (c_{23}^*)^{-1}+ (v+p)c_{31}')&vc_{32}^*&c_{33} \end{pmatrix} \\ c_{22} c_{33} = - p c_{32}^* c_{23}^* \end{matrix} $\\
& & &\\
\hline \hline
& & &\\
$\alpha \beta \alpha$ & $\begin{pmatrix} 0&0&v \overline{c}^*_{13}\\0&v\overline{c}_{22}^*&v\overline{c}_{23}\\v\overline{c}_{31}^*&v\overline{c}_{32}&v\overline{c}'_{33} \end{pmatrix}$ & $\begin{pmatrix} \leq 0  &  \leq 0 & 1^* \\  -\infty & 1^* & \leq 1 \\ v(0^*) & v(\leq 0) & \leq 1 \end{pmatrix}$ & $\begin{matrix} \begin{pmatrix} c_{11} & c_{11} c_{32} (c_{31}^*)^{-1}& c_{13} + (v + p) c_{13}^* \\ 0  & (v + p)c_{22}^*& (v + p)c_{23}\\v c_{31}^* & vc_{32}&c_{33} + (v+p)c_{33}' \end{pmatrix} \\ c_{11} c_{33} = - p c_{13} c_{31}^* \\ c_{11} c_{33}' - c_{13} c_{31}^* + p c_{13}^*c_{31}^* = 0 \end{matrix} $\\
& & &\\
\hline 
\end{tabular}}
\end{table}
\begin{table}[p]
\adjustbox{max width=\textwidth}{%
\begin{tabular}{| c || c | c | c |}
\hline
& & &\\
$\alpha \beta$ & $\begin{pmatrix} 0&0&v \overline{c}_{13}^*\\v \overline{c}_{21}^* & 0 &v\overline{c}'_{23}\\0 &v\overline{c}_{32}^*&v\overline{c}'_{33} \end{pmatrix}$ & $\begin{pmatrix} \leq 0  &  \leq 0 & 1^* \\  v(0^*) & \leq 0 & \leq 1 \\ v(\leq 0) & v(0^*) & \leq 1 \end{pmatrix}$ & 
$\begin{matrix}
\begin{pmatrix}
c_{31}c_{12}(c_{32}^*)^{-1}&c_{12}&c_{13}+(v+p) c_{13}^*\\
vc_{21}^*&c_{22}&c_{23}+(v+p) c_{23}'\\
vc_{31}&vc_{32}^*&\left(c_{31}c_{23}(c_{21}^*)^{-1}+(v+p)c_{33}'\right)
\end{pmatrix}
\\
c_{22}c_{31}=-pc_{21}^*c_{32}^*\\
c_{12}c_{23}= c_{22} c_{13}\\
c_{21}^*c_{32}^*c_{13}-pc_{21}^*c_{32}^*c_{13}^*-c'_{33}c_{21}^*c_{12}=0
\end{matrix}$
\\
& & &\\
\hline
& & &\\
$\beta \alpha$ & $\begin{pmatrix} 0&v \overline{c}_{12}^* & 0 \\ 0 & 0 &v  \overline{c}_{23}^*\\v\overline{c}_{31}^*& v \overline{c}_{32} & v\overline{c}'_{33} \end{pmatrix}$ & $\begin{pmatrix} \leq 0 &  1^* & \leq 0 \\  - \infty & \leq 1 & 1^* \\ v(0^*) & v(\leq 0) & \leq 1 \end{pmatrix}$  & 
$\begin{matrix}
\begin{pmatrix}
c_{11}&\left((c_{31}^*)^{-1}c_{11}c_{32}+ (v+p)c_{12}^*\right)&c_{13}\\
0& (v+p) c_{22}'&  (v+p)c_{23}^*\\
c_{31}^*v&c_{32}v&c_{33}+ (v+p)c_{33}'
\end{pmatrix}
\\
c_{11}c_{33}=-pc_{31}^*c_{13}\\
c_{22}'(c_{11} c_{33}' - c_{13} c^*_{31}) = pc_{23}^*c_{12}^*c_{31}^*
\end{matrix}$

\\
& & &\\
\hline \hline
& & &\\
$\alpha$ & $\begin{pmatrix} 0&v \overline{c}_{12}^*  & 0 \\ v\overline{c}_{21}^*  & v \overline{c}'_{22} & 0 \\ 0 & 0 &  v\overline{c}_{33}^* \end{pmatrix}$ & $\begin{pmatrix} \leq 0  &  1^* & \leq 0\\  v(0^*) & \leq 1 & \leq 0 \\ v(\leq 0) & v(\leq 0) & 1^* \end{pmatrix}$ & 
$\begin{matrix}
\begin{pmatrix}
c_{11}&c_{12}+ (v+p)c_{12}^*&c_{13}\\
c_{21}^*v&c_{22}+ (v+p)c_{22}' &c_{23}\\
c_{31}v&c_{32}v&\left(c_{33}+(v+p) c_{33}^*\right)
\end{pmatrix}
\\
\text{all $2\times2$ minors of }\tld{A}_{\alpha}^{(j)}|_{v = -p} \text{vanish,}\\
c_{11}c_{22}'c_{33}^*+c_{13}c_{21}^*c_{32}-c_{13}c_{22}'c_{31}-c_{12}c_{21}^*c_{33}^*+pc_{21}^*c_{12}^*c_{33}^*=0
\end{matrix}$
\\
& & &\\
\hline \hline
& & &\\
id & $\begin{pmatrix} v \overline{c}_{11}^* & 0 & 0 \\ 0 & v \overline{c}_{22}^* &  0\\ 0& 0  & v \overline{c}_{33}^*  \end{pmatrix}$ & $\begin{pmatrix} 1^*  &  \leq 0 & \leq 0 \\  v(\leq 0) & 1^* & \leq 0 \\ v(\leq 0) & v(\leq 0) & 1^* \end{pmatrix}$ &  
$\begin{matrix}
\begin{pmatrix}
c_{11}+c_{11}^*(v+p)&c_{12}&c_{13}\\
vc_{21}&c_{22}+ c_{22}^*(v+p) &c_{23}\\
vc_{31}&vc_{32}&c_{33}+c_{33}^*(v+p)
\end{pmatrix}
\\
\text{all $2\times2$ minors of }\tld{A}_{\Id}^{(j)}|_{v = -p} \text{vanish,}\\
c_{11}c^*_{22}c^*_{33}+c_{22}c^*_{33}c^*_{11}+c_{33}c^*_{11}c^*_{22}-c_{11}^*c_{23}c_{32}-c_{22}^*c_{13}c_{31}-c_{33}^*c_{12}c_{21}+c_{21}c_{13}c_{32}=0 
\end{matrix}$
\\
& & &\\
\hline 
\end{tabular}}
\caption*{
\textbf{Explanation of the table:} $\deg(\tld{A}_{\tld{w}_j}^{(j)})$ is the degree of the polynomial in each entry.  We write $k^*$ to indicate an entry polynomial of degree $k$ whose leading coefficient is a unit. We use $c^*$ to indicate an entry which is a unit in $R$. Each entry is also subject to the condition that the reduction modulo $m_R$ gives $\overline{A}^{(j)}_{\widetilde{w}_j}$. The third column is further explained in Remark \ref{degreeintable}.   
In the fourth column, we describe $\Mat_{\beta}(\phi^{(j)}_{\fM,s_{j+1}(3)})$ with  finite height and determinant conditions imposed, and performing some obvious $p$-saturation. However, we do not claim that we have performed all the $p$-saturation in the case of the last 5 rows.
}
\end{table}

\begin{table}[h]
\caption{\textbf{Monodromy equations}}\label{table3}
\adjustbox{max width=\textwidth}{%
\begin{tabular}{| c || c | c |}
\hline
& &\\
$\widetilde{w}_j$ & $R^{\expl}_{\widetilde{w}_j}$ & Leading term  \\ 
& & \\
\hline
& & \\
$\alpha \beta \alpha \gamma$ & $ \cO [\![ x_{11}^*, x_{22}^*, x_{33}^*, x_{21}, x_{31}, x'_{31}, x_{32} ]\!]$ &$ 
\begin{matrix}
pc^*_{33}\big((e-(a-c))c^*_{22}c_{31}+p(e-(b-c))c_{21}c_{32}c^*_{33}-pec^*_{22}c'_{31}\big)+O(p^{n-1})\\
\text{(using the $(3,1)$ entry of the leading term)}
\end{matrix}
 $ \\
& & \\
\hline
& & \\
$\beta \gamma \alpha \gamma$ &   $ \cO [\![ x_{11}^*, x_{22}^*, x_{33}^*, x_{12}, x_{31}, x_{32}, x'_{32} ]\!]$ & $
\begin{matrix}
pc^*_{33}\big((e-(b-c)) c_{32} c_{11}^* - (e - (a-c)) c_{12} c_{31}+pec_{11}c'_{32}\big)+O(p^{n-1})\\
\text{(using the $(3,2)$ entry of the leading term)}
\end{matrix}
$ \\
& &\\
\hline
\hline
& & \\
$\beta \alpha \gamma$ & $\begin{matrix} \cO [\![ x_{11}, x_{12}^*, x_{21}^*, x_{33}^*, x_{22}, x_{31}, x'_{31}, x_{32} ]\!] \\
 c_{11} c_{22} =  -p c_{12}^* c_{21}^* \end{matrix} $   & 
$
\begin{matrix}
pc_{33}^*\big((e-(a-c))c_{12}^*c_{31}-pec_{12}^*c_{31}'-(e-(b-c))c_{32}c_{11}\big)+O(p^{n-1})
\\
\text{(using the $(3,2)$ entry of the leading term)}
\end{matrix}$ 
\\
& & \\
\hline
& & \\
$\alpha \beta \gamma$ & $\begin{matrix}  \cO [\![x_{11}^*, x_{21}, x_{21}', x_{22}, x_{23}^*, x'_{31}, x_{32}^*, x_{33} ]\!] \\ c_{22} c_{33} = - p c_{32}^* c_{23}^* \end{matrix} $ & 
$\begin{matrix}
pc_{23}^*\big((e-(a-c))c_{32}^*c_{21}+(b-c)c_{22}c'_{31}-p(e-(b-c))c_{32}^*c_{21}'\big)+O(p^{n-1})\\
\text{(using the $(2,1)$ entry of the leading term)}
\end{matrix}$
\\
& & \\
\hline \hline
& & \\
$\alpha \beta \alpha$ & $\begin{matrix} \cO[\![x_{11}, x_{32}, x_{23}, x_{13}, x_{33}, x_{33}', x_{31}^*, x_{22}^*, x_{13}^*]\!] \\ c_{11}c_{33}=-pc_{13}c_{31}^*\\
c_{11}c_{33}'-c_{13}c_{31}^*+pc_{13}^*c^*_{31}=0 \end{matrix} $  &
$\begin{matrix}
-pc_{31}^*\big((e-(a-c))c_{33}c_{22}^*-p(a-b)c_{23}c_{32}+pec_{22}^*c'_{33})\big)+O(p^{n-1})
\\
\text{(using the $(3,1)$ entry of the leading term)}
\end{matrix}
$
\\
& & \\
\hline \hline
& & \\
$\alpha \beta$ & $\begin{matrix}
\cO[\![x_{31}, x_{22}, x_{12}, x_{13}, x_{23}, x_{23}', x_{33}', x_{21}^*, x_{13}^*, x_{32}^*]\!]
\\
c_{22}c_{31}=-pc_{21}^*c_{32}^*\\
c_{12}c_{23}= c_{22} c_{13}\\
c_{32}^*c_{13}-pc_{32}^*c_{13}^*-c_{33}'c_{12}=0
\end{matrix}$ & 
$\begin{matrix}
pc^*_{32}\big((e-(a-c))c_{31}c_{23}+p(e-(a-b))c^*_{21}c'_{33}+p(a-b)c_{31}c'_{23}\big)+O(p^{n-1})\\
\text{(using the $(3,1)$ entry of the leading term)}
\end{matrix}
$

\\
& & \\
\hline
& & \\
$\beta \alpha$  & 
$\begin{matrix}
\cO[\![x_{11}, x_{22}', x_{32}, x_{13}, x_{33}, x_{33}', x_{31}^*, x_{12}^*, x_{23}^*]\!]
\\
c_{11}c_{33}=-pc_{31}^*c_{13}\\
c_{22}'(c_{11} c_{33}' - c_{13} c^*_{31}) = pc_{23}^*c_{12}^*c_{31}^*
\end{matrix}$ &
$\begin{matrix}
pc^*_{31}\big(-(e-(a-c))c_{33}c'_{22}+p(a-b)c_{32}c^*_{23}-pec'_{22}c'_{33}\big)+O(p^{n-1})\\
\text{(using the $(3,1)$ entry of the leading term)}
\end{matrix}
$
\\
&  &\\
\hline \hline
&  &\\
$\alpha$ &  
See \S \ref{sec:badcases}.
&
$\begin{matrix}
p\big((e-a + c)  (c_{23} c_{32} c_{21}^* - c'_{22} c_{23} c_{31}) - (e - a +b) c_{22} c^*_{33}  c_{21}^*  - ep c'_{22} c^*_{33}  c_{21}^* \big)+O(p^{n-1})\\
\text{(using the $(2,1)$ entry of the leading term)}
\end{matrix}
$
\\
& &\\
\hline \hline
&  &\\
$\Id$  & 
See \S \ref{sec:badcases}.&See \S \ref{sec:badcases}.
\\
&  &\\
\hline \hline
\end{tabular}}
\caption*{We list the generator of $I_{\mathrm{mon}}^{(j+1)}[1/p]$ according to the procedure of Proposition \ref{onequation}. We take $a\defeq a_{s_{j+1}(1)}^{(j+1)},\ b\defeq a_{s_{j+1}(2)}^{(j+1)},\ c\defeq a_{s_{j+1}(3)}^{(j+1)}$. We define the variables $x_{ij}^{\bullet}\defeq c_{ij}^{\bullet}-[\overline{c}_{ij}^{\bullet}]$ where $\bullet=*,\ \prime$ or $\bullet\in\emptyset$ and $[\cdot]$ is the Teichm\"uller lift.}
\end{table}
\begin{table}[p]
\caption{\textbf{Deformation rings with monodromy}}\label{table:withmon}
\adjustbox{max width=\textwidth}{%
\begin{tabular}{| c || c | c |}
\hline
& & \\
$\widetilde{w}_j$ & Condition on $\overline{\fM}$ & $R^{\expl, \nabla}_{\overline{\fM}, \widetilde{w}_j}$ \\ 
& & \\
\hline
& & \\
$\alpha \beta \alpha \gamma$ & $\overline{c}_{31} = 0$  &  $ \cO [\![ x_{11}^*, x_{22}^*, x_{33}^*, x_{21}, x'_{31}, x_{32} ]\!]$ \\
& & \\
\hline
& & \\
$\beta \gamma \alpha \gamma$ & $(e-b+c) \overline{c}_{32} \overline{c}_{11}^* = (e - a+c) \overline{c}_{12} \overline{c}_{31}$ & $\cO [\![ x_{11}^*, x_{22}^*, x_{33}^*, x_{12}, x_{31}, x'_{32} ]\!]$\\
& & \\
\hline
\hline
& & \\
$\beta \alpha \gamma$ & $\overline{c}_{31} = 0$  & $\cO [\![ y_{11}, y_{22}, x_{12}^*, x_{21}^*, x_{33}^*, x'_{31}, x_{32} ]\!]/ (y_{11} y_{22} - p) $\\
& & \\
\hline
& & \\
$\alpha \beta \gamma$ & $\overline{c}_{21}= 0$ & $\cO [\![y_{22}, y_{33}, x_{11}^*, x_{21}', x_{23}^*, x'_{31}, x_{32}^*]\!]/(y_{22} y_{33} - p) $  \\
& & \\
\hline \hline
& & \\
$\alpha \beta \alpha$ & $(a-b) \overline{c}_{23} \overline{c}_{32} -(a-c) \overline{c}_{22}^* \overline{c}'_{33} \neq 0$ &$\cO[\![x_{32}, x_{23}, x_{33}', x_{31}^*, x_{22}^*, x_{13}^*]\!] $
\\
& & \\
\hline
& & \\
$\alpha \beta \alpha$ & $(a-b) \overline{c}_{23} \overline{c}_{32} -(a-c) \overline{c}_{22}^* \overline{c}'_{33} = 0$ & $\cO[\![x_{11}, x_{32}, x_{23}, y_{33}', x_{31}^*, x_{22}^*, x_{13}^*]\!]/(x_{11}y_{33}'-p)$
\\
& & \\
\hline \hline
& & \\
$\alpha \beta$ & $\overline{c}'_{33} \neq 0 $&$\cO[\![y_{31}, x_{22}, x_{23}', x_{33}', x_{21}^*, x_{13}^*, x_{32}^*]\!]/(y_{31} x_{22} - p)$\\
& & \\
\hline
& & \\
$\alpha \beta$ & $\overline{c}'_{33}= 0$ & $\cO[\![y_{31}, x_{22}, x_{12}, x_{23}', y_{33}', x_{21}^*, x_{13}^*, x_{32}^*]\!]/(y_{31} x_{22} - p, x_{12}y_{33}'-p)$\\
& &
\\
\hline
& & \\

$\beta \alpha$  & $\overline{c}_{32} \neq 0$ &$\cO[\![x_{22}', y_{13}, x_{32},  x_{33}', x_{31}^*, x_{12}^*, x_{23}^*]\!]/( x_{22}' y_{13} - p)$
\\
& &
\\
\hline
& & \\
$\beta \alpha$  & $\overline{c}_{32} = 0$ & $\cO[\![x_{11}, x_{22}', y_{32}, y_{13}, x_{33}', x_{31}^*, x_{12}^*, x_{23}^*]\!]/( x_{22}' y_{13} - p, x_{11}y_{32}-p)$
\\
&  &\\
\hline
\end{tabular}}
\caption*{The condition imposed by monodromy on the coefficients of $\Mat_{\overline{\beta}}\big(\phi^{(j)}_{\overline{\fM},s_{j+1}(3)}\big)$, according to the shape of $\overline{\fM}$ (cf. Table \ref{table shapes mod p}). 
We take $a\defeq a_{s_{j+1}(1)}^{(j+1)},\ b\defeq a_{s_{j+1}(2)}^{(j+1)},\ c\defeq a_{s_{j+1}(3)}^{(j+1)}$.
Note that the table above covers all the shapes of length $\geq 2$; the shapes of length $\leq1$ are more delicate and treated in detail in \S \ref{sec:badcases}.}
\end{table}

\begin{table}[h]
\caption{\textbf{Types with Weyl intersection in the proof of Proposition \ref{prop:intersec}(2)}}\label{table: types}
\adjustbox{max width=\textwidth}{%
\begin{tabular}{| c | c | c | c|}
\hline
&&&\\
$\rhobar$& $ \sigma(\tau) $ & $\JH(\sigma(\tau))\cap W^{?}(\rhobar)$& Shape $\bf{w}(\rhobar, \tau)$ \\ 
&&&\\
\hline
&&&\\
\multirow{3}{*}{$\big(\omega^{a}\oplus\omega^b\oplus\omega^{c}\big)\otimes\omega$}&$\Ind_{P_2(\Fp)}^{\GL_3(\Fp)}(\teich{\omega}^b\otimes\Theta(\teich{\omega}_2^{c+pa}))$ & 
$\begin{matrix}F(a-1,b,c+1),\\ F(c+p-1,b,a-p+1)\\
\end{matrix}$&$\alpha\beta\alpha$\\
&&&\\
\cline{2-4}
&&&\\
& $\Ind_{P_2(\Fp)}^{\GL_3(\Fp)}(\teich{\omega}^c\otimes\Theta(\teich{\omega}_2^{(a+1)+p(b-1)}))$
& 
$\begin{matrix}F(b-1,c,a-p+2),\\ F(a,c,b-p+1)\\
\end{matrix}$&$\beta\gamma\beta$\\
&&&\\
\cline{2-4}
&&&\\
&$
\Ind_{P_2(\Fp)}^{\GL_3(\Fp)}(\teich{\omega}^a\otimes\Theta(\teich{\omega}_2^{(b+1)+p(c-1)}))$& 
$\begin{matrix}F(c+p-2,a,b+1),\\ F(b+p-1,a,c)\\
\end{matrix}$&$\gamma\alpha\gamma$\\
&&&\\
\hline
\hline
&&&\\
\multirow{3}{*}{$\big(\omega^{a}\oplus \Ind_{G_{\Qpf{2}}}^{G_{\Qp}}\omega_2^{b+pc}\big)\otimes\omega$}&$\Theta(\teich{\omega}_3^{a+pb+p^2c})$
& 
$\begin{matrix}F(a-1,b,c+1),\\ F(c+p-1,b,a-p+1)\\
\end{matrix}$&$\alpha\beta\alpha$\\
&&&\\
\cline{2-4}
&&&\\
&$\Theta(\teich{\omega}_3^{(c-1)+pb+p^2(a+1)})$
& 
$\begin{matrix}F(b-1,c,a-p+2),\\ F(a,c,b-p+1)\\
\end{matrix}$&$\beta\gamma\beta$\\
&&&\\
\cline{2-4}
&&&\\
&$\Ind_{\B(\Fp)}^{\GL_3(\Fp)}\big(\omega^{a}\otimes\omega^{b-1}\otimes\omega^{c+1}\big)$
& 
$\begin{matrix}F(c+p-1,a,b),\\ F(b+p-2,a,c+1)\\
\end{matrix}$&$\gamma\alpha\gamma$\\
&&&\\
\hline\hline
&&&\\
\multirow{3}{*}{$\big(\Ind_{G_{\Qpf{3}}}^{G_{\Qp}}\omega_3^{a+pb+p^2c}\big)\otimes\omega$}&$\Ind_{P_2(\Fp)}^{\GL_3(\Fp)}\big(\omega^a\otimes \Theta(\teich{\omega}_2^{b+pc})\big)$
& 
$\begin{matrix}F(a-1,b,c+1),\\ F(c+p-1,b,a-p+1)\\
\end{matrix}$&$\alpha\beta\alpha$\\
&&&\\
\cline{2-4}
&&&\\
&$\Ind_{P_2(\Fp)}^{\GL_3(\Fp)}\big(\omega^{c+1}\otimes \Theta(\teich{\omega}_2^{(a-1)+pb})\big)$
& 
$\begin{matrix}F(c+p-1,a-1,b+1),\\ F(b+p-1,a-1,c+1)\\
\end{matrix}$&$\gamma\alpha\gamma$\\
&&&\\
\cline{2-4}
&&&\\
&$\Ind_{P_2(\Fp)}^{\GL_3(\Fp)}\big(\omega^{b+1}\otimes \Theta(\teich{\omega}_2^{(c-1)+pa})\big)$
& 
$\begin{matrix}F(b,c,a-p+1),\\ F(a-1,c,b-p+2)\\
\end{matrix}$&$\beta\gamma\beta$\\
&&&\\
\hline
\end{tabular}}
\end{table}

\begin{table}[h]
\adjustbox{max width=\textwidth}{%
\begin{tabular}{| c | c | c | c|}
\hline
&&&\\
\multirow{3}{*}{$\big(\omega^{b}\oplus \Ind_{G_{\Qpf{2}}}^{G_{\Qp}}\omega_2^{a+pc}\big)\otimes\omega$}&$
\Ind_{\B(\Fp)}^{\GL_3(\Fp)}\big(\omega^{a}\otimes\omega^{b}\otimes\omega^{c}\big)$
& 
$\begin{matrix}F(a-1,b,c+1),\\ F(c+p-1,b,a-p+1)\\
\end{matrix}$&$\alpha\beta\alpha$\\
&&&\\
\cline{2-4}
&&&\\
&$\Theta\big(\teich{\omega}_3^{c+p(b+1)+p^2(a-1)}\big)$
& 
$\begin{matrix}F(c+p-1,a-1,b+1),\\ F(b+p-1,a-1,c+1)\\
\end{matrix}$&$\gamma\alpha\gamma$\\
&&&\\
\cline{2-4}
&&&\\
&$\Theta\big(\teich{\omega}_3^{a+p(b-1)+p^2(c+1)}\big)$
& 
$\begin{matrix}F(b-1,c+1,a-p+1),\\ F(a-1,c+1,b-p+1)\\
\end{matrix}$&$\beta\gamma\beta$\\
&&&\\
\hline
\end{tabular}}
\end{table}

\begin{table}[h]
\caption{\textbf{Serre weights for semisimple $\rhobar$}}\label{table:SW}
\adjustbox{max width=\textwidth}{%
\begin{tabular}{| c | c | c |}
\hline
&&\\
$\rhobar$& $W_{\mathrm{obv}}(\rhobar)$ & $W^?(\rhobar)\setminus W_{\mathrm{obv}}(\rhobar)$ \\ 
&&\\
\hline
&&\\
$\big(\omega^{a}\oplus\omega^b\oplus\omega^{c}\big)\otimes\omega$&
$\begin{matrix}F(a-1,b,c+1),\\ F(b-1,c,a-p+2),\\ F(c+p-2,a,b+1),\\
F(a-1, c, b-p+2),\\ F(b+p-2,a,c+1),\\ F(c+p-2,b,a-p+2)\end{matrix}$
&
$\begin{matrix}F(c+p-1,b,a-p+1),\\
 F(b+p-1,a,c),\\ F(a,c,b-p+1)\end{matrix} $
\\
&&\\
\hline
&&\\
$\big(\omega^{a}\oplus \Ind_{G_{\Qpf{2}}}^{G_{\Qp}}\omega_2^{b+pc}\big)\otimes\omega$&
$\begin{matrix}
F(a-1,b,c+1),\\ F(b-1,c,a-p+2),\\ F(c+p-1,a,b),\\
F(a-1, c+1, b-p+1),\\ F(c+p-1,b-1,a-p+2),\\ F(b+p-1,a,c)
\end{matrix}$
&
$\begin{matrix}
F(c+p-1,b,a-p+1),\\ F(a,c,b-p+1),\\F(b+p-2,a,c+1)
\end{matrix}$\\
&&\\
\hline
&&\\
$\big(\Ind_{G_{\Qpf{3}}}^{G_{\Qp}}\omega_3^{a+pb+p^2c}\big)\otimes\omega$&
$\begin{matrix}
F(a-1,b,c+1),\\ F(c+p-1,a-1,b+1),\\ F(b,c,a-p+1),\\
F(a-1, c+1, b-p+1),\\ F(c+p-1,b+1,a-p),\\ F(b+p-1,a,c)
\end{matrix}$
&
$\begin{matrix}
F(c+p-1,b,a-p+1),\\ F(b+p-1,a-1,c+1),\\ F(a-1,c,b-p+2)
\end{matrix}$\\
&&\\
\hline
&&\\
$\big(\omega^{b}\oplus \Ind_{G_{\Qpf{2}}}^{G_{\Qp}}\omega_2^{a+pc}\big)\otimes\omega$
& 
$\begin{matrix}
F(a-1,b,c+1),\\ F(b-1,c+1,a-p+1),\\ F(c+p-1,a-1,b+1),\\
F(a-1, c, b-p+2),\\ F(c+p,b,a-p),\\ F(b+p-2,a,c+1)
\end{matrix}$
&
$\begin{matrix}
F(c+p-1,b,a-p+1),\\ F(b+p-1,a-1,c+1),\\ F(a-1,c+1,b-p+1)
\end{matrix}$\\
&&\\
\hline
\end{tabular}}
\caption*{
The triple $(a,b,c)\in \Z^3$ verifies $1<a-b,\ b-c <p-2$ and $a-c<p-2$. The table is deduced from \cite{herzig-duke}, Lemma 7.6 and Proposition 6.28; alternatively the obvious weights can be deduced from \cite{BLGG}, Lemma 5.1.2.}
\end{table}

\newpage
\bibliography{Biblio}
\bibliographystyle{amsalpha}

\end{document}